\numberwithin{equation}{section}
\newcommand{\N}{\mathbb{N}}
\newcommand{\R}{\mathbb{R}}
\newcommand{\sfd}{{\sf d}}
\renewcommand{\d}{{\mathrm d}}
\newcommand{\e}{{\rm e}}
\newcommand{\X}{{\rm X}}
\newcommand{\Y}{{\rm Y}}
\newcommand{\1}{\mathbbm 1}
\newcommand{\LIP}{{\rm LIP}}
\newcommand{\lip}{{\rm lip}}
\newcommand{\nablaAM}{\nabla_{\!\scriptscriptstyle\rm AM}}
\newcommand{\Eucl}{{\rm Eucl}}
\newcommand{\Ch}{{\rm Ch}}
\newcommand{\ppi}{{\mbox{\boldmath\(\pi\)}}}
\newcommand{\sppi}{{\mbox{\scriptsize\boldmath\(\pi\)}}}
\newcommand{\EAM}{{\rm E}_{\scriptscriptstyle{\rm AM}}}
\newcommand{\GAM}{G_{\scriptscriptstyle{\rm AM}}}
\newcommand{\WAM}{W_{\scriptscriptstyle{\rm AM}}}
\newcommand{\limi}{\varliminf}
\newcommand{\lims}{\varlimsup}
\renewcommand{\div}{{\rm div}}
\newcommand{\fr}{\penalty-20\null\hfill\(\blacksquare\)}
\newtheorem{theorem}{Theorem}[section]
\newtheorem{corollary}[theorem]{Corollary}
\newtheorem{lemma}[theorem]{Lemma}
\newtheorem{proposition}[theorem]{Proposition}
\newtheorem{definition}[theorem]{Definition}
\newtheorem{example}[theorem]{Example}
\newtheorem{remark}[theorem]{Remark}
\title[Upper gradients on the weighted Euclidean space]{Characterisation
of upper gradients on the weighted Euclidean space and applications}
\author{Danka Lu\v{c}i\'{c}}
\address{\scriptsize Department of Mathematics and Statistics,
P.O.\ Box 35 (MaD), FI-40014 University of Jyvaskyla}
\email{\scriptsize danka.d.lucic@jyu.fi}
\author{Enrico Pasqualetto}
\address{\scriptsize Department of Mathematics and Statistics,
P.O.\ Box 35 (MaD), FI-40014 University of Jyvaskyla}
\email{\scriptsize enrico.e.pasqualetto@jyu.fi}
\author{Tapio Rajala}
\address{\scriptsize Department of Mathematics and Statistics,
P.O.\ Box 35 (MaD), FI-40014 University of Jyvaskyla}
\email{\scriptsize tapio.m.rajala@jyu.fi}
\begin{document}
\date{\today}
\allowdisplaybreaks
\keywords{Sobolev space, weighted Euclidean space,
decomposability bundle}
\subjclass[2010]{46E35, 53C23, 26B05}
\begin{abstract}
In the context of Euclidean spaces equipped with an arbitrary
Radon measure, we prove the equivalence among several different
notions of Sobolev space present in the literature and we
characterise the minimal weak upper gradient of all
Lipschitz functions.
\end{abstract}
\maketitle
\tableofcontents
\section*{Introduction}
In this paper we study first-order Sobolev spaces on
the Euclidean space \(\R^n\) equipped with an arbitrary
Radon measure \(\mu\geq 0\). This theory has been initiated
in the late nineties, with the pioneering work \cite{BBS97}
by G.\ Bouchitt\'{e}, G.\ Buttazzo, and P.\ Seppecher. The
motivations and applications were numerous, in the fields
of calculus of variations \cite{BBS97}, shape optimisation
\cite{BBS97-2}, optimal transport problems with gradient penalisation
\cite{Louet14}, amongst many others. Compared to Allard's theory of 
varifolds \cite{Allard72} or to Federer--Fleming's theory of
currents \cite{FF60}, the usage of measures in optimisation
problems presents two main advantages:
it allows to model objects made of parts having different Hausdorff
dimension (such as multijunctions), and it rests on a solid
functional-analytic machinery. About the former feature, we
just mention that the aim of \cite{BBS97} was to represent
low-dimensional elastic structures (such as membrane and beams)
in an intrinsic way, as opposed to the more classical idea
of first `fattening' the structure under consideration and
then passing to the limit in the vanishing thickness parameter
(via \(\Gamma\)-convergence methods, for instance).
With regard to the latter feature, let us briefly explain
which is the analytic framework the theory of Sobolev spaces
on weighted \(\R^n\) relies upon.

The key idea introduced by \cite{BBS97} was to define a
suitable `tangent distribution' associated with the measure \(\mu\),
namely, a \(\mu\)-a.e.\ defined measurable subbundle
of \(T\R^n\cong\R^n\times\R^n\); see Definition \ref{def:distr}.
In the approach adopted in \cite{BBS97}, the tangent fibers are
identified by looking at vector fields whose distributional
divergence belongs to \(L^2(\mu)\) (see \eqref{eq:def_distrib_div}
for the precise definition we are referring to). A different (but
similar in spirit) notion was studied by Fragal\`{a}--Mantegazza
\cite{FM99}; we do not investigate it in this paper. For a complete
account on this technique via the distributional divergence, we
refer to the survey \cite{BF02} and the references therein.
An alternative way to select the tangent fibers was proposed
by Zhikov in \cite{Zhi00,Zhi02}, where the strategy was to perform
a relaxation at the level of gradients of smooth functions.
We introduce a useful generalisation -- called
\(G\)-structure -- of Zhikov's concept in Definition
\ref{def:G-struct}. Later on, J.\ Louet studied in his
PhD thesis \cite{LouetPhD} the relation between the above two
approaches, but their complete equivalence was not known;
we will obtain it as a byproduct of Theorem \ref{thm:alt_char_T_mu}.
Once the tangent distribution is given, the Sobolev space is defined
by first projecting the gradients of smooth functions on the
tangent fibers (obtaining the tangential gradient with respect
to \(\mu\)) and then passing to the closure. The resulting
energy functional is lower semicontinuous, or equivalently the
associated notion of weak gradient yields a closed linear
operator. It is worth to recall that other geometric and
measure-theoretic notions of tangent space to a measure are
studied in the literature -- for instance, Preiss' notion of
`tangent measure' \cite{Preiss87} or Simon's notion of `approximate
tangent space' \cite{Simon84}. However, these are not the correct
objects to look at in order to define a Sobolev space: besides
the fact that they not always exist, a noteworthy problem is
that the consequent tangential gradient may well be not closable
(since the geometric fibers are typically bigger than
the analytic ones).
\medskip

In the present paper we start our investigation of the Sobolev
space on weighted \(\R^n\) from a rather different viewpoint.
More precisely, we regard it as a special case of the more
general theory of Sobolev spaces over a metric measure space
\((\X,\sfd,\mu)\). In this respect, the first definition was
given by P.\ Haj\l asz in \cite{Haj96}, but we will not consider
it here because of its `non-local' nature. At a later time,
several other notions (which eventually turned out to be equivalent)
have been proposed by J.\ Cheeger \cite{Cheeger00},
N.\ Shanmugalingam \cite{Shanmugalingam00}, L.\ Ambrosio, N.\ Gigli,
and G.\ Savar\'{e} \cite{AmbrosioGigliSavare11}, and
S.\ Di Marino \cite{DM14}. It will be convenient for us to
work with the approach \(W^{1,2}(\X,\mu)\) based on the concept
of test plan, introduced in \cite{AmbrosioGigliSavare11}; see
Definition \ref{def:Sob_space}. The common feature of all the
above approaches is the following: in lack of an underlying Banach
structure, the weakly differentiable functions \(f\) on a metric
measure space are detected by estimating the entity of their
variation, rather than the variation itself. In other words,
one obtains the `modulus of the weak differential' \(|D_\mu f|\)
instead of the weak differential \(D_\mu f\).

Let us focus our attention on the case in which \(\mu\) is a Radon
measure on \(\R^n\). Contrarily to what was discussed in the
first part of this introduction, we now have a Sobolev space
\(W^{1,2}(\R^n,\mu)\) at our disposal, but not (a priori)
a notion of tangent fiber. Still, a tangent
distribution can be recovered by appealing to results
available in the literature, as we are going to describe:
\begin{itemize}
\item N.\ Gigli built in \cite{Gigli14} an abstract tensor
calculus for metric measure spaces \((\X,\sfd,\mu)\),
which is based upon the notion of \(L^2(\mu)\)-normed
\(L^\infty(\mu)\)-module. In particular, the Sobolev space
gives rise to a natural notion of tangent module \(L^2_\mu(T\X)\),
whose elements should be regarded as the `synthetic' vector fields
over \((\X,\sfd,\mu)\). See Definition \ref{def:tg_mod}.
\item In the framework of the weighted Euclidean space,
N.\ Gigli and the second named author proved in \cite{GP16-2}
that the tangent module \(L^2_\mu(T\R^n)\) can be isometrically
embedded into the space \(L^2(\R^n,\R^n;\mu)\) of all
\(L^2(\mu)\)-maps from \(\R^n\) to itself. See Theorem
\ref{thm:iota_as_adjoint}.
\item The first and second named authors proved in
\cite{LP18} that (locally finitely-generated) \(L^2(\mu)\)-normed
\(L^\infty(\mu)\)-modules can be always represented as the spaces
of sections of a measurable Banach bundle. In the specific case
of weighted \(\R^n\), this grants that the tangent module
\(L^2_\mu(T\R^n)\) is canonically associated with a distribution
\(T_\mu\) in \(\R^n\), that we will call the tangent
distribution. See Definition \ref{def:tg_distr}. 
\end{itemize}
One of the main achievements of the present paper is Theorem
\ref{thm:alt_char_T_mu}, where we prove that the tangent
distribution \(T_\mu\) -- and accordingly the Sobolev space
\(W^{1,2}(\R^n,\mu)\) -- is consistent both with the notion
obtained via divergence by Bouchitt\'{e}--Buttazzo--Seppecher
\cite{BBS97} and with the one via vectorial relaxation
by Zhikov \cite{Zhi00,Zhi02}. Moreover, by building on top of this
equivalence result, we will identify the minimal object
\(|D_\mu f|\) (called the minimal weak upper gradient) associated
with any compactly-supported Lipschitz function \(f\) on
\(\R^n\); see Theorem \ref{thm:mwug_Lip}. The case \(n=1\) was
previously investigated by S.\ Di Marino and G.\ Speight in
\cite{DiMarinoSpeight15}.

In order to establish the above-mentioned characterisation
of the weak gradient of Lipschitz functions, we will need
to study the interaction between the Sobolev calculus on weighted
\(\R^n\) and the Alberti--Marchese differentiation theorem
\cite{AM16}, which says -- roughly speaking -- that there
exists a maximal distribution \(V_\mu\) in \(\R^n\) along
which all Lipschitz functions are \(\mu\)-a.e.\ differentiable
(in the sense of Fr\'{e}chet); cf.\ Theorem \ref{thm:AM}.
This kind of investigation has been initiated by the first
and second named authors together with S.\ Di Marino in
\cite{DMLP20}, where it is proven that the absolute value of
the Alberti--Marchese gradient is a weak upper gradient
(see Theorem \ref{thm:DMLP}). By using the machinery discussed
so far, we show (in Corollary \ref{cor:T_mu_in_W_mu}) that
\[
T_\mu(x)\subseteq V_\mu(x),\quad
\text{ for }\mu\text{-a.e.\ }x\in\R^n.
\]
However, in general `Sobolev calculus' and `Lipschitz calculus'
are not equivalent, thus one cannot expect the equality
\(T_\mu=V_\mu\) to hold for all measures \(\mu\).
Indeed, the Alberti--Marchese distribution just depends on the
negligible sets of \(\mu\), while the Sobolev space
\(W^{1,2}(\R^n,\mu)\) -- and thus, a fortiori, the tangent
distribution \(T_\mu\) -- strongly depends on the measure
\(\mu\) itself. An example of a measure \(\mu\) on \(\R\)
for which \(T_\mu\neq V_\mu\) will be described in Remark
\ref{rmk:T_mu_neq_V_mu}.

We are now in a position to state Theorem \ref{thm:mwug_Lip}:
given any \(f\in\LIP_c(\R^n)\), it holds that
\[
|D_\mu f|=\big|{\rm pr}_{T_\mu}(\nablaAM f)\big|,
\quad\text{ in the }\mu\text{-a.e.\ sense,}
\]
where we denote by \({\rm pr}_{T_\mu}\colon V_\mu\to T_\mu\)
the natural projection operator, while \(\nablaAM f\) stands
for the Alberti--Marchese gradient of \(f\) (that is a measurable
section of the distribution \(V_\mu\)).
\medskip

In the last part of the paper -- namely, in Section
\ref{s:applications} -- we shall provide a few applications
(for the moment, only at a theoretical level) of our main
Theorems \ref{thm:alt_char_T_mu} and \ref{thm:mwug_Lip}:
\begin{itemize}
\item \textsc{Section \ref{ss:tg_sing}}:
By combining our techniques with a deep result by G.\ De Philippis
and F.\ Rindler \cite{DPR16} about Radon measures on \(\R^n\),
we prove that for \(\mu^s\)-a.e.\ point \(x\in\R^n\) the
tangent fiber \(T_\mu(x)\) cannot coincide with the whole
\(\R^n\), where \(\mu^s\) stands for the singular part of
\(\mu\) with respect to the Lebesgue measure \(\mathcal L^n\);
see Theorem \ref{thm:dim_fibers}.
\item \textsc{Section \ref{ss:geom_T_mu}}: The tangent distribution
\(T_\mu\) admits a geometric interpretation, in terms of the
initial velocities of suitably chosen test plans
on \((\R^n,\sfd_{\rm Eucl},\mu)\); see Theorem
\ref{thm:fiber_cl_dot_pi}.
\item \textsc{Section \ref{ss:tens_Ch}}: Sobolev spaces
over the weighted Euclidean space satisfy the expected
tensorisation property; see Theorem \ref{thm:tens_Sob}.
\end{itemize}
We wish to point out that in the whole paper we just stick
to the case \(p=2\), but mostly for a matter of practicality.
The main reason is that many of the tools we will use --
those concerning the theory of normed modules -- are explicitly
written in the literature only for the case \(p=2\).
However, we expect that our results have appropriate
counterparts for every \(p\in(1,\infty)\).
\medskip

Finally, we conclude this introduction by mentioning that
also second-order Sobolev spaces on weighted Euclidean
spaces (for suitable Radon measures) have been studied,
\emph{e.g.}, in \cite{Bouchitte_Fragala03}.
It would be definitely interesting to understand whether
even these second-order spaces admit an
equivalent reformulation in the language of metric
measure spaces. Yet another interesting problem would be to
study the space \({\rm BV}(\R^n,\mu)\) of functions of bounded variation.
\bigskip

{\bf Acknowledgements.}
We would like to thank Simone Di Marino for the many useful
conversations about the results of this paper. All authors
are partially supported by the Academy of Finland, project 314789.
\section*{List of symbols}
\noindent
We provide below a list of the non-standard symbols
we shall use throughout the paper.
\begin{center}
\begin{longtable}{p{2cm} p{12cm}}
\(\mathcal L_1\) & Restriction of the Lebesgue measure
to the interval \([0,1]\). See \eqref{eq:def_L_1}. \\
\(\sfd_\infty\) & Supremum distance on \(C([0,1],\X)\).
See \eqref{eq:def_d_infty}. \\
\(\e_t\), \(\e_t^\X\) & Evaluation map at time \(t\).
See \eqref{eq:def_e_t}. \\
\(|\dot\gamma|\) & Metric speed of an absolutely continuous
curve \(\gamma\). See \eqref{eq:def_ms}. \\
\({\rm KE}_t\) & Kinetic energy functional at time \(t\).
See \eqref{eq:def_KE_t}. \\
\(\lip(f)\) & Local Lipschitz constant of a Lipschitz
function \(f\). See \eqref{eq:def_lip}. \\
\({\rm Comp}(\ppi)\) & Compression constant of a test plan
\(\ppi\). See Definition \ref{def:test_plan}. \\
\({\rm Const}^\X\) & `Constant curve' map.
See \eqref{eq:def_Const}. \\
\(W^{1,2}(\X,\mu)\) & Sobolev space on a metric measure
space \((\X,\sfd,\mu)\).
See Definition \ref{def:Sob_space}. \\
\(|D_\mu f|\) & Minimal weak upper gradient of
\(f\in W^{1,2}(\X,\mu)\). See Definition \ref{def:Sob_space}. \\
\({\rm E}_\Ch\) & Cheeger energy functional.
See Definition \ref{def:Cheeger_energy}. \\
\({\rm E}_\lip\) & `Lipschitz' energy functional.
See \eqref{eq:def_E_lip}. \\
\(\Delta_\mu\) & Laplacian operator. See \eqref{eq:def_Sob_Lapl}. \\
\(\{P_t\}_{t\geq 0}\) & Heat flow semigroup. See
\eqref{eq:heat_flow}. \\
\({\sf R}_{\mathscr M}\) & Riesz isomorphism associated
with a Hilbert module \(\mathscr M\). See \eqref{eq:def_Riesz}. \\
\(\mathscr N^\perp\) & Orthogonal complement of a submodule
\(\mathscr N\subseteq\mathscr M\).
See Remark \ref{rmk:orth_compl_I}. \\
\(L^2_\mu(T^*\X)\) & Abstract cotangent module on \((\X,\sfd,\mu)\).
See Definition \ref{def:tg_mod}. \\
\(\d_\mu f\) & Abstract differential of a function
\(f\in W^{1,2}(\X,\mu)\). See Definition \ref{def:tg_mod}. \\
\(L^2_\mu(T\X)\) & Abstract tangent module on \((\X,\sfd,\mu)\).
See Definition \ref{def:tg_mod}. \\
\(\nabla_\mu f\) & Abstract gradient of a function
\(f\in W^{1,2}(\X,\mu)\). See Definition \ref{def:tg_mod}. \\
\(\ppi'_t\) & Velocity at time \(t\) of a test plan \(\ppi\).
See Proposition \ref{prop:speed_test_plan}. \\
\(\div_\mu\) & Abstract divergence operator. See
\eqref{eq:def_div_mu}. \\
\({\rm Der}_t\) & `Derivation' map. See \eqref{eq:def_Der}. \\
\(\mathbb B_\sppi\) & The space
\(L^2\big(C([0,1],\R^n),\R^n;\ppi\big)\). See
\eqref{eq:def_B_pi}. \\
\({\rm P}_\mu\) & `Projection of \(1\)-forms' map.
See Theorem \ref{thm:iota_as_adjoint}. \\
\(\iota_\mu\) & `Embedding of vector fields' map.
See Theorem \ref{thm:iota_as_adjoint}.\\
\(\underline\div_\mu\) & Concrete divergence operator.
See \eqref{eq:def_distrib_div}. \\
\({\rm Gr}(\R^n)\) & Grassmannian of \(\R^n\). See the
beginning of Section \ref{ss:distributions}. \\
\(\mathscr D_n(\mu)\) & Space of distributions on \(\R^n\)
(up to \(\mu\)-a.e.\ equality). See Definition \ref{def:distr}. \\
\(\Gamma(V)\) & Space of \(L^2(\mu)\)-sections of a
distribution \(V\in\mathscr D_n(\mu)\). See Definition
\ref{def:distr}. \\
\({\rm pr}_V\) & Orthogonal projection map onto \(\Gamma(V)\).
See Remark \ref{rmk:orth_proj}. \\
\(V^\perp\) & Orthogonal complement of a distribution
\(V\in\mathscr D_n(\mu)\). See Remark \ref{rmk:orth_compl_II}. \\
\(V_\mu\) & Alberti--Marchese distribution. See Theorem
\ref{thm:AM}. \\
\(\nablaAM f\) & Alberti--Marchese gradient of \(f\in\LIP_c(\R^n)\).
See Theorem \ref{thm:AM}. \\
\(\EAM\) & Alberti--Marchese energy functional.
See \eqref{eq:def_E_AM}. \\
\(T_\mu\) & Tangent distribution. See
Definition \ref{def:tg_distr}. \\
\((\mathcal V,\bar\nabla)\) & An arbitrary \(G\)-structure.
See Definition \ref{def:G-struct}. \\
\(G_\mu\) & The \(G_\mu\)-structure
\(\big(C^\infty_c(\R^n),\nabla\big)\). See item a)
of Example \ref{ex:G-struct}. \\
\(\GAM\) & The \(\GAM\)-structure
\(\big(\LIP_c(\R^n),\nablaAM\big)\). See item b)
of Example \ref{ex:G-struct}. \\
\(G(f)\) & The family of \(G\)-gradients of \(f\).
See Definition \ref{def:G-gradient}. \\
\(W_G\) & The unique distribution satisfying
\(\Gamma(W_G)=G(0)\). See Definition \ref{def:W_G}. \\
\(W_\mu\) & The distribution \(W_{G_\mu}\). See
item iii) of Theorem \ref{thm:alt_char_T_mu}. \\
\(\mathcal I(\underline v)\) & `Currentification' of
a vector field \(\underline v\in D(\underline\div_\mu)\).
See Example \ref{ex:vector_fields_as_currents}. \\
\({\rm D}_\sppi\) & Initial velocity of a test plan \(\ppi\).
See Theorem \ref{thm:deriv_time_0_tp}.
\end{longtable}
\end{center}
\section{Preliminaries}
\subsection{Sobolev calculus on metric measure spaces}
For the purposes of the present paper, a \textbf{metric measure space}
is any triple \((\X,\sfd,\mu)\), where \((\X,\sfd)\) is a complete and
separable metric space, while \(\mu\geq 0\) is a
boundedly finite Borel measure on \((\X,\sfd)\).
We denote by \(\mathscr P(\X)\) the family of all
Borel probability measures on \((\X,\sfd)\).
\subsubsection{Absolutely continuous curves}
First of all, let us introduce the shorthand notation
\begin{equation}\label{eq:def_L_1}
\mathcal L_1\coloneqq\mathcal L^1|_{[0,1]},\quad
\text{ where }\mathcal L^1\text{ stands for the Lebesgue measure on }\R.
\end{equation}
We denote by \(C([0,1],\X)\) the family of all continuous curves
\(\gamma\colon[0,1]\to\X\). It holds that the set \(C([0,1],\X)\) is a complete
and separable metric space when endowed with the \textbf{supremum distance}
\(\sfd_\infty\), which is defined as
\begin{equation}\label{eq:def_d_infty}
\sfd_\infty(\gamma,\sigma)\coloneqq\max_{t\in[0,1]}\sfd(\gamma_t,\sigma_t),
\quad\text{ for every }\gamma,\sigma\in C([0,1],\X).
\end{equation}
Given any \(t\in[0,1]\), we denote by \(\e_t\colon C([0,1],\X)\to\X\)
the \textbf{evaluation map at time \(t\)}, \emph{i.e.},
\begin{equation}\label{eq:def_e_t}
\e_t(\gamma)=\e^\X_t(\gamma)\coloneqq
\gamma_t,\quad\text{ for every }\gamma\in C([0,1],\X).
\end{equation}
We say that \(\gamma\in C([0,1],\X)\) is \textbf{absolutely continuous}
if there exists \(g\in L^1(0,1)\) such that
\[
\sfd(\gamma_t,\gamma_s)\leq\int_s^t g(r)\,\d r,
\quad\text{ for every }s,t\in[0,1]\text{ such that }s<t.
\]
The minimal such function \(g\) (where minimality is intended in the
\(\mathcal L_1\)-a.e.\  sense) is called the \textbf{metric speed} of
\(\gamma\) and denoted by \(|\dot\gamma|\in L^1(0,1)\).
As proven in \cite[Theorem 1.1.2]{AmbrosioGigliSavare08}, it holds
\begin{equation}\label{eq:def_ms}
|\dot\gamma_t|=\lim_{h\to 0}\frac{\sfd(\gamma_{t+h},\gamma_t)}{|h|},
\quad\text{ for }\mathcal L_1\text{-a.e.\ }t\in[0,1].
\end{equation}
The family of absolutely continuous curves on \(\X\) is denoted
by \(AC([0,1],\X)\). Also, we define
\[
AC^2([0,1],\X)\coloneqq\Big\{\gamma\in AC([0,1],\X)
\;\Big|\;|\dot\gamma|\in L^2(0,1)\Big\}.
\]
It is well-known that \(AC^2([0,1],\X)\) is a Borel subset
of the metric space \(\big(C([0,1],\X),\sfd_\infty\big)\).
Given any \(t\in(0,1]\), we define the functional
\({\rm KE}_t\colon C([0,1],\X)\to[0,+\infty]\) as
\begin{equation}\label{eq:def_KE_t}
{\rm KE}_t(\gamma)\coloneqq\left\{\begin{array}{ll}
t\big(\fint_0^t|\dot\gamma_s|^2\,\d s\big)^{1/2},\\
+\infty
\end{array}\quad\begin{array}{ll}
\text{ if }\gamma\in AC^2([0,1],\X),\\
\text{ otherwise.}
\end{array}\right.
\end{equation}
Given a reflexive, separable Banach space \(\big(\mathbb B,\|\cdot\|\big)\)
and a curve \(\gamma\in AC([0,1],\mathbb B)\), it holds that
\(\gamma\) is \(\mathcal L_1\)-a.e.\ differentiable, its
\(\mathcal L_1\)-a.e.\ derivative \(\dot\gamma\colon[0,1]\to\mathbb B\)
is Bochner integrable, and
\[
\gamma_t-\gamma_s=\int_s^t\dot\gamma_r\,\d r,
\quad\text{ for every }s,t\in[0,1]\text{ such that }s<t.
\]
Observe that the identity \(\|\dot\gamma_t\|=|\dot\gamma_t|\)
is satisfied for \(\mathcal L_1\)-a.e.\ \(t\in[0,1]\).
\subsubsection{Lipschitz functions}
The family of all real-valued Lipschitz functions defined on \((\X,\sfd)\)
is indicated with \(\LIP(\X)\). The subfamily of those
Lipschitz functions having compact support (resp.\ bounded support)
is denoted by \(\LIP_c(\X)\) (resp.\ \(\LIP_{bs}(\X)\)).
Given any \(f\in\LIP(\X)\), we define its \textbf{local Lipschitz constant} as
\begin{equation}\label{eq:def_lip}
\lip(f)(x)\coloneqq\lims_{y\to x}\frac{\big|f(x)-f(y)\big|}{\sfd(x,y)},
\quad\text{ whenever }x\in\X\text{ is an accumulation point,}
\end{equation}
and \(\lip(f)(x)\coloneqq 0\) elsewhere.
\subsubsection{Sobolev space via test plans}
We recall here the definition of Sobolev space in the metric measure
setting and its main properties. The approach we are going to describe
has been proposed in \cite{AmbrosioGigliSavare11,AmbrosioGigliSavare11-3}.
To begin with, let us recall the important notion of test plan:
\begin{definition}[Test plan
\cite{AmbrosioGigliSavare11,AmbrosioGigliSavare11-3}]
\label{def:test_plan}
Let \((\X,\sfd,\mu)\) be a metric measure space. Then we say that a Borel
probability measure \(\ppi\) on \(\big(C([0,1],\X),\sfd_\infty\big)\) is
a \textbf{test plan} on \((\X,\sfd,\mu)\) provided the following properties
are satisfied:
\begin{itemize}
\item[\(\rm i)\)] There exists a \textbf{compression constant}
\({\rm Comp}(\ppi)>0\) such that
\[
(\e_t)_*\ppi\leq{\rm Comp}(\ppi)\mu,\quad\text{ for every }t\in[0,1],
\]
where \((\e_t)_*\ppi\) stands for the pushforward measure of \(\ppi\)
under the evaluation map \(\e_t\).
\item[\(\rm ii)\)] The measure \(\ppi\) is concentrated on
\(AC^2([0,1],\X)\) and \textbf{has finite kinetic energy}, \emph{i.e.},
\[
\int{\rm KE}_1(\gamma)^2\,\d\ppi(\gamma)=
\int\!\!\!\int_0^1|\dot\gamma_t|^2\,\d t\,\d\ppi(\gamma)<+\infty.
\]
\end{itemize}
\end{definition}
\begin{example}\label{ex:Const}{\rm
Given a metric measure space \((\X,\sfd,\mu)\), we set
\({\rm Const}^\X\colon\X\to C([0,1],\X)\) as
\begin{equation}\label{eq:def_Const}
{\rm Const}^\X(x)_t\coloneqq x,\quad
\text{ for every }x\in\X\text{ and }t\in[0,1].
\end{equation}
Then the map \({\rm Const}^\X\) is an isometry and the
measure \(\ppi\coloneqq{\rm Const}^\X_*\nu\) is a test plan
on \((\X,\sfd,\mu)\) for every \(\nu\in\mathscr P(\X)\)
satisfying \(\nu\leq C\mu\) for some constant \(C>0\).
\fr}\end{example}
The notion of test plan plays an essential role in the definition
of Sobolev space:
\begin{definition}[Sobolev space via test plans \cite{AmbrosioGigliSavare11,AmbrosioGigliSavare11-3}]\label{def:Sob_space}
Let \((\X,\sfd,\mu)\) be a metric measure space. Fix \(f\in L^2(\mu)\).
Then a function \(G\in L^2(\mu)\) is said to be a
\textbf{weak upper gradient} of \(f\) provided for any test plan \(\ppi\)
on \((\X,\sfd,\mu)\) the following property is satisfied: for
\(\ppi\)-a.e.\ \(\gamma\) it holds that \(f\circ\gamma\in W^{1,1}(0,1)\) and
\[
\big|(f\circ\gamma)'_t\big|\leq G(\gamma_t)\,|\dot\gamma_t|,
\quad\text{ for }\mathcal L_1\text{-a.e.\ }t\in[0,1].
\]
We define the \textbf{Sobolev space} \(W^{1,2}(\X,\mu)\) as the family of
all those functions \(f\in L^2(\mu)\) that admit a weak upper gradient.
Given any \(f\in W^{1,2}(\X,\mu)\), we denote by \(|D_\mu f|\) the minimal
weak upper gradient of \(f\), where minimality is intended in the
\(\mu\)-a.e.\ sense.
\end{definition}
The original notion of Sobolev space \(W^{1,2}(\X,\mu)\) via
test plans has been introduced in \cite{AmbrosioGigliSavare11}, but
its equivalent reformulation we presented above has been established in
\cite[Appendix B]{Gigli12}. We chose the unusual notation
\(W^{1,2}(\X,\mu)\), where the distance \(\sfd\) does not appear
(even though it plays a role in the definition), for a matter of
practicality, since in all the cases we shall consider, the distance --
differently from the measure -- will always remain fixed.
\medskip

Given any function \(f\in\LIP_{bs}(\X)\),
it holds that \(f\in W^{1,2}(\X,\mu)\) and
\begin{equation}\label{eq:mwug_Lip}
|D_\mu f|\leq\lip(f),\quad\mu\text{-a.e.\ on }\X.
\end{equation}
The equality in \eqref{eq:mwug_Lip} is achieved only in
particular circumstances; see, \emph{e.g.},
Corollary \ref{cor:equiv_Df=lipf} and Remark
\ref{rmk:PI_case}.
\subsubsection{Energy functionals} Throughout the whole paper,
we will consider several different energy functionals
\({\rm E}\colon L^2(\mu)\to[0,+\infty]\) over a given metric measure
space \((\X,\sfd,\mu)\). Let us fix some notation.
The \textbf{finiteness domain} of \(\rm E\) is given by
\(D({\rm E})\coloneqq\big\{f\in L^2(\mu)\,:\,{\rm E}(f)<+\infty\big\}\).
We say that \(\rm E\) is \textbf{\(2\)-homogeneous} provided
\({\rm E}(\lambda f)=\lambda^2\,{\rm E}(f)\) for every
\(f\in D({\rm E})\) and \(\lambda\in\R\), while it
is \textbf{convex} provided
\({\rm E}\big(\lambda f+(1-\lambda)g\big)\leq\lambda\,{\rm E}(f)
+(1-\lambda)\,{\rm E}(g)\) for every \(f,g\in L^2(\mu)\)
and \(\lambda\in[0,1]\). The functional \(\rm E\) is said to satisfy
the \textbf{parallelogram rule} if it holds that
\[
{\rm E}(f+g)+{\rm E}(f-g)=2\,{\rm E}(f)+2\,{\rm E}(g),
\quad\text{ for every }f,g\in D({\rm E}).
\]
Moreover, we say that the functional \(\rm E\) is
\textbf{lower semicontinuous} provided
\[
{\rm E}(f)\leq\limi_{n\to\infty}{\rm E}(f_n),
\quad\text{ for every }f,f_n\in L^2(\mu)
\text{ such that }f_n\to f\text{ in }L^2(\mu).
\]
The \textbf{lower semicontinuous envelope}
\(\tilde{\rm E}\colon L^2(\mu)\to[0,+\infty]\) of \(\rm E\)
is defined as
\[
\tilde{\rm E}(f)\coloneqq\inf\limi_{n\to\infty}{\rm E}(f_n),
\quad\text{ for every }f\in L^2(\mu),
\]
where the infimum is taken among all sequences \((f_n)_n\subseteq L^2(\mu)\)
such that \(f_n\to f\) in \(L^2(\mu)\). It holds that \(\tilde{\rm E}\)
is the greatest lower semicontinuous functional which is dominated by
\(\rm E\).
\medskip

The most important energy functional we will consider is the
so-called Cheeger energy:
\begin{definition}[Cheeger energy]\label{def:Cheeger_energy}
Let \((\X,\sfd,\mu)\) be a metric measure space. Then we define
\begin{equation}\label{eq:def_E_Ch}
{\rm E}_{\rm Ch}(f)\coloneqq\left\{\begin{array}{ll}
\frac{1}{2}\int|D_\mu f|^2\,\d\mu,\\
+\infty,
\end{array}\quad\begin{array}{ll}
\text{ if }f\in W^{1,2}(\X,\mu),\\
\text{ otherwise.}
\end{array}\right.
\end{equation}
The functional \({\rm E}_{\rm Ch}\colon L^2(\mu)\to[0,+\infty]\)
is called the \textbf{Cheeger energy} associated with \((\X,\sfd,\mu)\).
\end{definition}

The map \({\rm E}_{\rm Ch}\) is convex, \(2\)-homogeneous,
and lower semicontinuous. Also, \(f\mapsto\sqrt{2\,{\rm E}_{\rm Ch}(f)}\)
is a seminorm on \(D({\rm E}_{\rm Ch})=W^{1,2}(\X,\mu)\). In particular,
\(W^{1,2}(\X,\mu)\) is a Banach space if endowed with the following norm:
\[
\|f\|_{W^{1,2}(\X,\mu)}\coloneqq
\Big(\|f\|_{L^2(\mu)}^2+2\,{\rm E}_{\rm Ch}(f)\Big)^{1/2},
\quad\text{ for every }f\in W^{1,2}(\X,\mu).
\]
Another energy functional to take into account is the following one:
\begin{equation}\label{eq:def_E_lip}
{\rm E}_\lip(f)\coloneqq\left\{\begin{array}{ll}
\frac{1}{2}\int\lip^2(f)\,\d\mu,\\
+\infty,
\end{array}\quad\begin{array}{ll}
\text{ if }f\in\LIP_{bs}(\X),\\
\text{ otherwise.}
\end{array}\right.
\end{equation}
In view of \eqref{eq:mwug_Lip}, we know that
\({\rm E}_{\rm Ch}\leq{\rm E}_\lip\). Actually, \({\rm E}_{\rm Ch}\)
is the lower semicontinuous envelope of \({\rm E}_\lip\), as granted
by the following important result.
\begin{theorem}[Density in energy of Lipschitz functions
\cite{AmbrosioGigliSavare11-3}]\label{thm:density_Lip}
Let \((\X,\sfd,\mu)\) be a metric measure space. Let \(f\in W^{1,2}(\X,\mu)\)
be given. Then there exists \((f_n)_n\subseteq\LIP_{bs}(\X)\)
such that \(f_n\to f\) and \(\lip(f_n)\to|D_\mu f|\) in \(L^2(\mu)\).
\end{theorem}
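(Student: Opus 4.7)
The plan is to show that the Cheeger energy ${\rm E}_{\rm Ch}$ coincides with the lower semicontinuous envelope $\widetilde{\rm E}_\lip$ of ${\rm E}_\lip$, and then to upgrade the resulting weak $L^2$-convergence of $\lip(f_n)$ into strong convergence. One inequality is soft: if $(f_n)\subseteq\LIP_{bs}(\X)$ with $f_n\to f$ in $L^2(\mu)$ and $\limi_n\int\lip^2(f_n)\,\d\mu<+\infty$, then weak $L^2$-compactness produces a weak limit $G$ of $\lip(f_n)$ along a subsequence. Applying Mazur's lemma to construct strongly convergent convex combinations of the $\lip(f_n)$, each of which is a weak upper gradient of the corresponding convex combination of the $f_n$, and then passing to the limit in the defining inequality for weak upper gradients, one obtains that $G$ is a weak upper gradient of $f$. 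Hence $f\in W^{1,2}(\X,\mu)$, $|D_\mu f|\leq G$ $\mu$-a.e., and in particular $2{\rm E}_{\rm Ch}(f)\leq\limi_n\int\lip^2(f_n)\,\d\mu$, i.e., ${\rm E}_{\rm Ch}\leq\widetilde{\rm E}_\lip$.

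The harder direction is $\widetilde{\rm E}_\lip\leq{\rm E}_{\rm Ch}$. After reducing to the case where $f\in W^{1,2}(\X,\mu)\cap L^\infty(\mu)$ has bounded support -- via truncation $f\mapsto(-N)\vee(f\wedge N)$, multiplication by Lipschitz cutoffs supported on an exhausting sequence of balls, and a diagonal argument, all of which are handled through the chain and Leibniz rules for the minimal weak upper gradient -- I would construct the Lipschitz approximations via the Hopf--Lax semigroup
\[
Q_t f(x)\coloneqq\inf_{y\in\X}\Big\{f(y)+\tfrac{1}{2t}\sfd^2(x,y)\Big\},\qquad t>0.
\]
Each $Q_t f$ belongs to $\LIP_{bs}(\X)$ and $Q_t f\to f$ in $L^2(\mu)$ as $t\to 0^+$, so the crux of this direction is the sharp energy estimate
\[
\lims_{t\to 0^+}\int\lip^2(Q_t f)\,\d\mu\leq\int|D_\mu f|^2\,\d\mu.
\]
This I would derive from a Hamilton--Jacobi-type subsolution bound $\tfrac{1}{2}\lip^2(Q_t f)+\tfrac{\d^+}{\d t}Q_t f\leq 0$, integrated against $\mu$; the transfer from the curvewise Hamilton--Jacobi inequality (which the definition of weak upper gradient along test plans supplies) to a pointwise $\mu$-a.e.\ bound on $\lip(Q_t f)$ is performed by testing against a family of test plans concentrated on (near) optimizers of the Hopf--Lax infimum, reparametrized on $[0,1]$ with finite kinetic energy.

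To pass from these two inequalities to the statement of the theorem, choose optimal Lipschitz approximants $(f_n)\subseteq\LIP_{bs}(\X)$ with $\int\lip^2(f_n)\,\d\mu\to\int|D_\mu f|^2\,\d\mu$. Any weak $L^2$-subsequential limit $G$ of $\lip(f_n)$ is, by the first paragraph, a weak upper gradient of $f$, so $G\geq|D_\mu f|$ $\mu$-a.e.; combined with $\|G\|_{L^2(\mu)}\leq\limi_n\|\lip(f_n)\|_{L^2(\mu)}=\||D_\mu f|\|_{L^2(\mu)}$, this forces $G=|D_\mu f|$ and convergence of the norms. In the Hilbert space $L^2(\mu)$, weak convergence together with convergence of norms implies strong convergence, yielding $\lip(f_n)\to|D_\mu f|$ in $L^2(\mu)$.

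The principal obstacle is the sharp Hamilton--Jacobi energy estimate underlying the second paragraph: in a bare metric setting there is no classical differentiation of $Q_t f$ at our disposal, so translating the integrated Hamilton--Jacobi inequality that the weak-upper-gradient condition provides along individual curves into a pointwise $\mu$-a.e.\ bound on $\lip(Q_t f)$ is subtle. It demands a careful optimal-transport construction of test plans along which the Hamilton--Jacobi inequality is asymptotically saturated as $t\to 0^+$, and it is precisely at this step that the test-plan definition of $|D_\mu f|$ shows its full strength.
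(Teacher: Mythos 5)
Your first paragraph (lower semicontinuity: any $L^2$-weak limit of $\lip(f_n)$ is a weak upper gradient of $f$, via Mazur plus the standard stability of weak upper gradients) and your final paragraph (weak convergence plus convergence of norms gives strong convergence in $L^2(\mu)$) are fine. Note, for the record, that the paper does not prove this theorem at all -- it is quoted from Ambrosio--Gigli--Savar\'{e} -- so the relevant comparison is with their proof.

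The decisive step of your argument, however, is not closed, and the obstacle you flag at the end is not a technicality but the actual content of the theorem. Two concrete problems. First, $Q_tf$ is not well defined on a $\mu$-equivalence class: it depends drastically on the chosen Borel representative, and since no doubling or Poincar\'{e} assumption is available, a generic $f\in W^{1,2}(\X,\mu)\cap L^\infty(\mu)$ has no distinguished (quasi)continuous representative; already the claim $Q_tf\to f$ in $L^2(\mu)$ can fail for a bad representative (as $t\searrow 0$, $Q_tf$ tends to the lower semicontinuous envelope, not to $f$), and the same ambiguity infects the integrated Hamilton--Jacobi inequality. Second, the proposed transfer from the curvewise information encoded in the test-plan definition of $|D_\mu f|$ to the $\mu$-a.e.\ bound $\tfrac12\lip^2(Q_tf)+\tfrac{\d^+}{\d t}Q_tf\le 0$ cannot be made through ``test plans concentrated on near-optimizers of the Hopf--Lax infimum'': the defining requirement $(\e_t)_*\ppi\le{\rm Comp}(\ppi)\,\mu$ is exactly what such plans violate, because the near-optimal points $y$ may concentrate on sets of arbitrarily small (or zero) $\mu$-measure; moreover $(\X,\sfd)$ is not assumed to be a length space, so there need be no curves joining $x$ to its near-optimizer along which to integrate the upper-gradient inequality, and even the sharp pointwise subsolution property for $\lip(Q_tf)$ requires a length-space hypothesis. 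This is precisely where the cited proof goes a different way: Ambrosio--Gigli--Savar\'{e} run the $L^2$-gradient flow of the relaxed (Cheeger-type) energy, use the Hopf--Lax semigroup only in duality (a Kuwada-type estimate applied to Lipschitz functions) to control the Wasserstein speed of the flow, and then lift the flow to a test plan via the superposition principle -- the bounded compression coming from the maximum principle for the flow -- so that the weak-upper-gradient inequality along that plan can be compared with the energy dissipation. Without an ingredient of this kind supplying test plans with controlled compression, your sharp estimate $\lims_{t\searrow 0}\int\lip^2(Q_tf)\,\d\mu\le\int|D_\mu f|^2\,\d\mu$ remains unproved, so the proposal has a genuine gap at its key step.
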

\subsubsection{Infinitesimal Hilbertianity}
The following definition has been introduced in \cite{Gigli12}:
\begin{definition}[Infinitesimal Hilbertianity]
A metric measure space \((\X,\sfd,\mu)\) is said to be
\textbf{infinitesimally Hilbertian} provided the Sobolev space
\(W^{1,2}(\X,\mu)\) is Hilbert. Equivalently, if the Cheeger
energy \({\rm E}_{\rm Ch}\) satisfies the parallelogram rule.
\end{definition}
Given an infinitesimally Hilbertian space \((\X,\sfd,\mu)\),
it holds that the mapping
\[
\langle\nabla_\mu f,\nabla_\mu g\rangle\coloneqq
\frac{\big|D_\mu(f+g)\big|^2-|D_\mu f|^2-|D_\mu g|^2}{2},
\quad\mu\text{-a.e.\ on }\X,
\]
defines a symmetric, bilinear form on \(W^{1,2}(\X,\mu)\times W^{1,2}(\X,\mu)\)
with values in \(L^1(\mu)\).
\subsubsection{Laplacian and heat flow}
Let \((\X,\sfd,\mu)\) be an infinitesimally Hilbertian space.
Given any function \(f\in W^{1,2}(\X,\mu)\), we declare that
\(f\in D(\Delta_\mu)\) if there exists \(h\in L^2(\mu)\) such that
\begin{equation}\label{eq:def_Sob_Lapl}
\int gh\,\d\mu=-\int\langle\nabla_\mu f,\nabla_\mu g\rangle\,\d\mu,
\quad\text{ for every }g\in W^{1,2}(\X,\mu).
\end{equation}
Since \(h\) is uniquely determined, we denote it by \(\Delta_\mu f\)
and call it the \textbf{Laplacian} of \(f\). It holds that \(D(\Delta_\mu)\)
is a linear subspace of \(W^{1,2}(\X,\mu)\) and
\(\Delta_\mu\colon D(\Delta_\mu)\to L^2(\mu)\) is a linear operator.
\medskip

The \textbf{heat flow} \(\{P_t\}_{t\geq 0}\) on \((\X,\sfd,\mu)\)
is defined as follows: for any given function \(f\in L^2(\mu)\),
we have that \([0,+\infty)\ni t\mapsto P_t f\in L^2(\mu)\) is the
unique continuous curve satisfying \(P_0 f=f\), which is absolutely
continuous on \((0,+\infty)\), such that \(P_t f\in D(\Delta_\mu)\)
holds for all \(t>0\) and
\begin{equation}\label{eq:heat_flow}
\frac{\d}{\d t}P_t f=\Delta_\mu P_t f,
\quad\text{ for }\mathcal L^1\text{-a.e.\ }t>0.
\end{equation}
Given any function \(f\in W^{1,2}(\X,\mu)\), it holds that
\begin{equation}\label{eq:heat_flow_contract}
\|P_t f\|_{W^{1,2}(\X,\mu)}\leq\|f\|_{W^{1,2}(\X,\mu)},
\quad\text{ for every }t>0.
\end{equation}
The above properties are ensured by the classical Komura--Brezis
theory of gradient flows.
\subsection{Differential structure of metric measure spaces}
\label{ss:diff_struct_mms}
A first-order differential calculus on metric measure spaces
has been developed in \cite{Gigli14,Gigli17}.
Let us briefly recall the key concepts.
\subsubsection{The theory of normed modules}
Let \((\X,\sfd,\mu)\) be a given metric measure space. Let \(\mathscr M\)
be an algebraic module over the commutative ring \(L^\infty(\mu)\).
Then a \textbf{pointwise norm} on \(\mathscr M\) is a mapping
\(|\cdot|\colon\mathscr M\to L^2(\mu)\) satisfying the following properties:
\[\begin{split}
|v|\geq 0,&\quad\text{ for every }v\in\mathscr M,
\text{ with equality if and only if }v=0,\\
|v+w|\leq|v|+|w|,&\quad\text{ for every }v,w\in\mathscr M,\\
|fv|=|f||v|,&\quad\text{ for every }f\in L^\infty(\mu)
\text{ and }v\in\mathscr M.
\end{split}\]
(All inequalities are intended in the \(\mu\)-a.e.\ sense.)
We say that \(\big(\mathscr M,|\cdot|\big)\), or just
\(\mathscr M\), is an \textbf{\(L^2(\mu)\)-normed \(L^\infty(\mu)\)-module}
provided the norm \(\|v\|_{\mathscr M}\coloneqq\big\||v|\big\|_{L^2(\mu)}\)
on \(\mathscr M\) is complete.
\medskip

By a \textbf{morphism} \(\varphi\colon\mathscr M\to\mathscr N\)
between two given \(L^2(\mu)\)-normed \(L^\infty(\mu)\)-modules
\(\mathscr M,\mathscr N\) we mean an \(L^\infty(\mu)\)-linear
and continuous map. The \textbf{dual module} \(\mathscr M^*\) of
\(\mathscr M\) is defined as the space of all \(L^\infty(\mu)\)-linear
and continuous maps from \(\mathscr M\) to \(L^1(\mu)\). It holds
that \(\mathscr M^*\) has a natural \(L^2(\mu)\)-normed \(L^\infty(\mu)\)-module structure, the pointwise
norm \(|L|\) of \(L\in\mathscr M^*\) being defined as the minimal
function \(G\in L^2(\mu)\), where minimality is intended in the
\(\mu\)-a.e.\ sense, such that the inequality \(\big|L(v)\big|\leq G|v|\)
is satisfied \(\mu\)-a.e.\ on \(\X\)
for every element \(v\in\mathscr M\).
\medskip

By a \textbf{Hilbert module} on \((\X,\sfd,\mu)\) we mean an \(L^2(\mu)\)-normed
\(L^\infty(\mu)\)-module \(\mathscr M\) such that
\[
|v+w|^2+|v-w|^2=2\,|v|^2+2\,|w|^2\;\;\;\mu\text{-a.e.},
\quad\text{ for every }v,w\in\mathscr M.
\]
Clearly, \(\mathscr M\) is a Hilbert module if and only if it is Hilbert
when viewed as a Banach space. Given two elements \(v,w\in\mathscr M\),
we define their \textbf{pointwise scalar product}
\(\langle v,w\rangle\in L^1(\mu)\) as
\[
\langle v,w\rangle\coloneqq\frac{|v+w|^2-|v|^2-|w|^2}{2},
\quad\text{ in the }\mu\text{-a.e.\ sense.}
\]
The resulting mapping
\(\langle\cdot,\cdot\rangle\colon\mathscr M\times\mathscr M\to L^1(\mu)\)
is \(L^\infty(\mu)\)-bilinear and symmetric. It holds that the morphism
\({\sf R}_{\mathscr M}\colon\mathscr M\to\mathscr M^*\) of \(L^2(\mu)\)-normed
\(L^\infty(\mu)\)-modules defined as
\begin{equation}\label{eq:def_Riesz}
{\sf R}_{\mathscr M}(v)(w)\coloneqq\langle v,w\rangle\in L^1(\mu),
\quad\text{ for every }v,w\in\mathscr M,
\end{equation}
is an isometric isomorphism. We call \({\sf R}_{\mathscr M}\) the
\textbf{Riesz isomorphism} associated with \(\mathscr M\).
\begin{remark}[Orthogonal complement, I]\label{rmk:orth_compl_I}{\rm
Let \(\mathscr M\) be a Hilbert module on \((\X,\sfd,\mu)\). Then we define
the \textbf{orthogonal complement} of a given submodule
\(\mathscr N\subseteq\mathscr M\) as
\[
\mathscr N^\perp\coloneqq\Big\{v\in\mathscr M\;\Big|\;\langle v,w\rangle=0
\text{ in the }\mu\text{-a.e.\ sense, for every }w\in\mathscr N\Big\}.
\]
Then \(\mathscr N^\perp\) is a submodule of \(\mathscr M\)
that satisfies \(\mathscr N\cap\mathscr N^\perp=\{0\}\) and
\(\mathscr N+\mathscr N^\perp=\mathscr M\).
\fr}\end{remark}
\begin{definition}[Dimension of a normed module
{\cite[Section 1.4]{Gigli14}}]
Let \((\X,\sfd,\mu)\) be a metric measure space. Let \(\mathscr M\) be an
\(L^2(\mu)\)-normed \(L^\infty(\mu)\)-module and let \(E\subseteq\X\) be a Borel
set such that \(\mu(E)>0\). Then:
\begin{itemize}
\item[\(\rm i)\)] We say that some elements \(v_1,\ldots,v_n\in\mathscr M\)
are \textbf{independent} on the set \(E\) provided the mapping
\(L^\infty(\mu|_E)^n\ni(f_1,\ldots,f_n)\mapsto\sum_{i=1}^n f_i\,v_i\in\mathscr M\)
is injective.
\item[\(\rm ii)\)] A family \(\mathscr F\subseteq\mathscr M\) is said to
\textbf{generate} \(\mathscr M\) on \(E\) provided the linear space \(\mathcal V\), given by
\[
\mathcal V\coloneqq\bigg\{\sum_{i=1}^n f_i\,v_i\;\bigg|\;
n\in\N,\,(f_i)_{i=1}^n\subseteq L^\infty(\mu|_E),\,
(v_i)_{i=1}^n\subseteq\mathscr F\bigg\}\subseteq\mathscr M,
\]
is dense in the restricted module
\(\mathscr M|_E\coloneqq\big\{\mathbbm 1_E\,v:v\in\mathscr M\big\}\).
\end{itemize}
We say that \(\mathscr M\) has \textbf{dimension} \(n\in\N\) on \(E\) provided
it admits a \textbf{local basis} \(v_1,\ldots,v_n\in\mathscr M\) on \(E\),
\emph{i.e.}, the elements \(v_1,\ldots,v_n\) are independent on \(E\) and
\(\{v_1,\ldots,v_n\}\) generates \(\mathscr M\) on \(E\).
\end{definition}
Let \((\X,\sfd_\X,\mu)\), \((\Y,\sfd_\Y,\nu)\) be metric measure spaces.
Let \(\varphi\colon\X\to\Y\) be a given Borel map. Then we say that \(\varphi\) is a
\textbf{map of bounded compression} provided \(\varphi_*\mu\leq C\nu\) for some \(C>0\).
\begin{theorem}[Pullback module {\cite[Section 1.4.1]{Gigli17}}]
\label{thm:pullback}
Let \((\X,\sfd_\X,\mu)\), \((\Y,\sfd_\Y,\nu)\) be two metric measure spaces.
Let \(\mathscr M\) be an \(L^2(\nu)\)-normed \(L^\infty(\nu)\)-module and
\(\varphi\colon\X\to\Y\) a map of bounded compression. Then there exists a
unique couple \((\varphi^*\mathscr M,\varphi^*)\), where
\(\varphi^*\mathscr M\) is an \(L^2(\mu)\)-normed \(L^\infty(\mu)\)-module
called the \textbf{pullback module} and \(\varphi^*\colon\mathscr M\to\varphi^*\mathscr M\)
is a linear operator called the \textbf{pullback map}, such that
\(|\varphi^*v|=|v|\circ\varphi\) holds \(\mu\)-a.e.\ for all \(v\in\mathscr M\)
and \(\{\varphi^*v:v\in\mathscr M\}\) generates \(\varphi^*\mathscr M\) on \(\X\).

Moreover, given two \(L^2(\nu)\)-normed \(L^\infty(\nu)\)-modules \(\mathscr M\),
\(\mathscr N\) and a morphism \(\Phi\colon\mathscr M\to\mathscr N\), there is
a unique morphism \(\varphi^*\Phi\colon\varphi^*\mathscr M\to\varphi^*\mathscr N\)
of \(L^2(\mu)\)-normed \(L^\infty(\mu)\)-modules such that
\[\begin{tikzcd}
\mathscr M \arrow[r,"\Phi"] \arrow[d,swap,"\varphi^*"]
& \mathscr N \arrow[d,"\varphi^*"] \\
\varphi^*\mathscr M \arrow[r,swap,"\varphi^*\Phi"]
& \varphi^*\mathscr N
\end{tikzcd}\]
is a commutative diagram.
\end{theorem}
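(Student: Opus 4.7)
The plan is to follow the classical \emph{free object modulo relations, then complete} strategy. First I would build a \emph{pre-pullback} $\mathcal{P}$ consisting of formal finite sums $\xi=\sum_{i=1}^n\1_{A_i}v_i$, where $(A_i)_{i=1}^n$ is a Borel partition of $\X$ and $v_i\in\mathscr M$, modulo the obvious relations $\1_{A\cup B}v=\1_A v+\1_B v$ (disjoint $A,B$) and $\1_A(v+w)=\1_A v+\1_A w$. On $\mathcal{P}$ I would define a candidate pointwise norm by
\[
|\xi|\coloneqq\sum_{i=1}^n \1_{A_i}\bigl(|v_i|\circ\varphi\bigr),
\]
which lies in $L^2(\mu)$ thanks to the bounded-compression bound $\||v|\circ\varphi\|_{L^2(\mu)}^2\leq C\|v\|_{\mathscr M}^2$. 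A common-refinement argument shows $|\xi|$ does not depend on the representation, and that $\xi\mapsto\||\xi|\|_{L^2(\mu)}$ is a seminorm.

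Next I would lift the $L^\infty(\mu)$-action to $\mathcal{P}$. For a simple function $f=\sum_j c_j\1_{B_j}$, set $f\xi\coloneqq\sum_{i,j}\1_{A_i\cap B_j}(c_j v_i)$; the identity $|f\xi|=|f||\xi|$ is then immediate, so the action is pointwise norm-continuous and extends uniquely to arbitrary $f\in L^\infty(\mu)$ by approximation. Quotienting by the null set of the seminorm and completing gives the Banach space $\varphi^*\mathscr M$, and the pointwise norm and $L^\infty(\mu)$-action extend by density; the axioms of an $L^2(\mu)$-normed $L^\infty(\mu)$-module are inherited. Defining $\varphi^*v\coloneqq[\1_\X v]$, one has $\varphi^*$ linear, $|\varphi^*v|=|v|\circ\varphi$ by construction, and the $L^\infty(\mu)$-linear span of $\{\varphi^*v:v\in\mathscr M\}$ equals the image of $\mathcal{P}$, so it is dense in $\varphi^*\mathscr M$.

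For uniqueness, given another pair $(\mathscr N,\iota)$ with the same properties, the map $\sum f_i\varphi^*v_i\mapsto\sum f_i\iota(v_i)$ is well-defined and isometric: on a Borel partition refining the supports of the $f_i$, both sides reduce to expressions of the form $\1_A(|v|\circ\varphi)$. Density of the generating span and completeness then extend it to an isometric isomorphism of modules. For the functorial part, given $\Phi\colon\mathscr M\to\mathscr N$ one first notes the automatic pointwise bound $|\Phi(w)|\leq C|w|$ $\nu$-a.e., obtained by testing $\|\1_E\Phi(v)\|_{\mathscr N}\leq\|\Phi\|\|\1_E v\|_{\mathscr M}$ against arbitrary Borel $E\subseteq\Y$. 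Setting $\varphi^*\Phi\bigl(\sum f_i\varphi^*v_i\bigr)\coloneqq\sum f_i\varphi^*\Phi(v_i)$ on the generating span yields $|\varphi^*\Phi(\xi)|\leq(C\circ\varphi)|\xi|$, so the map is well-defined and continuous, hence extends uniquely by density; $L^\infty(\mu)$-linearity and commutativity of the diagram are forced on the dense span and preserved in the limit. The main obstacle is the well-definedness and compatibility of the pointwise norm on $\mathcal{P}$: one must verify that every module relation in $\mathscr M$ is respected by $|\cdot|$ \emph{before} any quotienting, and that the $L^\infty(\mu)$-action extends consistently from simple functions -- both go through by a common-refinement plus approximation routine, and this is precisely where the bounded-compression assumption is used.
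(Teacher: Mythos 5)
This statement is imported from \cite[Section 1.4.1]{Gigli17} and the paper offers no proof of its own, so the comparison is with Gigli's original construction. Your argument --- simple elements \(\sum_i\1_{A_i}\varphi^*v_i\) with pointwise norm \(\sum_i\1_{A_i}\big(|v_i|\circ\varphi\big)\), quotient and completion, uniqueness via density of the generating span, and functoriality via the \(\nu\)-a.e.\ bound \(\big|\Phi(v)\big|\leq\|\Phi\|\,|v|\) obtained by testing on Borel sets --- is essentially that same construction and is correct.
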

\subsubsection{Abstract \(1\)-forms and vector fields}
The language of normed modules discussed in the previous section
can be used to provide abstract notions of \(1\)-forms and vector
fields -- tightly linked to the Sobolev calculus -- on general
metric measure spaces:
\begin{theorem}[Cotangent and tangent modules
{\cite[Sections 1.2.2 and 1.3.2]{Gigli17}}]\label{def:tg_mod}
Let \((\X,\sfd,\mu)\) be a metric measure space.
Then there exists a unique couple \(\big(L^2_\mu(T^*\X),\d_\mu\big)\),
where the \textbf{cotangent module} \(L^2_\mu(T^*\X)\) is an
\(L^2(\mu)\)-normed \(L^\infty(\mu)\)-module and the \textbf{differential}
\[
\d_\mu\colon W^{1,2}(\X,\mu)\longrightarrow L^2_\mu(T^*\X)
\]
is a linear operator, such that the following properties are satisfied:
\[\begin{split}
|\d_\mu f|=|D_\mu f|\;\;\;\mu\text{-a.e.,}&\quad\text{ for every }
f\in W^{1,2}(\X,\mu),\\
\big\{\d_\mu f\;\big|\;f\in W^{1,2}(\X,\mu)\big\}&
\quad\text{ generates }L^2_\mu(T^*\X)\text{ on }\X.
\end{split}\]
Moreover, if \((\X,\sfd,\mu)\) is infinitesimally Hilbertian, then
\(L^2_\mu(T^*\X)\) is a Hilbert module and the \textbf{tangent module}
is defined as \(L^2_\mu(T\X)\coloneqq L^2_\mu(T^*\X)^*\).
The \textbf{gradient} \(\nabla_\mu f\in L^2_\mu(T\X)\) of a function
\(f\in W^{1,2}(\X,\mu)\) is given by the image of \(\d_\mu f\)
under the Riesz isomorphism \({\sf R}_{L^2_\mu(T^*\X)}\).
\end{theorem}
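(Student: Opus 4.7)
The plan is to construct \(L^2_\mu(T^*\X)\) explicitly and then establish uniqueness through a universal-property argument, in the spirit of \cite{Gigli17}. First I would introduce the \emph{pre-cotangent module} \(\mathcal P_\mu\) consisting of formal finite sums
\[
\omega=\sum_{i=1}^n\1_{E_i}\,[f_i],
\]
where \(\{E_i\}_{i=1}^n\) is a Borel partition of \(\X\) and \(f_i\in W^{1,2}(\X,\mu)\), equipped with the obvious addition, multiplication by simple Borel functions, and formal bilinearity. The crucial step is to define the candidate pointwise norm
\[
|\omega|\coloneqq\sum_{i=1}^n\1_{E_i}|D_\mu f_i|\in L^2(\mu),
\]
and to verify that it descends to an \(L^\infty(\mu)\)-module pointwise seminorm on \(\mathcal P_\mu\).

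The main analytical input is a \emph{locality} property of the minimal weak upper gradient: \(|D_\mu f|=|D_\mu g|\) holds \(\mu\)-a.e.\ on any Borel set on which \(f-g\) is \(\mu\)-a.e.\ constant along \(\mu\)-a.e.\ absolutely continuous curve. Together with the pointwise subadditivity \(|D_\mu(f+g)|\leq|D_\mu f|+|D_\mu g|\), which is immediate from Definition \ref{def:Sob_space}, this guarantees that \(|\omega|\) is independent of the chosen representation and satisfies the three pointwise-norm axioms. After quotienting by \(\{\omega\in\mathcal P_\mu\,:\,|\omega|=0\;\mu\text{-a.e.}\}\) and completing in the norm \(\|\omega\|\coloneqq\big\||\omega|\big\|_{L^2(\mu)}\), one obtains \(L^2_\mu(T^*\X)\); the \(L^\infty(\mu)\)-module structure extends from simple functions by density, and the prescription \(\d_\mu f\coloneqq[\1_\X\,[f]]\) yields a linear differential with \(|\d_\mu f|=|D_\mu f|\). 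The generation property is built into the construction, since simple-function combinations of \(\d_\mu f_i\)'s are dense by design.

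For uniqueness I would invoke a universal-property argument: given another admissible couple \((\mathscr M',\d')\), the assignment \(\d_\mu f\mapsto\d' f\) extends uniquely by \(L^\infty(\mu)\)-linearity to simple combinations, is norm-preserving because \(|\d' f|=|D_\mu f|=|\d_\mu f|\), and therefore extends by continuity to an isometric embedding \(L^2_\mu(T^*\X)\hookrightarrow\mathscr M'\) which is surjective thanks to the generation hypothesis on \(\mathscr M'\). For the Hilbert case, infinitesimal Hilbertianity translates into the pointwise parallelogram rule
\[
|\d_\mu(f+g)|^2+|\d_\mu(f-g)|^2=2\,|\d_\mu f|^2+2\,|\d_\mu g|^2
\]
\(\mu\)-a.e.\ for \(f,g\in W^{1,2}(\X,\mu)\), which one first derives from the integrated parallelogram rule for \({\rm E}_\Ch\) by a localisation/polarisation argument and then propagates to all of \(\mathcal P_\mu\) by bilinearity and to the completion by continuity. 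Once \(L^2_\mu(T^*\X)\) is Hilbert, the definition \(\nabla_\mu f\coloneqq{\sf R}_{L^2_\mu(T^*\X)}^{-1}(\d_\mu f)\in L^2_\mu(T\X)\) is unambiguous.

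The step I expect to be the main obstacle is precisely the locality of \(|D_\mu\cdot|\) underlying the well-definedness of \(|\omega|\) on \(\mathcal P_\mu\), together with the promotion of the integrated parallelogram rule to its pointwise \(\mu\)-a.e.\ counterpart. Both are not formal: they require an analysis at the test-plan level of Definition \ref{def:Sob_space}, using restriction and reparametrisation of test plans (or, alternatively, the density-in-energy result Theorem \ref{thm:density_Lip} to reduce to the Lipschitz setting, where locality of \(\lip(\cdot)\) is transparent and the pointwise parallelogram identity can then be upgraded by lower semicontinuity). Once these two facts are in place, the remaining algebraic extensions, density arguments, and the Riesz step for the tangent module are routine.
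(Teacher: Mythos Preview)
The paper does not prove this theorem at all: it is quoted as a known result from \cite[Sections 1.2.2 and 1.3.2]{Gigli17}, with no argument supplied. Your sketch is precisely the standard construction carried out in that reference (pre-cotangent module of simple combinations of differentials, pointwise seminorm via \(|D_\mu\cdot|\), quotient and completion, uniqueness by universal property), so there is nothing to compare against in the present paper --- you have reproduced the cited proof rather than offered an alternative.

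Two minor points. First, in the last display you write \(\nabla_\mu f={\sf R}_{L^2_\mu(T^*\X)}^{-1}(\d_\mu f)\), but with the convention \eqref{eq:def_Riesz} the Riesz map goes \(\mathscr M\to\mathscr M^*\), so \(\nabla_\mu f={\sf R}_{L^2_\mu(T^*\X)}(\d_\mu f)\in L^2_\mu(T^*\X)^*=L^2_\mu(T\X)\); drop the inverse. Second, your phrasing of locality (``\(f-g\) is \(\mu\)-a.e.\ constant along \(\mu\)-a.e.\ absolutely continuous curve'') is vaguer than what is actually used and proved in \cite{Gigli14,Gigli17}: the clean statement is that \(|D_\mu f|=|D_\mu g|\) holds \(\mu\)-a.e.\ on \(\{f=g\}\) (equivalently, \(|D_\mu f|=0\) \(\mu\)-a.e.\ on \(\{f=0\}\)), and this is what makes \(|\omega|\) well-defined on \(\mathcal P_\mu\). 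Otherwise your outline is correct.
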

It holds that a given metric measure space \((\X,\sfd,\mu)\)
is infinitesimally Hilbertian if and only if its associated
modules \(L^2_\mu(T^*\X)\) and \(L^2_\mu(T\X)\) are Hilbert.
\begin{proposition}[Closure of the differential
{\cite[Theorem 2.2.9]{Gigli14}}]\label{prop:closure_diff}
Let \((\X,\sfd,\mu)\) be a metric measure space.
Let \((f_n)_n\subseteq W^{1,2}(\X,\mu)\) satisfy
\(f_n\rightharpoonup f\) weakly in \(L^2(\mu)\) for
some \(f\in L^2(\mu)\) and \(\d_\mu f_n\rightharpoonup\omega\)
weakly in \(L^2_\mu(T^*\X)\) for some \(\omega\in L^2_\mu(T^*\X)\).
Then \(f\in W^{1,2}(\X,\mu)\) and \(\d_\mu f=\omega\).
\end{proposition}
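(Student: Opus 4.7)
The plan is to reduce the weak convergence hypothesis to a strongly convergent one via Mazur's lemma, and then identify \(\omega\) with \(\d_\mu f\) by applying the \(L^2(\mu)\)-stability of weak upper gradients along a suitable family of perturbations.

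First, I would apply Mazur's lemma to the pair \((f_n,\d_\mu f_n)\rightharpoonup(f,\omega)\) in the Banach space \(L^2(\mu)\times L^2_\mu(T^*\X)\). Using the linearity of the differential, this produces convex combinations \(\tilde f_n=\sum_{k\geq n}\alpha_{n,k}f_k\), with \(\alpha_{n,k}\geq 0\) and \(\sum_k\alpha_{n,k}=1\), such that \(\tilde f_n\to f\) strongly in \(L^2(\mu)\) and \(\d_\mu\tilde f_n=\sum_k\alpha_{n,k}\,\d_\mu f_k\to\omega\) strongly in \(L^2_\mu(T^*\X)\). Consequently the pointwise norms \(|D_\mu\tilde f_n|=|\d_\mu\tilde f_n|\) converge to \(|\omega|\) in \(L^2(\mu)\). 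Since each \(|D_\mu\tilde f_n|\) is a weak upper gradient of \(\tilde f_n\) and \(\tilde f_n\to f\) in \(L^2(\mu)\), the standard stability of weak upper gradients under \(L^2\)-convergence ensures that \(|\omega|\) is a weak upper gradient of \(f\); in particular \(f\in W^{1,2}(\X,\mu)\) and \(|\d_\mu f|=|D_\mu f|\leq|\omega|\) \(\mu\)-a.e.

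The remaining task is to promote this pointwise bound to the equality \(\d_\mu f=\omega\). For this I would repeat the previous argument on the perturbed sequence \(\tilde f_n+g\to f+g\) for an arbitrary \(g\in W^{1,2}(\X,\mu)\); by linearity, \(\d_\mu(\tilde f_n+g)\to\omega+\d_\mu g\) strongly in \(L^2_\mu(T^*\X)\), so the same stability argument produces
\[
|\d_\mu f+\d_\mu g|=|\d_\mu(f+g)|\leq|\omega+\d_\mu g|,\quad\mu\text{-a.e.\ on }\X.
\]
Localising on the elements of a Borel partition \(\{E_i\}\), this extends to \(|\d_\mu f+\eta|\leq|\omega+\eta|\) \(\mu\)-a.e.\ for every simple section \(\eta=\sum_i\1_{E_i}\d_\mu g_i\) with \(g_i\in W^{1,2}(\X,\mu)\). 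By the generation property stated in Theorem \ref{def:tg_mod} together with the continuity of the pointwise norm under strong convergence in the cotangent module, the inequality passes to the closure and therefore holds for every \(\eta\in L^2_\mu(T^*\X)\). Plugging in \(\eta=-\omega\) forces \(|\d_\mu f-\omega|=0\) \(\mu\)-a.e., i.e., \(\d_\mu f=\omega\).

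The step I expect to be the main obstacle is this final density argument: one must check that the pointwise inequality \(|\d_\mu f+\eta|\leq|\omega+\eta|\) survives the passage from \(L^\infty(\mu)\)-linear combinations of differentials to arbitrary elements of \(L^2_\mu(T^*\X)\). This reduces to verifying that the set of admissible \(\eta\) is closed under strong module convergence, which in turn hinges on extracting \(\mu\)-a.e.\ convergent subsequences of pointwise norms and on the generation property of the cotangent module.
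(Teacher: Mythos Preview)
The paper does not supply a proof of this proposition; it is quoted directly from \cite[Theorem 2.2.9]{Gigli14} as a preliminary result. So there is no ``paper's own proof'' to compare against, only the original reference.

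Your argument is correct and is essentially the standard proof. The Mazur step and the stability of weak upper gradients give \(f\in W^{1,2}(\X,\mu)\) with \(|\d_\mu f|\le|\omega|\), and the perturbation trick \(g\mapsto\tilde f_n+g\) upgrades this to \(|\d_\mu f+\d_\mu g|\le|\omega+\d_\mu g|\) for every \(g\in W^{1,2}(\X,\mu)\). The localisation to simple sections \(\sum_i\1_{E_i}\d_\mu g_i\) is immediate, since on each \(E_i\) the inequality is just the global one with \(g=g_i\) restricted. The density step you flag as the ``main obstacle'' is in fact routine: you only need the inequality for the single element \(\eta=-\omega\), so it suffices to pick any sequence of simple sections \(\eta_k\to-\omega\) in \(L^2_\mu(T^*\X)\), pass to a subsequence along which \(|\d_\mu f+\eta_k|\to|\d_\mu f-\omega|\) and \(|\omega+\eta_k|\to 0\) hold \(\mu\)-a.e., and conclude. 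No subtlety arises here beyond the generation property of Theorem~\ref{def:tg_mod}.
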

Given a test plan \(\ppi\) on a metric measure space \((\X,\sfd,\mu)\),
it holds that for every \(t\in[0,1]\) the evaluation map \(\e_t\)
is a map of bounded compression between \(\big(C([0,1],\X),\ppi\big)\)
and \((\X,\mu)\). This allows us to consider the pullback
modules \(\e_t^*L^2_\mu(T^*\X)\) and \(\e_t^*L^2_\mu(T\X)\).
\begin{proposition}[Velocity of a test plan
{\cite[Theorem 2.3.18]{Gigli14}}]\label{prop:speed_test_plan}
Let \((\X,\sfd,\mu)\) be a metric measure space such that
the module \(L^2_\mu(T\X)\) is separable. Let \(\ppi\) be a test plan on
\((\X,\sfd,\mu)\). Then for \(\mathcal L_1\)-a.e.\ \(t\in[0,1]\)
there exists a unique element \(\ppi'_t\in\e_t^*L^2_\mu(T\X)\),
called the \textbf{velocity} of \(\ppi\) at \(t\), such that
\begin{equation}\label{eq:formula_speed_test_plan}
\lim_{h\to 0}\bigg\|\frac{f\circ\e_{t+h}-f\circ\e_t}{h}
-(\e_t^*\d_\mu f)(\ppi'_t)\bigg\|_{L^1(\sppi)}=0,
\quad\text{ for every }f\in W^{1,2}(\X,\mu).
\end{equation}
Moreover, it holds that \(|\ppi'_t|(\gamma)=|\dot\gamma_t|\)
for \((\ppi\otimes\mathcal L_1)\)-a.e.\ \((\gamma,t)\in
AC^2([0,1],\X)\times[0,1]\).
\end{proposition}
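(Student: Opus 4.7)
The plan is to construct $\ppi'_t$ by a duality argument. First I build a ``derivation'' functional ${\rm Der}_t$ encoding time differentiation along $\ppi$, then I identify it as an element of the pullback cotangent module's dual, and finally I invoke the Riesz isomorphism (available since the hypothesis that $L^2_\mu(T\X)$ is well-defined places us in the infinitesimally Hilbertian setting, so both $L^2_\mu(T^*\X)$ and its pullback $\e_t^* L^2_\mu(T^*\X)$ are Hilbert modules) to obtain the velocity as a section of $\e_t^* L^2_\mu(T\X)$.

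The starting point is the weak upper gradient bound applied to $\ppi$ itself: for every $f \in W^{1,2}(\X,\mu)$, for $\ppi$-a.e.\ curve $\gamma$ the composition $f \circ \gamma$ lies in $W^{1,1}(0,1)$ with $|(f \circ \gamma)'_s| \leq |D_\mu f|(\gamma_s)\,|\dot\gamma_s|$. Integrating in $s$ and applying Fubini together with the finite kinetic energy of $\ppi$ and Cauchy--Schwarz, the map $[0,1] \ni t \mapsto f \circ \e_t \in L^1(\ppi)$ turns out to be absolutely continuous with $\mathcal L_1$-a.e.\ derivative ${\rm Der}_t(f)(\gamma) := (f \circ \gamma)'_t$ satisfying the pointwise estimate $|{\rm Der}_t(f)|(\gamma) \leq |\e_t^* \d_\mu f|(\gamma)\,|\dot\gamma_t|$ for $(\ppi \otimes \mathcal L_1)$-a.e.\ $(\gamma,t)$.

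Next I exploit the separability of $L^2_\mu(T\X)$ --- and hence, via Riesz, of $L^2_\mu(T^*\X)$ --- to fix a countable $\mathbb Q$-linear subspace $\mathcal D \subseteq W^{1,2}(\X,\mu)$ whose differentials $\{\d_\mu f : f \in \mathcal D\}$ are dense in $L^2_\mu(T^*\X)$. Intersecting countably many $\mathcal L_1$-null sets --- one per element of $\mathcal D$ --- yields a full-measure set of times $t$ at which ${\rm Der}_t$ is simultaneously defined on all of $\mathcal D$, is $\mathbb Q$-linear, and obeys the pointwise bound above. Since this bound shows that ${\rm Der}_t(f)$ depends only on the pulled-back differential $\e_t^* \d_\mu f$, the functional ${\rm Der}_t$ descends to an $L^\infty(\ppi)$-linear continuous map on the $L^\infty(\ppi)$-submodule generated by $\{\e_t^* \d_\mu f : f \in \mathcal D\}$. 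By Theorem \ref{thm:pullback} this submodule is dense in $\e_t^* L^2_\mu(T^*\X)$, so ${\rm Der}_t$ extends uniquely to an element of $(\e_t^* L^2_\mu(T^*\X))^*$, which in the Hilbert setting is canonically isomorphic to $\e_t^* L^2_\mu(T\X)$; I call this element $\ppi'_t$. Uniqueness is immediate from the generation property, and \eqref{eq:formula_speed_test_plan} holds by construction first on $\mathcal D$ and then on all of $W^{1,2}(\X,\mu)$ by density combined with the uniform $L^1(\ppi)$-control on difference quotients.

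For the metric speed identity, the inequality $|\ppi'_t|(\gamma) \leq |\dot\gamma_t|$ is immediate from the pointwise bound used to define ${\rm Der}_t$. The reverse inequality is the main technical obstacle: I would apply \eqref{eq:formula_speed_test_plan} to the $1$-Lipschitz functions $\sfd(\cdot, x_n)$ for a countable dense set $\{x_n\}\subseteq\X$ (truncated and cut off so as to lie in $W^{1,2}(\X,\mu)$), and combine with the representation $|\dot\gamma_t| = \lim_{h \to 0} \sfd(\gamma_{t+h},\gamma_t)/|h|$ holding $\mathcal L_1$-a.e. The decisive point throughout is that each $f$ contributes its own $\mathcal L_1$-null exceptional set; the separability hypothesis on $L^2_\mu(T\X)$ is precisely what allows the collapse of countably many such null sets into a single one, upgrading a ``for each $f$, for a.e.\ $t$'' statement to the required ``for a.e.\ $t$, for every $f$'' statement.
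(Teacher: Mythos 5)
The paper itself does not prove this proposition; it is imported from [Gigli14, Theorem~2.3.18], and your strategy -- differentiate $t\mapsto f\circ\e_t$ in $L^1(\ppi)$, use the bound $|(f\circ\gamma)'_t|\leq|D_\mu f|(\gamma_t)\,|\dot\gamma_t|$ to see that the derivative depends only on $\e_t^*\d_\mu f$ and is $L^\infty(\ppi)$-linear, extend by density to an element of the dual of $\e_t^*L^2_\mu(T^*\X)$, and use separability to collapse the $f$-dependent null sets of times -- is indeed the standard route. Still, three points need repair. First, the differentials $\{\d_\mu f:f\in\mathcal D\}$ can never be dense in $L^2_\mu(T^*\X)$ in general (on $(\R^n,\mathcal L^n)$, $n\geq 2$, exact forms are a proper closed subspace); what you need, and what you actually use afterwards, is that $\mathcal D$ is dense in $W^{1,2}(\X,\mu)$ -- which does follow, since separability of $L^2_\mu(T\X)$, hence of $L^2_\mu(T^*\X)$, together with separability of $L^2(\mu)$ makes $W^{1,2}(\X,\mu)$ separable -- so that the $L^\infty(\ppi)$-submodule generated by $\{\e_t^*\d_\mu f:f\in\mathcal D\}$ is dense in $\e_t^*L^2_\mu(T^*\X)$. (Also note that in the cited reference no Hilbertianity is needed: the identification $(\e_t^*L^2_\mu(T^*\X))^*\cong\e_t^*L^2_\mu(T\X)$ is exactly what the separability hypothesis provides, via [Gigli14, Theorem~1.6.7], the same fact this paper invokes in the proof of Lemma \ref{lem:speed_pi_tangent_MOD}.) Second, absolute continuity of $t\mapsto f\circ\e_t\in L^1(\ppi)$ does not by itself yield an a.e.\ strong derivative, because $L^1(\ppi)$ lacks the Radon--Nikod\'{y}m property; you must show directly that for $\mathcal L_1$-a.e.\ $t$ the difference quotients converge in $L^1(\ppi)$ to the $\ppi$-a.e.\ pointwise derivative $(f\circ\gamma)'_t$, e.g.\ by dominating them with $\frac{1}{|h|}\int_t^{t+h}|D_\mu f|(\gamma_s)|\dot\gamma_s|\,\d s$ and combining Lebesgue points of $s\mapsto\int|D_\mu f|(\gamma_s)|\dot\gamma_s|\,\d\ppi(\gamma)$ with Vitali's convergence theorem; the same Lebesgue-point device is also what makes your final density-in-$W^{1,2}$ extension of \eqref{eq:formula_speed_test_plan} uniform in $h$.

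Third, and most substantively, the inequality $|\dot\gamma_t|\leq|\ppi'_t|(\gamma)$ cannot be obtained by directly combining \eqref{eq:formula_speed_test_plan} for the truncated distance functions $f_n$ with $|\dot\gamma_t|=\lim_{h\to 0}\sfd(\gamma_{t+h},\gamma_t)/|h|$: writing $\sfd(\gamma_{t+h},\gamma_t)=\sup_n|f_n(\gamma_{t+h})-f_n(\gamma_t)|$ you would have to pass the limit in $h$ inside the supremum over $n$, and the inequality between $\lim_h\sup_n$ and $\sup_n\lim_h$ goes the wrong way (besides, \eqref{eq:formula_speed_test_plan} gives $L^1(\ppi)$- rather than pointwise convergence). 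The standard repair is integral, not pointwise: for each $n$ one has $|(f_n\circ\gamma)'_r|=\big|(\e_r^*\d_\mu f_n)(\ppi'_r)(\gamma)\big|\leq|\ppi'_r|(\gamma)$ for $(\ppi\otimes\mathcal L_1)$-a.e.\ $(\gamma,r)$; integrating over $r\in[s,t]$, then taking the supremum over $n$ inside the resulting inequality, yields $\sfd(\gamma_s,\gamma_t)\leq\int_s^t|\ppi'_r|(\gamma)\,\d r$ for $\ppi$-a.e.\ $\gamma$ and all rational $s<t$, whence $|\dot\gamma_t|\leq|\ppi'_t|(\gamma)$ a.e.\ by the minimality characterising the metric speed. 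With these repairs your argument closes and coincides with the proof of the cited result.
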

\subsubsection{Divergence of abstract vector fields}
Let \((\X,\sfd,\mu)\) be an infinitesimally Hilbertian space.
We declare that \(v\in L^2_\mu(T\X)\) belongs to \(D(\div_\mu)\)
provided there exists \(h\in L^2(\mu)\) such that
\begin{equation}\label{eq:def_div_mu}
\int\d_\mu f(v)\,\d\mu=-\int fh\,\d\mu,
\quad\text{ for every }f\in W^{1,2}(\X,\mu).
\end{equation}
The uniquely determined function \(h\) will be denoted by \(\div_\mu(v)\)
and called the \textbf{abstract divergence} of \(v\).
It can be readily checked that a function \(f\in W^{1,2}(\X,\mu)\)
belongs to \(D(\Delta_\mu)\) if and only if \(\nabla_\mu f\in D(\div_\mu)\).
In this case, it also holds that \(\div_\mu(\nabla_\mu f)=\Delta_\mu f\).
\medskip

Let \(f\in\LIP_{bs}(\X)\) and \(v\in D(\div_\mu)\) be given.
Then it holds that \(fv\in D(\div_\mu)\) and
\begin{equation}\label{eq:Leibniz_div}
\div_\mu(fv)=f\,\div_\mu(v)+\langle\nabla_\mu f,v\rangle,
\quad\mu\text{-a.e.\ on }\X.
\end{equation}
In other words, we say that the abstract divergence satisfies
the \textbf{Leibniz rule}.
\begin{lemma}[Density of vector fields with divergence]
\label{lem:density_D_Delta}
Let \((\X,\sfd,\mu)\) be an infinitesimally Hilbertian space.
Then \(D(\Delta_\mu)\) is dense in \(W^{1,2}(\X,\mu)\)
and \(D(\div_\mu)\) is dense in \(L^2_\mu(T\X)\).
\end{lemma}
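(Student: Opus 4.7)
The plan is to treat the two statements separately, with the first one feeding into the second.

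For the density of $D(\Delta_\mu)$ in $W^{1,2}(\X,\mu)$, I would exploit the heat flow $\{P_t\}_{t\geq 0}$ from \eqref{eq:heat_flow}. Fix $f\in W^{1,2}(\X,\mu)$. For every $t>0$ we have $P_tf\in D(\Delta_\mu)$, and by \eqref{eq:heat_flow_contract} the curve $t\mapsto P_tf$ stays in the $W^{1,2}$-ball of radius $\|f\|_{W^{1,2}(\X,\mu)}$. Since by construction $t\mapsto P_tf$ is continuous into $L^2(\mu)$ and satisfies $P_0f=f$, any sequence $t_n\downarrow 0$ yields $P_{t_n}f\to f$ strongly in $L^2(\mu)$. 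Infinitesimal Hilbertianity makes $W^{1,2}(\X,\mu)$ a Hilbert space, so the bounded sequence $(P_{t_n}f)_n$ admits a weakly convergent subsequence, whose weak limit in $L^2(\mu)$ must agree with the strong $L^2$-limit $f$; thus $P_{t_n}f\rightharpoonup f$ weakly in $W^{1,2}(\X,\mu)$. Mazur's lemma then produces convex combinations of the $P_{t_n}f$ — which still belong to $D(\Delta_\mu)$ since the latter is a linear subspace — converging strongly to $f$ in $W^{1,2}(\X,\mu)$, giving the claim.

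For the density of $D(\div_\mu)$ in $L^2_\mu(T\X)$, I would start from the generating property of the differential in Theorem \ref{def:tg_mod}: applying the Riesz isomorphism \eqref{eq:def_Riesz}, the set $\{\nabla_\mu f:f\in W^{1,2}(\X,\mu)\}$ generates $L^2_\mu(T\X)$, meaning that $L^\infty(\mu)$-linear combinations $\sum_i g_i\nabla_\mu f_i$ are dense in $L^2_\mu(T\X)$. Combining continuity of $\nabla_\mu$ with the first part of the lemma, the generators $f_i$ can be taken in $D(\Delta_\mu)$, so that each $\nabla_\mu f_i\in D(\div_\mu)$ (with $\div_\mu(\nabla_\mu f_i)=\Delta_\mu f_i$). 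The remaining step is to replace each $L^\infty(\mu)$-coefficient $g_i$ by a function in $\LIP_{bs}(\X)$, so as to exploit the Leibniz rule \eqref{eq:Leibniz_div}.

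To carry this out, given $g\in L^\infty(\mu)$ I would pick $h_n\in\LIP_{bs}(\X)$ with $h_n\to g$ in $L^2(\mu)$ (using that $\LIP_{bs}(\X)$ is dense in $L^2(\mu)$ on a boundedly finite metric measure space) and then truncate them at height $\|g\|_{L^\infty(\mu)}$, obtaining $g_n\in\LIP_{bs}(\X)$ with $\|g_n\|_{L^\infty(\mu)}\leq\|g\|_{L^\infty(\mu)}$ and $g_n\to g$ $\mu$-a.e.\ along a subsequence. For $f\in D(\Delta_\mu)$, $g_n\nabla_\mu f\in D(\div_\mu)$ by \eqref{eq:Leibniz_div}, and
\[
\|g_n\nabla_\mu f-g\nabla_\mu f\|_{L^2_\mu(T\X)}^2
=\int|g_n-g|^2|\nabla_\mu f|^2\,\d\mu\longrightarrow 0
\]
by dominated convergence, since the integrand is dominated by $4\|g\|_{L^\infty(\mu)}^2|\nabla_\mu f|^2\in L^1(\mu)$ and converges to $0$ pointwise $\mu$-a.e. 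Taking finite sums of such approximations shows that the linear subspace $\big\{\sum_i g_i\nabla_\mu f_i:g_i\in\LIP_{bs}(\X),\,f_i\in D(\Delta_\mu)\big\}\subseteq D(\div_\mu)$ is dense in $L^2_\mu(T\X)$, which is the desired conclusion.

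The main delicate point is the last approximation step: one must produce Lipschitz approximants of an $L^\infty$-function that are \emph{uniformly bounded} in $L^\infty$, since otherwise dominated convergence against $|\nabla_\mu f|^2$ is unavailable (we have no $L^\infty$-control on the gradient of a general $D(\Delta_\mu)$-function). The truncation trick resolves this. The first part of the lemma — identifying the $W^{1,2}$-strong closure of $D(\Delta_\mu)$ via the heat flow — is by comparison a fairly standard application of convex-analysis of gradient flows on the Hilbert space $W^{1,2}(\X,\mu)$.
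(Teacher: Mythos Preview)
Your proof is correct and follows essentially the same route as the paper: heat flow plus weak compactness (you invoke Mazur's lemma where the paper uses Banach--Saks, which is an equivalent device here) for the first claim, and then the generating property of gradients together with the Leibniz rule \eqref{eq:Leibniz_div} for the second. Your treatment of the $L^\infty$-coefficient approximation via $L^2$-density plus truncation and dominated convergence is in fact more explicit than the paper's one-line appeal to weak\(^*\) density of \(\LIP_{bs}(\X)\) in \(L^\infty(\mu)\).
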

\begin{proof}
First of all, fix \(f\in W^{1,2}(\X,\mu)\) and consider
\(P_t f\in D(\Delta_\mu)\) for every \(t>0\). Since the family
\(\{P_t f\}_{t>0}\subseteq W^{1,2}(\X,\mu)\) is bounded
by \eqref{eq:heat_flow_contract} and \(W^{1,2}(\X,\mu)\)
is reflexive, there exists a sequence \(t_n\searrow 0\) such
that \(P_{t_n}f\rightharpoonup f\) weakly in \(W^{1,2}(\X,\mu)\).
By Banach--Saks theorem, we have that (possibly passing to a not
relabelled subsequence) the sequence \((f_n)_n\subseteq D(\Delta_\mu)\)
given by \(f_n\coloneqq\frac{1}{n}\sum_{i=1}^n P_{t_i}f\) satisfies
\(f_n\to f\) with respect to the strong topology of \(W^{1,2}(\X,\mu)\).

In order to prove the last part of the statement, fix \(v\in L^2_\mu(T\X)\)
and \(\varepsilon>0\). We can find functions
\(f'_1,\ldots,f'_n\in W^{1,2}(\X,\mu)\)
and \(g'_1,\ldots,g'_n\in L^\infty(\mu)\) with
\(\big\|v-\sum_{i=1}^n g'_i\nabla_\mu f'_i\big\|_{L^2_\mu(T\X)}
<\frac{\varepsilon}{2}\). Thanks to the first part of the statement and
the fact that boundedly-supported Lipschitz functions are
weakly\(^*\) dense in \(L^\infty(\mu)\), there are
\(f_1,\ldots,f_n\in D(\Delta_\mu)\) and \(g_1,\ldots,g_n\in\LIP_{bs}(\X)\)
such that \(\big\|\sum_{i=1}^n(g'_i\nabla_\mu f'_i-
g_i\nabla_\mu f_i)\big\|_{L^2_\mu(T\X)}<\frac{\varepsilon}{2}\).
Consequently, we conclude that the vector field
\(w\coloneqq\sum_{i=1}^n g_i\nabla_\mu f_i\),
which belongs to \(D(\div_\mu)\) by \eqref{eq:Leibniz_div},
satisfies \(\|v-w\|_{L^2_\mu(T\X)}<\varepsilon\).
\end{proof}
\subsubsection{Concrete \(1\)-forms and vector fields on weighted \(\R^n\)}
Let \((\X,\sfd,\mu)\) be a metric measure space and
\(\big(\mathbb B,\|\cdot\|\big)\) a separable Banach space.
Then we denote by \(L^2(\X,\mathbb B;\mu)\) the family of all
Borel maps \(v\colon\X\to\mathbb B\) such that
\(\int\big\|v(x)\big\|^2\,\d\mu(x)<+\infty\),
considered up to \(\mu\)-a.e.\ equality. It holds that
\(L^2(\X,\mathbb B;\mu)\) is an \(L^2(\mu)\)-normed \(L^\infty(\mu)\)-module
when endowed with the natural pointwise operations and the following
pointwise norm: given any \(v\in L^2(\X,\mathbb B;\mu)\), we define
\[
|v|(x)\coloneqq\big\|v(x)\big\|,\quad\text{ for }\mu\text{-a.e.\ }x\in\X.
\]
Moreover, it holds (assuming \(\mu\neq 0\))
that \(L^2(\X,\mathbb B;\mu)\) is Hilbert if
and only if \(\mathbb B\) is Hilbert.
\medskip

We denote by \(\sfd_{\rm Eucl}\) the Euclidean distance
\(\sfd_{\rm Eucl}(x,y)\coloneqq|x-y|\) on \(\R^n\).
Given any \(t\in[0,1]\), we define the mapping
\({\rm Der}_t\colon C([0,1],\R^n)\to\R^n\) as
\begin{equation}\label{eq:def_Der}
{\rm Der}_t(\gamma)\coloneqq\left\{\begin{array}{ll}
\dot\gamma_t,\\
0,
\end{array}\quad\begin{array}{ll}
\text{ if }\dot\gamma_t=\lim_{h\to 0}\frac{\gamma_{t+h}-\gamma_t}{h}
\text{ exists,}\\
\text{ otherwise.}
\end{array}\right.
\end{equation}
Standard arguments show that \({\rm Der}_t\) is Borel.
Given a non-negative Radon measure \(\mu\) on \(\R^n\) and a
test plan \(\ppi\) on \((\R^n,\sfd_{\rm Eucl},\mu)\), we define
the space \(\mathbb B_\sppi\) as
\begin{equation}\label{eq:def_B_pi}
\mathbb B_\sppi\coloneqq L^2\big(C([0,1],\R^n),\R^n;\ppi\big).
\end{equation}
Observe that \(\mathbb B_\sppi\) is a separable Hilbert space.
\begin{proposition}
Let \(\mu\) be a Radon measure on \(\R^n\). Let \(\ppi\) be a test
plan on \((\R^n,\sfd_{\rm Eucl},\mu)\). Then it holds that (the equivalence
classes up to \(\mathcal L_1\)-a.e.\ equality of) the mappings
\[\begin{split}
\e-\e_0\colon[0,1]\longrightarrow\mathbb B_\sppi,&\quad
t\longmapsto\e_t-\e_0,\\
{\rm Der}\colon[0,1]\longrightarrow\mathbb B_\sppi,&\quad
t\longmapsto{\rm Der}_t,
\end{split}\]
belong to the space \(L^2([0,1],\mathbb B_\sppi;\mathcal L_1)\).
Moreover, we have that \(\e-\e_0\in AC^2([0,1],\mathbb B_\sppi)\) and
\begin{equation}\label{eq:der_e_t}
\frac{\d}{\d t}(\e_t-\e_0)={\rm Der}_t,
\quad\text{ for }\mathcal L_1\text{-a.e.\ }t\in[0,1],
\end{equation}
where the derivative is intended with respect to the strong topology
of \(\mathbb B_\sppi\).
\end{proposition}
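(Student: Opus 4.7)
The plan is to treat $\mathbb{B}_\sppi=L^2(C([0,1],\R^n),\R^n;\ppi)$ as a separable Hilbert space and reduce the claim to standard Bochner integration coupled with Fubini's theorem. The core identity I would aim for is
\[
\e_t-\e_s=\int_s^t{\rm Der}_r\,\d r\quad\text{in }\mathbb B_\sppi,\qquad 0\leq s<t\leq 1,
\]
from which both the absolute continuity of $t\mapsto\e_t-\e_0$ and the formula \eqref{eq:der_e_t} for its derivative follow immediately from the Bochner FTC (recalled right before this proposition for reflexive, separable Banach spaces).

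First, I would dispatch the $L^2$-integrability claim. For $t\mapsto\e_t-\e_0$, the $\ppi$-a.e.\ continuity of $\gamma\mapsto\gamma_t-\gamma_0$ together with the domination $|\gamma_t-\gamma_0|\leq\int_0^1|\dot\gamma_r|\,\d r\in L^2(\ppi)$ (the latter by Cauchy--Schwarz and the finite kinetic energy of $\ppi$) gives continuity of $t\mapsto\e_t-\e_0$ into $\mathbb B_\sppi$ by dominated convergence; hence Borel measurability and, in turn, the bound
\[
\int_0^1\|\e_t-\e_0\|^2_{\mathbb B_\sppi}\,\d t\leq\int\!\!\!\int_0^1|\dot\gamma_r|^2\,\d r\,\d\ppi(\gamma)<+\infty.
\]
For $t\mapsto{\rm Der}_t$ I would invoke the Pettis measurability theorem: separability of $\mathbb B_\sppi$ reduces strong Borel measurability to weak measurability. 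Since $(t,\gamma)\mapsto{\rm Der}_t(\gamma)$ is jointly Borel (the set where the limit defining ${\rm Der}$ exists is Borel, and on that set ${\rm Der}$ is a pointwise limit of jointly continuous difference quotients), Fubini gives measurability of $t\mapsto\int{\rm Der}_t(\gamma)\cdot v(\gamma)\,\d\ppi$ for every $v\in\mathbb B_\sppi$, as required. The $L^2$-norm estimate comes from the pointwise bound $|{\rm Der}_t(\gamma)|\leq|\dot\gamma_t|$ combined with Definition \ref{def:test_plan}(ii).

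The main computational step is the identity displayed above, which I would verify by testing against an arbitrary $v\in\mathbb B_\sppi$. On the one hand, the definition of the Bochner integral gives
\[
\left\langle\int_s^t{\rm Der}_r\,\d r,v\right\rangle_{\mathbb B_\sppi}=\int_s^t\!\!\int{\rm Der}_r(\gamma)\cdot v(\gamma)\,\d\ppi(\gamma)\,\d r.
\]
Applying Fubini (justified by the Cauchy--Schwarz estimate $|{\rm Der}_r(\gamma)\cdot v(\gamma)|\leq|\dot\gamma_r||v(\gamma)|$, whose double integral is finite by the finite kinetic energy and $v\in\mathbb B_\sppi$) yields
\[
\int\!\left(\int_s^t{\rm Der}_r(\gamma)\,\d r\right)\!\cdot v(\gamma)\,\d\ppi(\gamma).
\]
For $\ppi$-a.e.\ $\gamma\in AC^2([0,1],\R^n)$, the classical FTC in $\R^n$ gives $\int_s^t{\rm Der}_r(\gamma)\,\d r=\int_s^t\dot\gamma_r\,\d r=\gamma_t-\gamma_s$, so the inner integral equals $\langle\e_t-\e_s,v\rangle_{\mathbb B_\sppi}$. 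Since $v$ was arbitrary, the identity is proved, and the AC property of $t\mapsto\e_t-\e_0$ together with \eqref{eq:der_e_t} follows.

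The step I expect to be the most delicate is not any single computation but the careful bookkeeping of measurability and integrability used to invoke Pettis and Fubini: one needs the joint Borel measurability of ${\rm Der}$, separability of $\mathbb B_\sppi$, and the $L^2$ domination coming from finite kinetic energy all at once. Once those are in hand the rest is a routine consequence of the Bochner-integral machinery recalled before the proposition.
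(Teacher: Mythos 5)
Your proposal is correct and follows essentially the same route as the paper: both establish the \(L^2\) membership via Fubini and finite kinetic energy, prove the key identity \(\e_t-\e_s=\int_s^t{\rm Der}_r\,\d r\) in \(\mathbb B_\sppi\), and then conclude absolute continuity and \eqref{eq:der_e_t} from the Bochner-integral calculus recalled before the statement. The only cosmetic difference is that the paper identifies the Bochner integral with the \(\ppi\)-pointwise integral \(\int_s^t\dot\gamma_r\,\d r\) directly, whereas you verify the same identity by testing against an arbitrary \(v\in\mathbb B_\sppi\); your measurability bookkeeping (Pettis plus joint Borel measurability of \({\rm Der}\)) fills in exactly the step the paper declares standard and omits.
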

\begin{proof} First of all, let us observe that
\[
\int_0^1\bigg(\int|{\rm Der}_t|^2\,\d\ppi\bigg)\d t
=\int\!\!\!\int_0^1|\dot\gamma_t|^2\,\d t\,\d\ppi(\gamma)<+\infty,
\]
thus \({\rm Der}_t\in\mathbb B_\sppi\) for
\(\mathcal L_1\)-a.e.\ \(t\in[0,1]\) and
\({\rm Der}\in L^2([0,1],\mathbb B_\sppi;\mathcal L_1)\); we omit
the standard proof of the fact that \(\rm Der\) is Borel.
Moreover, for every \(s,t\in[0,1]\) with \(s<t\) it holds
\[
(\e_t-\e_s)(\gamma)=\gamma_t-\gamma_s=\int_s^t\dot\gamma_r\,\d r
=\bigg(\int_s^t{\rm Der}_r\,\d r\bigg)(\gamma),\quad\text{ for }
\ppi\text{-a.e.\ }\gamma,
\]
so that \((\e_t-\e_0)-(\e_s-\e_0)=\e_t-\e_s=
\int_s^t{\rm Der}_r\,\d r\in\mathbb B_\sppi\), whence the statement follows.
\end{proof}

Given any non-negative Radon measure \(\mu\) on \(\R^n\), we will refer
to the metric measure space \((\R^n,\sfd_{\rm Eucl},\mu)\) as a
\textbf{weighted Euclidean space}. The rest of this paper is devoted
to the study of the Sobolev space and the differential structure associated
with \((\R^n,\sfd_{\rm Eucl},\mu)\). We will refer to the elements of
the Hilbert module \(L^2(\R^n,\R^n;\mu)\) as the
\textbf{concrete vector fields} on \((\R^n,\sfd_{\rm Eucl},\mu)\).
Given any \(f\in C^\infty_c(\R^n)\), we denote by
\(\nabla f\in L^2(\R^n,\R^n;\mu)\) the (equivalence class of the)
`strong' gradient of \(f\), \emph{i.e.}, for any \(x\in\R^n\)
we characterise \(\nabla f(x)\in\R^n\) as the unique vector satisfying
\[
\lim_{y\to x}\frac{f(y)-f(x)-\nabla f(x)\cdot(y-x)}{|y-x|}=0.
\]
The Hilbert module \(L^2(\R^n,(\R^n)^*;\mu)\) is the dual
module of \(L^2(\R^n,\R^n;\mu)\) and its elements are said to be
the \textbf{concrete \(1\)-forms} on \((\R^n,\sfd_{\rm Eucl},\mu)\).
The `strong' differential of a given function \(f\in C^\infty_c(\R^n)\)
will be denoted by \(\d f\in L^2(\R^n,(\R^n)^*;\mu)\).
\medskip

The relation between abstract and concrete vector fields on
the weighted Euclidean space has been investigated in \cite{GP16-2},
where the following results have been proven:
\begin{theorem}[Density in energy of smooth functions]
\label{thm:density_smooth}
Let \(\mu\) be a non-negative Radon measure on \(\R^n\).
Let \(f\in W^{1,2}(\R^n,\mu)\) be given. Then there exists
a sequence \((f_i)_i\subseteq C^\infty_c(\R^n)\) such that
\(f_i\to f\) and \(|\nabla f_i|\to|D_\mu f|\) in \(L^2(\mu)\).
\end{theorem}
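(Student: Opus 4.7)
I would use a two-stage approximation. First, Theorem \ref{thm:density_Lip} reduces the problem to the case of a compactly supported Lipschitz function $g$. Second, each such $g$ is smoothed by convolution with a standard family of mollifiers, and the $L^2(\mu)$-norm of the resulting gradient is controlled via a Fubini/Jensen estimate. A diagonal extraction across the two stages, combined with a Hilbert-module argument to upgrade convergence of norms to strong $L^2(\mu)$-convergence, concludes.

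\textbf{Stage 1: reduction to Lipschitz.} Applying Theorem \ref{thm:density_Lip} to $(\R^n,\sfd_{\rm Eucl},\mu)$, I obtain $(g_n)_n\subseteq\LIP_{bs}(\R^n)$ with $g_n\to f$ and $\lip(g_n)\to|D_\mu f|$ in $L^2(\mu)$. It then suffices, by a diagonal argument, to prove that for each $g\in\LIP_{bs}(\R^n)$ there exist $(h_\eps)_\eps\subseteq C^\infty_c(\R^n)$ with $h_\eps\to g$ in $L^2(\mu)$ and
\[
\limsup_{\eps\to 0}\int|\nabla h_\eps|^2\,\d\mu\leq\int\lip^2(g)\,\d\mu.
\]

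\textbf{Stage 2: mollification.} Fix $g\in\LIP_{bs}(\R^n)$ and a standard non-negative family $(\rho_\eps)_{\eps>0}$ of smooth mollifiers with $\int\rho_\eps\,\d\mathcal L^n=1$ and $\mathrm{supp}(\rho_\eps)\subseteq\bar B_\eps(0)$. Set $h_\eps:=g*\rho_\eps\in C^\infty_c(\R^n)$. Then $h_\eps\to g$ uniformly, hence in $L^2(\mu)$, since all supports lie in a fixed compact set. By Rademacher's theorem, the classical gradient $\nabla g$ exists $\mathcal L^n$-a.e.\ with $|\nabla g|\leq\lip(g)$ a.e.; since $\nabla h_\eps=(\nabla g)*\rho_\eps$, I obtain the pointwise bound $|\nabla h_\eps|\leq\lip(g)*\rho_\eps$. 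Jensen's inequality applied to the probability kernel $\rho_\eps$ then gives $|\nabla h_\eps|^2\leq\lip^2(g)*\rho_\eps$, and Fubini yields
\[
\int|\nabla h_\eps|^2\,\d\mu\leq\int\rho_\eps(y)\bigg(\int\lip^2(g)(x-y)\,\d\mu(x)\bigg)\,\d y.
\]

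\textbf{Main obstacle and final step.} The hardest point is passing to the limit $\eps\to 0$ in the above estimate: when $\mu$ is singular with respect to $\mathcal L^n$, the inner translation integral $y\mapsto\int\lip^2(g)(x-y)\,\d\mu(x)$ need not be continuous at the origin. I would handle this via density of $C_c(\R^n)$ in $L^2(\mu)$: for any $\delta>0$ approximate $\lip(g)$ in $L^2(\mu)$ within $\delta$ by a continuous compactly supported $\varphi$, for which translation is $L^2(\mu)$-continuous at $0$ by uniform continuity of $\varphi$ and dominated convergence, and absorb the $O(\delta)$ error. Combining this with a diagonal extraction across the two stages, and with the matching lower bound coming from $|D_\mu f_i|\leq\lip(f_i)=|\nabla f_i|$ and lower semicontinuity of the Cheeger energy, I produce $(f_i)_i\subseteq C^\infty_c(\R^n)$ with $f_i\to f$ in $L^2(\mu)$ and $\int|\nabla f_i|^2\,\d\mu\to\int|D_\mu f|^2\,\d\mu$. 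To upgrade convergence of norms to strong $L^2(\mu)$-convergence $|\nabla f_i|\to|D_\mu f|$, note that $(\d_\mu f_i)_i$ is bounded in the Hilbert module $L^2_\mu(T^*\R^n)$; any weak subsequential limit is identified as $\d_\mu f$ by Proposition \ref{prop:closure_diff}, and convergence of the module norms $\|\d_\mu f_i\|^2=\int|D_\mu f_i|^2\,\d\mu\to\|\d_\mu f\|^2$ promotes weak to strong convergence of the differentials, hence $|D_\mu f_i|\to|D_\mu f|$ in $L^2(\mu)$. Finally, the elementary inequality $(a-b)^2\leq a^2-b^2$ for $a\geq b\geq 0$, applied to $a=|\nabla f_i|$ and $b=|D_\mu f_i|$, gives $\int(|\nabla f_i|-|D_\mu f_i|)^2\,\d\mu\leq\int(|\nabla f_i|^2-|D_\mu f_i|^2)\,\d\mu\to 0$, closing the argument.
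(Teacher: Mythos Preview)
Your two-stage strategy (Lipschitz approximation followed by mollification) is the right one and matches what the paper indicates, but there is a genuine gap in the ``absorb the \(O(\delta)\) error'' step. After Fubini you face
\[
\int\rho_\eps(y)\bigg(\int\lip^2(g)(x-y)\,\d\mu(x)\bigg)\d y
=\int\rho_\eps(y)\bigg(\int\lip^2(g)\,\d(\tau_{-y})_*\mu\bigg)\d y,
\]
so the inner integral is against the \emph{translated} measure \((\tau_{-y})_*\mu\). Approximating \(\lip(g)\) by a continuous \(\varphi\) only in \(L^2(\mu)\) gives you no control over \(\|\lip(g)-\varphi\|_{L^2((\tau_{-y})_*\mu)}\) for \(y\neq 0\); when \(\mu\) is singular these can be arbitrarily large regardless of how small \(\|\lip(g)-\varphi\|_{L^2(\mu)}\) is. So the error term cannot be absorbed as claimed, and in fact the inequality \(\limsup_{\eps\to 0}\int(\lip^2(g)*\rho_\eps)\,\d\mu\leq\int\lip^2(g)\,\d\mu\) is simply false in general (take \(\mu=\delta_0\) and any \(g\) with \(\lip(g)(0)<\sup_{|x|<r}\lip(g)(x)\) for all \(r>0\)).

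The fix---and this is exactly the ``stronger variant of Theorem~\ref{thm:density_Lip}'' the paper alludes to---is to replace the convolution estimate by the pointwise bound
\[
|\nabla h_\eps(x)|\leq\Lip\big(g|_{B_{2\eps}(x)}\big),
\]
which is immediate from the definition of the convolution. The right-hand side decreases monotonically to the \emph{asymptotic} Lipschitz constant \(\lip_a(g)(x)\coloneqq\lim_{r\to 0}\Lip(g|_{B_r(x)})\) and is dominated by \(\Lip(g)\,\1_{\{\text{compact}\}}\), so dominated convergence gives \(\limsup_\eps\int|\nabla h_\eps|^2\,\d\mu\leq\int\lip_a^2(g)\,\d\mu\) with no translation issue. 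One then needs the stronger density result of Ambrosio--Gigli--Savar\'e, which produces Lipschitz approximants \(g_n\) with \(\lip_a(g_n)\to|D_\mu f|\) in \(L^2(\mu)\) (not merely \(\lip(g_n)\)); this is the content of the reference \cite{AmbrosioGigliSavare11-3}. A minor additional remark: your final step invokes the Hilbert-module structure of \(L^2_\mu(T^*\R^n)\), which in the paper's logical order is \emph{derived from} the theorem you are proving; you can avoid this circularity by using Mazur's lemma on the weakly convergent sequence \((|\nabla f_i|)_i\) together with the stability of weak upper gradients under strong \(L^2\)-limits.
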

The proof of the above result was obtained by combining
a standard convolution argument with (a stronger variant of)
Theorem \ref{thm:density_Lip}. As a consequence, the following
statement holds:
\begin{theorem}[The isometric embedding \(\iota_\mu\)]
\label{thm:iota_as_adjoint}
Let \(\mu\geq 0\) be a Radon measure on \(\R^n\). Then there
exists a unique morphism
\({\rm P}_\mu\colon L^2\big(\R^n,(\R^n)^*;\mu\big)\to L^2_\mu(T^*\R^n)\)
such that
\[
{\rm P}_\mu(\d f)=\d_\mu f,\quad\text{ for every }f\in C^\infty_c(\R^n).
\]
Calling \(\iota_\mu\colon L^2_\mu(T\R^n)\to L^2(\R^n,\R^n;\mu)\)
the adjoint of \({\rm P}_\mu\), \emph{i.e.}, the unique morphism
satisfying
\begin{equation}\label{eq:def_iota}
{\rm P}_\mu(\underline\omega)(v)=\underline\omega\big(\iota_\mu(v)\big),
\quad\text{ for every }v\in L^2_\mu(T\R^n)\text{ and }
\underline\omega\in L^2(\R^n,(\R^n)^*;\mu),
\end{equation}
we have that \(\big|\iota_\mu(v)\big|=|v|\) holds \(\mu\)-a.e.\ on \(\R^n\)
for any given \(v\in L^2_\mu(T\R^n)\).
\end{theorem}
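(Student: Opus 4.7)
My plan is to construct $P_\mu$ on the natural dense subspace of smooth $L^\infty(\mu)$-combinations, verify well-definedness and continuity through a single pointwise estimate, and then deduce the pointwise isometry of $\iota_\mu$ in two stages: first for $v$ of the form $\nabla_\mu f$ via density in energy, then for general $v$ by a polarisation/Hilbert-module argument and a density step.

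For the construction of $P_\mu$ I consider the $L^\infty(\mu)$-linear span
\[
\mathscr V \coloneqq \Bigl\{\sum_{i=1}^m g_i\,\d f_i : m\in\N,\,g_i\in L^\infty(\mu),\,f_i\in C^\infty_c(\R^n)\Bigr\}\subseteq L^2(\R^n,(\R^n)^*;\mu),
\]
which is dense by a truncation argument: for $R>0$ and $\phi_R\in C^\infty_c(\R^n)$ with $\phi_R\equiv 1$ on $B_R$, one has $\d(\phi_R x_j)|_{B_R} = \d x_j|_{B_R}$, and any $\underline\omega = \sum_j h_j\,\d x_j$ with $h_j\in L^2(\mu)$ can be approximated by bounded, compactly supported coefficients. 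The crucial technical estimate is
\[
\Bigl|\sum_{i=1}^m g_i\,\d_\mu f_i\Bigr| \;\leq\; \Bigl|\sum_{i=1}^m g_i\,\nabla f_i\Bigr| \qquad \mu\text{-a.e.,}
\]
which supplies simultaneously well-definedness and continuity (with operator norm at most $1$) of the tentative map $P_\mu\colon\sum_i g_i\,\d f_i\mapsto\sum_i g_i\,\d_\mu f_i$ on $\mathscr V$. For constants $g_i=c_i$, linearity of $\d_\mu$ reduces the left-hand side to $|D_\mu(\sum_i c_i f_i)|$, dominated by $|\nabla(\sum_i c_i f_i)|$ via \eqref{eq:mwug_Lip}. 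The locality identity $\mathbbm 1_E|\omega|=|\mathbbm 1_E\omega|$ in both modules extends the bound to simple $g_i$'s constant on a common Borel partition, and a $\mu$-a.e.\ approximation of $L^\infty$-functions by simple ones, together with continuity of $L^\infty$-multiplication, yields the estimate for arbitrary coefficients. Extending $P_\mu$ by continuity gives the morphism, and uniqueness is immediate from the density of $\mathscr V$.

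For the isometry of $\iota_\mu$, the pointwise upper bound $|\iota_\mu(v)|\leq|v|$ $\mu$-a.e.\ is a direct consequence of the previous estimate: for any Borel set $E$ with $\mu(E)<\infty$ and any $\underline\omega\in L^2(\R^n,(\R^n)^*;\mu)$ with $|\underline\omega|\leq\mathbbm 1_E$,
\[
\int \underline\omega(\iota_\mu(v))\,\d\mu = \int P_\mu(\underline\omega)(v)\,\d\mu \leq \int_E|v|\,\d\mu,
\]
and varying $\underline\omega$ in the unit ball supported on $E$ forces $\int_E|\iota_\mu(v)|\,\d\mu\leq\int_E|v|\,\d\mu$. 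For the matching lower bound I first treat $v=\nabla_\mu f$, $f\in W^{1,2}(\R^n,\mu)$: choose $f_n\in C^\infty_c(\R^n)$ as in Theorem \ref{thm:density_smooth}, so that $f_n\to f$ and $|\nabla f_n|\to|D_\mu f|$ in $L^2(\mu)$. The adjoint relation gives $\nabla f_n\cdot\iota_\mu(v) = \langle\nabla_\mu f_n,v\rangle$ $\mu$-a.e.\ (the weighted Euclidean space being infinitesimally Hilbertian), while Proposition \ref{prop:closure_diff} combined with the $L^2$-boundedness of $\d_\mu f_n$ yields $\nabla_\mu f_n\rightharpoonup v$ weakly in $L^2_\mu(T\R^n)$ along a subsequence. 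Integrating and applying Cauchy--Schwarz,
\[
\|v\|_{L^2}^2 = \lim_n\int\langle v,\nabla_\mu f_n\rangle\,\d\mu = \lim_n\int\nabla f_n\cdot\iota_\mu(v)\,\d\mu \leq \|v\|_{L^2}\,\|\iota_\mu(v)\|_{L^2},
\]
whence $\|v\|_{L^2}\leq\|\iota_\mu(v)\|_{L^2}$; coupled with the pointwise upper bound, this yields $|\iota_\mu(\nabla_\mu f)|=|\nabla_\mu f|$ $\mu$-a.e. Polarising the diagonal identity $|\iota_\mu(\nabla_\mu(f+g))|^2 = |\nabla_\mu(f+g)|^2$ against those for $f$ and $g$ separately gives the cross-term formula $\iota_\mu(\nabla_\mu f)\cdot\iota_\mu(\nabla_\mu g) = \langle\nabla_\mu f,\nabla_\mu g\rangle$ $\mu$-a.e., so $L^\infty(\mu)$-bilinearity delivers $|\iota_\mu(v)|=|v|$ $\mu$-a.e.\ for every $v$ in the $L^\infty$-linear span of $\{\nabla_\mu f:f\in W^{1,2}(\R^n,\mu)\}$. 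Since this span is dense in $L^2_\mu(T\R^n)$, extracting a subsequence along which both $|v_n|\to|v|$ and $|\iota_\mu(v_n)|\to|\iota_\mu(v)|$ $\mu$-a.e.\ concludes.

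The main obstacle is the pointwise inequality for $P_\mu$: one cannot invoke a global density theorem directly, since the bound must hold pointwise at the level of finite $L^\infty$-linear combinations; it is secured by combining the scalar inequality $|D_\mu f|\leq|\nabla f|$ with the locality of the module norms and a careful approximation step. Once this estimate is in hand, the isometry of $\iota_\mu$ reduces, through density in energy and polarisation, to a clean calculation within the Hilbert modules.
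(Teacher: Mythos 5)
Your proposal is mathematically sound, and it follows essentially the strategy of the source the paper defers to for this statement (the paper itself gives no proof of Theorem \ref{thm:iota_as_adjoint}, quoting it from \cite{GP16-2} as a consequence of Theorem \ref{thm:density_smooth}): one first builds \({\rm P}_\mu\) as a pointwise-norm-contracting morphism extending \(\d f\mapsto\d_\mu f\), using \eqref{eq:mwug_Lip} (i.e.\ \(|D_\mu f|\le|\nabla f|\) for smooth \(f\)) together with the locality of the pointwise norms to pass from constant to simple to general \(L^\infty(\mu)\) coefficients, and then one obtains the isometry of the adjoint from the density in energy of smooth functions. Your pointwise estimate on \(L^\infty(\mu)\)-combinations, the cut-off argument for the density of that span, the duality argument yielding \(|\iota_\mu(v)|\le|v|\) \(\mu\)-a.e., and the upgrade from equality of \(L^2\)-norms plus a one-sided pointwise bound to \(\mu\)-a.e.\ equality for \(v=\nabla_\mu f\), followed by polarisation and a density step, are all correct.

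One point needs repair in the context of this paper's deductive order. Your lower-bound stage uses the gradients \(\nabla_\mu f\), the pointwise scalar product on \(L^2_\mu(T\R^n)\), linearity of \(\nabla_\mu\), and weak compactness of bounded sequences in the (co)tangent module -- all of which presuppose the infinitesimal Hilbertianity of \((\R^n,\sfd_{\rm Eucl},\mu)\). In the paper, that is Theorem \ref{thm:Eucl_inf_Hilb}, which is stated as an immediate consequence of the very theorem you are proving, so as written your argument is circular within the paper. The circle is easy to break: the paper explicitly records independent proofs of infinitesimal Hilbertianity (\cite{DMGPS18}, and \cite{DMLP20} via the Alberti--Marchese energy, whose argument is sketched after Theorem \ref{thm:DMLP}), and you should invoke one of them; alternatively, you can avoid the Riesz isomorphism altogether by running the lower bound at the level of the dual module, namely showing \(|v(\d_\mu f)|\le|\iota_\mu(v)|\,|D_\mu f|\) \(\mu\)-a.e.\ for all \(f\in W^{1,2}(\R^n,\mu)\) (Theorem \ref{thm:density_smooth}, Mazur's lemma, and the same locality/simple-function device you already use extend this to \(L^\infty\)-combinations of differentials), which by the characterisation of the dual pointwise norm gives \(|v|\le|\iota_\mu(v)|\) without any Hilbertianity input. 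With either fix the proof stands.
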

\begin{remark}\label{rmk:suff_cond_iota}{\rm
Given any Radon measure \(\mu\geq 0\) on \(\R^n\) and any vector field
\(v\in L^2_\mu(T\R^n)\), it holds that \(\iota_\mu(v)\) can be
characterised as the unique element of \(L^2(\R^n,\R^n;\mu)\) such that
\begin{equation}\label{eq:def_iota_bis}
\int\d_\mu f(v)\,\d\mu=\int\d f\big(\iota_\mu(v)\big)\,\d\mu,
\quad\text{ for every }f\in C^\infty_c(\R^n).
\end{equation}
This readily follows from the fact that
\(\big\{\d f:f\in C^\infty_c(\R^n)\big\}\)
generates \(L^2(\R^n,(\R^n)^*;\mu)\) and that
\(\iota_\mu\colon L^2_\mu(T\R^n)\to L^2(\R^n,\R^n;\mu)\) is
a morphism of \(L^2(\mu)\)-normed \(L^\infty(\mu)\)-modules.
\fr}\end{remark}
As it was observed in \cite{GP16-2}, it immediately follows from
Theorem \ref{thm:iota_as_adjoint} that Euclidean
spaces are \textbf{universally infinitesimally Hilbertian},
in the following sense.
\begin{theorem}[Infinitesimal Hilbertianity of weighted \(\R^n\)]
\label{thm:Eucl_inf_Hilb}
Let \(\mu\geq 0\) be a Radon measure on \(\R^n\).
Then the metric measure space \((\R^d,\sfd_{\rm Eucl},\mu)\)
is infinitesimally Hilbertian.
\end{theorem}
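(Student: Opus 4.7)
The plan is to transfer the parallelogram rule from the concrete Hilbert module \(L^2(\R^n,\R^n;\mu)\) back to the abstract tangent module \(L^2_\mu(T\R^n)\) through the isometric embedding \(\iota_\mu\) furnished by Theorem \ref{thm:iota_as_adjoint}. Once this is in hand, the equivalence recorded immediately after Theorem \ref{def:tg_mod} --- namely, that a metric measure space \((\X,\sfd,\mu)\) is infinitesimally Hilbertian iff both \(L^2_\mu(T^*\X)\) and \(L^2_\mu(T\X)\) are Hilbert modules --- closes the argument.

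The preliminary observation is that \(L^2(\R^n,\R^n;\mu)\) is itself a Hilbert module: this was already noted in the discussion preceding Theorem \ref{thm:iota_as_adjoint}, where it was pointed out that \(L^2(\X,\mathbb B;\mu)\) is Hilbert whenever \(\mathbb B\) is Hilbert; here \(\mathbb B=\R^n\). (The degenerate case \(\mu=0\) is trivial and can be dispatched separately.)

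Since Theorem \ref{thm:iota_as_adjoint} guarantees that \(\iota_\mu\) is a morphism of \(L^2(\mu)\)-normed \(L^\infty(\mu)\)-modules (hence in particular \(\R\)-linear) with \(\big|\iota_\mu(v)\big|=|v|\) \(\mu\)-a.e., for any \(v,w\in L^2_\mu(T\R^n)\) one computes \(\mu\)-a.e.
\[
|v+w|^2+|v-w|^2=\big|\iota_\mu(v)+\iota_\mu(w)\big|^2+\big|\iota_\mu(v)-\iota_\mu(w)\big|^2=2\big|\iota_\mu(v)\big|^2+2\big|\iota_\mu(w)\big|^2=2|v|^2+2|w|^2,
\]
invoking the pointwise parallelogram identity in \(L^2(\R^n,\R^n;\mu)\) at the middle step. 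Thus \(L^2_\mu(T\R^n)\) satisfies the pointwise parallelogram rule, so it is a Hilbert module; consequently its predual \(L^2_\mu(T^*\R^n)\) is Hilbert as well (the paper has already remarked that, for \(L^2(\mu)\)-normed modules, Hilbertness as a module and as a Banach space coincide, so the standard fact ``\(X\) Hilbert iff \(X^*\) Hilbert'' applies). The equivalence mentioned above then delivers the infinitesimal Hilbertianity of \((\R^n,\sfd_{\rm Eucl},\mu)\).

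I do not anticipate any genuine obstacle here, since all the substantive work is performed by Theorem \ref{thm:iota_as_adjoint}: the present argument amounts to a routine pointwise algebraic transfer of the Hilbert identity along an isometric module embedding.
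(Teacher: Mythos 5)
Your proposal is correct and takes essentially the same route as the paper: Theorem \ref{thm:Eucl_inf_Hilb} is obtained there precisely as an immediate consequence of Theorem \ref{thm:iota_as_adjoint}, and your argument just spells out how the pointwise parallelogram identity of the Hilbert module \(L^2(\R^n,\R^n;\mu)\) is transferred to \(L^2_\mu(T\R^n)\) (hence to \(L^2_\mu(T^*\R^n)\) and to the Cheeger energy) along the pointwise-norm-preserving morphism \(\iota_\mu\). The only difference is cosmetic: the paper leaves this transfer implicit (citing \cite{GP16-2}) and additionally records an alternative proof via the Alberti--Marchese energy, which you do not need.
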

We point out that other two different proofs of Theorem
\ref{thm:Eucl_inf_Hilb} are known: it directly follows from
\cite[Theorem 1.1]{DMGPS18}, and it is one of the main
achievements of \cite{DMLP20}; in Section \ref{ss:AM},
we will briefly describe the strategy of the latter approach.
Let us now recall an important consequence of Theorems
\ref{thm:density_smooth} and \ref{thm:Eucl_inf_Hilb}.
For the reader's usefulness, we also provide its proof.
\begin{corollary}[Strong density of smooth functions]
\label{cor:strong_dens_smooth}
Let \(\mu\geq 0\) be a Radon measure on \(\R^n\). Let
\(f\in W^{1,2}(\R^n,\mu)\) be given. Then there exists a
sequence \((f_i)_i\subseteq C^\infty_c(\R^n)\) such that
\begin{subequations}
\begin{align}\label{eq:strong_dens_smooth_1}
f_i\longrightarrow f,&\quad\text{ strongly in }L^2(\mu),\\
\label{eq:strong_dens_smooth_2}
|\nabla f_i|\longrightarrow|D_\mu f|,&
\quad\text{ strongly in }L^2(\mu),\\
\label{eq:strong_dens_smooth_3}
\nabla_\mu f_i\longrightarrow\nabla_\mu f,&
\quad\text{ strongly in }L^2_\mu(T\R^n).
\end{align}\end{subequations}
\end{corollary}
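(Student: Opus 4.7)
The plan is to first obtain the sequence from Theorem \ref{thm:density_smooth}, which already delivers \eqref{eq:strong_dens_smooth_1} and \eqref{eq:strong_dens_smooth_2}, and then to upgrade the resulting energy convergence to strong convergence of the gradients in \(L^2_\mu(T\R^n)\) using the Hilbert module structure supplied by Theorem \ref{thm:Eucl_inf_Hilb}.

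Let \((f_i)_i\subseteq C^\infty_c(\R^n)\) be produced by Theorem \ref{thm:density_smooth}, so that \(f_i\to f\) in \(L^2(\mu)\) and \(|\nabla f_i|\to|D_\mu f|\) in \(L^2(\mu)\); this takes care of \eqref{eq:strong_dens_smooth_1} and \eqref{eq:strong_dens_smooth_2}. For the norms of the abstract gradients, I would use \(\|\nabla_\mu f_i\|_{L^2_\mu(T\R^n)}=\||D_\mu f_i|\|_{L^2(\mu)}\) together with the pointwise bound \eqref{eq:mwug_Lip} applied to the smooth functions \(f_i\), which yields \(|D_\mu f_i|\leq\lip(f_i)=|\nabla f_i|\). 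Hence \(\limsup_i\|\nabla_\mu f_i\|_{L^2_\mu(T\R^n)}\leq\||D_\mu f|\|_{L^2(\mu)}=\|\nabla_\mu f\|_{L^2_\mu(T\R^n)}\). Combined with the lower semicontinuity of the Cheeger energy (Definition \ref{def:Cheeger_energy}) applied along \(f_i\to f\) in \(L^2(\mu)\), this gives the reverse inequality and therefore
\[
\|\nabla_\mu f_i\|_{L^2_\mu(T\R^n)}\longrightarrow\|\nabla_\mu f\|_{L^2_\mu(T\R^n)}.
\]

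Next I would identify the weak limit. Since \(L^2_\mu(T\R^n)\) is Hilbert by Theorem \ref{thm:Eucl_inf_Hilb}, the bounded sequence \((\nabla_\mu f_i)_i\) admits weakly convergent subsequences; equivalently, via the Riesz isomorphism \({\sf R}_{L^2_\mu(T^*\R^n)}\), some subsequence \(\d_\mu f_{i_k}\) converges weakly in \(L^2_\mu(T^*\R^n)\) to an element \(\omega\). Proposition \ref{prop:closure_diff}, applied with \(f_{i_k}\to f\) in \(L^2(\mu)\) (in particular weakly), forces \(\omega=\d_\mu f\), and hence the corresponding weak limit of \(\nabla_\mu f_{i_k}\) in \(L^2_\mu(T\R^n)\) must be \(\nabla_\mu f\). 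Uniqueness of the weak limit, together with the Urysohn subsequence principle, yields \(\nabla_\mu f_i\rightharpoonup\nabla_\mu f\) weakly in \(L^2_\mu(T\R^n)\) along the full sequence.

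Finally, weak convergence plus convergence of norms in a Hilbert space implies strong convergence (expand \(\|\nabla_\mu f_i-\nabla_\mu f\|^2\) via the pointwise scalar product on the Hilbert module and integrate), giving \eqref{eq:strong_dens_smooth_3}. There is no real obstacle here: the only delicate point is verifying that the weak limit of \((\d_\mu f_i)_i\) is \(\d_\mu f\), and this is precisely what the closure statement of Proposition \ref{prop:closure_diff} was designed to handle. The rest is a standard ``weak plus norm implies strong'' argument, made available by the universal infinitesimal Hilbertianity of the weighted Euclidean space.
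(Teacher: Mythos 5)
Your proposal is correct and follows essentially the same route as the paper: take the sequence from Theorem \ref{thm:density_smooth}, bound \(\nabla_\mu f_i\) via \(|D_\mu f_i|\leq|\nabla f_i|\), extract a weak limit in the Hilbert module, identify it as \(\nabla_\mu f\) through Proposition \ref{prop:closure_diff} (via the Riesz isomorphism), and upgrade to strong convergence from convergence of the norms. The only cosmetic difference is that you get the norm convergence from the \(L^2(\mu)\)-lower semicontinuity of the Cheeger energy, whereas the paper uses the weak lower semicontinuity of the module norm along \(\nabla_\mu f_i\rightharpoonup\nabla_\mu f\); both supply the same \(\liminf\) inequality.
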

\begin{proof}
Thanks to Theorem \ref{thm:density_smooth}, we can find
a sequence \((f_i)_i\subseteq C^\infty_c(\R^n)\)
satisfying \eqref{eq:strong_dens_smooth_1} and
\eqref{eq:strong_dens_smooth_2}. Given that
\(|\nabla_\mu f_i|=|D_\mu f_i|\leq|\nabla f_i|\) holds
\(\mu\)-a.e.\ for every \(i\in\N\), we deduce that the
sequence \((\nabla_\mu f_i)_i\) is bounded in
\(L^2_\mu(T\R^n)\). Being \(L^2_\mu(T\R^n)\) Hilbert,
there exists \(v\in L^2_\mu(T\R^n)\) such that
(up to a not relabelled subsequence) it holds that
\(\nabla_\mu f_i\rightharpoonup v\) weakly in \(L^2_\mu(T\R^n)\).
By using Proposition \ref{prop:closure_diff} (and the Riesz
isomorphism), we obtain that \(v=\nabla_\mu f\). Moreover,
\[
\int|D_\mu f|^2\,\d\mu\leq\limi_{i\to\infty}\int|D_\mu f_i|^2
\,\d\mu\leq\lims_{i\to\infty}\int|D_\mu f_i|^2
\,\d\mu\leq\lim_{i\to\infty}\int|\nabla f_i|^2\,\d\mu
\overset{\eqref{eq:strong_dens_smooth_2}}=\int|D_\mu f|^2\,\d\mu,
\]
where the first inequality is granted by the weak convergence
\(\nabla_\mu f_i\rightharpoonup\nabla_\mu f\). Consequently,
we conclude that
\(\int|D_\mu f|^2\,\d\mu=\lim_i\int|D_\mu f_i|^2\,\d\mu\)
and thus \(\nabla_\mu f_i\to\nabla_\mu f\) strongly
in \(L^2_\mu(T\R^n)\), which shows the validity of
\eqref{eq:strong_dens_smooth_3}.
\end{proof}
\subsubsection{Divergence of concrete vector fields}
Let \(\mu\geq 0\) be a Radon measure on \(\R^n\). Then we denote by
\(D(\underline\div_\mu)\) the space of all those vector fields
\(\underline v\in L^2(\R^n,\R^n;\mu)\) whose distributional
divergence belongs to \(L^2(\mu)\). Namely, there exists a function
\(\underline\div_\mu(\underline v)\in L^2(\mu)\) such that
\begin{equation}\label{eq:def_distrib_div}
\int\nabla f\cdot\underline v\,\d\mu=
-\int f\,\underline\div_\mu(\underline v)\,\d\mu,
\quad\text{ for every }f\in C^\infty_c(\R^n).
\end{equation}
Observe that \(\underline\div_\mu\) satisfies
the Leibniz rule, \emph{i.e.}, it holds that
\(f\underline v\in D(\underline\div_\mu)\) and
\[
\underline\div_\mu(f\underline v)=
f\,\underline\div_\mu(\underline v)+\nabla f\cdot\underline v
\]
for every \(f\in C^\infty_c(\R^n)\) and
\(\underline v\in D(\underline\div_\mu)\).
\begin{lemma}[Relation between \(\div_\mu\) and
\(\underline\div_\mu\)]\label{lem:div_vs_und_div}
Let \(\mu\geq 0\) be a Radon measure on \(\R^n\). Then for any vector
field \(v\in L^2_\mu(T\R^n)\) we have that
\begin{equation}\label{eq:relation_div}
v\in D(\div_\mu)\quad\Longleftrightarrow\quad
\iota_\mu(v)\in D(\underline\div_\mu).
\end{equation}
In this case, it holds that
\(\div_\mu(v)=\underline\div_\mu\big(\iota_\mu(v)\big)\)
in the \(\mu\)-a.e.\ sense.
\end{lemma}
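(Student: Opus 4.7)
\textbf{Proof proposal for Lemma \ref{lem:div_vs_und_div}.} The plan is to derive both implications from the defining identity \eqref{eq:def_iota_bis} of $\iota_\mu$, namely
\[
\int\d_\mu f(v)\,\d\mu=\int\nabla f\cdot\iota_\mu(v)\,\d\mu,\qquad f\in C^\infty_c(\R^n),
\]
together with the Sobolev density of smooth functions provided by Corollary \ref{cor:strong_dens_smooth}. The forward implication is essentially tautological: if $v\in D(\div_\mu)$, then for every $f\in C^\infty_c(\R^n)\subseteq W^{1,2}(\R^n,\mu)$ one has
\[
\int\nabla f\cdot\iota_\mu(v)\,\d\mu=\int\d_\mu f(v)\,\d\mu=-\int f\,\div_\mu(v)\,\d\mu,
\]
which by \eqref{eq:def_distrib_div} shows $\iota_\mu(v)\in D(\underline\div_\mu)$ with $\underline\div_\mu(\iota_\mu(v))=\div_\mu(v)$.

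For the converse, assume $\iota_\mu(v)\in D(\underline\div_\mu)$ and set $h\coloneqq\underline\div_\mu(\iota_\mu(v))\in L^2(\mu)$. I need to check that the integration-by-parts identity
\[
\int\d_\mu f(v)\,\d\mu=-\int fh\,\d\mu
\]
extends from $f\in C^\infty_c(\R^n)$ (where it follows by combining \eqref{eq:def_iota_bis} with the definition of $\underline\div_\mu$) to every $f\in W^{1,2}(\R^n,\mu)$. Given such an $f$, apply Corollary \ref{cor:strong_dens_smooth} to obtain $(f_i)_i\subseteq C^\infty_c(\R^n)$ with $f_i\to f$ in $L^2(\mu)$ and $\nabla_\mu f_i\to\nabla_\mu f$ in $L^2_\mu(T\R^n)$. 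Via the Riesz isomorphism, the latter convergence is equivalent to $\d_\mu f_i\to\d_\mu f$ in $L^2_\mu(T^*\R^n)$, so that by the pointwise bound $|\d_\mu(f_i-f)(v)|\le|\d_\mu(f_i-f)|\,|v|$ and Cauchy--Schwarz one gets $\int\d_\mu f_i(v)\,\d\mu\to\int\d_\mu f(v)\,\d\mu$; meanwhile $\int f_i h\,\d\mu\to\int fh\,\d\mu$ by the $L^2$-convergence of $f_i$. Passing to the limit in the identity valid for each $f_i$ yields the desired relation, hence $v\in D(\div_\mu)$ with $\div_\mu(v)=h$.

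The argument is largely formal once \eqref{eq:def_iota_bis} is available; the only point that requires a genuine input is the extension from test functions to arbitrary Sobolev functions in the backward direction, and this is precisely handled by the strong energy-convergence statement in Corollary \ref{cor:strong_dens_smooth}. Note that it would \emph{not} suffice to know only $f_i\to f$ in $L^2$ and $|\nabla f_i|\to|D_\mu f|$ in $L^2$, since $\int\d_\mu f_i(v)\,\d\mu$ depends on $\d_\mu f_i$ as an element of the cotangent module, not merely on its pointwise norm; the strong convergence $\nabla_\mu f_i\to\nabla_\mu f$ is therefore the key ingredient and the main conceptual step.
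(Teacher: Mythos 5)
Your proposal is correct and follows essentially the same route as the paper's own proof: the forward implication is read off directly from \eqref{eq:def_iota_bis}, and the converse is obtained by extending the integration-by-parts identity from \(C^\infty_c(\R^n)\) to all of \(W^{1,2}(\R^n,\mu)\) via the strong convergence \(\nabla_\mu f_i\to\nabla_\mu f\) supplied by Corollary \ref{cor:strong_dens_smooth}. Your closing remark correctly identifies that this strong module convergence (and not merely convergence of the pointwise norms) is the essential ingredient, which is exactly how the paper uses it.
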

\begin{proof}
On the one hand, suppose \(v\in D(\div_\mu)\). Then for any
\(f\in C^\infty_c(\R^n)\) it holds that
\[
\int\nabla f\cdot\iota_\mu(v)\,\d\mu=\int\d f\big(\iota_\mu(v)\big)\,\d\mu
\overset{\eqref{eq:def_iota_bis}}=\int\d_\mu f(v)\,\d\mu
=\int\langle\nabla_\mu f,v\rangle\,\d\mu=-\int f\,\div_\mu(v)\,\d\mu,
\]
whence \(\iota_\mu(v)\in D(\underline\div_\mu)\) and
\(\underline\div_\mu\big(\iota_\mu(v)\big)=\div_\mu(v)\) holds
\(\mu\)-a.e.\ on \(\R^n\). On the other hand, suppose
\(\iota_\mu(v)\in D(\underline\div_\mu)\). Let us fix any function
\(f\in W^{1,2}(\R^n,\mu)\). Corollary \ref{cor:strong_dens_smooth}
grants the existence of \((f_i)_i\subseteq C^\infty_c(\R^n)\)
such that \(f_i\to f\) in \(L^2(\mu)\) and \(\nabla_\mu f_i\to\nabla_\mu f\)
in \(L^2_\mu(T\R^n)\). Therefore, it holds that
\[\begin{split}
\int\langle\nabla_\mu f,v\rangle\,\d\mu
&=\lim_{i\to\infty}\int\langle\nabla_\mu f_i,v\rangle\,\d\mu
\overset{\eqref{eq:def_iota_bis}}=
\lim_{i\to\infty}\int\nabla f_i\cdot\iota_\mu(v)\,\d\mu
=-\lim_{i\to\infty}\int f_i\,\underline\div_\mu\big(\iota_\mu(v)\big)\,\d\mu\\
&=-\int f\,\underline\div_\mu\big(\iota_\mu(v)\big)\,\d\mu.
\end{split}\]
This shows that \(v\in D(\div_\mu)\) and
\(\div_\mu(v)=\underline\div_\mu\big(\iota_\mu(v)\big)\)
holds \(\mu\)-a.e.\ on \(\R^n\), as required.
\end{proof}
\subsection{Distributions on the Euclidean space}
\label{ss:distributions}
We denote by \({\rm Gr}(\R^n)\) the \textbf{Grassmannian}  of \(\R^n\),
\emph{i.e.}, the family of all linear subspaces of \(\R^n\). We endow
\({\rm Gr}(\R^n)\) with the distance
\[
\sfd_{{\rm Gr}(\R^n)}(V,W)\coloneqq\max\Bigg\{
\sup_{\substack{v\in V,\\|v|\leq 1}}\inf_{\substack{w\in W,\\|w|\leq 1}}|v-w|,
\sup_{\substack{w\in W,\\|w|\leq 1}}\inf_{\substack{v\in V,\\|v|\leq 1}}|v-w|
\Bigg\},\quad\text{ for every }V,W\in{\rm Gr}(\R^n),
\]
\emph{i.e.}, \(\sfd_{{\rm Gr}(\R^n)}(V,W)\) is the Hausdorff distance
in \(\R^n\) between the closed unit balls of \(V\) and \(W\).
It holds that \(\big({\rm Gr}(\R^n),\sfd_{{\rm Gr}(\R^n)}\big)\)
is a compact metric space; see, for instance, \cite{Aliprantis_Border}.
\begin{definition}[Distribution]\label{def:distr}
A \textbf{distribution} on \(\R^n\) is a Borel map
\(V\colon\R^n\to{\rm Gr}(\R^n)\). Given any Radon measure \(\mu\) on \(\R^n\),
we denote by \(\mathscr D_n(\mu)\) the family of all distributions on \(\R^n\),
considered up to \(\mu\)-a.e.\ equality. Given any \(V\in\mathscr D_n(\mu)\),
we define \(\Gamma(V)\subseteq L^2(\R^n,\R^n;\mu)\) as
\[
\Gamma(V)\coloneqq\Big\{\underline v\in L^2(\R^n,\R^n;\mu)\;\Big|
\;\underline v(x)\in V(x),\text{ for }\mu\text{-a.e.\ }x\in\R^n\Big\}.
\]
Moreover, we define a partial order on \(\mathscr D_n(\mu)\) in the following
way: given any \(V,W\in\mathscr D_n(\mu)\), we declare that
\(V\leq W\) provided it holds that \(V(x)\subseteq W(x)\)
for \(\mu\)-a.e.\ \(x\in\R^n\).
\end{definition}
It can be readily checked that \(\Gamma(V)\) is an \(L^2(\mu)\)-normed
\(L^\infty(\mu)\)-submodule of \(L^2(\R^n,\R^n;\mu)\).
\begin{proposition}\label{prop:Gamma_biject}
Let \(\mu\geq 0\) be a Radon measure on \(\R^n\).
Then the mapping \(V\mapsto\Gamma(V)\) is a bijection between
\(\mathscr D_n(\mu)\) and the family of \(L^2(\mu)\)-normed
\(L^\infty(\mu)\)-submodules of \(L^2(\R^n,\R^n;\mu)\).
Moreover, the map \(\Gamma\) is order-preserving, \emph{i.e.},
one has \(V\leq W\) if and only if \(\Gamma(V)\subseteq\Gamma(W)\).
\end{proposition}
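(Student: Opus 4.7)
The plan is to treat order preservation and bijectivity in parallel, with the nontrivial half of order preservation simultaneously giving injectivity. The forward implication $V\leq W\Rightarrow\Gamma(V)\subseteq\Gamma(W)$ is immediate from Definition \ref{def:distr}. For the converse (and hence for injectivity), suppose $\Gamma(V)\subseteq\Gamma(W)$ but the Borel set $E\coloneqq\{x\in\R^n:V(x)\not\subseteq W(x)\}$ has positive $\mu$-measure. The multifunction $x\mapsto V(x)\cap\overline B_1(0)\setminus W(x)$ has nonempty values on $E$, so the Kuratowski--Ryll-Nardzewski selection theorem yields a Borel section $\underline v\colon E\to\R^n$ with $\underline v(x)\in V(x)\setminus W(x)$ and $|\underline v(x)|\leq 1$. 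Intersecting $E$ with a ball of finite $\mu$-measure (available since $\mu$ is Radon) produces a vector field $\1_{E}\,\underline v$ lying in $\Gamma(V)\setminus\Gamma(W)$, a contradiction.

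For surjectivity, fix any $L^2(\mu)$-normed $L^\infty(\mu)$-submodule $\mathscr N\subseteq L^2(\R^n,\R^n;\mu)$. Being a closed subspace of the separable Hilbert space $L^2(\R^n,\R^n;\mu)$, $\mathscr N$ admits a countable dense subset $(\underline v_k)_k$; define
\[
V(x)\coloneqq\mathrm{span}_\R\{\underline v_k(x)\,:\,k\in\N\}\in\Gr(\R^n),\quad\text{ for $\mu$-a.e.\ }x\in\R^n.
\]
Each $V(x)$ is finite-dimensional, hence closed. Borel measurability of $x\mapsto V(x)$ is standard: the rank function $x\mapsto\dim V(x)$ is Borel, and on each level set one can select a Borel basis from among the $\underline v_k$; equivalently, the orthogonal projection $P_{V(x)}$ depends Borel-measurably on $x$ through Moore--Penrose pseudoinverses of the matrices built from finitely many generators, and this characterises the Grassmannian map.

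It remains to show $\Gamma(V)=\mathscr N$. The inclusion $\mathscr N\subseteq\Gamma(V)$ is immediate: any $\underline u\in\mathscr N$ is an $L^2(\mu)$-limit of a sequence drawn from $(\underline v_k)_k$, and passing to a $\mu$-a.e.\ convergent subsequence gives $\underline u(x)\in V(x)$ $\mu$-a.e.\ (since each $V(x)$ is closed). The reverse inclusion is the key step, and I will handle it through the Hilbert-module orthogonal decomposition from Remark \ref{rmk:orth_compl_I}: write $\underline u=\underline u_1+\underline u_2$ with $\underline u_1\in\mathscr N$ and $\underline u_2\in\mathscr N^\perp$. Testing $\underline u_2$ against $f\underline v_k\in\mathscr N$ for arbitrary $f\in L^\infty(\mu)$ forces $\underline u_2(x)\cdot\underline v_k(x)=0$ for $\mu$-a.e.\ $x$ and every $k$, whence $\underline u_2(x)\perp V(x)$ $\mu$-a.e. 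But $\underline u\in\Gamma(V)$ by assumption and $\underline u_1\in\mathscr N\subseteq\Gamma(V)$ by the already-proven inclusion, so $\underline u_2(x)=\underline u(x)-\underline u_1(x)\in V(x)$ $\mu$-a.e.; combined with the previous orthogonality this forces $\underline u_2=0$, giving $\underline u=\underline u_1\in\mathscr N$.

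The main technical obstacle is the Borel measurability of $x\mapsto V(x)\in\Gr(\R^n)$; once that is in hand, the substantive work is done by the Hilbert-module orthogonal decomposition, which propagates the separable Hilbert structure of $L^2(\R^n,\R^n;\mu)$ into a pointwise-in-$x$ orthogonality statement that forces $\underline u_2$ to vanish. Injectivity itself is a purely measurable-selection argument and is conceptually independent.
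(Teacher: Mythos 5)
Your proposal is correct, but the surjectivity argument follows a genuinely different route from the paper's. The paper proves the inclusion \(\Gamma(V)\subseteq\mathscr N\) by hand: given \(\underline u\in\Gamma(V)\), it truncates to balls, partitions each ball into Borel sets on which a single generator \(\underline v_i\) approximates \(\underline u\) to a prescribed accuracy, and uses the \(L^\infty(\mu)\)-module structure (multiplication by indicators plus completeness of \(\mathscr N\)) to see that the piecewise selection \(\sum_i\1_{E^j_i}\,\underline v_i\) lies in \(\mathscr N\) and converges to \(\underline u\); this is self-contained, uses nothing beyond the definitions, and would survive with \(p\neq 2\). You instead invoke the decomposition \(\mathscr M=\mathscr N+\mathscr N^\perp\) of Remark \ref{rmk:orth_compl_I} and kill the component \(\underline u_2\in\mathscr N^\perp\) by combining the pointwise orthogonality \(\underline u_2(x)\perp V(x)\) with \(\underline u_2(x)\in V(x)\). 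This is shorter and clean, and it is legitimate since the remark precedes the proposition and does not depend on it; but it outsources the real content -- namely that orthogonal complementation in a Hilbert module localises to a \(\mu\)-a.e.\ pointwise statement -- to that unproved remark, and it is intrinsically tied to the Hilbert (\(p=2\)) setting. Note that both proofs use the submodule hypothesis exactly where they must (you when testing against \(f\underline v_k\) with \(f\in L^\infty(\mu)\), the paper when multiplying by indicators), so neither would mistakenly apply to a closed subspace that is not a submodule.

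One blemish in your injectivity/order-preservation step (which the paper dismisses as routine): the multifunction \(x\mapsto V(x)\cap\overline B_1(0)\setminus W(x)\) does not have closed values, so the Kuratowski--Ryll-Nardzewski theorem does not apply as stated; you would need a selection theorem valid for Borel graphs, or to replace the multifunction by something like \(\big\{v\in V(x)\,:\,|v|=1,\ {\rm dist}\big(v,W(x)\big)\geq 1/k\big\}\) on a positive-measure subset of \(E\). A lighter fix avoids selections altogether: the Borel fields \(x\mapsto\pi_{V(x)}(e_i)\), \(i=1,\dots,n\), multiplied by indicators of balls of finite measure, belong to \(\Gamma(V)\subseteq\Gamma(W)\), and their values span \(V(x)\), whence \(V(x)\subseteq W(x)\) for \(\mu\)-a.e.\ \(x\). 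With that repair the whole argument stands.
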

\begin{proof}
The only non-trivial fact to check is that the mapping \(\Gamma\)
is surjective. To this aim, let us fix an \(L^2(\mu)\)-normed
\(L^\infty(\mu)\)-submodule \(\mathscr M\) of \(L^2(\R^n,\R^n;\mu)\).
Also, take any countable, dense \(\mathbb Q\)-linear subspace
\((\underline v_i)_i\) of \(\mathscr M\). Define
\begin{equation}\label{eq:def_V_for_Gamma_surj}
V(x)\coloneqq{\rm cl}\,\big\{\underline v_i(x)\;\big|\;i\in\N\big\}
\in{\rm Gr}(\R^n),\quad\text{ for }\mu\text{-a.e.\ }x\in\R^n.
\end{equation}
The resulting map \(V\colon\R^n\to{\rm Gr}(\R^n)\)
is Borel. Indeed, for every \(W\in{\rm Gr}(\R^n)\) we have that
\[
\sfd_{{\rm Gr}(\R^n)}\big(V(x),W\big)=
\max\bigg\{\sup_{i\in\N}\,\inf_{j\in\N}\bigg|\frac{\underline v_i(x)}
{\max\big\{|\underline v_i(x)|,1\big\}}-w_j\bigg|,\,
\sup_{j\in\N}\,\inf_{i\in\N}\bigg|\frac{\underline v_i(x)}
{\max\big\{|\underline v_i(x)|,1\big\}}-w_j\bigg|\bigg\}
\]
holds for \(\mu\)-a.e.\ \(x\in\R^n\), where \((w_j)_j\) is any dense
sequence in the closed unit ball of \(W\), thus accordingly
\(x\mapsto\sfd_{{\rm Gr}(\R^n)}\big(V(x),W\big)\) is
\(\mu\)-a.e.\ equivalent to a Borel function.
Then, \(V\in\mathscr D_n(\mu)\).

Let us now prove that \(\mathscr M=\Gamma(V)\). Given that
\(\underline v_i\in\Gamma(V)\) for every \(i\in\N\) by construction
and \(\mathscr M={\rm cl}\,\{\underline v_i\,:\,i\in\N\}\), we
deduce that \(\mathscr M\subseteq\Gamma(V)\). Conversely,
fix any \(\underline v\in\Gamma(V)\). By dominated
convergence theorem we see that the sequence
\((\underline w_j)_j\subseteq\Gamma(V)\), given by
\(\underline w_j\coloneqq\1_{B_j(0)}\,\underline v\) for all
\(j\in\N\), converges to \(\underline v\) in \(\Gamma(V)\).
Fix \(j_0\in\N\) satisfying
\(\mu\big(B_{j_0}(0)\big)>0\). Given any \(j\geq j_0\),
we infer from \eqref{eq:def_V_for_Gamma_surj} that there
is a Borel partition \((E^j_i)_i\) of \(B_j(0)\)
having the property that
\begin{equation}\label{eq:aux_Gamma_surj}
\big|\underline v_i(x)-\underline v(x)\big|^2\leq
\frac{1}{j^2\mu\big(B_j(0)\big)},\quad\text{ for every }
i\in\N\text{ and }\mu\text{-a.e.\ }x\in E^j_i.
\end{equation}
Define \(\underline z_j\coloneqq\sum_{i=1}^\infty\1_{E^j_i}
\,\underline v_i\in\mathscr M\). By exploiting the inequality
in \eqref{eq:aux_Gamma_surj}, we thus obtain that
\[
\int|\underline z_j-\underline w_j|^2\,\d\mu
=\sum_{i=1}^\infty\int_{E^j_i}|\underline v_i-\underline v|^2\,\d\mu
\leq\sum_{i=1}^\infty\frac{\mu(E^j_i)}{j^2\mu\big(B_j(0)\big)}
=\frac{1}{j^2}.
\]
Therefore, we conclude that
\(\|\underline z_j-\underline v\|_{\Gamma(V)}
\leq\frac{1}{j}+\|\underline w_j-\underline v\|_{\Gamma(V)}\to 0\),
so that \(\underline v\in\mathscr M\).
\end{proof}
\begin{remark}{\rm
The statement of Proposition \ref{prop:Gamma_biject} is a particular
instance of a more general result proven in \cite{LP18}, concerning
the representation of a certain class of normed modules as spaces of
sections of a measurable Banach bundle. Nevertheless,
in the special case under consideration (\emph{i.e.}, only
submodules of \(L^2(\R^n,\R^n;\mu)\) are taken into account)
the argument is simpler than the original one in \cite{LP18},
so we opted for providing an easier proof.
\fr}\end{remark}
\begin{lemma}\label{lem:stable_spaces}
Let \(\mu\) be a Radon measure on \(\R^n\).
Let \(\mathscr V\) be a linear subspace of \(L^2(\R^n,\R^n;\mu)\)
such that \(g\underline v\in\mathscr V\) holds for every
\(g\in C^\infty_c(\R^n)\) and \(\underline v\in\mathscr V\). Given a
dense sequence \((\underline v_i)_i\subseteq\mathscr V\), we define
\(V(x)\coloneqq{\rm cl}\,\big\{\underline v_i(x)\,:\,i\in\N\big\}\)
for \(\mu\)-a.e.\ \(x\in\R^n\). Then \(\big\{V(x)\big\}_{x\in\R^n}\)
is a family of linear subspaces of \(\R^n\), which are
\(\mu\)-a.e.\ independent of \((\underline v_i)_i\). Moreover,
it holds that
\[
{\rm cl}\,\mathscr V=\Big\{\underline v\in L^2(\R^n,\R^n;\mu)\;\Big|
\;\underline v(x)\in V(x),\text{ for }\mu\text{-a.e.\ }x\in\R^n\Big\}.
\]
In particular, the map \(V\colon\R^n\to{\rm Gr}(\R^n)\) is a
distribution on \(\R^n\), the Banach space \({\rm cl}\,\mathscr V\)
is an \(L^2(\mu)\)-normed \(L^\infty(\mu)\)-submodule of
\(L^2(\R^n,\R^n;\mu)\), and \(\Gamma(V)={\rm cl}\,\mathscr V\).
\end{lemma}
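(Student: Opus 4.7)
The plan is to reduce the statement to Proposition \ref{prop:Gamma_biject}, via the intermediate claim that $\mathscr M\coloneqq{\rm cl}\,\mathscr V$ is an $L^2(\mu)$-normed $L^\infty(\mu)$-submodule of $L^2(\R^n,\R^n;\mu)$. The main technical obstacle is upgrading the $C^\infty_c(\R^n)$-stability of $\mathscr V$ to full $L^\infty(\mu)$-stability of $\mathscr M$, since without it the argument of Proposition \ref{prop:Gamma_biject} (which uses cutoffs by indicator functions $\1_{E^j_i}$) cannot be applied as stated.

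To verify the $L^\infty(\mu)$-stability of $\mathscr M$, fix $g\in L^\infty(\mu)$ and $\underline v\in\mathscr M$. Approximating $\underline v$ in $L^2(\mu)$ by elements $\underline w\in\mathscr V$ and using the estimate $\|g(\underline v-\underline w)\|_{L^2}\leq\|g\|_{L^\infty}\|\underline v-\underline w\|_{L^2}$, it suffices to check $g\underline w\in\mathscr M$ for each $\underline w\in\mathscr V$. By a standard approximation argument (apply density of $C^\infty_c$ in $L^2(\mu|_{B_j(0)})$ to $g\,\1_{B_j(0)}$, post-compose with a smooth truncation $\phi\in C^\infty(\R)$ satisfying $\phi(t)=t$ for $|t|\leq\|g\|_{L^\infty}$, $\phi(0)=0$ and $|\phi|\leq\|g\|_{L^\infty}+1$, and diagonalise in $j$), one obtains a sequence $(g_k)_k\subseteq C^\infty_c(\R^n)$ with $\sup_k\|g_k\|_{L^\infty}\leq\|g\|_{L^\infty}+1$ and $g_k\to g$ $\mu$-a.e. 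By hypothesis $g_k\underline w\in\mathscr V$, and dominated convergence (with dominant $(\|g\|_{L^\infty}+1)|\underline w|\in L^2(\mu)$) yields $g_k\underline w\to g\underline w$ in $L^2(\mu)$, whence $g\underline w\in\mathscr M$.

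Proposition \ref{prop:Gamma_biject} then furnishes a unique distribution $V_0\in\mathscr D_n(\mu)$ with $\Gamma(V_0)=\mathscr M$; inspection of its proof shows $V_0(x)={\rm cl}\,\{\underline u_k(x):k\in\N\}$ for any countable $\mathbb Q$-linear dense subspace $(\underline u_k)_k\subseteq\mathscr M$, and further guarantees the Borel measurability of $V_0$. I take $(\underline u_k)_k$ to be the $\mathbb Q$-linear span of the given sequence $(\underline v_i)_i$, which is still countable and dense in $\mathscr M$ because $\mathscr V$ is $\R$-linear. It remains to identify $V(x)=V_0(x)$ for $\mu$-a.e.\ $x$, since this automatically implies that $V$ is a.e.\ equal to a Borel map into ${\rm Gr}(\R^n)$, that $\Gamma(V)=\mathscr M={\rm cl}\,\mathscr V$, and that $V$ is independent of the choice of sequence.

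The inclusion $V(x)\subseteq V_0(x)$ $\mu$-a.e.\ is immediate: $\underline v_i\in\mathscr V\subseteq\Gamma(V_0)$ gives $\underline v_i(x)\in V_0(x)$ on a joint full-measure set, and $V_0(x)$ is closed. For the reverse inclusion, fix $k$ and use density of $(\underline v_i)_i$ in $\mathscr V$ to extract a subsequence $(\underline v_{i_\ell})_\ell$ with $\underline v_{i_\ell}\to\underline u_k$ in $L^2(\mu)$; passing to a further subsequence gives pointwise $\mu$-a.e.\ convergence $\underline v_{i_\ell}(x)\to\underline u_k(x)$, hence $\underline u_k(x)\in{\rm cl}\,\{\underline v_i(x):i\in\N\}=V(x)$ on a full-measure set $A_k$. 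Intersecting over $k\in\N$ and using the closedness of $V(x)$ yields ${\rm cl}\,\{\underline u_k(x):k\in\N\}=V_0(x)\subseteq V(x)$, completing the proof.
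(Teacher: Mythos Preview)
Your argument is correct. The route differs from the paper's, and the comparison is worth recording.

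The paper's proof is essentially a citation: it invokes \cite[Lemma~A.1]{Bouchitte_Fragala03} for the claims that the fibers \(V(x)\) are linear subspaces independent of the chosen dense sequence and that \({\rm cl}\,\mathscr V\) coincides with the space of sections landing in \(V\); it then reuses only the Borel-measurability computation from the proof of Proposition~\ref{prop:Gamma_biject} to conclude that \(V\) is a distribution, from which the submodule property of \({\rm cl}\,\mathscr V\) follows a posteriori.

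You go the other way around: you first upgrade the \(C^\infty_c\)-stability of \(\mathscr V\) to full \(L^\infty(\mu)\)-stability of \(\mathscr M={\rm cl}\,\mathscr V\) by a bounded smooth approximation plus dominated convergence, so that Proposition~\ref{prop:Gamma_biject} can be applied as a black box to produce the distribution \(V_0\) with \(\Gamma(V_0)=\mathscr M\); you then identify \(V=V_0\) \(\mu\)-a.e.\ via pointwise subsequence arguments. This buys self-containment --- no external reference is needed --- and it makes transparent why \(V(x)\) is automatically a linear subspace (it equals \(V_0(x)\in{\rm Gr}(\R^n)\), which is linear because the generating sequence \((\underline u_k)_k\) is \(\mathbb Q\)-linear). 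The paper's approach is shorter on the page but outsources the substantive content; yours actually does the work.
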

\begin{proof}
The first part of the statement follows, \emph{e.g.}, from
\cite[Lemma A.1]{Bouchitte_Fragala03}. The fact that \(V\) is a
distribution on \(\R^n\) can be proved by arguing exactly
as in the proof of Proposition \ref{prop:Gamma_biject},
whence the remaining claims immediately follow.
\end{proof}
\begin{remark}[Orthogonal projection]\label{rmk:orth_proj}{\rm
Let \(\mu\geq 0\) be a Radon measure on \(\R^n\).
Let \(V\in\mathscr D_n(\mu)\) be given. We define the
\textbf{orthogonal projection} mapping
\({\rm pr}_V\colon L^2(\R^n,\R^n;\mu)\to\Gamma(V)\) as
\[
{\rm pr}_V(\underline v)(x)\coloneqq\pi_{V(x)}\big(\underline v(x)\big),
\quad\text{ for }\mu\text{-a.e.\ }x\in\R^n,
\]
where \(\pi_{V(x)}\colon\R^n\to V(x)\) is the standard orthogonal
projection. Clearly, the mapping \({\rm pr}_V\) is a surjective,
\(1\)-Lipschitz morphism of \(L^2(\mu)\)-normed
\(L^\infty(\mu)\)-modules.
\fr}\end{remark}
\begin{remark}[Orthogonal complement, II]\label{rmk:orth_compl_II}{\rm
Given any Radon measure \(\mu\) on \(\R^n\) and any distribution
\(V\in\mathscr D_n(\mu)\), we define the \textbf{orthogonal complement}
\(V^\perp\in\mathscr D_n(\mu)\) of \(V\) as
\[
V^\perp(x)\coloneqq\big(V(x)\big)^\perp\subseteq\R^n,
\quad\text{ for }\mu\text{-a.e.\ }x\in\R^n.
\]
Moreover, observe that \(\Gamma(V^\perp)=\Gamma(V)^\perp\),
where \(\Gamma(V)^\perp\) is defined as in Remark \ref{rmk:orth_compl_I}.
\fr}\end{remark}
\section{Characterisation of the Sobolev space on weighted Euclidean spaces}
\subsection{Alberti--Marchese distribution}\label{ss:AM}
In our investigation of the Sobolev space associated with a weighted
Euclidean space, a key role is played by the following result, whose
statement can be roughly summed up in this way: given a Radon measure
\(\mu\) on \(\R^n\), there is a `maximal' distribution \(V_\mu\)
on \(\R^n\) along which all Lipschitz functions are
\(\mu\)-a.e.\ (Fr\'{e}chet) differentiable.
\begin{theorem}[Alberti--Marchese distribution \cite{AM16}]\label{thm:AM}
Let \(\mu\geq 0\) be a Radon measure on \(\R^n\). Then
there exists a unique distribution \(V_\mu\in\mathscr D_n(\mu)\)
such that the following properties hold:
\begin{itemize}
\item[\(\rm i)\)] Every function \(f\in\LIP_c(\R^n)\) is
\(\mu\)-a.e.\ \textbf{differentiable with respect to \(V_\mu\)},
\emph{i.e.}, there exists a vector field
\(\nablaAM f\in\Gamma(V_\mu)\) such that
\begin{equation}\label{eq:formula_nablaAM}
\lim_{V_\mu(x)\ni v\to 0}
\frac{f(x+v)-f(x)-\nablaAM f(x)\cdot v}{|v|}=0,
\quad\text{ for }\mu\text{-a.e.\ }x\in\R^n.
\end{equation}
\item[\(\rm ii)\)] There exists a function \(f_0\in\LIP(\R^n)\)
such that for \(\mu\)-a.e.\ \(x\in\R^n\) it holds that \(f_0\)
is not differentiable at \(x\) with respect to any direction
\(v\in\R^n\setminus V_\mu(x)\).
\end{itemize}
We call \(V_\mu\) the \textbf{Alberti--Marchese distribution}
associated with \(\mu\).
\end{theorem}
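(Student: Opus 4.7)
The plan is to construct $V_\mu$ as the maximal element of a natural class of ``differentiability distributions,'' and then separately to exhibit a single Lipschitz function witnessing the maximality. Let $\mathcal{C}$ denote the family of all $V \in \mathscr{D}_n(\mu)$ such that every $f \in \LIP_c(\R^n)$ is $\mu$-a.e.\ differentiable along $V$ in the sense of \eqref{eq:formula_nablaAM}, for some section $w_f \in \Gamma(V)$ playing the role of $\nablaAM f$. Note that $\mathcal{C}$ is nonempty, since the trivial distribution $V \equiv \{0\}$ lies in it.

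The first main step is the stability of $\mathcal{C}$ under countable pointwise spans. Given a sequence $(V_k)_k \subseteq \mathcal{C}$, the map $V(x) \coloneqq \operatorname{span} \bigcup_k V_k(x)$ is again a distribution: Borel measurability into $\Gr(\R^n)$ reduces, via a countable enumeration, to measurability of each $V_k$ together with the continuity of the span operation. For any $f \in \LIP_c(\R^n)$, the linearity of the directional derivative forces the associated sections over $V_k$ and $V_\ell$ to agree on their intersection, and hence they patch on a $\mu$-conull set into a single section in $\Gamma(V)$; this shows $V \in \mathcal{C}$. To upgrade countable joins to an actual maximum, I would identify each $V \in \mathcal{C}$ with the submodule $\Gamma(V) \subseteq L^2(\R^n,\R^n;\mu)$ via Proposition \ref{prop:Gamma_biject}, form the closed submodule $\mathscr{M} \coloneqq \overline{\bigcup_{V \in \mathcal{C}} \Gamma(V)}$, and let $V_\mu$ be the unique distribution with $\Gamma(V_\mu) = \mathscr{M}$ produced by the same proposition. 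Separability of $L^2(\R^n,\R^n;\mu)$ yields a countable family $(V_k) \subseteq \mathcal{C}$ whose pointwise spans generate $\mathscr{M}$, so by the countable stability step $V_\mu$ belongs to $\mathcal{C}$; this simultaneously proves (i) and makes uniqueness automatic.

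The hard part is (ii): producing one $f_0 \in \LIP(\R^n)$ whose set of directions of non-differentiability at $\mu$-a.e.\ $x$ fills out all of $\R^n \setminus V_\mu(x)$. I expect this to be the main obstacle. A naive indirect attempt would proceed as follows: if no such $f_0$ existed, then for each $f \in \LIP_c(\R^n)$ the ``differentiability directions'' of $f$ would define a distribution $W_f \geq V_\mu$ strictly larger than $V_\mu$ on a positive-measure set; intersecting the $W_{f_k}$ over a countable dense family $(f_k)_k \subseteq \LIP_c(\R^n)$ would yield a distribution $\tilde V \gneq V_\mu$ along which each $f_k$ is $\mu$-a.e.\ differentiable, and passing to all of $\LIP_c(\R^n)$ would place $\tilde V \in \mathcal{C}$, contradicting maximality. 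The delicate --- and genuinely non-trivial --- point is that differentiability in a fixed direction is \emph{not} preserved under uniform limits, so this closure step fails as stated. The actual proof of Alberti--Marchese therefore constructs $f_0$ explicitly as a carefully convergent series of $1$-Lipschitz ``tooth'' functions, built so that on each slice transverse to $V_\mu$ one exhibits oscillations at all scales; the summability of the Lipschitz norms guarantees $f_0 \in \LIP(\R^n)$, while the geometric independence of the tooth-directions guarantees non-differentiability in every direction of $\R^n \setminus V_\mu(x)$ at $\mu$-a.e.\ $x$.
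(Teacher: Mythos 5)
This theorem is not proved in the paper at all: it is imported verbatim from \cite{AM16} (where \(V_\mu\) is the ``decomposability bundle''), with the only added remark that the statement there is for finite measures and the general Radon case follows by localising as in \cite[Remark 1.6]{DMLP20}. So the relevant benchmark is the proof in \cite{AM16}, whose architecture is different from yours: there the bundle is \emph{defined} through decompositions of \(\mu\) into \(1\)-rectifiable pieces (curve fragments), and both the differentiability statement i) and the non-differentiability statement ii) are theorems about that bundle; it is not defined as a maximal ``differentiability distribution.''

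Your attempt has a genuine gap precisely at the step that would make your alternative definition work: the closure of \(\mathcal C\) under (countable) pointwise spans. Differentiability of \(f\) at \(x\) along \(V_1(x)\) and along \(V_2(x)\), in the restricted-Fr\'{e}chet sense of \eqref{eq:formula_nablaAM}, does \emph{not} imply differentiability along \(V_1(x)+V_2(x)\): take \(f(x,y)=\sqrt{|xy|}\) in \(\R^2\), which vanishes on both coordinate axes (hence is differentiable along each axis at the origin with derivative \(0\)) but is not differentiable along \(\R^2\) there, since \(f(t,t)=|t|\). Patching the sections \(w_f\) over \(V_k\) and \(V_\ell\) only produces a candidate linear map on the span; it does not yield the limit \eqref{eq:formula_nablaAM} along the span, and upgrading separate directional/subspace differentiability to \(\mu\)-a.e.\ Fr\'{e}chet differentiability along the joined bundle is exactly one of the hard points of \cite{AM16} (handled there via the rectifiable-decomposition structure of the bundle), not a formal patching argument. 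Without this step your maximal \(V_\mu\) is not known to lie in \(\mathcal C\), so i) is not established. Finally, for ii) you correctly observe that the naive indirect argument fails (directional differentiability does not pass to uniform limits), but then you simply defer to the explicit construction of \(f_0\) in \cite{AM16}; that is a citation of the result, not a proof of it, so ii) remains unproved in your proposal. You also do not address the passage from finite to general Radon measures, which the paper handles by the localisation remark mentioned above.
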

In \cite{AM16} the object \(V_\mu\) is called the
`decomposability bundle' of \(\mu\). Here, we chose the term
`distribution' in order to be consistent with our Definition \ref{def:distr}.
Moreover, Theorem \ref{thm:AM} was actually proven under the additional
assumption of \(\mu\) being a finite measure, whence the case of a
possibly infinite Radon measure follows by arguing as in
\cite[Remark 1.6]{DMLP20}.
\begin{remark}\label{rmk:AM_Leb}{\rm
It follows from Rademacher theorem that
\[
V_{\mathcal L^n}(x)=\R^n,\quad\text{ for }\mathcal L^n\text{-a.e.\ }x\in\R^n.
\]
In particular, if \(\mu\ll\mathcal L^n\), then \(V_\mu(x)=\R^n\) holds
for \(\mu\)-a.e.\ \(x\in\R^n\).
\fr}\end{remark}
We shall refer to \(\nablaAM\) as the \textbf{Alberti--Marchese gradient}
operator. It readily follows from \eqref{eq:formula_nablaAM} that the
element \(\nablaAM f\) is uniquely determined (up to \(\mu\)-a.e.\ equality).
Moreover,
\begin{equation}\label{eq:nablaAM_linear}\begin{split}
&\nablaAM(f+g)(x)=\nablaAM f(x)+\nablaAM g(x),
\quad\text{ for }\mu\text{-a.e.\ }x\in\R^n,\\
&\nablaAM(f-g)(x)=\nablaAM f(x)-\nablaAM g(x),
\quad\text{ for }\mu\text{-a.e.\ }x\in\R^n,
\end{split}\end{equation}
are satisfied for every \(f,g\in\LIP_c(\R^n)\).
Let us also recall that it holds that
\begin{equation}\label{eq:nablaAM_leq_lip}
\big|\nablaAM f(x)\big|\leq\lip(f)(x),
\quad\text{ for }\mu\text{-a.e.\ }x\in\R^n,
\end{equation}
as shown in \cite[Remark 1.7]{DMLP20}.
\subsubsection{Consequences of Alberti--Marchese theorem}
Aim of this section is to illustrate the relation between the
Alberti--Marchese distribution and the Sobolev space on weighted \(\R^n\),
investigated in \cite{DMLP20}. We collect in the
following statement the main results of \cite[Section 2]{DMLP20}.
\begin{theorem}\label{thm:DMLP}
Let \(\mu\geq 0\) be a Radon measure on \(\R^n\).
Then the following properties hold:
\begin{itemize}
\item[\(\rm i)\)]
Let \(\ppi\) be a test plan on \((\R^n,\sfd_\Eucl,\mu)\).
Then for \(\ppi\)-a.e.\ curve \(\gamma\) it holds that
\[
\dot\gamma_t\in V_\mu(\gamma_t),\quad
\text{ for }\mathcal L_1\text{-a.e.\ }t\in[0,1].
\]
\item[\(\rm ii)\)] Let \(f\in\LIP_c(\R^n)\) be given. Then
\(|\nablaAM f|\in L^2(\mu)\) is a weak upper gradient of \(f\).
\item[\(\rm iii)\)] Let \(f\in W^{1,2}(\R^n,\mu)\) be given.
Then there exists a sequence \((f_i)_i\subseteq\LIP_c(\R^n)\)
such that \(f_i\to f\) and \(|\nablaAM f_i|\to|D_\mu f|\) in
the strong topology of \(L^2(\mu)\).
\end{itemize}
\end{theorem}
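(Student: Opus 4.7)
The plan is to handle the three items in order, treating (i) as the main new input and reducing (ii) and (iii) to a chain rule and a density/closure argument respectively.

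For (i), I would argue by contradiction, exploiting item ii) of Theorem~\ref{thm:AM}. Let $f_0\in\LIP(\R^n)$ be the ``universal bad'' Lipschitz function supplied there, so that for $\mu$-a.e.\ $x$, $f_0$ admits no directional derivative at $x$ along any direction $v\in\R^n\setminus V_\mu(x)$. Suppose, for contradiction, that the Borel set
\[
E\coloneqq\{(\gamma,t)\in C([0,1],\R^n)\times[0,1]\,:\,\dot\gamma_t\text{ exists and }\dot\gamma_t\notin V_\mu(\gamma_t)\}
\]
has positive $(\ppi\otimes\mathcal L_1)$-measure. For $\ppi$-a.e.\ $\gamma$, the composition $f_0\circ\gamma$ is Lipschitz in $t$, hence $(f_0\circ\gamma)'(t)$ exists for $\mathcal L_1$-a.e.\ $t$. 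The key observation is that whenever both $\dot\gamma_t$ and $(f_0\circ\gamma)'(t)$ exist, the elementary Lipschitz estimate $|f_0(\gamma_t+h\dot\gamma_t+o(h))-f_0(\gamma_t+h\dot\gamma_t)|\leq\Lip(f_0)\,o(h)$ forces $f_0$ to be directionally differentiable at $\gamma_t$ along $\dot\gamma_t$. On $E$ this direction lies outside $V_\mu(\gamma_t)$. By Fubini, there is a positive $\mathcal L_1$-measure set of times $t$ with $\ppi(E_t)>0$, and the compression estimate $(\e_t)_*\ppi\leq\Comp(\ppi)\,\mu$ then forces the image $\{\gamma_t:\gamma\in E_t\}$ to have positive $\mu$-measure, contradicting Theorem~\ref{thm:AM} ii).

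Item (ii) follows by applying the same Lipschitz-composition device to the given $f\in\LIP_c(\R^n)$. First, \eqref{eq:nablaAM_leq_lip} together with the bounded support of $\lip(f)$ yield $|\nablaAM f|\in L^2(\mu)$. For any test plan $\ppi$ and $\ppi$-a.e.\ $\gamma$, the composition $f\circ\gamma$ is absolutely continuous; by (i), $\dot\gamma_t\in V_\mu(\gamma_t)$ for a.e.\ $t$, and combining \eqref{eq:formula_nablaAM} with the same Lipschitz estimate yields the chain rule $(f\circ\gamma)'(t)=\nablaAM f(\gamma_t)\cdot\dot\gamma_t$. Cauchy--Schwarz then gives $|(f\circ\gamma)'(t)|\leq|\nablaAM f(\gamma_t)|\,|\dot\gamma_t|$, which is exactly the weak upper gradient inequality. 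For (iii), I would use Theorem~\ref{thm:density_Lip} to produce $(f_i)_i\subseteq\LIP_c(\R^n)$ with $f_i\to f$ and $\lip(f_i)\to|D_\mu f|$ in $L^2(\mu)$. The pointwise sandwich $|D_\mu f_i|\leq|\nablaAM f_i|\leq\lip(f_i)$, obtained from (ii) and \eqref{eq:nablaAM_leq_lip}, combined with the lower semicontinuity of the Cheeger energy applied to $f_i\to f$ in $L^2(\mu)$, forces all three $L^2$-norms to converge to $\||D_\mu f|\|_{L^2(\mu)}$. Any weak $L^2$-subsequential limit $g$ of $(|\nablaAM f_i|)_i$ is then a weak upper gradient of $f$ by the standard closure of the weak upper gradient condition under joint $L^2$-strong/weak limits; hence $|D_\mu f|\leq g$, and convergence of norms upgrades weak to strong convergence, yielding $g=|D_\mu f|$. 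A routine subsequence-extraction argument then promotes the conclusion to the full sequence.

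The main obstacle sits entirely in (i), specifically in the ``directional differentiability transfer'' step: the existence of $(f_0\circ\gamma)'(t)$ together with that of $\dot\gamma_t$ has to imply genuine directional differentiability of $f_0$ at $\gamma_t$ along $\dot\gamma_t$. Although the underlying inequality is a one-line Lipschitz bound, the argument must be executed Fubini-measurably in $(\gamma,t)$ and then propagated to the base via the compression of $\ppi$ in order to produce a bundle-level contradiction. Once (i) is in hand, (ii) is just the chain rule plus Cauchy--Schwarz, and (iii) is a sandwich argument complemented by the closure of weak upper gradients.
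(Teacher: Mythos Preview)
The paper does not supply its own proof of this theorem: it is stated as a collection of the main results of \cite[Section~2]{DMLP20} and no argument is given in the present paper. So there is nothing to compare against here, and the relevant question is simply whether your proposal is sound.

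Your argument for item~(i) is correct and is, in spirit, the natural route. The key computation is exactly the one you isolate: if $\dot\gamma_t$ and $(f_0\circ\gamma)'(t)$ both exist, then writing $\gamma_{t+h}=\gamma_t+h\dot\gamma_t+o(h)$ and using the global Lipschitz bound on $f_0$ gives
\[
\frac{f_0(\gamma_t+h\dot\gamma_t)-f_0(\gamma_t)}{h}=\frac{f_0(\gamma_{t+h})-f_0(\gamma_t)}{h}+o(1),
\]
so the directional derivative of $f_0$ at $\gamma_t$ along $\dot\gamma_t$ exists. One cosmetic remark: rather than phrasing it as ``the image $\{\gamma_t:\gamma\in E_t\}$ has positive $\mu$-measure'', it is cleaner (and avoids measurability of the image) to argue directly that the $\mu$-null exceptional set $N$ from Theorem~\ref{thm:AM}~ii) satisfies $(\ppi\otimes\mathcal L_1)\big(\{(\gamma,t):\gamma_t\in N\}\big)=\int_0^1(\e_t)_*\ppi(N)\,\d t=0$ by compression; then for $(\ppi\otimes\mathcal L_1)$-a.e.\ $(\gamma,t)$ one simultaneously has $\gamma_t\notin N$, $\dot\gamma_t$ exists, and $(f_0\circ\gamma)'(t)$ exists, forcing $\dot\gamma_t\in V_\mu(\gamma_t)$. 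This is the same idea, just packaged without the contradiction.

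Item~(ii) is exactly as you say: combine (i) with the $\mu$-a.e.\ Fr\'echet differentiability of $f$ along $V_\mu$ (lifted to $(\ppi\otimes\mathcal L_1)$-a.e.\ $(\gamma,t)$ by compression again) to get the chain rule $(f\circ\gamma)'(t)=\nablaAM f(\gamma_t)\cdot\dot\gamma_t$, then Cauchy--Schwarz.

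Item~(iii) is also fine. The closure of the weak-upper-gradient condition under joint strong/weak $L^2$-limits that you invoke is indeed standard (Mazur plus convexity of the weak-upper-gradient inequality reduces it to the strong--strong case). A slightly shorter finish is available once you have the pointwise sandwich $|D_\mu f_i|\leq|\nablaAM f_i|\leq\lip(f_i)$ and $\lip(f_i)\to|D_\mu f|$ in $L^2(\mu)$: any weak limit $g$ of $|\nablaAM f_i|$ then satisfies $g\leq|D_\mu f|$ $\mu$-a.e.\ (domination by a strongly convergent sequence passes to weak limits), while the closure step gives $|D_\mu f|\leq g$; hence $g=|D_\mu f|$ and the already-established norm convergence upgrades weak to strong. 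Your version reaches the same conclusion.
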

As we already mentioned in the paragraph below Theorem
\ref{thm:Eucl_inf_Hilb}, the universal infinitesimal
Hilbertianity of \(\R^n\) was obtained in
\cite[Theorem 2.3]{DMLP20} as a consequence of
Theorem \ref{thm:DMLP}. The argument was the following:
the Cheeger energy \({\rm E}_{\rm Ch}\) is the lower
semicontinuous envelope of the \textbf{Alberti--Marchese energy}
functional \(\EAM\colon L^2(\mu)\to[0,+\infty]\), given by
\begin{equation}\label{eq:def_E_AM}
\EAM(f)\coloneqq\left\{\begin{array}{ll}
\frac{1}{2}\int|\nablaAM f|^2\,\d\mu,\\
+\infty,\\
\end{array}\quad\begin{array}{ll}
\text{ if }f\in\LIP_c(\R^n),\\
\text{ otherwise,}
\end{array}\right.
\end{equation}
which is clearly \(2\)-homogeneous by construction, and satisfies
the parallelogram rule by \eqref{eq:nablaAM_linear}. Consequently,
the Cheeger energy associated with \((\R^n,\sfd_{\rm Eucl},\mu)\)
satisfies the parallelogram rule, thus yielding the sought conclusion.
\subsection{Identification of the tangent module}
Let \(\mu\) be a given Radon measure on \(\R^n\).
We know from Theorem \ref{thm:iota_as_adjoint} that the
tangent module \(L^2_\mu(T\R^n)\) can be canonically seen
as a submodule of \(L^2(\R^n,\R^n;\mu)\), whence (by Proposition
\ref{prop:Gamma_biject}) we have a natural notion of tangent
distribution \(T_\mu\). In this section, we provide some
alternative characterisations of \(T_\mu\), thus showing
(as described in the introduction) that our approach is
equivalent to the ones introduced in \cite{BBS97} and
\cite{Zhi00,Zhi02}. Some of the proofs that we will carry
out are inspired by \cite{Louet14}.
\subsubsection{Tangent distribution}
We introduce the notion of tangent distribution
on \((\R^n,\sfd_{\rm Eucl},\mu)\):
\begin{definition}[Tangent distribution]\label{def:tg_distr}
Let \(\mu\) be a Radon measure on \(\R^n\).
Then we define the \textbf{tangent distribution} \(T_\mu\) as the
unique element of \(\mathscr D_n(\mu)\) such that
\begin{equation}\label{eq:def_tg_distr}
\Gamma(T_\mu)=\iota_\mu\big(L^2_\mu(T\R^n)\big),
\end{equation}
where \(\iota_\mu\colon L^2_\mu(T\R^n)\to L^2(\R^n,\R^n;\mu)\)
is the isometric embedding described in Theorem
\ref{thm:iota_as_adjoint}.
\end{definition}
\begin{remark}\label{rmk:dim_fibers}{\rm
It is straightforward to check that the module \(L^2_\mu(T\R^n)\)
has dimension \(k\) on a given Borel set \(E\subseteq\R^n\) with
\(\mu(E)>0\) if and only if \(\dim T_\mu(x)=k\) for
\(\mu\)-a.e.\ \(x\in E\).
\fr}\end{remark}
The following result shows that `test plans are tangent
to the distribution \(T_\mu\)', in a sense.
\begin{lemma}\label{lem:speed_pi_tangent_MOD}
Let \(\mu\) be a Radon measure on \(\R^n\). Let \(\ppi\) be a given test plan
on \((\R^n,\sfd_{\rm Eucl},\mu)\). Then for \(\ppi\)-a.e.\ curve \(\gamma\)
it holds that
\begin{equation}\label{eq:speed_pi_tangent_claim}
\dot\gamma_t\in T_\mu(\gamma_t),
\quad\text{ for }\mathcal L_1\text{-a.e.\ }t\in[0,1].
\end{equation}
\end{lemma}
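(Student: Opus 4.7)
\emph{Proof plan.} The strategy is to identify $\mathrm{Der}_t$ with the concrete realisation of the abstract velocity $\ppi'_t$ through the embedding $\iota_\mu$, and then to invoke the fact that $\iota_\mu$ takes values in $\Gamma(T_\mu)$ by definition. For $\mathcal L_1$-a.e.\ $t\in[0,1]$ set
$$\tilde\ppi_t\coloneqq(\e_t^*\iota_\mu)(\ppi'_t)\in\e_t^*L^2(\R^n,\R^n;\mu)\subseteq\mathbb B_\sppi.$$
By \eqref{eq:def_tg_distr}, every $v\in L^2_\mu(T\R^n)$ satisfies $\iota_\mu(v)(x)\in T_\mu(x)$ for $\mu$-a.e.\ $x$; and since $\ppi'_t$ is approximated by finite sums $\sum_i g_i\,\e_t^*v_i$ with $v_i\in L^2_\mu(T\R^n)$ and $g_i\in L^\infty(\ppi)$ (the generating property of Theorem \ref{thm:pullback}), continuity of $\e_t^*\iota_\mu$ yields $\tilde\ppi_t(\gamma)\in T_\mu(\gamma_t)$ for $\ppi$-a.e.\ $\gamma$. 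The main task reduces to proving $\tilde\ppi_t=\mathrm{Der}_t$ in $\mathbb B_\sppi$ for $\mathcal L_1$-a.e.\ $t$.

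To this end, fix $f\in C^\infty_c(\R^n)$. Functoriality of the pullback applied to $\d_\mu f=\mathrm{P}_\mu(\d f)$, combined with the pulled-back adjoint identity \eqref{eq:def_iota}, gives
$$(\e_t^*\d_\mu f)(\ppi'_t)=(\e_t^*\d f)(\tilde\ppi_t)=(\nabla f\circ\e_t)\cdot\tilde\ppi_t,$$
so that Proposition \ref{prop:speed_test_plan} rewrites as $\frac{f\circ\e_{t+h}-f\circ\e_t}{h}\to(\nabla f\circ\e_t)\cdot\tilde\ppi_t$ in $L^1(\ppi)$ for $\mathcal L_1$-a.e.\ $t$. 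On the other hand, for $\ppi$-a.e.\ $\gamma\in AC^2$ the classical chain rule gives $\frac{f(\gamma_{t+h})-f(\gamma_t)}{h}=\frac{1}{h}\int_t^{t+h}\nabla f(\gamma_s)\cdot\dot\gamma_s\,\d s$, which converges to $\nabla f(\gamma_t)\cdot\dot\gamma_t$ at every Lebesgue point of $s\mapsto\nabla f(\gamma_s)\cdot\dot\gamma_s$. A Fubini argument selects an $\mathcal L_1$-full set of times $t$ at which this pointwise convergence holds for $\ppi$-a.e.\ $\gamma$; dominating the difference quotients by $\Lip(f)\cdot M|\dot\gamma|(t)$, where $M$ denotes the Hardy--Littlewood maximal function (which lies in $L^2(\ppi\otimes\mathcal L_1)$ by the maximal inequality and Definition \ref{def:test_plan} ii)), dominated convergence promotes this to $L^1(\ppi)$-convergence towards $(\nabla f\circ\e_t)\cdot\mathrm{Der}_t$ for $\mathcal L_1$-a.e.\ $t$. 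Comparing the two limits yields $(\nabla f\circ\e_t)\cdot(\tilde\ppi_t-\mathrm{Der}_t)=0$ $\ppi$-a.e., for $\mathcal L_1$-a.e.\ $t$.

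To upgrade this orthogonality to the equality $\tilde\ppi_t=\mathrm{Der}_t$, test against the functions $f_{i,R}(x)\coloneqq x_i\,\varphi_R(x)$, where $\varphi_R\in C^\infty_c(\R^n)$ equals $1$ on $B_R(0)$, so that $\nabla f_{i,R}\equiv e_i$ there; letting $i=1,\ldots,n$ and $R\to\infty$ forces $\tilde\ppi_t(\gamma)=\mathrm{Der}_t(\gamma)$ for $\ppi$-a.e.\ $\gamma$ and $\mathcal L_1$-a.e.\ $t$. Combining this with the first step yields $\dot\gamma_t=\mathrm{Der}_t(\gamma)=\tilde\ppi_t(\gamma)\in T_\mu(\gamma_t)$, and a final Fubini step delivers \eqref{eq:speed_pi_tangent_claim}. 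The main technical obstacle lies in reconciling the two convergences: \eqref{eq:formula_speed_test_plan} supplies $L^1(\ppi)$-convergence for each fixed $t$, whereas the chain-rule identity is pointwise in $\gamma$ for a.e.\ $t$, and merging them into a common $\mathcal L_1$-a.e.\ $t$ statement needs a Fubini plus maximal-function argument. The conceptual kernel of the argument is the first step, which is short but rests crucially on the pullback machinery of Theorem \ref{thm:pullback} together with the very definition of $T_\mu$ to ensure that $\tilde\ppi_t$ is automatically tangent to $T_\mu$ along $\e_t$.
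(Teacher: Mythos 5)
Your proposal is correct and follows essentially the same route as the paper's proof: you pull back \(\iota_\mu\) along \(\e_t\), identify \((\e_t^*\iota_\mu)(\ppi_t')\) with \({\rm Der}_t\) by pairing with differentials of smooth functions, observe that this element is \(\ppi\)-a.e.\ valued in \(T_\mu(\gamma_t)\), and conclude by Fubini. The only deviations are cosmetic: you spell out the Fubini/maximal-function step that the paper compresses into the deduction of \((f\circ\gamma)'_t=(\e_t^*\d_\mu f)(\ppi'_t)(\gamma)\), and you force the equality \(\tilde\ppi_t={\rm Der}_t\) by testing with the explicit coordinate cut-offs \(f_{i,R}\) instead of invoking that the pulled-back adjoint identity over the generating family \(\{\d f:f\in C^\infty_c(\R^n)\}\) characterises \(\e_t^*\iota_\mu\).
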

\begin{proof}
Let \(\ppi\) be a given test plan on \((\R^n,\sfd_{\rm Eucl},\mu)\).
Given any \(t\in[0,1]\), consider the pullback morphisms
\(\e_t^*{\rm P}_\mu\colon\e_t^*L^2(\R^n,(\R^n)^*;\mu)
\to\e_t^*L^2_\mu(T^*\R^n)\) and \(\e_t^*\iota_\mu\colon\e_t^*L^2_\mu(T\R^n)
\to\e_t^*L^2(\R^n,\R^n;\mu)\) as in Theorem \ref{thm:pullback}.
The spaces \(\e_t^*L^2_\mu(T\R^n)\) and \(\e_t^*L^2(\R^n,\R^n;\mu)\)
can be identified with the dual modules of \(\e_t^*L^2_\mu(T^*\R^n)\)
and \(\e_t^*L^2(\R^n,(\R^n)^*;\mu)\), respectively,
as a consequence of the separability of \(L^2(\R^n,\R^n;\mu)\)
(which can be readily checked) and of its subspace
\(\iota_\mu\big(L^2_\mu(T\R^n)\big)\); cf.\ \cite[Theorem 1.6.7]{Gigli14}.
Since \(\iota_\mu\) is the adjoint of \({\rm P}_\mu\), it holds that
\(\e_t^*\iota_\mu\) is the adjoint of \(\e_t^*{\rm P}_\mu\), thus in
particular for any element \(z\in\e_t^*L^2_\mu(T\R^n)\) we have that
\begin{equation}\label{eq:adjoint_pullback}
\big((\e_t^*{\rm P}_\mu)(\e_t^*\d f)\big)(z)
=(\e_t^*\d f)\big((\e_t^*\iota_\mu)(z)\big)\;\;\;\ppi\text{-a.e.},
\quad\text{ for every }f\in C^\infty_c(\R^n).
\end{equation}
Moreover, the morphism \(\e_t^*\iota_\mu\) preserves the pointwise norm.
In order to prove it, notice that
\[
\big|(\e_t^*\iota_\mu)(\e_t^*v)\big|=\big|\e_t^*(\iota_\mu(v))\big|
=\big|\iota_\mu(v)\big|\circ\e_t=|v|\circ\e_t=|\e_t^*v|\;\;\;\ppi\text{-a.e.,}
\quad\text{ for every }v\in L^2_\mu(T\R^n),
\]
whence \(\e_t^*\iota_\mu\) is an isometry as we know that
\(\big\{\e_t^*v\,:\,v\in L^2_\mu(T\R^n)\big\}\) generates
\(\e_t^*L^2_\mu(T\R^n)\).

One can readily check that \(\e_t^*L^2(\R^n,\R^n;\mu)\) can be identified
with the space \(\mathbb B_\sppi\), the pullback map
\(\e_t^*\colon L^2(\R^n,\R^n;\mu)\to\mathbb B_\sppi\) being
given by \(\e_t^*\underline v\coloneqq\underline v\circ\e_t\) for every
\(\underline v\in L^2(\R^n,\R^n;\mu)\). An analogous statement holds
for \(\e_t^*L^2(\R^n,(\R^n)^*;\mu)\). Observe that
\begin{equation}\label{eq:char_pullback_tg}
(\e_t^*\iota_\mu)\big(\e_t^*L^2_\mu(T\R^n)\big)=
\Big\{\underline z\in\mathbb B_\sppi\;\Big|\;
\underline z(\gamma)\in T_\mu(\gamma_t),\text{ for }
\ppi\text{-a.e.\ }\gamma\Big\}.
\end{equation}
Let us consider, for \(\mathcal L_1\)-a.e.\ \(t\in[0,1]\), the velocity
\(\ppi'_t\in\e_t^*L^2_\mu(T\R^n)\) of \(\ppi\) as in Proposition
\ref{prop:speed_test_plan}. We deduce from \eqref{eq:formula_speed_test_plan}
that for any given function \(f\in C^\infty_c(\R^n)\) it holds that
\begin{equation}\label{eq:speed_test_plan_concrete}
(f\circ\gamma)'_t=(\e_t^*\d_\mu f)(\ppi'_t)(\gamma),
\quad\text{ for }\ppi\text{-a.e.\ }\gamma.
\end{equation}
For \(\mathcal L_1\)-a.e.\ \(t\in[0,1]\), consider the mapping
\({\rm Der}_t\in\mathbb B_\sppi\) defined in
\eqref{eq:def_Der}. We claim that
\begin{equation}\label{eq:speed_test_plan_concrete_claim}
(\e_t^*\iota_\mu)(\ppi'_t)={\rm Der}_t,\quad
\text{ for }\mathcal L_1\text{-a.e.\ }t\in[0,1].
\end{equation}
Given any function \(f\in C^\infty_c(\R^n)\), we have that for
\(\ppi\)-a.e.\ curve \(\gamma\) it holds that
\[
\big((\e_t^*{\rm P}_\mu)(\e_t^*\d f)\big)(\ppi'_t)(\gamma)
=(\e_t^*\d_\mu f)(\ppi'_t)(\gamma)
\overset{\eqref{eq:speed_test_plan_concrete}}=
(f\circ\gamma)'_t=(\d f\circ\e_t)({\rm Der}_t)(\gamma)
=(\e_t^*\d f)({\rm Der}_t)(\gamma).
\]
Since the identity in \eqref{eq:adjoint_pullback} actually characterises
\(\e_t^*\iota_\mu\), we deduce that the claim
\eqref{eq:speed_test_plan_concrete_claim} holds. In particular, we have
that \({\rm Der}_t\in(\e_t^*\iota_\mu)\big(\e_t^*L^2_\mu(T\R^n)\big)\)
for \(\mathcal L_1\)-a.e.\ \(t\in[0,1]\), whence
\eqref{eq:char_pullback_tg} yields
\[
\dot\gamma_t={\rm Der}_t(\gamma)\in T_\mu(\gamma_t),
\quad\text{ for }(\ppi\otimes\mathcal L_1)\text{-a.e.\ }(\gamma,t).
\]
Thanks to Fubini theorem, we finally conclude that the sought
property \eqref{eq:speed_pi_tangent_claim} is satisfied.
\end{proof}
Clearly, in order to identify the minimal weak upper gradient
of a given Sobolev function, it is sufficient to look at the
directions that are selected by the test plans. The following
result makes this claim precise. 
\begin{lemma}\label{lem:tg_distr_gives_wug}
Let \(\mu\) be a Radon measure on \(\R^n\).
Let \(V\in\mathscr D_n(\mu)\) satisfy the following property:
given any test plan \(\ppi\) on \((\R^n,\sfd_{\rm Eucl},\mu)\),
it holds that \(\dot\gamma_t\in V(\gamma_t)\) for
\((\ppi\otimes\mathcal L_1)\)-a.e.\ \((\gamma,t)\).
Then for any function \(f\in C^\infty_c(\R^n)\) we have that
\(\big|{\rm pr}_V(\nabla f)\big|\) is a weak upper gradient of \(f\).
\end{lemma}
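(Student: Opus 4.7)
The strategy is direct: fix a test plan \(\ppi\) on \((\R^n,\sfd_{\rm Eucl},\mu)\) and a function \(f\in C^\infty_c(\R^n)\), and verify the definition of weak upper gradient by combining a classical chain rule for \(f\circ\gamma\) with the hypothesis that the velocities \(\dot\gamma_t\) lie in \(V(\gamma_t)\). First observe that \(\big|{\rm pr}_V(\nabla f)\big|\leq|\nabla f|\) holds \(\mu\)-a.e.\ by Remark \ref{rmk:orth_proj}, and since \(\nabla f\) is bounded with compact support it belongs to \(L^2(\R^n,\R^n;\mu)\); hence \(\big|{\rm pr}_V(\nabla f)\big|\in L^2(\mu)\) is a legitimate candidate for a weak upper gradient.

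Next I would exploit the regularity of \(f\). Because \(f\) is \(C^1\) and \(\ppi\) is concentrated on \(AC^2([0,1],\R^n)\), for \(\ppi\)-a.e.\ curve \(\gamma\) the composition \(f\circ\gamma\) is absolutely continuous, and at each \(t\in[0,1]\) where the Euclidean derivative \(\dot\gamma_t\) exists in the strong sense one has the pointwise chain rule
\[
(f\circ\gamma)'_t=\nabla f(\gamma_t)\cdot\dot\gamma_t,
\]
with \(|\dot\gamma_t|\) being \(\mathcal L_1\)-a.e.\ equal to the metric speed of \(\gamma\). Now invoke the hypothesis on \(V\): combining Fubini's theorem with the assumption yields a \(\ppi\)-full Borel set of curves such that \(\dot\gamma_t\in V(\gamma_t)\) for \(\mathcal L_1\)-a.e.\ \(t\in[0,1]\). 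Decomposing \(\nabla f(\gamma_t)={\rm pr}_V(\nabla f)(\gamma_t)+w_t\) with \(w_t\in V(\gamma_t)^\perp\), the second summand is orthogonal to \(\dot\gamma_t\), and therefore
\[
(f\circ\gamma)'_t={\rm pr}_V(\nabla f)(\gamma_t)\cdot\dot\gamma_t,
\quad\text{ for }\mathcal L_1\text{-a.e.\ }t\in[0,1].
\]
Cauchy--Schwarz in \(\R^n\) then gives \(\big|(f\circ\gamma)'_t\big|\leq\big|{\rm pr}_V(\nabla f)(\gamma_t)\big|\,|\dot\gamma_t|\), which is exactly the weak upper gradient inequality.

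There is no real obstacle here: the statement reduces entirely to the pointwise chain rule for smooth \(f\) composed with an absolutely continuous curve, together with the orthogonal decomposition of \(\nabla f\) relative to the distribution \(V\). The only technical point worth care is checking that the exceptional \(\ppi\)-null set coming from Fubini and the exceptional \(\mathcal L_1\)-null set coming from the classical differentiability of \(\gamma\) can be handled simultaneously — which is immediate, since both conditions hold for \((\ppi\otimes\mathcal L_1)\)-a.e.\ \((\gamma,t)\).
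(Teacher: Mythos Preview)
Your proposal is correct and follows essentially the same approach as the paper: fix a test plan, use the chain rule \((f\circ\gamma)'_t=\nabla f(\gamma_t)\cdot\dot\gamma_t\), observe that the hypothesis \(\dot\gamma_t\in V(\gamma_t)\) kills the component of \(\nabla f(\gamma_t)\) orthogonal to \(V(\gamma_t)\), and conclude by Cauchy--Schwarz. The paper's proof is just a terser version of yours; your extra remarks on the \(L^2\)-integrability of the candidate and on handling the exceptional sets via Fubini are correct and merely add detail.
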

\begin{proof}
Fix any test plan \(\ppi\) on \((\R^n,\sfd_{\rm Eucl},\mu)\).
Then for \(\ppi\)-a.e.\ curve \(\gamma\) it holds that
\[
\big|(f\circ\gamma)'_t\big|
=\big|\nabla f(\gamma_t)\cdot\dot\gamma_t\big|
=\big|{\rm pr}_V(\nabla f)(\gamma_t)\cdot\dot\gamma_t\big|
\leq\big|{\rm pr}_V(\nabla f)\big|(\gamma_t)\,|\dot\gamma_t|,
\quad\text{ for }\mathcal L_1\text{-a.e.\ }t\in[0,1].
\]
By arbitrariness of \(\ppi\), we conclude that
\(\big|{\rm pr}_V(\nabla f)\big|\) is a weak upper gradient of \(f\).
\end{proof}
\subsubsection{An axiomatic notion of weak gradient}
Another possible way to define the tangent fibers is via
the vectorial relaxation procedure proposed by Zhikov in
\cite{Zhi00,Zhi02} and studied by Louet in \cite{Louet14}.
Below we introduce a generalisation of such approach, tailored
for our purposes.
\begin{definition}[\(G\)-structure]\label{def:G-struct}
Let \(\mu\geq 0\) be a Radon measure on \(\R^n\). Then by
\textbf{\(G\)-structure} on \((\R^n,\sfd_{\rm Eucl},\mu)\)
we mean a couple \((\mathcal V,\bar\nabla)\) satisfying
the following list of axioms:
\begin{itemize}
\item[\textbf{A1.}] \(\mathcal V\) is a linear subspace of
\(W^{1,2}(\R^n,\mu)\) containing \(C^\infty_c(\R^n)\).
\item[\textbf{A2.}] \(\bar\nabla\colon\mathcal V\to L^2(\R^n,\R^n;\mu)\)
is a linear operator.
\item[\textbf{A3.}] \(|\bar\nabla f|\) is a weak upper
gradient of \(f\) for any \(f\in\mathcal V\),
with \(|\bar\nabla f|\in L^\infty(\mu)\) if
\(f\in C^\infty_c(\R^n)\).
\item[\textbf{A4.}] \(\bar\nabla\) satisfies the
\textbf{Leibniz rule}, \emph{i.e.}, if \(f\in\mathcal V\)
and \(g\in C^\infty_c(\R^n)\), then \(fg\in\mathcal V\) and
\[
\bar\nabla(fg)=f\,\bar\nabla g+g\bar\nabla f,
\quad\text{ in the }\mu\text{-a.e.\ sense.}
\]
\item[\textbf{A5.}] Calling \({\rm E}_G\colon L^2(\mu)\to[0,+\infty]\)
the energy functional
\[
{\rm E}_G(f)\coloneqq\left\{\begin{array}{ll}
\frac{1}{2}\int|\bar\nabla f|^2\,\d\mu,\\
+\infty,
\end{array}\quad\begin{array}{ll}
\text{ if }f\in\mathcal V,\\
\text{ otherwise,}
\end{array}\right.
\]
it holds that \({\rm E}_{\rm Ch}\) is the lower
semicontinuous envelope of \({\rm E}_G\).
\end{itemize}
\end{definition}
The term `\(G\)-structure' is somehow inspired by the
notion of \(D\)-structure, which has been proposed by
V.\ Gol'dshtein and M.\ Troyanov in the paper \cite{GoldTroy01}.
Therein, they developed an axiomatic theory of Sobolev
spaces on general metric measure spaces. In our setting,
thanks to the presence of an underlying linear structure,
the axiomatisation can be formulated in terms of `gradients'
rather than `moduli of the gradients'.
\begin{remark}[Density in energy]\label{rmk:density_energy_G-struct}{\rm
Observe that axiom \textbf{A5} is equivalent to requiring that
the elements of \(\mathcal V\) are \textbf{dense in energy} in
\(W^{1,2}(\R^n,\mu)\), \emph{i.e.}, for every
\(f\in W^{1,2}(\R^n,\mu)\) there exists a sequence
\((f_i)_i\subseteq\mathcal V\) such that \(f_i\to f\) and
\(|\bar\nabla f_i|\to|D_\mu f|\) strongly in \(L^2(\mu)\).
\fr}\end{remark}
\begin{example}[Examples of \(G\)-structures]\label{ex:G-struct}{\rm
Let us describe two examples of \(G\)-structures on
\((\R^n,\sfd_{\rm Eucl},\mu)\) that will play a fundamental
role in the forthcoming discussion:
\begin{itemize}
\item[\(\rm a)\)] The \(G_\mu\)-structure
\(\big(C^\infty_c(\R^n),\nabla\big)\).
\item[\(\rm b)\)] The \(\GAM\)-structure
\(\big(\LIP_c(\R^n),\nablaAM\big)\). Observe that
\begin{equation}\label{eq:char_nabla_AM_f}
\nablaAM f={\rm pr}_{V_\mu}(\nabla f),
\quad\text{ for every }f\in C^\infty_c(\R^n).
\end{equation}
\end{itemize}
The axioms defining a \(G\)-structure are satisfied both
in a) and in b), as a consequence of the results contained in
Sections \ref{ss:diff_struct_mms} and \ref{ss:AM}, respectively.
\fr}\end{example}
Much like in the case of Sobolev spaces via test plans
and minimal weak upper gradients, any \(G\)-structure naturally
comes with a unique minimal object, called the minimal
\(G\)-gradient:
\begin{definition}[\(G\)-gradient]\label{def:G-gradient}
Let \(\mu\geq 0\) be a Radon measure on \(\R^n\) and
\((\mathcal V,\bar\nabla)\) a \(G\)-structure on
\((\R^n,\sfd_{\rm Eucl},\mu)\). Fix \(f\in L^2(\mu)\).
Then we say that \(f\) admits a \textbf{\(G\)-gradient}
\(\underline v\in L^2(\R^n,\R^n;\mu)\) provided there
exists a sequence \((f_i)_i\subseteq\mathcal V\) such that
\[\begin{split}
f_i\to f,&\quad\text{ strongly in }L^2(\mu),\\
\bar\nabla f_i\to\underline v,&\quad\text{ strongly in }L^2(\R^n,\R^n;\mu).
\end{split}\]
We denote by \(G(f)\) the closed affine subspace
of \(L^2(\R^n,\R^n;\mu)\) made of all \(G\)-gradients of \(f\).
The (unique) element of \(G(f)\) of minimal norm
is called the \textbf{minimal \(G\)-gradient} of \(f\).
\end{definition}
Observe that \(\bar\nabla f\in G(f)\) for every \(f\in\mathcal V\),
as one can see by taking \(f_i\coloneqq f\) for every \(i\in\N\).
\begin{remark}\label{rmk:fibers_G_mu}{\rm
Note that the space \(G(0)\) is closed under multiplication
by \(C^\infty_c(\R^n)\)-functions: given any \(g\in C^\infty_c(\R^n)\)
and \(\underline v\in G(0)\), it holds that \(g\underline v\in G(0)\).
Indeed, if \((f_i)_i\subseteq C^\infty_c(\R^n)\) is a sequence satisfying
\(f_i\to 0\) in \(L^2(\mu)\) and \(\bar\nabla f_i\to\underline v\)
in \(L^2(\R^n,\R^n;\mu)\), then \(g f_i\to 0\) in \(L^2(\mu)\) and
\(\bar\nabla(gf_i)=g\bar\nabla f_i+f_i\bar\nabla g\to g\underline v\)
in \(L^2(\R^n,\R^n;\mu)\). In particular, we deduce from
Lemma \ref{lem:stable_spaces} that the space \(G(0)\)
is an \(L^2(\mu)\)-normed \(L^\infty(\mu)\)-submodule of
\(L^2(\R^n,\R^n;\mu)\).
\fr}\end{remark}
\begin{definition}\label{def:W_G}
Let \(\mu\) be a Radon measure on \(\R^n\) and
\((\mathcal V,\bar\nabla)\) a \(G\)-structure on
\((\R^n,\sfd_{\rm Eucl},\mu)\). Then we define \(W_G\)
as the unique element of \(\mathscr D_n(\mu)\) such that
\begin{equation}\label{eq:def_V_G}
\Gamma(W_G)=G(0).
\end{equation}
\end{definition}
Notice that the previous definition is meaningful as a
consequence of Remark \ref{rmk:fibers_G_mu}.
\subsubsection{Alternative characterisations of the tangent distribution}
The following two results show that \(G\)-structures can be
used to provide an alternative notion of Sobolev space, which
turns out to be fully equivalent to the approach via test plans.
\begin{theorem}[Alternative characterisation of \(W^{1,2}\)]
\label{thm:alt_char_Sobolev}
Let \(\mu\geq 0\) be a Radon measure on \(\R^n\) and let
\((\mathcal V,\bar\nabla)\) be a \(G\)-structure on
\((\R^n,\sfd_{\rm Eucl},\mu)\). Then
\[
W^{1,2}(\R^n,\mu)=\big\{f\in L^2(\mu)\;\big|\;G(f)\neq\emptyset\big\}.
\]
Moreover, for every \(f\in W^{1,2}(\R^n,\mu)\) it holds
that the minimal weak upper gradient \(|D_\mu f|\) coincides
(in the \(\mu\)-a.e.\ sense) with the pointwise norm of the
minimal \(G\)-gradient of \(f\).
\end{theorem}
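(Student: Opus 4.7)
The plan is to establish the two inclusions separately, combining axiom \textbf{A5} (density in energy, cf.\ Remark \ref{rmk:density_energy_G-struct}) with the stability of weak upper gradients under strong \(L^2(\mu)\)-convergence, which is a standard consequence of Definition \ref{def:Sob_space} and Fatou's lemma.

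First I would prove that \(W^{1,2}(\R^n,\mu)\subseteq\big\{f\in L^2(\mu):G(f)\neq\emptyset\big\}\) and simultaneously identify \(|D_\mu f|\) with the pointwise norm of the minimal \(G\)-gradient. Fix \(f\in W^{1,2}(\R^n,\mu)\). By Remark \ref{rmk:density_energy_G-struct} there exists \((f_i)_i\subseteq\mathcal V\) with \(f_i\to f\) in \(L^2(\mu)\) and \(|\bar\nabla f_i|\to|D_\mu f|\) in \(L^2(\mu)\); in particular \((\bar\nabla f_i)_i\) is bounded in the Hilbert space \(L^2(\R^n,\R^n;\mu)\). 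Extract a weakly convergent subsequence \(\bar\nabla f_{i_k}\rightharpoonup\underline v\), and apply Mazur's lemma to get convex combinations \(g_k\coloneqq\sum_{j}\lambda_j^k f_{i_j}\) with \(i_j\geq k\) such that \(\bar\nabla g_k\to\underline v\) strongly in \(L^2(\R^n,\R^n;\mu)\); since also \(g_k\to f\) in \(L^2(\mu)\), we obtain \(\underline v\in G(f)\). By linearity \textbf{A2} and axiom \textbf{A3}, each \(|\bar\nabla g_k|\) is a weak upper gradient of \(g_k\in\mathcal V\), and \(|\bar\nabla g_k|\to|\underline v|\) strongly in \(L^2(\mu)\); by the \(L^2\)-stability of weak upper gradients this gives that \(|\underline v|\) is a weak upper gradient of \(f\), hence \(|D_\mu f|\leq|\underline v|\) \(\mu\)-a.e.

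Next I would use weak lower semicontinuity of the norm to get the reverse integral inequality:
\[
\int|\underline v|^2\,\d\mu\leq\liminf_{k\to\infty}\int|\bar\nabla f_{i_k}|^2\,\d\mu=\int|D_\mu f|^2\,\d\mu\leq\int|\underline v|^2\,\d\mu,
\]
where the last step uses \(|D_\mu f|\leq|\underline v|\) \(\mu\)-a.e. Equality in the integrals, together with the pointwise bound, forces \(|\underline v|=|D_\mu f|\) \(\mu\)-a.e. Since any \(\underline w\in G(f)\) yields, by the same stability argument, \(|\underline w|\geq|D_\mu f|\) \(\mu\)-a.e., the element \(\underline v\) realises the minimum of \(\|\cdot\|_{L^2(\R^n,\R^n;\mu)}\) on \(G(f)\), hence is the unique minimal \(G\)-gradient of \(f\) (by strict convexity of the norm on the closed affine set \(G(f)\)), and its pointwise norm equals \(|D_\mu f|\).

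For the reverse inclusion, take \(f\in L^2(\mu)\) with \(G(f)\neq\emptyset\) and pick any \(\underline v\in G(f)\) with approximating sequence \((f_i)_i\subseteq\mathcal V\) such that \(f_i\to f\) in \(L^2(\mu)\) and \(\bar\nabla f_i\to\underline v\) in \(L^2(\R^n,\R^n;\mu)\). By \textbf{A3} each \(|\bar\nabla f_i|\) is a weak upper gradient of \(f_i\) and \(|\bar\nabla f_i|\to|\underline v|\) in \(L^2(\mu)\); the stability of weak upper gradients then provides \(|\underline v|\) as a weak upper gradient of \(f\), so \(f\in W^{1,2}(\R^n,\mu)\).

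The main technical obstacle is the upgrade from the weak \(L^2\)-convergence of \(\bar\nabla f_i\) (which is all the energy bound yields a priori) to a strong convergence through convex combinations lying in \(\mathcal V\); this is essential in order to invoke axioms \textbf{A2}--\textbf{A3} along the approximating sequence and then appeal to the closure of weak upper gradients. All other steps are standard once this reduction to strong convergence inside \(\mathcal V\) is in place.
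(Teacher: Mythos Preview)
Your proposal is correct and follows essentially the same route as the paper's proof: both use the density in energy from axiom \textbf{A5} together with a weak compactness argument, upgrade to strong convergence via convex combinations (you use Mazur's lemma, the paper uses Banach--Saks), and then invoke the \(L^2\)-stability of weak upper gradients (which the paper phrases via Proposition~\ref{prop:closure_diff}) to get the pointwise bound \(|D_\mu f|\leq|\underline v|\). The only cosmetic difference is that the paper concludes the norm identification by first proving \(\big\||\underline v|\big\|_{L^2(\mu)}\leq\big\||D_\mu f|\big\|_{L^2(\mu)}\) directly from the triangle inequality applied to the Ces\`aro means, whereas you obtain it from weak lower semicontinuity of the Hilbert norm; both are equally valid.
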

\begin{proof}
First of all, let us fix any function
\(f\in W^{1,2}(\R^n,\mu)\). We claim that
\(G(f)\neq\emptyset\) and that there exists an element
\(\underline v\in G(f)\) such that
\(\big\||\underline v|\big\|_{L^2(\mu)}\leq
\big\||D_\mu f|\big\|_{L^2(\mu)}\). In order to prove it, choose a
sequence \((f_i)_i\subseteq\mathcal V\) such that \(f_i\to f\)
and \(|\bar\nabla f_i|\to|D_\mu f|\) in \(L^2(\mu)\), whose existence
is observed in Remark \ref{rmk:density_energy_G-struct}. Up to a not
relabelled subsequence, it holds that
\(\bar\nabla f_i\rightharpoonup\underline v\)
weakly in \(L^2(\R^n,\R^n;\mu)\) for some vector field
\(\underline v\in L^2(\R^n,\R^n;\mu)\). By Banach--Saks theorem
we know that (up to taking a further subsequence) it holds that the functions
\(g_i\coloneqq\frac{1}{i}\sum_{j=1}^i f_j\in\mathcal V\)
satisfy \(g_i\to f\) in \(L^2(\mu)\) and
\(\bar\nabla g_i\to\underline v\) strongly in \(L^2(\R^n,\R^n;\mu)\),
which yields \(\underline v\in G(f)\). It also holds that
\(\big\||\underline v|\big\|_{L^2(\mu)}=
\lim_i\big\||\bar\nabla g_i|\big\|_{L^2(\mu)}\leq
\lim_i\frac{1}{i}\sum_{j=1}^i\big\||\bar\nabla f_j|\big\|_{L^2(\mu)}=
\big\||D_\mu f|\big\|_{L^2(\mu)}\).

Conversely, let us suppose that \(f\in L^2(\mu)\) satisfies
\(G(f)\neq\emptyset\). Fix an element
\(\underline v\in G(f)\). Pick any sequence
\((f_i)_i\subseteq\mathcal V\) such that
\(f_i\to f\) in \(L^2(\mu)\) and \(\bar\nabla f_i\to\underline v\)
in \(L^2(\R^n,\R^n;\mu)\). In particular,
\(|\bar\nabla f_i|\to|\underline v|\) in \(L^2(\mu)\).
Since \(|\bar\nabla f_i|\) is a weak upper gradient of \(f_i\)
for every \(i\in\N\), we deduce from Proposition \ref{prop:closure_diff}
that \(f\in W^{1,2}(\R^n,\mu)\) and
\(|D_\mu f|\leq|\underline v|\) holds \(\mu\)-a.e.\ in \(\R^n\).
All in all, the proof of the statement is finally achieved.
\end{proof}
\begin{proposition}\label{prop:mwug_smooth}
Let \(\mu\geq 0\) be a Radon measure on \(\R^n\) and let
\((\mathcal V,\bar\nabla)\) be a \(G\)-structure on
\((\R^n,\sfd_{\rm Eucl},\mu)\). Then for any \(f\in\mathcal V\)
it holds that \({\rm pr}_{W_G^\perp}(\bar\nabla f)\)
is the minimal \(G\)-gradient of \(f\). In particular,
\(\big|{\rm pr}_{W_G^\perp}(\bar\nabla f)\big|\)
is the minimal weak upper gradient of \(f\).
\end{proposition}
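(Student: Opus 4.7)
The plan is to reduce the claim to a standard Hilbert-space projection fact, once we recognise $G(f)$ as a closed affine subspace of $L^2(\R^n,\R^n;\mu)$ obtained by translating the closed submodule $G(0)=\Gamma(W_G)$ by any particular $G$-gradient of $f$. Since $\bar\nabla f\in G(f)$ (take the constant sequence $f_i\coloneqq f$ in Definition \ref{def:G-gradient}), the natural choice of translate is $\bar\nabla f$ itself.

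First I would establish the affine decomposition
\[
G(f)=\bar\nabla f+\Gamma(W_G).
\]
The inclusion $\supseteq$ is immediate: if $\underline w\in G(0)$, realised by $(h_i)_i\subseteq\mathcal V$ with $h_i\to 0$ in $L^2(\mu)$ and $\bar\nabla h_i\to\underline w$ in $L^2(\R^n,\R^n;\mu)$, then $f+h_i\in\mathcal V$ (using axiom A1 for linearity) together with linearity of $\bar\nabla$ (axiom A2) certify $\bar\nabla f+\underline w\in G(f)$. Conversely, if $\underline v\in G(f)$ is witnessed by $(g_i)_i\subseteq\mathcal V$, then $g_i-f$ goes to $0$ in $L^2(\mu)$ while $\bar\nabla(g_i-f)=\bar\nabla g_i-\bar\nabla f\to\underline v-\bar\nabla f$ in $L^2(\R^n,\R^n;\mu)$, hence $\underline v-\bar\nabla f\in G(0)=\Gamma(W_G)$ by Definition \ref{def:W_G}.

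Once this is in place, the ambient space $L^2(\R^n,\R^n;\mu)$ is Hilbert and $\Gamma(W_G)$ is a closed subspace, so the orthogonal decomposition
\[
L^2(\R^n,\R^n;\mu)=\Gamma(W_G)\oplus\Gamma(W_G)^\perp=\Gamma(W_G)\oplus\Gamma(W_G^\perp),
\]
guaranteed by Remarks \ref{rmk:orth_compl_I} and \ref{rmk:orth_compl_II}, applies. The unique norm-minimising element of the affine space $\bar\nabla f+\Gamma(W_G)$ is the one orthogonal to $\Gamma(W_G)$, namely
\[
\bar\nabla f-{\rm pr}_{W_G}(\bar\nabla f)={\rm pr}_{W_G^\perp}(\bar\nabla f),
\]
which therefore is the minimal $G$-gradient of $f$. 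The final clause follows at once from Theorem \ref{thm:alt_char_Sobolev}, which identifies $|D_\mu f|$ with the pointwise norm of the minimal $G$-gradient.

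I do not anticipate a genuine obstacle: the only delicate point is making sure that the translation/closure identity $G(f)=\bar\nabla f+\Gamma(W_G)$ really holds at the level of sets (as opposed to just an inclusion), and that the minimiser of the norm on an affine Hilbert subspace is indeed given by the orthogonal-projection formula — both are standard once one checks that $\bar\nabla$ is $\R$-linear on $\mathcal V$ and that $G(0)$ is closed. The submodule structure of $G(0)$ has already been recorded in Remark \ref{rmk:fibers_G_mu}, so no extra work is needed to invoke Definition \ref{def:W_G}.
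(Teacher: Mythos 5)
Your proposal is correct and follows essentially the same route as the paper: you prove that \(G(f)\) is the translate \(\bar\nabla f+\Gamma(W_G)\) of the closed submodule \(G(0)=\Gamma(W_G)\) (the paper expresses exactly this by showing \({\rm pr}_{W_G^\perp}(\bar\nabla f)\in G(f)\) and that \(\bar\nabla f-\underline v\in G(0)\) for every \(\underline v\in G(f)\)), and then you identify the minimal-norm element via the orthogonal decomposition \(L^2(\R^n,\R^n;\mu)=\Gamma(W_G)\oplus\Gamma(W_G^\perp)\), whereas the paper concludes minimality from the \(1\)-Lipschitzness of \({\rm pr}_{W_G^\perp}\) -- a purely cosmetic difference. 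Both arguments then obtain the last clause from Theorem \ref{thm:alt_char_Sobolev}, so no gap remains.
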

\begin{proof}
We claim that for any element \(\underline v\in G(f)\) it holds
that \({\rm pr}_{W_G^\perp}(\underline v)\) belongs to \(G(f)\)
and is independent of \(\underline v\). First, recall that
\(\bar\nabla f\in G(f)\). Since Lemma \ref{lem:stable_spaces}
yields \({\rm pr}_{W_G}(\bar\nabla f)\in G(0)\), there exists
\((g_i)_i\subseteq\mathcal V\) such that \(g_i\to 0\) in
\(L^2(\mu)\) and \(\bar\nabla g_i\to{\rm pr}_{W_G}(\bar\nabla f)\)
in \(L^2(\R^n,\R^n;\mu)\). Hence, the sequence
\((f-g_i)_i\subseteq\mathcal V\) satisfies
\(f-g_i\to f\) in \(L^2(\mu)\) and
\(\bar\nabla(f-g_i)\to{\rm pr}_{W_G^{\perp}}(\bar\nabla f)\)
in \(L^2(\R^n,\R^n;\mu)\), yielding
\({\rm pr}_{W_G^\perp}(\bar\nabla f)\in G(f)\).
Furthermore, let \(\underline v\in G(f)\) be fixed. Pick any
sequence \((f_i)_i\subseteq\mathcal V\) such that \(f_i\to f\)
in \(L^2(\mu)\) and \(\bar\nabla f_i\to\underline v\) in
\(L^2(\R^n,\R^n;\mu)\). This implies that
\((f-f_i)_i\subseteq\mathcal V\) satisfies \(f-f_i\to 0\) in
\(L^2(\mu)\) and \(\bar\nabla(f-f_i)\to\bar\nabla f-\underline v\)
in \(L^2(\R^n,\R^n;\mu)\). Consequently, we conclude that
\(\bar\nabla f-\underline v\in G(0)\), thus
\(\bar\nabla f(x)-\underline v(x)\in W_G(x)\) for
\(\mu\)-a.e.\ \(x\in\R^n\). This means that
\({\rm pr}_{W_G^\perp}(\bar\nabla f)-{\rm pr}_{W_G^\perp}
(\underline v)={\rm pr}_{W_G^\perp}(\bar\nabla f-
\underline v)=0\). All in all, the claim is proven.

Now the first part of the statement readily follows:
given any \(\underline v\in G(f)\), it holds that
\[
\big\|{\rm pr}_{W_G^\perp}(\bar\nabla f)\big\|_{L^2(\R^n,\R^n;\mu)}
=\big\|{\rm pr}_{W_G^\perp}(\underline v)\big\|_{L^2(\R^n,\R^n;\mu)}
\leq\|\underline v\|_{L^2(\R^n,\R^n;\mu)}.
\]
Therefore, we finally conclude that \({\rm pr}_{W_G^\perp}(\bar\nabla f)\)
is the minimal \(G\)-gradient of \(f\). The last part of the
statement now follows from Theorem \ref{thm:alt_char_Sobolev},
thus the proof is complete.
\end{proof}
We are now ready to state and prove the main result of
this section. It says that the tangent distribution \(T_\mu\)
can be expressed either in terms of the domain of the
distributional divergence \(\underline{\rm div}_\mu\), or
of the \(G_\mu\)-structure. We point out that, to the best
of our knowledge, the equivalence between these two approaches
(namely, items ii) and iii) of the following result) was
previously not known; one of the two implications is proved
in \cite[end of Section 1]{Louet14}. 
\begin{theorem}[Alternative characterisations of \(T_\mu\)]
\label{thm:alt_char_T_mu}
Let \(\mu\geq 0\) be a Radon measure on \(\R^n\).
Then the tangent distribution \(T_\mu\) can be equivalently
characterised in the following ways:
\begin{itemize}
\item[\(\rm i)\)] \(T_\mu\) is the unique minimal element of
\(\mathscr D_n(\mu)\) with the property that for any
test plan \(\ppi\) on \((\R^n,\sfd_{\rm Eucl},\mu)\)
it holds \(\dot\gamma_t\in T_\mu(\gamma_t)\) for
\((\ppi\otimes\mathcal L_1)\)-a.e.\ \((\gamma,t)\in
AC^2([0,1],\R^n)\times[0,1]\).
\item[\(\rm ii)\)] \(T_\mu\) is the unique minimal element of
\(\mathscr D_n(\mu)\) with the property that for any
\(\underline v\in D(\underline\div_\mu)\) it holds that
\(\underline v(x)\in T_\mu(x)\) for \(\mu\)-a.e.\ \(x\in\R^n\).
Equivalently, \(\iota_\mu\big(L^2_\mu(T\R^n)\big)={\rm cl}\,D(\underline\div_\mu)\).
\item[\(\rm iii)\)] It holds that \(T_\mu=W_\mu^\perp\), where
\(W_\mu\coloneqq W_{G_\mu}\) stands for the distribution on
\(\R^n\) associated with the \(G_\mu\)-structure
\(\big(C^\infty_c(\R^n),\nabla\big)\), which is described in
item \(\rm a)\) of Example \ref{ex:G-struct}.
\end{itemize}
In items \(\rm i)\) and \(\rm ii)\), minimality has to be intended with respect to
the partial order \(\leq\) on \(\mathscr D_n(\mu)\).
\end{theorem}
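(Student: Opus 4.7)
The plan is to prove (iii) first, and then derive (ii) and (i) as easy consequences. The heart of the argument is the following identity, which expresses the image of the abstract gradient under $\iota_\mu$ as a projection onto the tangent distribution:
\begin{equation}\label{eq:plan_key}
\iota_\mu(\nabla_\mu f)={\rm pr}_{T_\mu}(\nabla f),\quad\text{for every }f\in C^\infty_c(\R^n).
\end{equation}
To prove \eqref{eq:plan_key}, fix $f\in C^\infty_c(\R^n)$, $g\in W^{1,2}(\R^n,\mu)$ and $h\in L^\infty(\mu)$. Applying \eqref{eq:def_iota_bis} to the vector field $h\nabla_\mu g\in L^2_\mu(T\R^n)$ and using that $\iota_\mu$ preserves the pointwise scalar product (a polarisation consequence of being an \(L^\infty\)-linear isometry), I get
\[
\int h\,\nabla f\cdot\iota_\mu(\nabla_\mu g)\,\d\mu=\int h\,\langle\nabla_\mu f,\nabla_\mu g\rangle\,\d\mu=\int h\,\iota_\mu(\nabla_\mu f)\cdot\iota_\mu(\nabla_\mu g)\,\d\mu.
\]
By arbitrariness of $h$, $(\nabla f-\iota_\mu(\nabla_\mu f))\cdot\iota_\mu(\nabla_\mu g)=0$ holds $\mu$-a.e.; since $\{\iota_\mu(\nabla_\mu g):g\in W^{1,2}(\R^n,\mu)\}$ generates $\iota_\mu(L^2_\mu(T\R^n))=\Gamma(T_\mu)$, this yields $\nabla f-\iota_\mu(\nabla_\mu f)\in\Gamma(T_\mu^\perp)$ and hence \eqref{eq:plan_key}.

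Next I establish $T_\mu\leq W_\mu^\perp$. For $v\in L^2_\mu(T\R^n)$ and $\underline w\in G_\mu(0)$, write $\underline w=\lim_i\nabla f_i$ in $L^2(\R^n,\R^n;\mu)$ with $f_i\to 0$ in $L^2(\mu)$. Since $(\nabla f_i)$ is bounded in $L^2$ and $|\nabla_\mu f_i|\leq|\nabla f_i|$ $\mu$-a.e., Proposition \ref{prop:closure_diff} forces $\nabla_\mu f_i\rightharpoonup 0$ weakly in $L^2_\mu(T\R^n)$. Then, for every $h\in L^\infty(\mu)$,
\[
\int h\,\iota_\mu(v)\cdot\nabla f_i\,\d\mu=\int h\,\langle\nabla_\mu f_i,v\rangle\,\d\mu\xrightarrow[i\to\infty]{}0,
\]
so $\iota_\mu(v)\cdot\underline w=0$ $\mu$-a.e. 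Varying $\underline w$ along a countable dense subset of $\Gamma(W_\mu)$, whose pointwise values generate $W_\mu(x)$ by the construction in Proposition \ref{prop:Gamma_biject}, I conclude that $\iota_\mu(v)(x)\in W_\mu^\perp(x)$ $\mu$-a.e.

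For the reverse inclusion $W_\mu^\perp\leq T_\mu$, I combine \eqref{eq:plan_key} with Proposition \ref{prop:mwug_smooth} applied to the $G_\mu$-structure: ${\rm pr}_{W_\mu^\perp}(\nabla f)$ is the minimal $G_\mu$-gradient of $f\in C^\infty_c(\R^n)$ and has pointwise norm $|D_\mu f|$ by Theorem \ref{thm:alt_char_Sobolev}. Using $T_\mu\leq W_\mu^\perp$, the nested projections satisfy ${\rm pr}_{T_\mu}\circ{\rm pr}_{W_\mu^\perp}={\rm pr}_{T_\mu}$, hence
\[
\iota_\mu(\nabla_\mu f)={\rm pr}_{T_\mu}(\nabla f)={\rm pr}_{T_\mu}\big({\rm pr}_{W_\mu^\perp}(\nabla f)\big),
\]
and equality of pointwise norms $|\iota_\mu(\nabla_\mu f)|=|D_\mu f|=|{\rm pr}_{W_\mu^\perp}(\nabla f)|$ forces ${\rm pr}_{W_\mu^\perp}(\nabla f)\in\Gamma(T_\mu)$. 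A standard cutoff argument (via $\nabla(x_i\varphi)=\varphi\,e_i+x_i\,\nabla\varphi$) shows that $\{\nabla f:f\in C^\infty_c(\R^n)\}$ generates $L^2(\R^n,\R^n;\mu)$ as an $L^\infty(\mu)$-module; applying ${\rm pr}_{W_\mu^\perp}$ then yields a generating set of $\Gamma(W_\mu^\perp)$ entirely contained in $\Gamma(T_\mu)$, whence $\Gamma(W_\mu^\perp)\subseteq\Gamma(T_\mu)$, completing the proof of (iii).

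Items (i) and (ii) now follow quickly. For (ii), $\iota_\mu(L^2_\mu(T\R^n))\subseteq{\rm cl}\,D(\underline\div_\mu)$ is immediate from Lemma \ref{lem:density_D_Delta} and Lemma \ref{lem:div_vs_und_div}; conversely, for $\underline v\in D(\underline\div_\mu)$, approximating $\underline w\in G_\mu(0)$ by gradients $\nabla f_i$ and integrating by parts gives $\int\underline v\cdot\underline w\,\d\mu=0$, and the Leibniz rule for $\underline\div_\mu$ applied to $C^\infty_c$-cutoffs upgrades this to pointwise orthogonality $\underline v\in\Gamma(W_\mu^\perp)=\Gamma(T_\mu)$. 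For (i), Lemma \ref{lem:speed_pi_tangent_MOD} shows that $T_\mu$ itself has the property; for minimality, if $V$ is any such distribution, then Lemma \ref{lem:tg_distr_gives_wug} yields $|D_\mu f|\leq|{\rm pr}_V(\nabla f)|$, so by \eqref{eq:plan_key} one has $|{\rm pr}_{T_\mu}(\nabla f)|\leq|{\rm pr}_V(\nabla f)|$ $\mu$-a.e. Choosing $f$ with $\nabla f\equiv u$ on a ball, for $u$ ranging in a countable dense subset of $\R^n$, yields $|{\rm pr}_{T_\mu(x)}(u)|\leq|{\rm pr}_{V(x)}(u)|$ for $\mu$-a.e. $x$ and every $u\in\R^n$, which for $u\in T_\mu(x)$ forces $u\in V(x)$. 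I expect the main obstacle to be the pointwise-norm comparison used in the proof of $W_\mu^\perp\leq T_\mu$, since one must both justify the module-generation of $L^2(\R^n,\R^n;\mu)$ by concrete gradients and extract pointwise equality from $L^2$-norm equality through the nested projections.
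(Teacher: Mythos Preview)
Your argument is correct, but it follows a genuinely different route from the paper's. The paper proves \(T_\mu\leq W_\mu^\perp\) via the divergence chain \(\iota_\mu(L^2_\mu(T\R^n))\subseteq{\rm cl}\,D(\underline\div_\mu)\subseteq G_\mu(0)^\perp\), and then in a single step shows that \emph{any} distribution \(V\) capturing test-plan velocities satisfies \(W_\mu^\perp\leq V\); items (i) and (iii) are then read off simultaneously. The projection identity \(\iota_\mu(\nabla_\mu f)={\rm pr}_{T_\mu}(\nabla f)\) is only established afterwards, in Proposition~\ref{prop:char_minimal_Gmu_grad}.

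You instead front-load that projection identity (legitimately, since its proof is self-contained) and use it as the engine for everything else: \(T_\mu\leq W_\mu^\perp\) comes from weak closedness of the abstract differential rather than from divergence, \(W_\mu^\perp\leq T_\mu\) is obtained by comparing the pointwise norms \(|{\rm pr}_{T_\mu}(\nabla f)|=|D_\mu f|=|{\rm pr}_{W_\mu^\perp}(\nabla f)|\) under nested projections, and the minimality in (i) is handled by a direct pointwise argument with affine test functions. This decouples (iii) from (i) cleanly and avoids the orthogonal-complement-within-\(W_\mu^\perp\) trick of the paper. The paper's organisation, on the other hand, yields as a byproduct the reusable fact that every test-plan-tangent distribution contains \(W_\mu^\perp\), not just \(T_\mu\). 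Both the ``generation of \(L^2(\R^n,\R^n;\mu)\) by smooth gradients'' and the passage from norm equality to inclusion via nested projections that you flagged as potential obstacles are fine as you wrote them.
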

\begin{proof}
We subdivide the proof into several steps:\\
{\color{blue}\textsc{Step 1.}} First of all, we claim that
\(T_\mu\leq W_\mu^\perp\). This would follow from the inclusions
\begin{equation}\label{eq:inclusion_tg_mod}
\iota_\mu\big(L^2_\mu(T\R^n)\big)\subseteq{\rm cl}\,D(\underline\div_\mu)
\subseteq G_\mu(0)^\perp.
\end{equation}
Indeed, by using \eqref{eq:inclusion_tg_mod}, \eqref{eq:def_tg_distr},
\eqref{eq:def_V_G}, and Remark \ref{rmk:orth_compl_II}, we deduce that
\(\Gamma(T_\mu)\subseteq\Gamma(W_\mu^\perp)\), whence \(T_\mu\leq W_\mu^\perp\)
by the last part of the statement of Proposition \ref{prop:Gamma_biject}. To prove the first inclusion
in \eqref{eq:inclusion_tg_mod}, recall that
\({\rm cl}\,D(\div_\mu)=L^2_\mu(T\R^n)\) by Lemma
\ref{lem:density_D_Delta} and notice that
\[
\iota_\mu\big(L^2_\mu(T\R^n)\big)=
\iota_\mu\big({\rm cl}\,D(\div_\mu)\big)
\subseteq{\rm cl}\,\iota_\mu\big(D(\div_\mu)\big)
\overset{\eqref{eq:relation_div}}\subseteq
{\rm cl}\,D(\underline\div_\mu).
\]
To prove the second inclusion in \eqref{eq:inclusion_tg_mod},
it clearly suffices to show that \(D(\underline\div_\mu)\subseteq G_\mu(0)^\perp\).
To this aim, fix \(\underline v\in D(\underline\div_\mu)\) and
\(\underline w\in G_\mu(0)\). Choose any \((f_i)_i\subseteq C^\infty_c(\R^n)\)
such that \(f_i\to 0\) in \(L^2(\mu)\) and \(\nabla f_i\to\underline w\) in
\(L^2(\R^n,\R^n;\mu)\). Therefore, we have that
\[
\int\underline v\cdot\underline w\,\d\mu
=\lim_{i\to\infty}\int\underline v\cdot\nabla f_i\,\d\mu
=-\lim_{i\to\infty}\int f_i\,\underline\div_\mu(\underline v)\,\d\mu=0.
\]
By arbitrariness of \(\underline v\) and \(\underline w\), we conclude
that \(D(\underline\div_\mu)\subseteq G_\mu(0)^\perp\), so that
\eqref{eq:inclusion_tg_mod} is proven.\\
{\color{blue}\textsc{Step 2.}} Let \(V\in\mathscr D_n(\mu)\) be
a distribution on \(\R^n\) such that for any test plan \(\ppi\)
on \((\R^n,\sfd_{\rm Eucl},\mu)\) it holds that
\(\dot\gamma_t\in V(\gamma_t)\) for
\((\ppi\otimes\mathcal L_1)\)-a.e.\ \((\gamma,t)\).
Then we claim that \(W_\mu^\perp\leq V\).

First, from \textsc{Step 1} and Lemma
\ref{lem:speed_pi_tangent_MOD} we know that 
\(W_\mu^\perp\cap V\in\mathscr D_n(\mu)\) satisfies the same
property as \(V\), \emph{i.e.}, for any test plan \(\ppi\) one has
\(\dot\gamma_t\in W_\mu(\gamma_t)^\perp\cap V(\gamma_t)\)
for \((\ppi\otimes\mathcal L_1)\)-a.e.\ \((\gamma,t)\).
Let \(W\in\mathscr D_n(\mu)\) be defined so that \(W(x)\) is the
orthogonal complement of \(W_\mu(x)^\perp\cap V(x)\) in
\(W_\mu(x)^\perp\) for \(\mu\)-a.e.\ point \(x\in\R^n\).
Given any function \(f\in C^\infty_c(\R^n)\), it holds that
\(\big|{\rm pr}_{W_\mu^\perp\cap V}(\nabla f)\big|\) is a weak
upper gradient of \(f\) by Lemma \ref{lem:tg_distr_gives_wug},
while \(\big|{\rm pr}_{W_\mu^\perp}(\nabla f)\big|\) is the
minimal weak upper gradient of \(f\) by Proposition
\ref{prop:mwug_smooth}. This implies that
\(\big|{\rm pr}_{W_\mu^\perp\cap V}(\nabla f)\big|=
\big|{\rm pr}_{W_\mu^\perp}(\nabla f)\big|\) holds
\(\mu\)-a.e.\ in \(\R^n\) for every \(f\in C^\infty_c(\R^n)\),
thus accordingly we might conclude that
\[
\big|{\rm pr}_W(\nabla f)\big|^2
=\big|{\rm pr}_{W_\mu^\perp}(\nabla f)\big|^2
-\big|{\rm pr}_{W_\mu^\perp\cap V}(\nabla f)\big|^2=0
\;\;\;\mu\text{-a.e.},\quad\text{ for every }f\in C^\infty_c(\R^n).
\]
Given that \(\big\{\nabla f\,:\,f\in C^\infty_c(\R^n)\big\}\) generates
\(L^2(\R^n,\R^n;\mu)\) on \(\R^n\), we deduce that the image of \({\rm pr}_W\) coincides
with \(\{0\}\), thus necessarily \(W=\{0\}\). This means that
\(W_\mu(x)^\perp\cap V(x)=W_\mu(x)^\perp\) for \(\mu\)-a.e.\ point \(x\in\R^n\),
which grants that \(W_\mu^\perp\leq V\). Hence, the claim is proven.\\
{\color{blue}\textsc{Step 3.}} By Lemma \ref{lem:speed_pi_tangent_MOD}
we know that \(T_\mu\) satisfies the property in item i), whence by
\textsc{Steps} 1 and 2 we see that \(T_\mu=W_\mu^\perp\) is the (unique)
minimal distribution on \(\R^n\) having this property, proving items
iii) and i). Moreover, notice that
\(\iota_\mu\big(L^2_\mu(T\R^n)\big)={\rm cl}\,D(\underline\div_\mu)
=G_\mu(0)^\perp\) follows from \eqref{eq:inclusion_tg_mod} and the
identity \(T_\mu=W_\mu^\perp\), thus item ii) is proven as well.
\end{proof}
Note that by combining Lemma \ref{lem:div_vs_und_div} with item ii)
of the previous theorem, we obtain that
\[
\iota_\mu\big(D(\div_\mu)\big)=D(\underline\div_\mu),
\quad\text{ for every Radon measure }\mu\geq 0\text{ on }\R^n.
\]
As another immediate consequence of Theorem \ref{thm:alt_char_T_mu},
we also see that the tangent distribution is always contained in
the Alberti--Marchese distribution:
\begin{corollary}\label{cor:T_mu_in_W_mu}
Let \(\mu\geq 0\) be a Radon measure on \(\R^n\). Then it holds that
\begin{equation}\label{eq:T_mu_in_W_mu}
T_\mu\leq V_\mu.
\end{equation}
\end{corollary}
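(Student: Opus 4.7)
The plan is to derive the inclusion directly from the minimality characterisation of $T_\mu$ provided by item i) of Theorem \ref{thm:alt_char_T_mu}, combined with the test-plan tangency property of $V_\mu$ already recorded in item i) of Theorem \ref{thm:DMLP}. The point is that the latter result says exactly that $V_\mu$ is \emph{a} distribution satisfying the defining property of the former.

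In detail, I would first recall from item i) of Theorem \ref{thm:DMLP} that, for every test plan $\ppi$ on $(\R^n,\sfd_{\rm Eucl},\mu)$, one has $\dot\gamma_t\in V_\mu(\gamma_t)$ for $\ppi$-a.e.\ $\gamma$ and $\mathcal L_1$-a.e.\ $t\in[0,1]$; Fubini's theorem immediately upgrades this to the joint $(\ppi\otimes\mathcal L_1)$-a.e.\ statement used in Theorem \ref{thm:alt_char_T_mu}. Thus $V_\mu\in\mathscr D_n(\mu)$ lies in the class of distributions with respect to which test plans move tangentially.

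Second, item i) of Theorem \ref{thm:alt_char_T_mu} characterises $T_\mu$ precisely as the minimal element, with respect to the partial order $\leq$ on $\mathscr D_n(\mu)$, of this class. Applying this minimality to the candidate $V_\mu$ yields $T_\mu\leq V_\mu$, which is \eqref{eq:T_mu_in_W_mu}.

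There is essentially no obstacle: the corollary is a one-line consequence of the two theorems invoked, with all the substantive work already absorbed into the proof of Theorem \ref{thm:alt_char_T_mu} (namely the identification $T_\mu=W_\mu^\perp$ and the equivalence with the test-plan description via the pullback-module machinery and Lemma \ref{lem:speed_pi_tangent_MOD}) and into the Alberti--Marchese-based results of \cite{DMLP20} recalled as Theorem \ref{thm:DMLP}.
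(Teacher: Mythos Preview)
Your proposal is correct and follows exactly the paper's own argument: the proof in the paper reads ``Combine item i) of Theorem \ref{thm:DMLP} with item i) of Theorem \ref{thm:alt_char_T_mu}.'' Your added remark about Fubini to pass to the joint $(\ppi\otimes\mathcal L_1)$-a.e.\ formulation is a harmless clarification of the same reasoning.
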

\begin{proof}
Combine item i) of Theorem \ref{thm:DMLP} with item i)
of Theorem \ref{thm:alt_char_T_mu}.
\end{proof}
\begin{remark}\label{rmk:T_mu_neq_V_mu}{\rm
It might happen that \(T_\mu\neq V_\mu\). For instance, let \(C\)
be a fat Cantor set in \(\R\) and consider \(\mu\coloneqq\mathcal L^1|_C\).
Then \(V_\mu(x)=\R\) for \(\mu\)-a.e.\ \(x\in\R\) by Remark
\ref{rmk:AM_Leb}, while \(T_\mu(x)=\{0\}\) for \(\mu\)-a.e.\ \(x\in\R\)
as a consequence of the fact that the support of \(\mu\) is totally
disconnected, thus \(W^{1,2}(\R,\mu)=L^2(\mu)\) and \(|D_\mu f|=0\)
holds \(\mu\)-a.e.\ for every \(f\in W^{1,2}(\R,\mu)\).
\fr}\end{remark}
\subsection{Identification of the minimal weak upper gradient}
Once we have the equivalent characterisations of the Sobolev
space and of the tangent distribution at our disposal, we can
identify the minimal weak upper gradient of every given Lipschitz
function. First, we deal with smooth functions (in Proposition
\ref{prop:char_minimal_Gmu_grad}), then we pass to general
Lipschitz functions (in Theorem \ref{thm:mwug_Lip}). A consequence of
Proposition \ref{prop:char_minimal_Gmu_grad} -- namely, the
fact that \(|D_\mu f|=\big|{\rm pr}_{T_\mu}(\nabla f)\big|\)
holds for every \(f\in C^\infty_c(\R^n)\) -- was already
proven by S.\ Di Marino in \cite[Theorem 7.4.8]{DiMarinoPhD}.
\begin{proposition}\label{prop:char_minimal_Gmu_grad}
Let \(\mu\geq 0\) be a Radon measure on \(\R^n\). Then for every
\(f\in W^{1,2}(\R^n,\mu)\) it holds that \(\iota_\mu(\nabla_\mu f)\)
is the minimal \(G_\mu\)-gradient of \(f\). In particular, we have that
\begin{equation}\label{eq:explicit_iota_nabla_mu}
\iota_\mu(\nabla_\mu f)={\rm pr}_{T_\mu}(\nabla f),
\quad\text{ for every }f\in C^\infty_c(\R^n).
\end{equation}
\end{proposition}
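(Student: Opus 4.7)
My plan is to handle the smooth case first, then bootstrap to general Sobolev functions by approximation and a diagonal argument.

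\textbf{Step 1 (smooth case).} For $f\in C^\infty_c(\R^n)$ I would first establish the identity $\iota_\mu(\nabla_\mu f)=\mathrm{pr}_{T_\mu}(\nabla f)$ directly. Since $\iota_\mu(\nabla_\mu f)\in\iota_\mu(L^2_\mu(T\R^n))=\Gamma(T_\mu)$ by Definition \ref{def:tg_distr}, it suffices to show that $\nabla f-\iota_\mu(\nabla_\mu f)$ is $\mu$-a.e.\ orthogonal to $T_\mu$. Starting from the characterisation in Remark \ref{rmk:suff_cond_iota}, and using that via the Riesz isomorphism $\d_\mu f(v)=\langle\nabla_\mu f,v\rangle$, for every $v\in L^2_\mu(T\R^n)$ I get
\[
\int \nabla f\cdot\iota_\mu(v)\,\d\mu=\int\d_\mu f(v)\,\d\mu=\int\langle\nabla_\mu f,v\rangle\,\d\mu=\int\iota_\mu(\nabla_\mu f)\cdot\iota_\mu(v)\,\d\mu,
\]
where the last equality uses that the isometric morphism $\iota_\mu$ preserves pointwise scalar products (polarisation from preservation of pointwise norms). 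Replacing $v$ by $gv$ for $g\in L^\infty(\mu)$ and using $\iota_\mu(gv)=g\,\iota_\mu(v)$, the integral identity localises to the pointwise relation $(\nabla f-\iota_\mu(\nabla_\mu f))\cdot\iota_\mu(v)=0$ $\mu$-a.e., for each $v$. Choosing a countable dense sequence $(v_i)\subseteq L^2_\mu(T\R^n)$, the vectors $\{\iota_\mu(v_i)(x)\}$ span $T_\mu(x)$ at $\mu$-a.e.\ $x$ (this is exactly the content of the construction in the proof of Proposition \ref{prop:Gamma_biject}), which yields the required pointwise orthogonality. Combining with $\iota_\mu(\nabla_\mu f)(x)\in T_\mu(x)$ one concludes \eqref{eq:explicit_iota_nabla_mu}. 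Proposition \ref{prop:mwug_smooth} applied to the $G_\mu$-structure, together with the identity $T_\mu=W_\mu^\perp$ from Theorem \ref{thm:alt_char_T_mu}(iii), then shows that $\iota_\mu(\nabla_\mu f)=\mathrm{pr}_{T_\mu}(\nabla f)$ is the minimal $G_\mu$-gradient of $f$.

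\textbf{Step 2 (general case).} For $f\in W^{1,2}(\R^n,\mu)$ I would apply Corollary \ref{cor:strong_dens_smooth} to get a sequence $(f_i)\subseteq C^\infty_c(\R^n)$ with $f_i\to f$ in $L^2(\mu)$ and $\nabla_\mu f_i\to\nabla_\mu f$ in $L^2_\mu(T\R^n)$. Continuity of the morphism $\iota_\mu$ then yields $\iota_\mu(\nabla_\mu f_i)\to\iota_\mu(\nabla_\mu f)$ in $L^2(\R^n,\R^n;\mu)$. By Step 1, each $\iota_\mu(\nabla_\mu f_i)$ belongs to $G_\mu(f_i)$, so there exist approximating sequences $(g_{i,j})_j\subseteq C^\infty_c(\R^n)$ with $g_{i,j}\to f_i$ in $L^2(\mu)$ and $\nabla g_{i,j}\to\iota_\mu(\nabla_\mu f_i)$ in $L^2(\R^n,\R^n;\mu)$ as $j\to\infty$. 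A diagonal selection $h_i\coloneqq g_{i,j(i)}$ then satisfies $h_i\to f$ in $L^2(\mu)$ and $\nabla h_i\to\iota_\mu(\nabla_\mu f)$ in $L^2(\R^n,\R^n;\mu)$, so $\iota_\mu(\nabla_\mu f)\in G_\mu(f)$. To see it is the \emph{minimal} $G_\mu$-gradient, I compare $L^2$-norms: since $\iota_\mu$ is an isometry and $|\nabla_\mu f|=|D_\mu f|$ $\mu$-a.e.,
\[
\|\iota_\mu(\nabla_\mu f)\|_{L^2(\R^n,\R^n;\mu)}=\|\nabla_\mu f\|_{L^2_\mu(T\R^n)}=\big\||D_\mu f|\big\|_{L^2(\mu)},
\]
and by Theorem \ref{thm:alt_char_Sobolev} the last quantity equals the $L^2$-norm of the minimal $G_\mu$-gradient; uniqueness of the norm-minimiser in the closed affine set $G_\mu(f)$ finishes the argument.

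The main obstacle is the first step, more precisely the passage from the integral orthogonality identity to pointwise orthogonality: this hinges on exploiting the $L^\infty(\mu)$-module structure (to test against $g\,v$ for arbitrary $g\in L^\infty(\mu)$) together with the fibrewise density argument showing that a countable dense subset of $L^2_\mu(T\R^n)$ produces, via $\iota_\mu$, generators of $T_\mu(x)$ at $\mu$-a.e.\ $x$.
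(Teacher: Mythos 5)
Your proof is correct, but both steps are carried out with different tools than the paper's, so a comparison is in order. For the smooth case, the paper picks \(v\in L^2_\mu(T\R^n)\) with \(\iota_\mu(v)={\rm pr}_{T_\mu}(\nabla f)\) and runs a chain of \emph{pointwise} identities based on the symmetry \(\d_\mu g(\nabla_\mu f)=\d_\mu f(\nabla_\mu g)\), concluding by the uniqueness statement of Remark \ref{rmk:suff_cond_iota}; you instead localise the integral duality identity by testing against \(g\,v\), \(g\in L^\infty(\mu)\), and then invoke the fibrewise description of \(T_\mu\) via a countable dense family --- for that last point you should take the family to be a dense \(\mathbb Q\)-linear subspace (as in the proof of Proposition \ref{prop:Gamma_biject}, or appeal to Lemma \ref{lem:stable_spaces} with \(\mathscr V=\Gamma(T_\mu)\)) so that the pointwise sets really span the fibres; this is a cosmetic fix, not a gap. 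For the general case the paper does not pass through the smooth-case membership at all: it rebuilds an approximating smooth sequence by Banach--Saks together with an energy computation showing \({\rm pr}_{W_\mu}(\nabla g_i)\to 0\), hence \(\nabla g_i\to\iota_\mu(\nabla_\mu f)\) in \(L^2(\R^n,\R^n;\mu)\); you instead use the full strength of Corollary \ref{cor:strong_dens_smooth} (strong convergence \(\nabla_\mu f_i\to\nabla_\mu f\)), the fact from Proposition \ref{prop:mwug_smooth} and Theorem \ref{thm:alt_char_T_mu}(iii) that \(\iota_\mu(\nabla_\mu f_i)={\rm pr}_{T_\mu}(\nabla f_i)\in G_\mu(f_i)\), and a diagonal selection exploiting the closedness of the \(G\)-gradient relation --- a shorter route that leans more heavily on Proposition \ref{prop:mwug_smooth}, with no circularity since that proposition and Theorem \ref{thm:alt_char_T_mu} precede the present statement. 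The minimality argument is the same in substance in both proofs: the paper phrases it through the pointwise identity \(\big|\iota_\mu(\nabla_\mu f)\big|=|D_\mu f|\) and Theorem \ref{thm:alt_char_Sobolev}, while you compare \(L^2\)-norms and use uniqueness of the norm-minimiser in the closed affine set \(G_\mu(f)\); both are valid.
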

\begin{proof}
First of all, let us prove \eqref{eq:explicit_iota_nabla_mu}.
Fix any \(f\in C^\infty_c(\R^n)\). Choose any
\(v\in L^2_\mu(T\R^n)\) such that \(\iota_\mu(v)={\rm pr}_{T_\mu}(\nabla f)\).
Therefore, for every \(g\in C^\infty_c(\R^n)\) it holds that
\[\begin{split}
\d_\mu g(\nabla_\mu f)&=\d_\mu f(\nabla_\mu g)
\overset{\eqref{eq:def_iota}}=\d f\big(\iota_\mu(\nabla_\mu g)\big)
=\nabla f\cdot\iota_\mu(\nabla_\mu g)
={\rm pr}_{T_\mu}(\nabla f)\cdot\iota_\mu(\nabla_\mu g)\\
&=\iota_\mu(v)\cdot\iota_\mu(\nabla_\mu g)
=\langle v,\nabla_\mu g\rangle=\d_\mu g(v)
\overset{\eqref{eq:def_iota}}=\d g\big(\iota_\mu(v)\big)
=\d g\big({\rm pr}_{T_\mu}(\nabla f)\big).
\end{split}\]
In light of Remark \ref{rmk:suff_cond_iota}, we can conclude that
\(\iota_\mu(\nabla_\mu f)={\rm pr}_{T_\mu}(\nabla f)\), thus proving
\eqref{eq:explicit_iota_nabla_mu}.

Let us now fix \(f\in W^{1,2}(\R^n,\mu)\).
By Corollary \ref{cor:strong_dens_smooth} there is a sequence
\((f_i)_i\subseteq C^\infty_c(\R^n)\) such that \(f_i\to f\),
\(|D_\mu f_i|\to|D_\mu f|\), and \(|\nabla f_i|\to|D_\mu f|\)
in \(L^2(\mu)\). Up to a not relabelled subsequence, we can
also assume that \(\nabla_\mu f_i\rightharpoonup v\) weakly
in \(L^2_\mu(T\R^n)\), for some \(v\in L^2_\mu(T\R^n)\).
By Banach--Saks theorem we can find a sequence
\((g_i)_i\subseteq C^\infty_c(\R^n)\) such that
\(g_i\to f\) in \(L^2(\mu)\),
\(\nabla_\mu g_i\to v\) in \(L^2_\mu(T\R^n)\), and
\(\lims_i\big\||\nabla g_i|\big\|_{L^2(\mu)}
\leq\big\||D_\mu f|\big\|_{L^2(\mu)}\). It follows from
Proposition \ref{prop:closure_diff} that \(v=\nabla_\mu f\),
thus in particular \(|D_\mu g_i|\to|D_\mu f|\) in \(L^2(\mu)\).
Item iii) of Theorem \ref{thm:alt_char_T_mu} yields
\[\begin{split}
\lims_{i\to\infty}\int\big|{\rm pr}_{W_\mu}(\nabla g_i)\big|^2\,\d\mu
&\overset{\phantom{\eqref{eq:explicit_iota_nabla_mu}}}=
\lims_{i\to\infty}\bigg(\int|\nabla g_i|^2\,\d\mu-
\int\big|{\rm pr}_{T_\mu}(\nabla g_i)\big|^2\,\d\mu\bigg)\\
&\overset{\eqref{eq:explicit_iota_nabla_mu}}=
\lims_{i\to\infty}\int|\nabla g_i|^2\,\d\mu
-\lim_{i\to\infty}\int|D_\mu g_i|^2\,\d\mu\\
&\overset{\phantom{\eqref{eq:explicit_iota_nabla_mu}}}\leq
\int|D_\mu f|^2\,\d\mu-\int|D_\mu f|^2\,\d\mu=0,
\end{split}\]
whence \({\rm pr}_{W_\mu}(\nabla g_i)\to 0\) in \(L^2(\R^n,\R^n;\mu)\).
Since \(\nabla_\mu g_i\to\nabla_\mu f\) in \(L^2_\mu(T\R^n)\)
and \(\iota_\mu\) is continuous, we can finally conclude that
\[
\nabla g_i={\rm pr}_{T_\mu}(\nabla g_i)+{\rm pr}_{W_\mu}(\nabla g_i)
\overset{\eqref{eq:explicit_iota_nabla_mu}}=
\iota_\mu(\nabla_\mu g_i)+{\rm pr}_{W_\mu}(\nabla g_i)
\to\iota_\mu(\nabla_\mu f),\quad\text{ in }L^2(\R^n,\R^n;\mu).
\]
This means that \(\iota_\mu(\nabla_\mu f)\in G_\mu(f)\).
Given that \(\big|\iota_\mu(\nabla_\mu f)\big|=|D_\mu f|\) holds
\(\mu\)-a.e.\ in \(\R^n\), we infer from Theorem
\ref{thm:alt_char_Sobolev} that \(\iota_\mu(\nabla_\mu f)\) is
the minimal \(G_\mu\)-gradient of \(f\). The proof is complete.
\end{proof}
\begin{theorem}[Minimal weak upper gradient of Lipschitz functions]
\label{thm:mwug_Lip}
Let \(\mu\geq 0\) be a Radon measure on \(\R^n\).
Then it holds that
\[
|D_\mu f|=\big|{\rm pr}_{T_\mu}(\nablaAM f)\big|
\;\;\;\mu\text{-a.e.},\quad\text{ for every }f\in\LIP_c(\R^n).
\]
\end{theorem}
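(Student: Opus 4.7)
The plan is to leverage the $G$-structure framework developed above. Since $(\LIP_c(\R^n), \nablaAM)$ is a $G$-structure on $(\R^n, \sfd_{\rm Eucl}, \mu)$ by item b) of Example \ref{ex:G-struct}, Proposition \ref{prop:mwug_smooth} delivers
\[
|D_\mu f| = \big|{\rm pr}_{W_{\GAM}^\perp}(\nablaAM f)\big|
\]
for every $f \in \LIP_c(\R^n)$, and exhibits ${\rm pr}_{W_{\GAM}^\perp}(\nablaAM f)$ as the minimal $\GAM$-gradient of $f$. My task is then to identify this projection with ${\rm pr}_{T_\mu}(\nablaAM f)$.

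The heart of the argument is to recognise $\iota_\mu(\nabla_\mu f)$ as the minimal $\GAM$-gradient of any $f \in \LIP_c(\R^n)$. By Corollary \ref{cor:strong_dens_smooth} there exists $(f_i)_i \subseteq C^\infty_c(\R^n)$ with $f_i \to f$ and $|\nabla f_i| \to |D_\mu f|$ in $L^2(\mu)$, together with $\nabla_\mu f_i \to \nabla_\mu f$ in $L^2_\mu(T\R^n)$. Since $|\iota_\mu(\nabla_\mu f_i)| = |D_\mu f_i|$ and $T_\mu = W_\mu^\perp$, the pointwise orthogonal decomposition $|\nabla f_i|^2 = |\iota_\mu(\nabla_\mu f_i)|^2 + |{\rm pr}_{W_\mu}(\nabla f_i)|^2$ forces ${\rm pr}_{W_\mu}(\nabla f_i) \to 0$ in $L^2(\R^n, \R^n; \mu)$, whence $\nabla f_i \to \iota_\mu(\nabla_\mu f)$ strongly. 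Applying ${\rm pr}_{V_\mu}$ and using $\iota_\mu(\nabla_\mu f) \in \Gamma(T_\mu) \subseteq \Gamma(V_\mu)$, one obtains $\nablaAM f_i = {\rm pr}_{V_\mu}(\nabla f_i) \to \iota_\mu(\nabla_\mu f)$ in $L^2(\R^n, \R^n; \mu)$, so $\iota_\mu(\nabla_\mu f)$ is a $\GAM$-gradient of $f$; since $|\iota_\mu(\nabla_\mu f)| = |D_\mu f|$ matches the minimal norm prescribed by Theorem \ref{thm:alt_char_Sobolev}, it is minimal, and combining with Proposition \ref{prop:mwug_smooth} yields
\[
\iota_\mu(\nabla_\mu f) = {\rm pr}_{W_{\GAM}^\perp}(\nablaAM f).
\]

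It remains to pass from this to ${\rm pr}_{T_\mu}(\nablaAM f)$. I plan to establish $T_\mu \leq W_{\GAM}^\perp$: the family $\{\nabla_\mu f : f \in \LIP_c(\R^n)\}$ generates $L^2_\mu(T\R^n)$ (by item iii) of Theorem \ref{thm:DMLP}, Proposition \ref{prop:closure_diff}, and a Banach--Saks argument in the Hilbert module $L^2_\mu(T\R^n)$), so $\{\iota_\mu(\nabla_\mu f) : f \in \LIP_c(\R^n)\}$ generates $\Gamma(T_\mu)$; each such element lies in $\Gamma(W_{\GAM}^\perp)$ by the previous display, whence $\Gamma(T_\mu) \subseteq \Gamma(W_{\GAM}^\perp)$ and thus $T_\mu \leq W_{\GAM}^\perp$, i.e.\ $W_{\GAM} \leq T_\mu^\perp$. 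Exploiting the pointwise orthogonal splitting $\nablaAM f = \iota_\mu(\nabla_\mu f) + {\rm pr}_{W_{\GAM}}(\nablaAM f)$, whose first summand lies in $\Gamma(T_\mu)$ and whose second lies in $\Gamma(W_{\GAM}) \subseteq \Gamma(T_\mu^\perp)$, I conclude ${\rm pr}_{T_\mu}(\nablaAM f) = \iota_\mu(\nabla_\mu f)$, and therefore $|D_\mu f| = |\iota_\mu(\nabla_\mu f)| = |{\rm pr}_{T_\mu}(\nablaAM f)|$. The main technical obstacle is the strong $L^2$-convergence $\nabla f_i \to \iota_\mu(\nabla_\mu f)$ in the central step, which decisively exploits the norm convergence $|\nabla f_i| \to |D_\mu f|$ delivered by Corollary \ref{cor:strong_dens_smooth}.
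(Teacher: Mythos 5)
Your proof is correct, and it reaches the conclusion by a route that is organised differently from the paper's. The paper first proves the distributional identity \(\WAM^\perp\cap V_\mu=T_\mu\) (its \eqref{eq:equiv_T_mu}): one inclusion by the duality argument \(\GAM(0)^\perp\cap\Gamma(V_\mu)\subseteq G_\mu(0)^\perp\), the other by introducing the excess distribution \(Z\coloneqq(\WAM^\perp\cap V_\mu)^\perp\cap T_\mu\) and killing it through Proposition \ref{prop:mwug_smooth} applied to both structures together with the fact that smooth gradients generate \(L^2(\R^n,\R^n;\mu)\); the theorem then follows because \(\nablaAM f\in\Gamma(V_\mu)\) and \(\WAM\leq V_\mu\) allow one to replace \({\rm pr}_{\WAM^\perp\cap V_\mu}\) by \({\rm pr}_{\WAM^\perp}\). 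You instead extend Proposition \ref{prop:char_minimal_Gmu_grad} to the \(\GAM\)-structure: from Corollary \ref{cor:strong_dens_smooth} (note that your pointwise splitting \(|\nabla f_i|^2=|\iota_\mu(\nabla_\mu f_i)|^2+|{\rm pr}_{W_\mu}(\nabla f_i)|^2\) tacitly uses \(\iota_\mu(\nabla_\mu f_i)={\rm pr}_{T_\mu}(\nabla f_i)\), i.e.\ \eqref{eq:explicit_iota_nabla_mu}, which you should cite explicitly -- it is available since Proposition \ref{prop:char_minimal_Gmu_grad} precedes the theorem) you obtain \(\nabla f_i\to\iota_\mu(\nabla_\mu f)\) strongly, push this through \({\rm pr}_{V_\mu}\) via \eqref{eq:char_nabla_AM_f} and \(T_\mu\leq V_\mu\) to see that \(\iota_\mu(\nabla_\mu f)\in\GAM(f)\), and identify it as the minimal \(\GAM\)-gradient, hence \(\iota_\mu(\nabla_\mu f)={\rm pr}_{\WAM^\perp}(\nablaAM f)\); after that, only the one-sided inclusion \(T_\mu\leq\WAM^\perp\) (obtained by generation) is needed, and the inclusion \(\WAM^\perp\cap V_\mu\leq T_\mu\) from the paper's proof is never used. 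What each buys: the paper's route produces the structural relation \(T_\mu=\WAM^\perp\cap V_\mu\) among the three distributions, which has independent interest; your route is somewhat leaner on the distributional side, yields the stronger vector-level identity \({\rm pr}_{T_\mu}(\nablaAM f)=\iota_\mu(\nabla_\mu f)\) (not merely equality of the moduli), and in passing identifies the minimal \(\GAM\)-gradient of every \(f\in\LIP_c(\R^n)\), which is a natural companion to Proposition \ref{prop:char_minimal_Gmu_grad}.
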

\begin{proof}
Consider the \(G_\mu\)-structure and the \(\GAM\)-structure, which were
defined in items a) and b) of Example \ref{ex:G-struct}, respectively.
For brevity, we call \(\WAM\coloneqq W_{\GAM}\).
First, we prove that
\begin{equation}\label{eq:equiv_T_mu}
\WAM^\perp\cap V_\mu=T_\mu.
\end{equation}
In order to show one inclusion, fix
\(\underline v\in\GAM(0)^\perp\cap\Gamma(V_\mu)\)
and \(\underline w\in G_\mu(0)\). Let us pick any sequence
\((f_i)_i\subseteq C^\infty_c(\R^n)\) satisfying
\(f_i\to 0\) in \(L^2(\mu)\) and \(\nabla f_i\to\underline w\)
in \(L^2(\R^n,\R^n;\mu)\). The latter convergence, together with
\eqref{eq:char_nabla_AM_f}, yields
\(\nablaAM f_i={\rm pr}_{V_\mu}(\nabla f_i)\to
{\rm pr}_{V_\mu}(\underline w)\) in \(L^2(\R^n,\R^n;\mu)\),
which gives \({\rm pr}_{V_\mu}(\underline w)\in\GAM(0)\).
Since \(\underline v\in\GAM(0)^\perp\cap\Gamma(V_\mu)\), we get
that \(\underline v\cdot\underline w=\underline v\cdot
{\rm pr}_{V_\mu}(\underline w)=0\) holds \(\mu\)-a.e., which
implies \(\GAM(0)^\perp\cap\Gamma(V_\mu)\subseteq G_\mu(0)^\perp\)
and thus \(\WAM^\perp\cap V_\mu\leq W_\mu^\perp=T_\mu\).

To prove the converse inclusion, let us consider the orthogonal complement
\(Z\) of \(\WAM^\perp\cap V_\mu\) in \(T_\mu\),
namely, \(Z\coloneqq(\WAM^\perp\cap V_\mu)^\perp\cap T_\mu\).
Notice that for any \(f\in C^\infty_c(\R^n)\) we have that
\begin{equation}\label{eq:mwug_Lip_aux1}
\big|{\rm pr}_{T_\mu}(\nablaAM f)\big|^2
=\big|{\rm pr}_{\WAM^\perp\cap V_\mu}(\nablaAM f)\big|^2+
\big|{\rm pr}_Z(\nablaAM f)\big|^2,\quad\text{ in the }\mu\text{-a.e.\ sense.}
\end{equation}
By applying Proposition \ref{prop:mwug_smooth} to the \(G_\mu\)-structure
and the \(\GAM\)-structure, we obtain that
\begin{equation}\label{eq:mwug_Lip_aux2}\begin{split}
\big|{\rm pr}_{T_\mu}(\nablaAM f)\big|
&\overset{\eqref{eq:T_mu_in_W_mu}}=\big|{\rm pr}_{T_\mu}(\nabla f)\big|
=\big|{\rm pr}_{W_\mu^\perp}(\nabla f)\big|=|D_\mu f|,\\
\big|{\rm pr}_{\WAM^\perp\cap V_\mu}(\nablaAM f)\big|
&\overset{\phantom{\eqref{eq:T_mu_in_W_mu}}}=
\big|{\rm pr}_{\WAM^\perp}(\nablaAM f)\big|=|D_\mu f|,
\end{split}\end{equation}
respectively. By plugging \eqref{eq:mwug_Lip_aux2} into \eqref{eq:mwug_Lip_aux1},
we deduce that
\(\big|{\rm pr}_Z(\nabla f)\big|=\big|{\rm pr}_Z(\nablaAM f)\big|=0\)
holds \(\mu\)-a.e.\ for all \(f\in C^\infty_c(\R^n)\). Since
\(\big\{\nabla f:f\in C^\infty_c(\R^n)\big\}\) generates \(L^2(\R^n,\R^n;\mu)\)
on \(\R^n\), we conclude that \(Z=\{0\}\), which means that
the identity in \eqref{eq:equiv_T_mu} is verified.

Now fix any function \(f\in\LIP_c(\R^n)\). We know that
\(\big|{\rm pr}_{\WAM^\perp}(\nablaAM f)\big|\) is the minimal
weak upper gradient of \(f\) by Proposition \ref{prop:mwug_smooth}.
Since it also holds that
\[
\big|{\rm pr}_{T_\mu}(\nablaAM f)\big|
\overset{\eqref{eq:equiv_T_mu}}=
\big|{\rm pr}_{\WAM^\perp\cap V_\mu}(\nablaAM f)\big|
=\big|{\rm pr}_{\WAM^\perp}(\nablaAM f)\big|,
\quad\text{ in the }\mu\text{-a.e.\ sense,}
\]
we finally conclude that
\(\big|{\rm pr}_{T_\mu}(\nablaAM f)\big|\)
is the minimal weak upper gradient of \(f\).
\end{proof}
It readily follows from Theorem \ref{thm:mwug_Lip} that
those measures \(\mu\) on \(\R^n\) for which minimal weak
upper gradient and local Lipschitz constant always coincide
can be explicitly characterised in terms of the tangent
distribution \(T_\mu\), as the next result shows.
\begin{corollary}\label{cor:equiv_Df=lipf}
Let \(\mu\geq 0\) be a Radon measure on \(\R^n\).
Then the following are equivalent:
\begin{itemize}
\item[\(\rm i)\)] \(|D_\mu f|=\lip(f)\) holds
\(\mu\)-a.e., for every \(f\in\LIP_c(\R^n)\),
\item[\(\rm ii)\)] \(T_\mu(x)=\R^n\), for \(\mu\)-a.e.\ \(x\in\R^n\).
\end{itemize}
\end{corollary}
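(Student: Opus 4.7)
The plan is to use Theorem \ref{thm:mwug_Lip} as the bridge: since $|D_\mu f| = \bigl|{\rm pr}_{T_\mu}(\nablaAM f)\bigr|$ for every $f \in \LIP_c(\R^n)$, the corollary reduces to understanding when, for $\mu$-a.e.\ $x$, the projection onto $T_\mu(x)$ acts as the identity on gradients.

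For the direction \textbf{ii)$\Rightarrow$i)}, I would argue as follows. If $T_\mu = \R^n$ holds $\mu$-a.e., then by Corollary \ref{cor:T_mu_in_W_mu} also $V_\mu = \R^n$ holds $\mu$-a.e. Then the Alberti--Marchese differentiability \eqref{eq:formula_nablaAM} along $V_\mu(x)=\R^n$ is exactly the usual Fr\'echet differentiability, so for $\mu$-a.e.\ $x$ we have
\[
f(y) = f(x) + \nablaAM f(x)\cdot(y-x) + o(|y-x|) \quad\text{as }y\to x.
\]
A standard calculation then gives $\lip(f)(x) = |\nablaAM f(x)|$. Combined with Theorem \ref{thm:mwug_Lip} and ${\rm pr}_{T_\mu(x)} = \mathrm{Id}$, this yields $|D_\mu f| = \lip(f)$ $\mu$-a.e.

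For the direction \textbf{i)$\Rightarrow$ii)}, I would test the hypothesis on cut-off linear functions. Fix a unit vector $e \in \R^n$ and $R > 0$, and pick $\eta_R \in C^\infty_c(\R^n)$ with $\eta_R \equiv 1$ on $B_R(0)$. Consider $f \coloneqq \eta_R(\cdot)(\cdot \cdot e) \in \LIP_c(\R^n)$. On $B_R(0)$ it coincides with $x \mapsto x\cdot e$, so $\lip(f)\equiv 1$ there, and $f$ is Fr\'echet differentiable with $\nabla f = e$. By the uniqueness clause of Theorem \ref{thm:AM}, it follows that $\nablaAM f(x) = {\rm pr}_{V_\mu(x)}(e)$ for $\mu$-a.e.\ $x \in B_R(0)$. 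Since $T_\mu \leq V_\mu$ (Corollary \ref{cor:T_mu_in_W_mu}), one has ${\rm pr}_{T_\mu(x)}\circ{\rm pr}_{V_\mu(x)} = {\rm pr}_{T_\mu(x)}$, and Theorem \ref{thm:mwug_Lip} combined with the hypothesis gives
\[
\bigl|{\rm pr}_{T_\mu(x)}(e)\bigr| \;=\; |D_\mu f|(x) \;=\; \lip(f)(x) \;=\; 1 \;=\; |e|
\quad\text{for }\mu\text{-a.e.\ }x \in B_R(0).
\]
Since orthogonal projection strictly contracts vectors outside the target subspace, this forces $e \in T_\mu(x)$ for $\mu$-a.e.\ $x \in B_R(0)$. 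Letting $R\uparrow\infty$ and then taking $e$ to range over a countable dense subset of $S^{n-1}$, the closedness of each fiber $T_\mu(x) \in \Gr(\R^n)$ yields $T_\mu(x) = \R^n$ for $\mu$-a.e.\ $x\in\R^n$.

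I expect no serious obstacle here: both implications are essentially direct corollaries of the already-established Theorem \ref{thm:mwug_Lip} and Corollary \ref{cor:T_mu_in_W_mu}. The only mildly delicate point is ensuring the correct identification $\nablaAM(\eta_R u_e) = {\rm pr}_{V_\mu}(e)$ on $B_R(0)$, which I would justify by noting that Fr\'echet differentiability of $\eta_R u_e$ on the open set $\{\eta_R \equiv 1\}$ is inherited along any direction in $V_\mu(x)$, so uniqueness in \eqref{eq:formula_nablaAM} pins down $\nablaAM$ as claimed.
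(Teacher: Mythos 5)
Your proposal is correct and follows essentially the same route as the paper: the implication ii)\(\Rightarrow\)i) is identical (Corollary \ref{cor:T_mu_in_W_mu} gives \(V_\mu=\R^n\), hence \(|\nablaAM f|=\lip(f)\), and Theorem \ref{thm:mwug_Lip} concludes), and for i)\(\Rightarrow\)ii) the paper likewise tests on a smooth compactly supported function whose gradient is a fixed vector on a ball and uses that \(\big|{\rm pr}_{T_\mu(x)}(v)\big|<|v|\) when \(v\notin T_\mu(x)\), only phrased by contradiction with a single rational direction on a positive-measure set rather than directly over a countable dense set of directions as you do. The only cosmetic difference is that you combine Theorem \ref{thm:mwug_Lip} with the identification \(\nablaAM f={\rm pr}_{V_\mu}(\nabla f)\) where the paper cites Proposition \ref{prop:char_minimal_Gmu_grad} directly; both are equivalent for smooth \(f\).
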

\begin{proof}
Suppose i) holds. To prove ii), we argue by contradiction:
suppose there exists a Borel set \(E\subseteq\R^n\)
such that \(\mu(E)>0\) and \(T_\mu(x)\neq\R^n\) for
\(\mu\)-a.e.\ \(x\in E\). This means that for
\(\mu\)-a.e.\ point \(x\in E\) there exists a vector
\(v\in\mathbb Q^n\) such that \(v\notin T_\mu(x)\), in other words
\[
E\subseteq\bigcup_{v\in\mathbb Q^n}\big\{x\in\R^n\;\big|\;
v\notin T_\mu(x)\big\},\quad\text{ up to }\mu\text{-null sets.}
\]
Hence, there exist a vector \(v\in\mathbb Q^n\) and a Borel set
\(F\subseteq E\) such that \(\mu(F)>0\) and \(v\notin T_\mu(x)\)
for \(\mu\)-a.e.\ \(x\in F\). Choose a radius \(r>0\)
such that \(\mu\big(F\cap B_r(0)\big)>0\) and a function
\(f\in C^\infty_c(\R^n)\) satisfying \(\nabla f(x)=v\)
for every \(x\in B_r(0)\). By using Proposition
\ref{prop:char_minimal_Gmu_grad}, we thus deduce that
\[
|D_\mu f|(x)=\big|{\rm pr}_{T_\mu}(\nabla f)\big|(x)
=\big|\pi_{T_\mu(x)}(v)\big|<|v|=\lip(f)(x),
\quad\text{ for }\mu\text{-a.e.\ }x\in F\cap B_r(0).
\]
This leads to a contradiction with i), whence accordingly
ii) is proven.

Conversely, suppose ii) holds. A fortiori, we have that
\(V_\mu(x)=\R^n\) for \(\mu\)-a.e.\ \(x\in\R^n\) (recall
Corollary \ref{cor:T_mu_in_W_mu}), so that any given function
\(f\in\LIP_c(\R^n)\) is \(\mu\)-a.e.\ differentiable
and thus \(|\nablaAM f|=\lip(f)\) in the \(\mu\)-a.e.\ sense.
Finally, by using Theorem \ref{thm:mwug_Lip} we obtain that
\[
|D_\mu f|=\big|{\rm pr}_{T_\mu}(\nablaAM f)\big|
=|\nablaAM f|=\lip(f),\quad\text{ holds }\mu\text{-a.e.\ on }\R^n,
\]
proving the validity of i).
\end{proof}
\section{Some applications}\label{s:applications}
\subsection{Tangent fibers on the singular part}\label{ss:tg_sing}
In the structure theory of Radon measures on Euclidean spaces,
a breakthrough is represented by the celebrated paper \cite{DPR16}
by G.\ De Philippis and F.\ Rindler. A consequence of their main
result is reported in Theorem \ref{thm:DPR}.
\medskip

In this section, we will combine the results by De
Philippis--Rindler with our knowledge of the tangent
distribution, in order to prove that
for any Radon measure \(\mu=\rho\mathcal L^n+\mu^s\) on \(\R^n\)
(where \(\mu^s\perp\mathcal L^n\)) it holds that
\(T_\mu(x)\neq\R^n\) for \(\mu^s\)-a.e.\ point \(x\in\R^n\);
see Theorem \ref{thm:dim_fibers}. This gives a positive answer
to a variant of a question raised by I.\ Fragal\`{a} and
C.\ Mantegazza in \cite[Remark 4.4]{FM99}; the original problem
was posed in terms of a different notion of tangent fiber.
However, by adapting our arguments one can solve
also their original open problem.
We point out that neither the kind of results we will prove in
this section, nor the techniques we will use, are really new.
See, \emph{e.g.}, \cite{DPMR16,MK18,GP16-2} for similar
statements and arguments.
\subsubsection{Reminder on Euclidean \(1\)-currents}
Recall that a \textbf{\(1\)-current} \(\rm T\) on \(\R^n\) is a
linear and continuous real-valued functional defined
on the space of smooth, compactly-supported \(1\)-forms
on \(\R^n\). Its \textbf{total mass} \({\bf M}({\rm T})\) is given by the
supremum of \({\rm T}(\underline\omega)\) among all smooth,
compactly-supported \(1\)-forms \(\underline\omega\) on \(\R^n\)
that satisfy \(|\underline\omega|\leq 1\) on all \(\R^n\).
If \({\bf M}({\rm T})\) is finite, then \(\rm T\) is an
\(\R^n\)-valued Radon measure on \(\R^n\), whence by
using the Radon--Nikod\'{y}m theorem one can find a finite,
non-negative Borel measure \(\|{\rm T}\|\) on \(\R^n\) and a
vector field \(\vec{\rm T}\in L^1(\R^n,\R^n;\|{\rm T}\|)\),
with \(\big|\vec{\rm T}(x)\big|=1\) for
\(\|{\rm T}\|\)-a.e.\ point \(x\in\R^n\), such that
\({\rm T}=\vec{\rm T}\,\|{\rm T}\|\). The \textbf{boundary}
\(\partial{\rm T}\) of \(\rm T\) is the \(0\)-current (\emph{i.e.},
the generalised function) on \(\R^n\) which is defined as
\(\partial{\rm T}(f)\coloneqq{\rm T}(\d f)\) for all
\(f\in C^\infty_c(\R^n)\). A \(1\)-current \(\rm T\) on \(\R^n\)
is said to be \textbf{normal} provided
\({\bf M}({\rm T}),{\bf M}(\partial{\rm T})<+\infty\),
where \({\bf M}(\partial{\rm T})\coloneqq
\sup\big\{\partial{\rm T}(f)\,:\,f\in C^\infty_c(\R^n),
\,|f|\leq 1\text{ on }\R^n\big\}\). When the total mass
\({\bf M}(\partial{\rm T})\) is finite, the
\(0\)-current \(\partial{\rm T}\) can be canonically
identified with a (finite) signed measure on \(\R^n\).
\medskip

The following deep result, concerning the structure of normal \(1\)-currents
in the Euclidean space, has been proven by G.\ De Philippis and
F.\ Rindler in the paper \cite{DPR16}.
\begin{theorem}\label{thm:DPR}
Let \(\mu\geq 0\) be a Radon measure on \(\R^n\) and let
\({\rm T}_1,\ldots,{\rm T}_n\) be normal \(1\)-currents
in \(\R^n\) such that \(\mu\ll\|{\rm T}_i\|\) for
every \(i=1,\ldots,n\). Suppose that
\[
\vec{\rm T}_1(x),\ldots,\vec{\rm T}_n(x)\in\R^n
\text{ are linearly independent,}\quad
\text{ for }\mu\text{-a.e.\ }x\in\R^n.
\]
Then it holds that \(\mu\ll\mathcal L^n\).
\end{theorem}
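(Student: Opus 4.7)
My plan is to derive Theorem \ref{thm:DPR} as a consequence of the main theorem of De Philippis--Rindler \cite{DPR16}, which asserts that the Radon--Nikod\'ym polar of a PDE-constrained vector Radon measure, taken with respect to its own total variation, must lie at $\mathcal L^n$-singular points in the wave cone of the PDE. The heart of the argument is to package the $n$ currents into a single vector measure whose associated wave cone encodes linear dependence of $n$-tuples of vectors in $\R^n$; the deep analytic input -- the De Philippis--Rindler theorem itself -- will then be invoked as a black box.

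Concretely, I would first fix the common dominating measure $\sigma \coloneqq \sum_{i=1}^n \|{\rm T}_i\|$, write $\|{\rm T}_i\| = h_i\,\sigma$ with $h_i \in L^1(\sigma)$, $0 \leq h_i \leq 1$, and set $\vec t_i \coloneqq h_i\,\vec{\rm T}_i$, so that ${\rm T}_i = \vec t_i\,\sigma$. Bundling these together yields a single $(\R^n)^n$-valued Radon measure $S \coloneqq (\vec t_1,\ldots,\vec t_n)\,\sigma$ on $\R^n$. The normality hypothesis on each ${\rm T}_i$ translates into the fact that the block-diagonal first-order operator
\[
\mathcal A(\vec u_1,\ldots,\vec u_n) \coloneqq (\div\vec u_1,\ldots,\div\vec u_n)
\]
sends $S$ to an $\R^n$-valued \emph{bounded} Radon measure on $\R^n$, whose $i$-th component is $-\partial{\rm T}_i$.

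Next, I would compute the wave cone $\Lambda_{\mathcal A}$. Since the principal symbol is $\hat{\mathcal A}(\xi)(v_1,\ldots,v_n) = (\xi\cdot v_1,\ldots,\xi\cdot v_n)$, whose kernel for $\xi \neq 0$ equals $(\xi^\perp)^n$, taking the union over $\xi\in\R^n\setminus\{0\}$ yields
\[
\Lambda_{\mathcal A} = \big\{(v_1,\ldots,v_n)\in(\R^n)^n \;\big|\; v_1,\ldots,v_n\text{ are linearly dependent}\big\}.
\]
Applying the De Philippis--Rindler theorem to $S$ then gives, writing $\sigma = \sigma^a + \sigma^s$ for the Lebesgue decomposition with respect to $\mathcal L^n$, that the polar $(\vec t_1,\ldots,\vec t_n)$ of $S$ lies in $\Lambda_{\mathcal A}$ at $\sigma^s$-a.e.\ point; that is, the tuple $(\vec t_1,\ldots,\vec t_n)$ is $\sigma^s$-a.e.\ linearly dependent.

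To close the argument, I would observe that the chain $\mu \ll \|{\rm T}_i\| \leq \sigma$ for each $i$ implies both $\mu \ll \sigma$ (hence $\mu^s \ll \sigma^s$) and $h_i > 0$ $\mu$-a.e. Since $\mu^s \leq \mu$, any $\mu$-a.e.\ statement holds also $\mu^s$-a.e.; since $\mu^s \ll \sigma^s$, any $\sigma^s$-a.e.\ statement holds also $\mu^s$-a.e. Combining these, all three conditions -- $h_i > 0$ for all $i$, linear independence of $(\vec{\rm T}_i)_{i=1}^n$ (hypothesis), and linear dependence of $(\vec t_i)_{i=1}^n$ (previous step) -- hold simultaneously $\mu^s$-a.e.; but $h_i > 0$ makes the latter two incompatible, forcing $\mu^s = 0$, i.e., $\mu \ll \mathcal L^n$. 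The main obstacle is the black-box invocation of the De Philippis--Rindler theorem; on our side, the only subtlety is identifying the precise version to cite, namely the one concerning operators $\mathcal A$ sending the measure to a bounded image, rather than to zero.
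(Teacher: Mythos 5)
The paper does not actually prove this statement: Theorem \ref{thm:DPR} is imported wholesale from De Philippis--Rindler \cite{DPR16}, where it appears among the corollaries of their main structure theorem, so there is no internal argument to compare yours against. That said, your reduction is correct, and it is essentially the derivation carried out in \cite{DPR16} itself: bundle \({\rm T}_1,\ldots,{\rm T}_n\) into the \((\R^n)^n\)-valued measure \(S=(\vec t_1,\ldots,\vec t_n)\,\sigma\), note that the row-wise divergence of \(S\) equals the bounded measure \((-\partial{\rm T}_1,\ldots,-\partial{\rm T}_n)\) by normality, compute the wave cone of the row-wise divergence as the set of \(n\)-tuples admitting a common orthogonal direction (equivalently, linearly dependent tuples), and invoke the structure theorem to force the polar into the wave cone on the singular part; the conclusion then follows from \(h_i>0\) \(\mu\)-a.e.\ exactly as you say. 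Two details are worth tightening. First, the structure theorem constrains \(\d S/\d|S|\) at \(|S|^s\)-a.e.\ point, not \(\d S/\d\sigma\) at \(\sigma^s\)-a.e.\ point; this is harmless because \(\Lambda_{\mathcal A}\) is a cone and because \(\|{\rm T}_i\|\leq|S|\leq\sigma\) (indeed \(|\vec t_i|=h_i\) and \(\sum_i h_i=1\) \(\sigma\)-a.e.), so \(\mu\ll\|{\rm T}_i\|\) already gives \(\mu\ll|S|\) and hence \(\mu^s\ll|S|^s\), which is all your final contradiction needs. Second, the version of the De Philippis--Rindler theorem in which \(\mathcal A S\) is only required to be a bounded measure (rather than zero) is indeed available: it is either stated in \cite{DPR16} or obtained by augmenting \(S\) with the measure \(\mathcal A S\) and observing that the added zeroth-order term does not change the principal symbol, hence not the wave cone. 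With these points made explicit, your argument is complete.
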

As pointed out in \cite{DPR16}, Theorem \ref{thm:DPR} has
-- amongst many others -- the following consequence:
\begin{theorem}[Weak converse of Rademacher theorem]
\label{thm:weak_Rademacher}
Let \(\mu\) be a Radon measure on \(\R^n\) such that every
function \(f\in\LIP(\R^n)\) is \(\mu\)-a.e.\ differentiable.
Then it holds that \(\mu\ll\mathcal L^n\).
\end{theorem}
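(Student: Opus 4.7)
The strategy is to deduce the statement from Theorem \ref{thm:DPR} by producing $n$ normal $1$-currents $\mathrm T_1,\ldots,\mathrm T_n$ on $\R^n$ satisfying $\mu\ll\|\mathrm T_i\|$ for every $i$ and with $\vec{\mathrm T}_1(x),\ldots,\vec{\mathrm T}_n(x)$ linearly independent at $\mu$-a.e.\ $x$.

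The preliminary step is reading the hypothesis through Theorem \ref{thm:AM}: I would show that, under the assumption, $V_\mu(x)=\R^n$ holds for $\mu$-a.e.\ $x$. To see this, let $f_0\in\LIP(\R^n)$ be the universal Lipschitz function provided by item ii) of Theorem \ref{thm:AM}. By hypothesis $f_0$ is (Fréchet) differentiable at $\mu$-a.e.\ $x$, hence in particular directionally differentiable along every $v\in\R^n$ at such points; combined with the non-differentiability property of $f_0$ along any direction $v\in\R^n\setminus V_\mu(x)$, this forces $\R^n\setminus V_\mu(x)=\emptyset$ for $\mu$-a.e.\ $x$, as desired.

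The substantive part -- and the main obstacle -- is converting the pointwise fullness of $V_\mu$ into the desired currents. Here I would invoke the equivalent geometric description of the decomposability bundle from \cite{AM16}: a direction $v$ lies in $V_\mu(x)$ precisely when, locally near $x$, $\mu$ admits an Alberti representation along Lipschitz curves whose tangent lies in a narrow cone around $v$. Having $V_\mu\equiv\R^n$ $\mu$-a.e., one can therefore fix $n$ pairwise disjoint open cones $C_1,\ldots,C_n\subseteq\R^n\setminus\{0\}$ around linearly independent directions, narrow enough that any selection $v_i\in C_i$ is automatically linearly independent, and for each $i$ extract an Alberti representation of $\mu$ supported on Lipschitz curves tangent to $C_i$. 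Integrating the associated oriented-curve currents against the representing measure (weighted by the fiberwise density so as to absorb $\mu$) yields a $1$-current $\mathrm T_i$ with $\mu\ll\|\mathrm T_i\|$ and $\vec{\mathrm T}_i(x)\in C_i$ at $\|\mathrm T_i\|$-a.e.\ point. Since its boundary is a superposition of endpoint Dirac masses, it has finite total mass and $\mathrm T_i$ is normal.

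With $\mathrm T_1,\ldots,\mathrm T_n$ so constructed, the linear independence of $\vec{\mathrm T}_1(x),\ldots,\vec{\mathrm T}_n(x)$ at $\mu$-a.e.\ $x$ is built in by the choice of the cones $C_i$. A direct application of Theorem \ref{thm:DPR} then yields $\mu\ll\mathcal L^n$, as required. The delicate point in this plan is the passage from $V_\mu\equiv\R^n$ to Alberti representations in $n$ independent directions, which is not developed in the excerpt and would have to be imported from \cite{AM16}.
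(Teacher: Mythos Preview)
The paper does not supply a proof of this theorem: it is stated immediately after Theorem~\ref{thm:DPR} with the remark ``As pointed out in \cite{DPR16}, Theorem~\ref{thm:DPR} has -- amongst many others -- the following consequence'', and no proof environment follows. In other words, the paper imports the result from \cite{DPR16} (combined with \cite{AM16}) rather than reproving it.

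Your outline is the standard route to the result and is essentially correct. Two remarks on the write-up. First, the reduction $V_\mu\equiv\R^n$ via the universal function $f_0$ from item~ii) of Theorem~\ref{thm:AM} is clean and exactly right. Second, the passage from $V_\mu\equiv\R^n$ to $n$ \emph{normal} $1$-currents is indeed the crux and needs more than you indicate: an Alberti representation yields a family of curves and a disintegration of (a piece of) $\mu$, but to obtain a current with finite boundary mass you must control the endpoints of the curves, which typically requires localising $\mu$ to a bounded set (so that $\mu$ is finite) and choosing curves of uniformly bounded length. This is handled in \cite{AM16} and the argument is recalled in \cite{DPR16}, but it is not automatic from the bare definition of Alberti representation; your sentence ``since its boundary is a superposition of endpoint Dirac masses, it has finite total mass'' hides precisely this step. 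Modulo this localisation and the appeal to the structure theorem for the decomposability bundle in \cite{AM16}, your plan is the correct one.
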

In turn, the weak converse of Rademacher theorem readily implies
that the Alberti--Marchese distribution has full rank if
and only if the measure under consideration is absolutely
continuous with respect to the Lebesgue measure:
\begin{corollary}\label{cor:char_AM_full}
Let \(\mu\) be a given Radon measure on \(\R^n\). Then it holds that
\[
V_\mu(x)=\R^n,\;\;\;\text{for }\mu\text{-a.e.\ }x\in\R^n
\quad\Longleftrightarrow\quad\mu\ll\mathcal L^n.
\]
\end{corollary}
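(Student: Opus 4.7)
The backward implication is immediate from Remark~\ref{rmk:AM_Leb}: if $\mu\ll\mathcal{L}^n$, then $\mathcal{L}^n$-negligible sets are $\mu$-negligible, and since $V_{\mathcal{L}^n}(x)=\R^n$ holds $\mathcal{L}^n$-a.e., the same identity holds $\mu$-a.e.

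For the forward implication, the plan is to bridge the hypothesis on $V_\mu$ to the weak converse of Rademacher (Theorem~\ref{thm:weak_Rademacher}) by upgrading differentiability along $V_\mu$ to full Fr\'{e}chet differentiability of all Lipschitz functions on $\R^n$, not merely those in $\LIP_c(\R^n)$. So first I would pick any $f\in\LIP_c(\R^n)$; by item~i) of Theorem~\ref{thm:AM} there exists $\nablaAM f\in\Gamma(V_\mu)$ such that \eqref{eq:formula_nablaAM} holds for $\mu$-a.e.\ $x$, and under our assumption the constraint ``$V_\mu(x)\ni v\to 0$'' becomes simply ``$v\to 0$ in $\R^n$''. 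Hence $f$ is Fr\'{e}chet differentiable at $\mu$-a.e.\ point, with differential $\nablaAM f(x)$.

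Next, I would extend this to arbitrary $f\in\LIP(\R^n)$ by a standard cutoff argument. Fix a sequence $\chi_k\in C^\infty_c(\R^n)$ with $\chi_k\equiv 1$ on $B_k(0)$, so that $f\chi_k\in\LIP_c(\R^n)$ (the product is Lipschitz because $f$ is locally bounded on the compact support of $\chi_k$). By the previous step, there is a $\mu$-null set $N_k$ outside of which $f\chi_k$ is Fr\'{e}chet differentiable; since $f$ agrees with $f\chi_k$ on $B_k(0)$, the function $f$ is Fr\'{e}chet differentiable on $B_k(0)\setminus N_k$. Taking $N\coloneqq\bigcup_{k\in\N}N_k$, the set $N$ is $\mu$-negligible and $f$ is Fr\'{e}chet differentiable on $\R^n\setminus N$. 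Theorem~\ref{thm:weak_Rademacher} then gives $\mu\ll\mathcal{L}^n$, completing the proof.

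There is no real obstacle; the only point that needs a line of care is the reduction from $\LIP(\R^n)$ to $\LIP_c(\R^n)$, which is handled by the countable cutoff argument above so that the exceptional $\mu$-null set does not depend on the truncation parameter. All the substantive work is absorbed into Theorem~\ref{thm:weak_Rademacher} (ultimately Theorem~\ref{thm:DPR} of De Philippis--Rindler).
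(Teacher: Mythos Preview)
Your proof is correct and follows the same approach as the paper: backward implication via Remark~\ref{rmk:AM_Leb}, forward implication by noting that full-rank \(V_\mu\) forces \(\mu\)-a.e.\ differentiability of Lipschitz functions and then invoking Theorem~\ref{thm:weak_Rademacher}. You are actually more careful than the paper's one-line argument, since Theorem~\ref{thm:AM}(i) is stated only for \(f\in\LIP_c(\R^n)\) while Theorem~\ref{thm:weak_Rademacher} requires all of \(\LIP(\R^n)\); your cutoff argument correctly bridges this gap.
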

\begin{proof}
If \(V_\mu(x)=\R^n\) for \(\mu\)-a.e.\ \(x\in\R^n\),
then every Lipschitz function is \(\mu\)-a.e.\ differentiable, whence
\(\mu\ll\mathcal L^n\) by Theorem \ref{thm:weak_Rademacher}.
The converse implication is observed in Remark \ref{rmk:AM_Leb}.
\end{proof}
\begin{example}[Vector fields with divergence as normal \(1\)-currents]
\label{ex:vector_fields_as_currents}
{\rm Let \(\mu\) be a finite Borel measure on \(\R^n\) and
\(\underline v\in D(\underline\div_\mu)\). Let us associate
to \(\underline v\) the \(1\)-current \(\mathcal I(\underline v)\)
on \(\R^n\), defined as
\begin{equation}\label{eq:def_currentif}
\mathcal I(\underline v)(\underline\omega)\coloneqq
\int\underline\omega(\underline v)\,\d\mu,
\quad\text{ for every smooth, compactly-supported }1\text{-form }
\underline\omega\text{ on }\R^n.
\end{equation}
Then we claim that \(\mathcal I(\underline v)\) is a normal
\(1\)-current and that it satisfies
\[
\overrightarrow{\mathcal I(\underline v)}=
\1_{\{|\underline v|>0\}}\frac{\underline v}{|\underline v|},
\quad\big\|\mathcal I(\underline v)\big\|=|\underline v|\mu,
\quad\partial\mathcal I(\underline v)=
-\underline\div_\mu(\underline v)\mu.
\]

Indeed, the fact that
\({\bf M}\big(\mathcal I(\underline v)\big)<+\infty\),
and the explicit formulae for
\(\overrightarrow{\mathcal I(\underline v)}\) and
\(\big\|\mathcal I(\underline v)\big\|\), are immediate
consequences of \eqref{eq:def_currentif}, while it readily
follows from the identity
\[
\partial\mathcal I(\underline v)(f)=\mathcal I(\underline v)(\d f)
=\int\d f(\underline v)\,\d\mu=
-\int f\,\underline\div_\mu(\underline v)\,\d\mu,
\quad\text{ for every }f\in C^\infty_c(\R^n),
\]
that the \(0\)-current \(\partial\mathcal I(\underline v)\) has
finite total mass and satisfies
\(\partial\mathcal I(\underline v)=-\underline\div_\mu(\underline v)\mu\).
\fr}\end{example}
\subsubsection{The dimension drops on the singular part}
As a first step, we show that a given Radon measure on \(\R^n\)
must be absolutely continuous with respect to the Lebesgue
measure \(\mathcal L^n\) if restricted to any Borel set
where the tangent module has maximal dimension.
\begin{proposition}\label{prop:top_dim_tg_mod}
Let \(\mu\) be a finite Borel measure on \(\R^n\).
Suppose \(L^2_\mu(T\R^n)\) has dimension equal to \(n\)
on a Borel set \(E\subseteq\R^n\). Then \(\mu|_E\ll\mathcal L^n\).
\end{proposition}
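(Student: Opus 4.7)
The plan is to apply the De Philippis--Rindler theorem (Theorem~\ref{thm:DPR}) to the restrictions of $\mu$ to suitable Borel subsets of $E$. Since $L^2_\mu(T\R^n)$ has dimension $n$ on $E$, Remark~\ref{rmk:dim_fibers} gives $T_\mu(x)=\R^n$ for $\mu$-a.e.\ $x\in E$, and by assumption we may fix a local basis $v_1,\ldots,v_n\in L^2_\mu(T\R^n)$ on $E$. Pushing through $\iota_\mu$ we obtain $\underline w_i\coloneqq\iota_\mu(v_i)\in\Gamma(T_\mu)$, and a preliminary step is to upgrade the $L^\infty(\mu|_E)$-independence of the $v_i$ to \emph{pointwise} linear independence of $\underline w_1(x),\ldots,\underline w_n(x)\in\R^n$ for $\mu$-a.e.\ $x\in E$. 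This I would prove by contradiction: a measurable selection argument produces bounded coefficients $\lambda_1,\ldots,\lambda_n$ with $\max_i|\lambda_i|=1$ realising a nontrivial pointwise dependence on a positive-measure subset $A\subseteq E$; since $\iota_\mu$ is $L^\infty(\mu)$-linear and pointwise-norm-preserving, the identity $\iota_\mu\bigl(\sum_i\1_A\lambda_i v_i\bigr)=\sum_i\1_A\lambda_i\underline w_i=0$ forces $\sum_i\1_A\lambda_i v_i=0$ in $L^2_\mu(T\R^n)$, contradicting the module-theoretic independence of the basis.

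Next I would approximate to reach $D(\underline\div_\mu)$. By Lemma~\ref{lem:density_D_Delta} there exist $v_i^k\in D(\div_\mu)$ with $v_i^k\to v_i$ strongly in $L^2_\mu(T\R^n)$; setting $\underline v_i^k\coloneqq\iota_\mu(v_i^k)$, Lemma~\ref{lem:div_vs_und_div} yields $\underline v_i^k\in D(\underline\div_\mu)$ and $\underline v_i^k\to\underline w_i$ in $L^2(\R^n,\R^n;\mu)$. After extracting a common subsequence we may assume $\underline v_i^k(x)\to\underline w_i(x)$ for $\mu$-a.e.\ $x$ and every $i$; continuity of the determinant then ensures that, for $\mu$-a.e.\ $x\in E$, there is a least index $k(x)\in\N$ making $\underline v_1^{k(x)}(x),\ldots,\underline v_n^{k(x)}(x)$ linearly independent in $\R^n$. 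The Borel sets $E_k\coloneqq\{x\in E\,:\,k(x)=k\}$ partition $E$ up to a $\mu$-null set.

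To conclude, I would apply Theorem~\ref{thm:DPR} on each $E_k$. Form the $1$-currents $T^{(k)}_i\coloneqq\mathcal I(\underline v_i^k)$ from Example~\ref{ex:vector_fields_as_currents}; finiteness of $\mu$ together with $\underline v_i^k\in D(\underline\div_\mu)$ guarantees that each $T^{(k)}_i$ is normal, with $\|T^{(k)}_i\|=|\underline v_i^k|\mu$ and direction $\vec{T}^{(k)}_i=\underline v_i^k/|\underline v_i^k|$ where the latter makes sense. By construction $|\underline v_i^k|>0$ at $\mu|_{E_k}$-a.e.\ point, so $\mu|_{E_k}\ll\|T^{(k)}_i\|$ for every $i$; and the directions $\vec{T}^{(k)}_1,\ldots,\vec{T}^{(k)}_n$ are linearly independent $\mu|_{E_k}$-a.e. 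Theorem~\ref{thm:DPR} applied to the finite Radon measure $\mu|_{E_k}$ therefore yields $\mu|_{E_k}\ll\mathcal L^n$, and summing over the countable partition produces $\mu|_E\ll\mathcal L^n$. The main obstacle in this scheme is the first step---passing from the abstract notion of local basis to pointwise linear independence of the $\underline w_i$; the subsequent approximation, measurable selection of $k(x)$, and current-theoretic application are routine once this is in place.
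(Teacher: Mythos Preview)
Your proof is correct and follows the same overall strategy as the paper---partition $E$ into Borel pieces on which $n$ vector fields from $D(\underline\div_\mu)$ are pointwise linearly independent, then invoke Theorem~\ref{thm:DPR} via Example~\ref{ex:vector_fields_as_currents} on each piece. The difference lies in how the divergence-measure vector fields are obtained.

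You fix a local basis $v_1,\ldots,v_n$, prove pointwise independence of the $\underline w_i=\iota_\mu(v_i)$ (your ``main obstacle''), and then approximate each $v_i$ by elements of $D(\div_\mu)$, using continuity of the determinant to recover pointwise independence for large~$k$. The paper instead bypasses both of these steps by appealing directly to item~ii) of Theorem~\ref{thm:alt_char_T_mu}, which gives $\Gamma(T_\mu)={\rm cl}\,D(\underline\div_\mu)$; since $\underline\div_\mu$ satisfies the Leibniz rule, Lemma~\ref{lem:stable_spaces} applied to a countable dense $\mathcal C\subseteq D(\underline\div_\mu)$ yields $T_\mu(x)={\rm cl}\{\underline v(x):\underline v\in\mathcal C\}$ for $\mu$-a.e.\ $x$. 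Thus for $\mu$-a.e.\ $x\in E$ some $n$-tuple from $\mathcal C$ already spans $\R^n$ at $x$, and one partitions $E$ according to which $n$-element subset $S_k\subseteq\mathcal C$ works. This route avoids the measurable-selection contradiction argument and the approximation/subsequence extraction entirely: the vector fields with divergence are available from the start. Your approach, on the other hand, is self-contained in that it does not rely on the divergence characterisation of $T_\mu$ from Theorem~\ref{thm:alt_char_T_mu}, using only the density of $D(\div_\mu)$ from Lemma~\ref{lem:density_D_Delta}.
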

\begin{proof}
Fix a countable dense subset \(\mathcal C\) of
\(D(\underline\div_\mu)\). Given that \(\Gamma(T_\mu)=
{\rm cl}\,D(\underline\div_\mu)\) by item ii) of Theorem
\ref{thm:alt_char_T_mu} and \(\underline\div_\mu\) satisfies
the Leibniz rule, we know from Lemma \ref{lem:stable_spaces}
that \(T_\mu(x)\) coincides with \({\rm cl}\,\big\{\underline v(x)
\,:\,\underline v\in\mathcal C\big\}\) for
\(\mu\)-a.e.\ \(x\in\R^n\). In particular, Remark
\ref{rmk:dim_fibers} grants that:
\begin{equation}\label{eq:top_dim_tg_mod_aux}
\text{For }\mu\text{-a.e.\ }x\in E,\text{ there exist }
\underline v_1,\ldots,\underline v_n\in\mathcal C:\quad{\rm span}
\big\{\underline v_1(x),\ldots,\underline v_n(x)\big\}=\R^n.
\end{equation}
Consider the family \((S_k)_{k\in\N}\) of all those subsets of
\(\mathcal C\) made exactly of \(n\) elements. Given any \(k\in\N\),
we denote by \(E_k\) the set of all points \(x\in E\) such that
\(\underline v_1(x),\ldots,\underline v_n(x)\in\R^n\) are linearly
independent, where \(\{\underline v_1,\ldots,\underline v_n\}=S_k\).
Then \eqref{eq:top_dim_tg_mod_aux} grants that the Borel sets
\(E_k\) satisfy \(\mu\big(E\setminus\bigcup_k E_k\big)=0\).
Now fix any \(k\in\N\) and call
\(S_k=\{\underline v_1,\ldots,\underline v_n\}\). Thanks to
Example \ref{ex:vector_fields_as_currents}, the \(1\)-currents
\(\mathcal I(\underline v_1),\ldots,\mathcal I(\underline v_n)\)
are normal and satisfy \(\mu|_{E_k}\ll\big\|\mathcal I(\underline v_i)\big\|\)
for all \(i=1,\ldots,n\). Therefore, we conclude from Theorem \ref{thm:DPR} that
\(\mu|_{E_k}\ll\mathcal L^n\) for all \(k\in\N\),
thus \(\mu|_E\ll\mathcal L^n\).
\end{proof}
It is now easy to prove, as an immediate consequence of
Proposition \ref{prop:top_dim_tg_mod}, that the tangent
fibers cannot have dimension \(n\) on the singular part
of the measure \(\mu\) under consideration.
\begin{theorem}[Tangent fibers on the singular part]
\label{thm:dim_fibers}
Let \(\mu\) be a finite Borel measure on \(\R^n\),
with Lebesgue decomposition \(\mu=\rho\mathcal L^n+\mu^s\).
Then it holds that
\begin{equation}\label{eq:dim_fibers}
\dim T_\mu(x)<n,\quad\text{ for }\mu^s\text{-a.e.\ }x\in\R^n.
\end{equation}
\end{theorem}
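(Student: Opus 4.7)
The plan is to reduce the statement to Proposition \ref{prop:top_dim_tg_mod} by isolating the Borel set where the tangent fiber attains the maximal possible dimension \(n\).

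First, I set \(E\coloneqq\big\{x\in\R^n\,:\,\dim T_\mu(x)=n\big\}\). Since \(T_\mu\) is (up to \(\mu\)-a.e.\ equality) a Borel map \(\R^n\to\Gr(\R^n)\) and the locus \(\{V\in\Gr(\R^n)\,:\,\dim V=n\}=\{\R^n\}\) is a Borel subset of the Grassmannian, I may assume \(E\) is Borel. By Remark \ref{rmk:dim_fibers}, the tangent module \(L^2_\mu(T\R^n)\) has dimension \(n\) on \(E\).

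Second, I apply Proposition \ref{prop:top_dim_tg_mod} to the Borel set \(E\) to conclude that \(\mu|_E\ll\mathcal L^n\). Since \(\mu^s|_E\leq\mu|_E\) as Borel measures on \(\R^n\), this yields \(\mu^s|_E\ll\mathcal L^n\). Combined with the defining property \(\mu^s\perp\mathcal L^n\), we deduce \(\mu^s|_E=0\), i.e., \(\mu^s(E)=0\). Equivalently, for \(\mu^s\)-a.e.\ \(x\in\R^n\) we have \(x\notin E\), which is exactly \(\dim T_\mu(x)<n\), proving \eqref{eq:dim_fibers}.

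The heavy lifting -- namely, the use of the De Philippis--Rindler Theorem \ref{thm:DPR} to promote linear independence of finitely many concrete vector fields in \(D(\underline\div_\mu)\) to absolute continuity with respect to \(\mathcal L^n\) -- has already been carried out in Proposition \ref{prop:top_dim_tg_mod}, so there is essentially no remaining obstacle; the theorem is a direct corollary. The only minor point worth double-checking is the measurability of \(E\), which is automatic from the Borel regularity of \(T_\mu\) and the standard fact that the dimension strata of \(\Gr(\R^n)\) are Borel.
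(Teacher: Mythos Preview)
Your proof is correct and follows essentially the same route as the paper's: both arguments isolate the set where \(\dim T_\mu=n\), invoke Remark~\ref{rmk:dim_fibers} to pass to the module-theoretic dimension, and then apply Proposition~\ref{prop:top_dim_tg_mod} to force absolute continuity, contradicting the singularity of \(\mu^s\). The only cosmetic difference is that the paper phrases this as a proof by contradiction (assuming \(\mu^s(E)>0\) for some \(E\) inside a carrier of \(\mu^s\)), while you argue directly; the content is identical.
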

\begin{proof}
Fix a Borel set \(B\subseteq\R^n\) such that \(\mathcal L^n(B)=\mu^s(\R^n\setminus B)=0\).
We argue by contradiction: suppose there is a Borel set \(E\subseteq B\)
such that \(\mu^s(E)>0\) and \(\dim T_\mu(x)=n\) for \(\mu^s\)-a.e.\ \(x\in E\).
In particular, \(\mu(E)>0\) and \(\dim T_\mu(x)=n\) for \(\mu\)-a.e.\ \(x\in E\).
As observed in Remark \ref{rmk:dim_fibers}, this means that the tangent module
\(L^2_\mu(T\R^n)\) has dimension \(n\) on \(E\). Therefore, Proposition
\ref{prop:top_dim_tg_mod} grants that \(\mu^s|_E=\mu|_E\ll\mathcal L^n\).
This leads to a contradiction, as \(\mathcal L^n(E)=0\) but \(\mu^s(E)>0\).
\end{proof}
\begin{remark}{\rm
Actually, Theorem \ref{thm:dim_fibers} holds for any non-negative
Radon measure \(\mu\) on \(\R^n\). Indeed, given any
\(\bar x\in{\rm spt}(\mu)\) and \(r>0\), it can be readily
deduced from \cite[Proposition 2.6]{Gigli12} that
\(T_{\mu_r}(x)=T_\mu(x)\) is satisfied for \(\mu\)-a.e.\ \(x\in B_r(\bar x)\),
where we set \(\mu_r\coloneqq\mu|_{B_r(\bar x)}\). Moreover,
notice that \((\mu_r)^s=\mu^s|_{B_r(\bar x)}\). Therefore, by applying
Theorem \ref{thm:dim_fibers} to the measures \((\mu_k)_{k\in\N}\)
we deduce that \(\mu\) itself satisfies \eqref{eq:dim_fibers},
thus showing that in the statement of Theorem \ref{thm:dim_fibers}
the finiteness assumption on \(\mu\) can be dropped.
\fr}\end{remark}
\begin{remark}[Weighted real line]{\rm
As already mentioned in the introduction, the Sobolev space
on weighted \(\R\) has been fully understood by S.\ Di Marino
and G.\ Speight in \cite{DiMarinoSpeight15}. More specifically,
they completely characterised the minimal weak upper gradient
of any Lipschitz function \(f\in W^{1,2}(\R,\mu)\), where
\(\mu\) is a given Radon measure on \(\R\); see
\cite[Theorem 2]{DiMarinoSpeight15}. We point out that our
results imply a part (but not the whole) of their statement:
Theorem \ref{thm:mwug_Lip} grants that
\(|D_\mu f|(x)\in\big\{0,\lip(f)(x)\big\}\) is satisfied
for \(\mu\)-a.e.\ \(x\in\R\), while Theorem \ref{thm:dim_fibers}
ensures that \(T_\mu(x)=\{0\}\) and thus \(|D_\mu f|(x)=0\)
hold for \(\mu^s\)-a.e.\ \(x\in\R\).
\fr}\end{remark}
It is worth to isolate the following statement, which might be
seen as a special case of Theorem \ref{thm:dim_fibers}
(or, alternatively, of Corollary \ref{cor:char_AM_full}).
\begin{corollary}\label{cor:necessary_measure_ac}
Let \(\mu\geq 0\) be a Radon measure on \(\R^n\) such that
\begin{equation}\label{eq:mwug_equals_lip}
|D_\mu f|=\lip(f)\;\;\;\mu\text{-a.e.},\quad
\text{ for every }f\in\LIP_c(\R^n).
\end{equation}
Then it holds that \(\mu\ll\mathcal L^n\).
\end{corollary}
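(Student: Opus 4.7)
The plan is to translate the analytic hypothesis \eqref{eq:mwug_equals_lip} into a purely geometric condition on the tangent distribution $T_\mu$, and then appeal to the structural results already established in the paper. Concretely, the equivalence proved in Corollary \ref{cor:equiv_Df=lipf} shows that \eqref{eq:mwug_equals_lip} is exactly the statement that $T_\mu(x)=\R^n$ for $\mu$-a.e.\ $x\in\R^n$. Thus the problem reduces to showing that a Radon measure $\mu$ whose tangent distribution has full rank must be absolutely continuous with respect to $\mathcal L^n$.

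From here there are two essentially equivalent routes, and I would give the shorter one. By Corollary \ref{cor:T_mu_in_W_mu} we have $T_\mu\leq V_\mu$, so if $T_\mu(x)=\R^n$ for $\mu$-a.e.\ $x$, then a fortiori $V_\mu(x)=\R^n$ for $\mu$-a.e.\ $x$. Corollary \ref{cor:char_AM_full} (which itself rests on the De Philippis--Rindler theorem via the weak converse of Rademacher's theorem) then yields $\mu\ll\mathcal L^n$, as required. Alternatively, one can argue directly through Theorem \ref{thm:dim_fibers}: writing the Lebesgue decomposition $\mu=\rho\mathcal L^n+\mu^s$, that theorem forces $\dim T_\mu(x)<n$ for $\mu^s$-a.e.\ $x$, which is incompatible with $T_\mu(x)=\R^n$ $\mu$-a.e.\ unless $\mu^s=0$.

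There is essentially no obstacle here, since all the heavy lifting is done by the previously established results; the corollary is a formal combination of Corollary \ref{cor:equiv_Df=lipf} with either Corollary \ref{cor:T_mu_in_W_mu} and Corollary \ref{cor:char_AM_full}, or with Theorem \ref{thm:dim_fibers} (after the standard localisation argument removing the finiteness assumption on $\mu$, as explained in the remark following Theorem \ref{thm:dim_fibers}). The only point worth double-checking is that the $\mu$-negligibility of the set where $T_\mu(x)\neq\R^n$ really transfers to $\mu^s$-negligibility of the corresponding set in the second route, which is immediate since $\mu^s\leq\mu$.
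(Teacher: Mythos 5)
Your proposal is correct and coincides with the paper's own proof, which gives exactly the same two routes: the reduction via Corollary \ref{cor:equiv_Df=lipf}, then either Theorem \ref{thm:dim_fibers} (forcing $\mu^s=0$) or the a fortiori passage $T_\mu\leq V_\mu$ combined with Corollary \ref{cor:char_AM_full}. Nothing is missing; the paper merely presents the two arguments in the opposite order.
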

\begin{proof}
By Corollary \ref{cor:equiv_Df=lipf}, we know that
\eqref{eq:mwug_equals_lip} is equivalent to
\(T_\mu(x)=\R^n\) for \(\mu\)-a.e.\ \(x\in\R^n\).
Therefore, it follows from Theorem \ref{thm:dim_fibers}
that \(\mu^s=0\), which exactly means that \(\mu\ll\mathcal L^n\).

Alternatively, one can argue as follows: since \(T_\mu(x)=\R^n\)
for \(\mu\)-a.e.\ \(x\in\R^n\), we know a fortiori that
\(V_\mu(x)=\R^n\) for \(\mu\)-a.e.\ \(x\in\R^n\), thus accordingly
\(\mu\ll\mathcal L^n\) by Corollary \ref{cor:char_AM_full}.
\end{proof}
\begin{remark}\label{rmk:PI_case}{\rm
Suppose that \(\mu\) is a Radon measure on \(\R^n\) such that
the resulting metric measure space \((\R^n,\sfd_{\rm Eucl},\mu)\)
is doubling and supports a weak \((1,2)\)-Poincar\'{e} inequality,
in the sense of \cite{HKST15}. Then the property
in \eqref{eq:mwug_equals_lip} is satisfied, as proven by J.\ Cheeger
in \cite{Cheeger00}. Therefore, it follows from Corollary
\ref{cor:necessary_measure_ac} that the measure \(\mu\) must
be absolutely continuous with respect to \(\mathcal L^n\).
This fact was already proven by A.\ Schioppa in
\cite{Schioppa15}. See also \cite{DPMR16}.
\fr}\end{remark}
\subsection{A geometric characterisation of the tangent
distribution}\label{ss:geom_T_mu}
The aim of this section is to show that the tangent distribution
\(T_\mu\) associated with a given Radon measure \(\mu\) on \(\R^n\)
admits a `geometric' characterisation in terms of the velocity
of test plans, somehow refining Theorem \ref{thm:alt_char_T_mu}.
More precisely, we will prove that there exists a sequence
\((\ppi_i)_i\) of test plans on \((\R^n,\sfd_{\rm Eucl},\mu)\)
having the following property: \(T_\mu\) is obtained as the closure
of the velocities of the plans \(\ppi_i\) at time \(0\), in a
suitable sense; see Theorem \ref{thm:fiber_cl_dot_pi} for the
correct statement. In order to achieve this goal, a key tool
is given by the notion of test plan representing a gradient,
which has been defined and proven to exist (in high generality)
by N.\ Gigli in \cite{Gigli12}.
\subsubsection{Reminder on test plans representing a gradient}
First of all, let us report the notion of test plan representing
the gradient of a Sobolev function; recall the definition
\eqref{eq:def_KE_t} of \({\rm KE}_t\).
\begin{definition}[Test plan representing a gradient \cite{Gigli12}]
Let \((\X,\sfd,\mu)\) be a metric measure space. Let \(f\in W^{1,2}(\X,\mu)\)
be given. Then a test plan \(\ppi\) on \((\X,\sfd,\mu)\) is said to
\textbf{represent the gradient} of the function \(f\) provided it satisfies
the following property:
\begin{equation}\label{eq:plan_repr_grad}
\lim_{t\searrow 0}\frac{f\circ\e_t-f\circ\e_0}{{\rm KE}_t}
=\lim_{t\searrow 0}\frac{{\rm KE}_t}{t}=|D_\mu f|\circ\e_0,
\quad\text{ strongly in }L^2(\ppi).
\end{equation}
\end{definition}
Test plans representing a gradient exist under mild assumptions,
as the next result shows.
\begin{theorem}[Existence of test plans representing a gradient \cite{Gigli12}]
\label{thm:existence_plan_repr_grad}
Let \((\X,\sfd,\mu)\) be a metric measure space.
Let \(\nu\) be a Borel probability measure on \((\X,\sfd)\) such that
\(\int\sfd^2(\cdot,\bar x)\,\d\nu<+\infty\) for every \(\bar x\in\X\),
and \(\nu\leq C\mu\) for some constant \(C>0\). Let \(f\in W^{1,2}(\X,\mu)\)
be given. Then there exists a test plan \(\ppi\) on \((\X,\sfd,\mu)\)
that represents the gradient of \(f\) and satisfies \((\e_0)_*\ppi=\nu\).
\end{theorem}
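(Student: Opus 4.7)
My plan is to first reduce to the case when $f$ is bounded and Lipschitz, and then construct $\ppi$ from a curve of measures produced by a minimising-movement (JKO-type) scheme in the $L^2$-Wasserstein space. For the reduction, pick $f_n\in\LIP_{bs}(\X)$ with $f_n\to f$ in $L^2(\mu)$ and $\lip(f_n)\to|D_\mu f|$ in $L^2(\mu)$ via Theorem \ref{thm:density_Lip}; if one can produce for each $n$ a test plan $\ppi_n$ representing the gradient of $f_n$ with $(\e_0)_*\ppi_n=\nu$ and compression constant depending only on $C$, then the uniform kinetic-energy bound plus tightness give a weak limit $\ppi$, and a diagonal argument on the representation identity \eqref{eq:plan_repr_grad} transfers the property from $f_n$ to $f$.

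For the bounded-Lipschitz case, fix $\tau>0$, set $\nu_0^\tau\coloneqq\nu$, and iteratively define
\[
\nu_{k+1}^\tau\in\operatorname*{argmin}\!\left\{-\!\int f\,\d\sigma+\frac{1}{2\tau}W_2^2(\sigma,\nu_k^\tau)\;:\;\sigma\in\mathscr P(\X),\ \sigma\le C'\mu\right\}\!,
\]
where $C'$ is chosen large (depending on $C$, $\Lip(f)$, and the total time) so that minimisers exist and the density cap is preserved. Interpolating by $W_2$-geodesics yields a curve $(\mu_t^\tau)_{t\in[0,1]}\subseteq\mathscr P_2(\X)$ satisfying the discrete JKO estimate
\[
\int f\,\d\nu-\int f\,\d\mu_t^\tau\ge\tfrac{1}{2}\!\int_0^t|\dot\mu_s^\tau|^2\,\d s+\tfrac{1}{2}\!\int_0^t\!\!\!\int|D_\mu f|^2\,\d\mu_s^\tau\,\d s+o_\tau(1).
\]
An Ascoli--Arzel\`a argument in $(\mathscr P_2(\X),W_2)$, justified by the uniform density cap and the finite second-moment hypothesis on $\nu$, gives a limit curve $(\mu_t)$ with $\mu_t\le C'\mu$, and Lisini's superposition theorem lifts it to a Borel probability $\ppi$ on $C([0,1],\X)$ concentrated on $AC^2$ curves with $(\e_t)_*\ppi=\mu_t$ and $\int|\dot\gamma_t|^2\,\d\ppi=|\dot\mu_t|^2$ for $\mathcal L_1$-a.e.\ $t$. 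The density cap makes $\ppi$ a test plan, and by construction $(\e_0)_*\ppi=\nu$.

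Finally, to check \eqref{eq:plan_repr_grad}: the weak-upper-gradient property applied to $f\circ\gamma$, integrated against $\ppi$ and combined with Young's inequality, produces exactly the \emph{reverse} of the JKO energy inequality. Together these force the energy-dissipation equality, hence $(f\circ\gamma)'_s=-|D_\mu f|(\gamma_s)|\dot\gamma_s|$ and $|\dot\gamma_s|=|D_\mu f|(\gamma_s)$ for $(\ppi\otimes\mathcal L_1)$-a.e.\ $(\gamma,s)$. Dividing by ${\rm KE}_t$, sending $t\downarrow 0$, and invoking dominated convergence (the compression bound guarantees $|D_\mu f|\circ\e_0\in L^2(\ppi)$ as an envelope) gives the two $L^2(\ppi)$-limits in \eqref{eq:plan_repr_grad}.

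The principal obstacle is preserving the density cap $\sigma\le C'\mu$ along the JKO iteration: a priori the Wasserstein minimiser could concentrate mass and blow up the density, destroying the compression constant needed to make $\ppi$ a test plan. I would handle this either by imposing the cap as a hard constraint (using Kantorovich duality to show existence of constrained minimisers, then verifying the constraint is inactive for $\tau$ small via the $L^\infty$-bound on $f$), or by first replacing $-f$ by its Hopf--Lax regularisation $-Q_\tau f$ whose gradient flow admits explicit density bounds through the Hamilton--Jacobi inequality; this is precisely the step that exploits the hypothesis $\nu\le C\mu$.
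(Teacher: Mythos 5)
This statement is quoted from \cite{Gigli12} and is not reproved in the paper, so your construction has to stand on its own; as written it has a genuine gap at its central step. The minimizing-movement machinery for \(F(\sigma)\coloneqq-\int f\,\d\sigma\) (with or without the density cap) produces an energy--dissipation inequality whose dissipation term is the descending \(W_2\)-slope \(|\partial F|(\mu^\tau_s)\), not \(\big(\int|D_\mu f|^2\,\d\mu^\tau_s\big)^{1/2}\). Your ``discrete JKO estimate'' therefore presupposes the lower bound \(|\partial F|(\sigma)^2\ge\int|D_\mu f|^2\,\d\sigma\) for all \(\sigma\le C'\mu\); but proving this means exhibiting competitors \(\sigma'\), \(W_2\)-close to \(\sigma\) and with controlled density, which gain \(\int f\) at rate \(\big(\int|D_\mu f|^2\,\d\sigma\big)^{1/2}\) per unit of \(W_2\)-distance --- that is, essentially a test plan representing the gradient of \(f\) issuing from \(\sigma\), which is the very object to be constructed. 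The easy estimate from Kantorovich duality goes the other way (slope bounded above by \(\int\lip(f)^2\,\d\sigma\)), and the weak-upper-gradient/Young chain you invoke at the end likewise only yields the matching upper bound; without the slope lower bound no energy--dissipation \emph{equality} can be forced, so the scheme is circular exactly where the content of the theorem lies. The sign bookkeeping betrays this: flowing \(-\int f\,\d\sigma\) makes \(\int f\,\d\mu^\tau_t\) increase, whereas your displayed inequality and the conclusion \((f\circ\gamma)'_s=-|D_\mu f|(\gamma_s)\,|\dot\gamma_s|\) assert a decrease, while \eqref{eq:plan_repr_grad} forces \(f\) to increase along the curves.

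The remaining steps do not repair this. The density cap, which you rightly flag, cannot be argued away: for a merely Lipschitz \(f\) the one-step unconstrained minimizer can concentrate mass (small \(W_2\)-displacement is perfectly compatible with local concentration), so the constraint is genuinely active for every \(\tau>0\); imposing it as a hard constraint only decreases the local slope, aggravating the circularity above, and the Hopf--Lax remedy controls \(\partial_t Q_tf+\tfrac12\lip(Q_tf)^2\), not the densities of JKO minimizers --- density bounds of that kind are available only under curvature-type assumptions, which are not part of the hypotheses. Note also that the theorem is stated for an arbitrary complete and separable \((\X,\sfd)\), where the \(W_2\)-geodesic interpolation you use need not even exist. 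Finally, the reduction from \(f\in W^{1,2}(\X,\mu)\) to the Lipschitz approximations of Theorem \ref{thm:density_Lip} is not a routine diagonal argument: \eqref{eq:plan_repr_grad} is a rate-free limit as \(t\searrow0\), it is neither continuous nor semicontinuous under weak convergence \(\ppi_n\rightharpoonup\ppi\), and your argument provides no \(n\)-uniform quantitative control that would let the property pass to the limit plan.
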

In lack of an appropriate reference, we provide a quick
proof of the following elementary continuity result. To
do so, we use the well-known density of \(\LIP_c(\R^n,\R^n)\)
in \(L^2(\R^n,\R^n;\mu)\).
\begin{lemma}\label{lem:cont_comp_e_t}
Let \(\mu\geq 0\) be a Radon measure on \(\R^n\). Let \(\ppi\) be
a test plan on \((\R^n,\sfd_{\rm Eucl},\mu)\). Then for every
\(\underline v\in L^2(\R^n,\R^n;\mu)\) it holds that
\[
[0,1]\ni t\longmapsto\underline v\circ\e_t\in
\mathbb B_\sppi\;\text{ is a continuous curve.}
\]
\end{lemma}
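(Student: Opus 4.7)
The plan is to exploit the density of $\LIP_c(\R^n,\R^n)$ in $L^2(\R^n,\R^n;\mu)$ together with the compression bound on $\ppi$ to reduce the statement to the case of a Lipschitz, compactly-supported vector field. For any $\underline v\in L^2(\R^n,\R^n;\mu)$ and any $\underline w\in\LIP_c(\R^n,\R^n)$, the compression inequality gives
\[
\int|\underline v\circ\e_t-\underline w\circ\e_t|^2\,\d\ppi\leq\Comp(\ppi)\int|\underline v-\underline w|^2\,\d\mu,
\]
for every $t\in[0,1]$. Hence, picking a sequence $(\underline w_k)\subseteq\LIP_c(\R^n,\R^n)$ with $\underline w_k\to\underline v$ in $L^2(\R^n,\R^n;\mu)$, the curves $t\mapsto\underline w_k\circ\e_t$ converge to $t\mapsto\underline v\circ\e_t$ \emph{uniformly} in $t\in[0,1]$ with respect to the norm of $\mathbb B_\sppi$. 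Therefore, if each curve $t\mapsto\underline w_k\circ\e_t$ is continuous, the limit curve $t\mapsto\underline v\circ\e_t$ inherits continuity.

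It thus remains to handle a Lipschitz, compactly-supported $\underline w$ with Lipschitz constant $L$. For any $s,t\in[0,1]$ with $s<t$, the Lipschitz bound yields $|\underline w(\gamma_t)-\underline w(\gamma_s)|\leq L\,|\gamma_t-\gamma_s|$. Integrating against $\ppi$ and using the fact that $\ppi$ is concentrated on $AC^2([0,1],\R^n)$, together with Cauchy--Schwarz applied to $|\gamma_t-\gamma_s|\leq\int_s^t|\dot\gamma_r|\,\d r$, I would estimate
\[
\int|\underline w\circ\e_t-\underline w\circ\e_s|^2\,\d\ppi\leq L^2\,|t-s|\int\!\!\!\int_0^1|\dot\gamma_r|^2\,\d r\,\d\ppi(\gamma).
\]
The right-hand side is finite by the finite kinetic energy of $\ppi$ and tends to zero as $|t-s|\to 0$, proving the continuity of $t\mapsto\underline w\circ\e_t$ in $\mathbb B_\sppi$.

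I do not expect genuine obstacles here: the argument only uses the two defining properties of a test plan (bounded compression and finite kinetic energy) and a standard density argument. The mildly delicate point is simply to record that the approximation $\underline w_k\circ\e_t\to\underline v\circ\e_t$ is uniform in $t$, so that continuity passes to the limit; this is precisely what the compression bound ensures.
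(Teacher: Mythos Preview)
Your proof is correct and follows essentially the same strategy as the paper: approximate by compactly-supported Lipschitz maps, use the compression bound to get uniform convergence of the curves \(t\mapsto\underline w_k\circ\e_t\), and conclude. The only minor difference is that for the Lipschitz step the paper invokes dominated convergence (boundedness of \(\underline w\) plus continuity of \(\gamma\)), whereas you use the kinetic-energy bound to obtain an explicit \(\tfrac{1}{2}\)-H\"older estimate; both work equally well here.
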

\begin{proof}
Fix any \(\underline v\in L^2(\R^n,\R^n;\mu)\).
Choose compactly-supported Lipschitz maps \(\underline v_i\colon\R^n\to\R^n\)
such that \(\underline v_i\to\underline v\) in \(L^2(\R^n,\R^n;\mu)\).
Given any \(t\in[0,1]\), we have \(\lim_{s\to t}
\int|\underline v_i\circ\e_s-\underline v_i\circ\e_t|^2\,\d\ppi=0\)
by dominated convergence theorem, so
\([0,1]\ni t\mapsto
\underline v_i\circ\e_t\in\mathbb B_\sppi\)
is continuous. Moreover, the curves
\(t\mapsto\underline v_i\circ\e_t\) uniformly converge to
\(t\mapsto\underline v\circ\e_t\) as \(i\to\infty\). Indeed, it holds that
\[
\sup_{t\in[0,1]}
\int\big|\underline v_i\circ\e_t-\underline v\circ\e_t\big|^2\,\d\ppi
=\sup_{t\in[0,1]}
\int|\underline v_i-\underline v|^2\circ\e_t\,\d\ppi
\leq{\rm Comp}(\ppi)\int|\underline v_i-\underline v|^2\,\d\mu.
\]
Therefore, the curve \([0,1]\ni t\mapsto\underline v\circ\e_t\in\mathbb B_\sppi\)
is continuous as well, as required.
\end{proof}
As one might expect, if a test plan \(\ppi\) represents the
gradient of a Sobolev function \(f\), then for any other Sobolev
function \(g\) we have, roughly speaking, that the derivative
at \(t=0\) of the map \(t\mapsto g\circ\e_t\in L^1(\ppi)\)
coincides with the scalar product
\(\langle\nabla_\mu g,\nabla_\mu f\rangle\circ\e_0\).
This claim is made precise by the ensuing result, which has been
proven in \cite[Corollary 2.4]{P20}.
\begin{proposition}\label{prop:plan_repr_grad_prod}
Let \((\X,\sfd,\mu)\) be an infinitesimally Hilbertian space.
Let \(f\in W^{1,2}(\X,\mu)\) be given. Let \(\ppi\) be a test
plan on \((\X,\sfd,\mu)\) that represents the gradient of \(f\).
Then for every function \(g\in W^{1,2}(\X,\mu)\) it holds that
\[
\frac{g\circ\e_t-g\circ\e_0}{t}\rightharpoonup
\langle\nabla_\mu g,\nabla_\mu f\rangle\circ\e_0,
\quad\text{ weakly in }L^1(\ppi)\text{ as }t\searrow 0.
\]
\end{proposition}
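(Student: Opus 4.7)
The plan is to obtain weak $L^1(\ppi)$-convergence via a symmetric squeeze: producing matching upper and lower bounds on $\limsup$ and $\liminf$ of $\int\phi\cdot(g\circ\e_t - g\circ\e_0)/t\,\d\ppi$ for arbitrary $\phi\in L^\infty(\ppi)$. The engine is the perturbation $h_\lambda:=f+\lambda g$: since the representation property is of quadratic flavour, small perturbations of $f$ will detect the directional derivative $\langle\nabla_\mu f,\nabla_\mu g\rangle$ through the infinitesimal Hilbertianity identity $|D_\mu h_\lambda|^2=|D_\mu f|^2+2\lambda\langle\nabla_\mu f,\nabla_\mu g\rangle+\lambda^2|D_\mu g|^2$.

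First I would establish the auxiliary fact that the representation hypothesis \eqref{eq:plan_repr_grad} upgrades to the $L^1(\ppi)$-convergence $(f\circ\e_t - f\circ\e_0)/t\to|D_\mu f|^2\circ\e_0$, by decomposing the left-hand side as the product of the two $L^2(\ppi)$-convergent factors $(f\circ\e_t - f\circ\e_0)/{\rm KE}_t$ and ${\rm KE}_t/t$. Next I apply the weak-upper-gradient inequality to $h_\lambda$ along $\ppi$-a.e.\ curve and Cauchy--Schwarz in time to dominate $(h_\lambda\circ\e_t - h_\lambda\circ\e_0)/t$ pointwise on curves by $A_t\cdot B_t$, where $A_t:=\bigl(\frac{1}{t}\int_0^t|D_\mu h_\lambda|^2\circ\e_s\,\d s\bigr)^{1/2}$ and $B_t:={\rm KE}_t/t$. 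As $t\searrow 0$, the hypothesis gives $B_t\to|D_\mu f|\circ\e_0$ in $L^2(\ppi)$, an $L^2(\ppi)$-continuity argument gives $A_t\to|D_\mu h_\lambda|\circ\e_0$ in $L^2(\ppi)$, and hence $A_tB_t\to|D_\mu h_\lambda|\cdot|D_\mu f|\circ\e_0$ in $L^1(\ppi)$; testing against $\phi\ge 0$ in $L^\infty(\ppi)$ yields the bound $\limsup_{t\searrow 0}\int\phi\,(h_\lambda\circ\e_t-h_\lambda\circ\e_0)/t\,\d\ppi\le\int\phi\,|D_\mu h_\lambda|\cdot|D_\mu f|\circ\e_0\,\d\ppi$.

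The infinitesimal Hilbertianity identity then yields, by a direct squaring computation, the pointwise inequality $|D_\mu h_\lambda|\cdot|D_\mu f|\le|D_\mu f|^2+\lambda\langle\nabla_\mu f,\nabla_\mu g\rangle+\frac{\lambda^2}{2}|D_\mu g|^2$ (both sides being non-negative for every $\lambda\in\R$). Subtracting the identity from the first step, dividing by $\lambda>0$, and sending $\lambda\searrow 0$ delivers the upper bound $\limsup_{t\searrow 0}\int\phi\,(g\circ\e_t-g\circ\e_0)/t\,\d\ppi\le\int\phi\,\langle\nabla_\mu f,\nabla_\mu g\rangle\circ\e_0\,\d\ppi$ for $\phi\ge 0$; repeating the argument with $\lambda<0$ flips the inequality and produces the matching lower bound on the $\liminf$. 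Linearity in $\phi$ extends the equality of limits to arbitrary $\phi\in L^\infty(\ppi)$, and uniform $L^2(\ppi)$-boundedness of the difference quotients (falling out of the same Cauchy--Schwarz estimate applied directly to $g$) promotes the pointwise convergence of the duality pairings to weak $L^1(\ppi)$-convergence.

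The main obstacle I foresee is verifying that $A_t\to|D_\mu h_\lambda|\circ\e_0$ strongly in $L^2(\ppi)$, since the time-averaging blurs the fibrewise behaviour of $|D_\mu h_\lambda|^2\circ\e_s$. This reduces to showing that $s\mapsto|D_\mu h_\lambda|^2\circ\e_s$ is $L^1(\ppi)$-continuous at $s=0$, which in turn follows by approximating $|D_\mu h_\lambda|^2\in L^1(\mu)$ by continuous bounded functions and invoking dominated convergence on curves, using the bounded-compression bound $(\e_s)_*\ppi\le\Comp(\ppi)\mu$ to control the error in $L^1(\ppi)$ by a multiple of the $L^1(\mu)$-approximation error (exactly in the spirit of Lemma \ref{lem:cont_comp_e_t}). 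Once this is secured, $A_t^2\to|D_\mu h_\lambda|^2\circ\e_0$ in $L^1(\ppi)$ upgrades to $A_t\to|D_\mu h_\lambda|\circ\e_0$ in $L^2(\ppi)$ via norm convergence plus a subsequence-pointwise argument.
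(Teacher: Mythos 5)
Your proof is correct: the paper gives no in-text argument for this proposition (it is quoted from \cite[Corollary 2.4]{P20}), and your scheme -- perturbing to \(h_\lambda=f+\lambda g\), using the weak-upper-gradient inequality with Cauchy--Schwarz in time, Young's inequality together with the pointwise expansion \(|D_\mu h_\lambda|^2=|D_\mu f|^2+2\lambda\langle\nabla_\mu f,\nabla_\mu g\rangle+\lambda^2|D_\mu g|^2\), and then dividing by \(\lambda\) of both signs -- is exactly the standard Gigli-type first-variation argument underlying that reference. The technical points you single out (the \(L^1(\ppi)\)-continuity of \(s\mapsto|D_\mu h_\lambda|^2\circ\e_s\) at \(s=0\) via bounded continuous approximation and the compression bound, and the upgrade from \(L^1\)-convergence of \(A_t^2\) to \(L^2(\ppi)\)-convergence of \(A_t\)) are handled correctly, so no gap remains.
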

\subsubsection{Geometric characterisation of the tangent fibers}
In the setting of weighted Euclidean spaces, we have that
test plans representing a gradient admit a `concrete' derivative
at \(t=0\):
\begin{theorem}[Initial velocity of test plans representing a gradient]
\label{thm:deriv_time_0_tp}
Let \(\mu\geq 0\) be a Radon measure on \(\R^n\). Let
\(f\in W^{1,2}(\R^n,\mu)\) be given. Let \(\ppi\) be a test plan
on \((\R^n,\sfd_{\rm Eucl},\mu)\) that represents the gradient
of \(f\). Then it holds that
\begin{equation}\label{eq:def_D_pi}
\exists\,{\rm D}_\sppi\coloneqq\lim_{t\searrow 0}\frac{\e_t-\e_0}{t}=
\iota_\mu(\nabla_\mu f)\circ\e_0,\quad\text{ strongly in }
\mathbb B_\sppi.
\end{equation}
\end{theorem}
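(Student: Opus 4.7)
The plan is to obtain strong convergence in $\mathbb B_\sppi$ by combining a sharp upper bound on $\big\|(\e_t-\e_0)/t\big\|_{\mathbb B_\sppi}$ with the Hilbert-space identity
\[
\bigg\|\frac{\e_t-\e_0}{t}-\iota_\mu(\nabla_\mu f)\circ\e_0\bigg\|_{\mathbb B_\sppi}^2=\bigg\|\frac{\e_t-\e_0}{t}\bigg\|_{\mathbb B_\sppi}^2-2\bigg\langle\frac{\e_t-\e_0}{t},\iota_\mu(\nabla_\mu f)\circ\e_0\bigg\rangle_{\mathbb B_\sppi}+\|\iota_\mu(\nabla_\mu f)\circ\e_0\|_{\mathbb B_\sppi}^2.
\]
First, the pointwise estimate $|\gamma_t-\gamma_0|\leq\sqrt{t}\big(\int_0^t|\dot\gamma_s|^2\,\d s\big)^{1/2}={\rm KE}_t(\gamma)$ yields $\big\|(\e_t-\e_0)/t\big\|_{\mathbb B_\sppi}\leq\|{\rm KE}_t/t\|_{L^2(\sppi)}$; since $\ppi$ represents the gradient of $f$ and $|\iota_\mu(\nabla_\mu f)|=|D_\mu f|$ $\mu$-a.e., \eqref{eq:plan_repr_grad} supplies both the limsup bound $\leq\|\iota_\mu(\nabla_\mu f)\circ\e_0\|_{\mathbb B_\sppi}$ and a uniform $\mathbb B_\sppi$-bound on the family $\{(\e_t-\e_0)/t\}_{t\in(0,1]}$.

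For the cross term, I would first test against $\nabla g\circ\e_0$ with $g\in C^\infty_c(\R^n)$. A second-order Taylor expansion gives
\[
\big|g\circ\e_t-g\circ\e_0-(\nabla g\circ\e_0)\cdot(\e_t-\e_0)\big|\leq\tfrac{1}{2}\|D^2 g\|_\infty\,|\e_t-\e_0|^2;
\]
after dividing by $t$ and integrating against $\ppi$, this remainder is controlled by a constant times $t\,\|{\rm KE}_t/t\|_{L^2(\sppi)}^2\to 0$. Proposition \ref{prop:plan_repr_grad_prod}, applied to the test function $h\equiv 1\in L^\infty(\ppi)$, identifies the limit of the leading term as $\int\langle\nabla_\mu g,\nabla_\mu f\rangle\circ\e_0\,\d\ppi$, and the pointwise identity $\langle\nabla_\mu g,\nabla_\mu f\rangle=\nabla g\cdot\iota_\mu(\nabla_\mu f)$ $\mu$-a.e., obtained by combining ${\rm P}_\mu(\d g)=\d_\mu g$ with \eqref{eq:def_iota}, then gives
\[
\lim_{t\searrow 0}\int\frac{\e_t-\e_0}{t}\cdot(\nabla g\circ\e_0)\,\d\ppi=\int(\nabla g\circ\e_0)\cdot\big(\iota_\mu(\nabla_\mu f)\circ\e_0\big)\,\d\ppi.
\]

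To upgrade this limit from $\nabla g$ to the actual target $\iota_\mu(\nabla_\mu f)$, I would invoke Proposition \ref{prop:char_minimal_Gmu_grad}: since $\iota_\mu(\nabla_\mu f)$ is the minimal $G_\mu$-gradient of $f$, Definition \ref{def:G-gradient} supplies a sequence $(g_i)_i\subseteq C^\infty_c(\R^n)$ with $\nabla g_i\to\iota_\mu(\nabla_\mu f)$ strongly in $L^2(\R^n,\R^n;\mu)$. Using Cauchy--Schwarz, the uniform bound from the first paragraph, and the bounded-compression estimate $\|\underline v\circ\e_0\|_{\mathbb B_\sppi}\leq\sqrt{{\rm Comp}(\ppi)}\,\|\underline v\|_{L^2(\mu)}$, a standard $3\varepsilon$-argument then yields
\[
\lim_{t\searrow 0}\bigg\langle\frac{\e_t-\e_0}{t},\iota_\mu(\nabla_\mu f)\circ\e_0\bigg\rangle_{\mathbb B_\sppi}=\|\iota_\mu(\nabla_\mu f)\circ\e_0\|_{\mathbb B_\sppi}^2.
\]
Plugging the cross-term limit and the norm bound into the Hilbert identity collapses its right-hand side to $0$ in the limsup, which is exactly \eqref{eq:def_D_pi}.

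The step I expect to be most delicate is the density argument in the third paragraph: the Taylor expansion only identifies the cross-term limit for test vectors of the form $\nabla g$, while $\iota_\mu(\nabla_\mu f)$ is in general a \emph{projection} of some $\nabla g$ onto the tangent distribution $T_\mu$. The crucial input making the approximation viable is Proposition \ref{prop:char_minimal_Gmu_grad}, which recognises $\iota_\mu(\nabla_\mu f)$ as an honest $L^2$-limit of smooth gradients rather than merely their projection; without this strong smooth-density of the minimal $G_\mu$-gradient, the cross-term limit could not be transferred from $\nabla g$ to the target.
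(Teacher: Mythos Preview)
Your proof is correct and takes a genuinely different route from the paper's. The paper argues via weak compactness: it extracts a weak subsequential limit \(\ell\) of \((\e_{t_i}-\e_0)/t_i\) in \(\mathbb B_\sppi\), shows \(\ell(\gamma)\in T_\mu(\gamma_0)\) for \(\ppi\)-a.e.\ \(\gamma\) by combining the tangency Lemma~\ref{lem:speed_pi_tangent_MOD} with the continuity Lemma~\ref{lem:cont_comp_e_t} and the formula \eqref{eq:der_e_t}, and then identifies \(\ell\) by testing against \(\iota_\mu(\nabla_\mu g)\circ\e_0\) on arbitrary Borel sets \(E\) (so as to obtain a \emph{pointwise} orthogonality). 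Only at the very end does it upgrade weak to strong convergence via the norm bound.

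Your approach bypasses weak limits, subsequences, Lemma~\ref{lem:cont_comp_e_t}, and Lemma~\ref{lem:speed_pi_tangent_MOD} entirely: the Hilbert-space expansion plus the sharp norm bound reduces everything to computing a single scalar cross term, and the Taylor remainder together with Proposition~\ref{prop:plan_repr_grad_prod} handles smooth \(g\). The price you pay is that you need the \emph{full} Proposition~\ref{prop:char_minimal_Gmu_grad} (that \(\iota_\mu(\nabla_\mu f)\) for general \(f\in W^{1,2}\) is a strong \(L^2\)-limit of smooth gradients), whereas the paper only uses the special case \eqref{eq:explicit_iota_nabla_mu} for smooth \(g\). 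Both dependencies are already available at this point of the paper, so neither approach is logically cheaper; yours is simply more direct, while the paper's yields the additional intermediate fact that \emph{any} weak cluster point is automatically tangent at \(\gamma_0\). One minor cosmetic point: your ``uniform bound on \(\{(\e_t-\e_0)/t\}_{t\in(0,1]}\)'' is only guaranteed for \(t\) near \(0\) (from the \(L^2\)-convergence of \({\rm KE}_t/t\)), but that is all the \(3\varepsilon\) argument needs.
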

\begin{proof}
Fix any sequence \(t_i\searrow 0\). Observe that for every \(t\in(0,1)\)
we have that
\begin{equation}\label{eq:bound_Der}
\bigg|\frac{\e_t-\e_0}{t}\bigg|(\gamma)\leq
\fint_0^t|\dot\gamma_s|\,\d s\leq
\bigg(\fint_0^t|\dot\gamma_s|^2\,\d s\bigg)^{1/2}=
\frac{{\rm KE}_t(\gamma)}{t},\quad\text{ for }\ppi\text{-a.e.\ }\gamma.
\end{equation}
Since \(\big({\rm KE}_{t_i}/t_i\big)_i\) is convergent in
\(L^2(\ppi)\), we deduce that \(\big((\e_{t_i}-\e_0)/t_i\big)_i\)
is bounded in \(\mathbb B_\sppi\), thus accordingly (up to a not relabelled
subsequence) it holds that
\((\e_{t_i}-\e_0)/t_i\rightharpoonup\ell\) weakly
in \(\mathbb B_\sppi\) for some \(\ell\in\mathbb B_\sppi\). Given
\(\underline v\in L^2(\R^n,\R^n;\mu)\) and \(E\subseteq C([0,1],\R^n)\)
Borel, we claim that
\begin{equation}\label{eq:Der_0_aux}
\int_E(\underline v\circ\e_0)\cdot\ell\,\d\ppi=
\lim_{i\to\infty}\int_E\fint_0^{t_i}(\underline v\circ\e_t)
\cdot{\rm Der}_t\,\d t\,\d\ppi.
\end{equation}
In order to prove it, observe that
\[\begin{split}
\int_E(\underline v\circ\e_0)\cdot\ell\,\d\ppi
=\lim_{i\to\infty}\int_E(\underline v\circ\e_0)\cdot\frac{\e_{t_i}-\e_0}{t_i}
\,\d\ppi\overset{\eqref{eq:der_e_t}}=\lim_{i\to\infty}\int_E
\fint_0^{t_i}(\underline v\circ\e_0)\cdot{\rm Der}_t\,\d t\,\d\ppi
\end{split}\]
and that by exploiting Lemma \ref{lem:cont_comp_e_t} we obtain that
\[\begin{split}
&\lims_{i\to\infty}\bigg|\int_E\fint_0^{t_i}(\underline v\circ\e_t)
\cdot{\rm Der}_t\,\d t\,\d\ppi-\int_E\fint_0^{t_i}(\underline v\circ\e_0)
\cdot{\rm Der}_t\,\d t\,\d\ppi\bigg|\\
\leq\,&\lims_{i\to\infty}\int_E\fint_0^{t_i}\big|\underline v\circ\e_t-
\underline v\circ\e_0\big||{\rm Der}_t|\,\d t\,\d\ppi\\
\leq\,&\lim_{i\to\infty}\bigg(\fint_0^{t_i}\big\|\underline v\circ\e_t-
\underline v\circ\e_0\big\|_{\mathbb B_\sppi}^2\,\d t\bigg)^{1/2}
\bigg(\int\frac{{\rm KE}_{t_i}^2}{t_i^2}\,\d\ppi\bigg)^{1/2}=0.
\end{split}\]
Since one has \({\rm Der}_t(\gamma)\in T_\mu(\gamma_t)\) for
\((\ppi\otimes\mathcal L_1)\)-a.e.\ \((\gamma,t)\) by Lemma
\ref{lem:speed_pi_tangent_MOD}, we deduce from \eqref{eq:Der_0_aux} that
\(\int_E(\underline v\circ\e_0)\cdot\ell\,\d\ppi=0\) for every
\(\underline v\in\Gamma(T_\mu^\perp)\) and \(E\subseteq C([0,1],\R^n)\)
Borel, thus accordingly
\begin{equation}\label{eq:Der_0_aux2}
\ell(\gamma)\in T_\mu(\gamma_0),\quad\text{ for }\ppi\text{-a.e.\ }\gamma.
\end{equation}
Also, given \(g\in C^\infty_c(\R^n)\) and \(E\subseteq C([0,1],\R^n)\)
Borel, we know from Proposition \ref{prop:plan_repr_grad_prod} that
\[\begin{split}
&\int_E\big(\iota_\mu(\nabla_\mu g)\circ\e_0\big)\cdot
\big(\iota_\mu(\nabla_\mu f)\circ\e_0\big)\,\d\ppi\\
\overset{\phantom{\eqref{eq:explicit_iota_nabla_mu}}}=\,&\int_E
\langle\nabla_\mu g,\nabla_\mu f\rangle\circ\e_0\,\d\ppi=\lim_{i\to\infty}
\int_E\frac{g\circ\e_{t_i}-g\circ\e_0}{t_i}\,\d\ppi\\ 
\overset{\phantom{\eqref{eq:explicit_iota_nabla_mu}}}=\,&\lim_{i\to\infty}
\int_E\fint_0^{t_i}\frac{\d}{\d t}\,g(\gamma_t)\,\d t\,\d\ppi(\gamma)=
\lim_{i\to\infty}\int_E\fint_0^{t_i}(\nabla g\circ\e_t)
\cdot{\rm Der}_t\,\d t\,\d\ppi\\
\overset{\phantom{\eqref{eq:explicit_iota_nabla_mu}}}=\,&
\lim_{i\to\infty}\int_E\fint_0^{t_i}\big({\rm pr}_{T_\mu}(\nabla g)\circ\e_t\big)
\cdot{\rm Der}_t\,\d t\,\d\ppi\\
\overset{\eqref{eq:explicit_iota_nabla_mu}}=\,&
\lim_{i\to\infty}\int_E\fint_0^{t_i}\big(\iota_\mu(\nabla_\mu g)\circ\e_t\big)
\cdot{\rm Der}_t\,\d t\,\d\ppi\overset{\eqref{eq:Der_0_aux}}=
\int_E\big(\iota_\mu(\nabla_\mu g)\circ\e_0\big)\cdot\ell\,\d\ppi,
\end{split}\]
whence it follows that \(\big(\iota_\mu(\nabla_\mu g)\circ\e_0\big)
\cdot\big(\ell-\iota_\mu(\nabla_\mu f)\circ\e_0\big)=0\) holds
\(\ppi\)-a.e.. By using \eqref{eq:Der_0_aux2} and the arbitrariness of
\(g\in C^\infty_c(\R^n)\), we get \(\ell=\iota_\mu(\nabla_\mu f)\circ\e_0\).
Being the limit \(\ell\) independent of the sequence \((t_i)_i\),
we deduce that
\begin{equation}\label{eq:Der_0_aux3}
\frac{\e_t-\e_0}{t}\rightharpoonup\iota_\mu(\nabla_\mu f)\circ\e_0,
\quad\text{ weakly in }\mathbb B_\sppi\text{ as }t\searrow 0.
\end{equation}
Finally, let us observe that
\[\begin{split}
\int|D_\mu f|^2\circ\e_0\,\d\ppi
&\overset{\phantom{\eqref{eq:bound_Der}}}=
\int\big|\iota_\mu(\nabla_\mu f)\big|^2\circ\e_0\,\d\ppi
\overset{\eqref{eq:Der_0_aux3}}\leq
\limi_{t\searrow 0}\int\bigg|\frac{\e_t-\e_0}{t}\bigg|^2\,\d\ppi
\leq\lims_{t\searrow 0}\int\bigg|\frac{\e_t-\e_0}{t}\bigg|^2\,\d\ppi\\
&\overset{\eqref{eq:bound_Der}}\leq\lim_{t\searrow 0}\int\frac{{\rm KE}_t^2}
{t^2}\,\d\ppi=\int|D_\mu f|^2\circ\e_0\,\d\ppi.
\end{split}\]
This shows that \(\int\big|\iota_\mu(\nabla_\mu f)\big|^2\circ\e_0\,\d\ppi
=\lim_{t\searrow 0}\int\big|\frac{\e_t-\e_0}{t}\big|^2\,\d\ppi\), which
together with \eqref{eq:Der_0_aux3} grant that
\(\frac{\e_t-\e_0}{t}\to\iota_\mu(\nabla_\mu f)\circ\e_0\) strongly
in \(\mathbb B_\sppi\) as \(t\searrow 0\), thus proving the statement.
\end{proof}
By building upon Theorem \ref{thm:deriv_time_0_tp},
we can eventually prove the main result of this section. 
\begin{theorem}[Geometric characterisation of the tangent fibers]
\label{thm:fiber_cl_dot_pi}
Let \(\mu\geq 0\) be a Radon measure on \(\R^n\). Then there exists
a sequence \((\ppi_i)_i\) of test plans on \((\R^n,\sfd_{\rm Eucl},\mu)\)
such that the limits \({\rm D}_{\sppi_i}\) exist as in \eqref{eq:def_D_pi},
the property \(\mu\ll(\e_0)_*\ppi_i\ll\mu\) holds for every \(i\in\N\), and
\begin{equation}\label{eq:fiber_cl_dot_pi_claim}
T_\mu(x)={\rm cl}\,\Big\{{\rm Im}_{\e_0,\sppi_i}({\rm D}_{\sppi_i})(x)
\;\Big|\;i\in\N\Big\},\quad\text{ for }\mu\text{-a.e.\ }x\in\R^n,
\end{equation}
where the \textbf{essential image}
\({\rm Im}_{\e_0,\sppi_i}({\rm D}_{\sppi_i})\colon C([0,1],\R^n)\to\R^n\)
of \({\rm D}_{\sppi_i}\) under \(\e_0\) is defined as
\[
{\rm Im}_{\e_0,\sppi_i}({\rm D}_{\sppi_i})\coloneqq
\frac{\d(\e_0)_*({\rm D}_{\sppi_i}\ppi_i)}{\d(\e_0)_*\ppi_i},
\quad\text{ for every }i\in\N.
\]
\end{theorem}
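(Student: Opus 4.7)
The plan is to realise each test plan as representing the gradient of a carefully chosen smooth function, and to select a countable family of such functions whose $\iota_\mu$-gradients generate $T_\mu$ fiberwise. First I would build a reference probability measure $\nu$ on $\R^n$ which is equivalent to $\mu$, has a bounded Radon--Nikod\'ym derivative, and has finite second moment: the standard choice is $\nu \coloneqq Z^{-1} e^{-|\cdot|^2}\mu$ with $Z \coloneqq \int e^{-|x|^2}\,\d\mu(x)$. The Radon property of $\mu$ together with the super-polynomial decay of the Gaussian yield $Z < \infty$ and $\int |x-\bar x|^2\,\d\nu(x) < \infty$ for all $\bar x \in \R^n$, while $\nu \leq Z^{-1}\mu$ and $\d\nu/\d\mu > 0$ give $\nu \sim \mu$.

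Second I would construct the countable family $(h_i)_i \subseteq C^\infty_c(\R^n)$. Fix smooth cutoffs $\chi_k$ with $\chi_k \equiv 1$ on $B_k(0)$ and $\chi_k \equiv 0$ outside $B_{k+1}(0)$, and enumerate as $(h_i)_i$ the countable family
\[
\Bigl\{\chi_k \cdot \sum_{j=1}^n q_j\,{\rm pr}_j \;\Big|\; k \in \N,\; (q_1,\dots,q_n)\in\Q^n\Bigr\},
\]
where ${\rm pr}_j(x) \coloneqq x_j$. For $\mu$-a.e.\ $x \in \R^n$ and for every $k$ such that $x \in B_k(0)$ we have $\nabla(\chi_k \sum_j q_j {\rm pr}_j)(x) = \sum_j q_j e_j$; hence by \eqref{eq:explicit_iota_nabla_mu} the vector $\iota_\mu(\nabla_\mu h_i)(x)$ ranges, as $i$ varies over those indices, over all $\Q$-linear combinations of ${\rm pr}_{T_\mu(x)}(e_1),\ldots,{\rm pr}_{T_\mu(x)}(e_n)$. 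Since ${\rm pr}_{T_\mu(x)}\colon\R^n \to T_\mu(x)$ is surjective and $T_\mu(x)$ is finite-dimensional, such combinations are dense in $T_\mu(x)$, whence
\[
T_\mu(x) = {\rm cl}\,\{\iota_\mu(\nabla_\mu h_i)(x)\;|\;i\in\N\},\qquad\text{for }\mu\text{-a.e.\ }x\in\R^n.
\]

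Third, for each $i \in \N$ I would apply Theorem~\ref{thm:existence_plan_repr_grad} to $h_i \in W^{1,2}(\R^n,\mu)$ with initial marginal $\nu$, obtaining a test plan $\ppi_i$ representing the gradient of $h_i$ with $(\e_0)_*\ppi_i = \nu$; hence $\mu \ll (\e_0)_*\ppi_i \ll \mu$. By Theorem~\ref{thm:deriv_time_0_tp} the limit ${\rm D}_{\sppi_i}$ exists in $\mathbb B_{\sppi_i}$ and equals $\iota_\mu(\nabla_\mu h_i)\circ\e_0$. A direct $\R^n$-valued change-of-variables computation then gives, for every Borel set $A \subseteq \R^n$,
\[
(\e_0)_*({\rm D}_{\sppi_i}\ppi_i)(A) = \int_{\e_0^{-1}(A)} \iota_\mu(\nabla_\mu h_i)(\gamma_0)\,\d\ppi_i(\gamma) = \int_A \iota_\mu(\nabla_\mu h_i)\,\d(\e_0)_*\ppi_i,
\]
so that ${\rm Im}_{\e_0,\sppi_i}({\rm D}_{\sppi_i}) = \iota_\mu(\nabla_\mu h_i)$ in $L^2(\R^n,\R^n;(\e_0)_*\ppi_i) = L^2(\R^n,\R^n;\mu)$ (using $\nu \sim \mu$). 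Inserting this identification into the fiberwise density established above proves \eqref{eq:fiber_cl_dot_pi_claim}.

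The subtlest step is the second one: a naive choice of a countable $L^2$-dense subset of $\{\iota_\mu(\nabla_\mu f):f\in C^\infty_c(\R^n)\}$ does not a priori guarantee fiberwise density in the Euclidean sense, because within a single fiber one could end up with only finitely many distinct values. The cutoff coordinate construction circumvents this by exploiting the finite-dimensionality of $T_\mu(x)$, where the $n$ projected basis vectors $\mathrm{pr}_{T_\mu(x)}(e_j)$ span the whole fiber, so that $\Q$-linear combinations are automatically dense. The remaining verifications (essential-image identity, equivalence $\nu \sim \mu$, applicability of Theorems~\ref{thm:existence_plan_repr_grad} and~\ref{thm:deriv_time_0_tp}) are routine once the family $(h_i)_i$ and the measure $\nu$ are in place.
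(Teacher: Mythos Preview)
Your overall strategy coincides with the paper's: pick a countable family of smooth functions whose \(\iota_\mu\)-gradients span \(T_\mu\) fiberwise, build a reference probability \(\nu\sim\mu\) with \(\nu\leq C\mu\) and finite second moment, invoke Theorem~\ref{thm:existence_plan_repr_grad} to obtain a test plan representing each gradient with \((\e_0)_*\ppi_i=\nu\), apply Theorem~\ref{thm:deriv_time_0_tp}, and compute the essential image. Your coordinate-function construction of \((h_i)_i\) is a tidy variant of the paper's second step: the paper instead takes a countable \(\mathbb Q\)-linear family dense in \(W^{1,2}(\R^n,\mu)\) and then invokes Lemma~\ref{lem:stable_spaces} to pass from \(L^2\)-density of the gradients in \(\Gamma(T_\mu)\) to fiberwise density, whereas you bypass that lemma entirely by projecting the standard basis onto \(T_\mu(x)\) and using finite-dimensionality of the fiber. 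Both routes are legitimate; yours is more elementary, the paper's fits more naturally into its module-theoretic framework.

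There is, however, one genuine error in your first step. The Gaussian weight does \emph{not} force \(Z=\int e^{-|x|^2}\,\d\mu<\infty\) for an arbitrary Radon measure: ``boundedly finite'' places no growth restriction on \(k\mapsto\mu\big(B_k(0)\big)\), and for instance \(\mu\coloneqq\sum_{k\geq 1}e^{k^3}\delta_{ke_1}\) is Radon on \(\R^n\) but gives \(Z=\sum_k e^{k^3-k^2}=+\infty\). The paper sidesteps this by merely asserting the existence of a suitable \(\nu\); an explicit fix is to take a density that is constant on the annuli \(A_k=B_k(0)\setminus B_{k-1}(0)\), say proportional to \(2^{-k}/\big(1+\mu(A_k)\big)\), which is bounded, strictly positive, and makes both \(\nu(\R^n)\) and \(\int|x|^2\,\d\nu\) finite. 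With this correction your argument goes through.
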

\begin{proof}
Given that \(C^\infty_c(\R^n)\) is strongly dense in \(W^{1,2}(\R^n,\mu)\)
by Corollary \ref{cor:strong_dens_smooth}, we can find a countable
\(\mathbb Q\)-linear subspace \((f_i)_i\) of \(C^\infty_c(\R^n)\) that
is dense in \(W^{1,2}(\R^n,\mu)\). In particular, the family
\(\mathscr V\coloneqq\big\{\sum_{j=1}^k g_j\nabla_\mu f_{i_j}\,:\,
k\in\N,\,(g_j)_{j=1}^k\subseteq L^\infty(\mu),\,(i_j)_{j=1}^k\subseteq\N\big\}\)
is dense in \(L^2_\mu(T\R^n)\), thus the linear space
\(\iota_\mu(\mathscr V)\) is dense in \(\Gamma(T_\mu)\).
By using Lemma \ref{lem:stable_spaces}, we can deduce that
\begin{equation}\label{eq:fiber_cl_dot_pi_aux}
T_\mu(x)={\rm cl}\,\big\{\iota_\mu(\nabla_\mu f_i)(x)\;\big|\;
i\in\N\big\},\quad\text{ for }\mu\text{-a.e.\ }x\in\R^n.
\end{equation}
It is straightforward to check that one can find a Borel probability
measure \(\nu\) on \(\R^n\) such that \(\int|x|^2\,\d\nu(x)<+\infty\)
and \(\mu\ll\nu\leq C\mu\) for some \(C>0\). Given any \(i\in\N\),
we know from Theorem \ref{thm:existence_plan_repr_grad} that there
exists a test plan \(\ppi_i\) on \((\R^n,\sfd_{\rm Eucl},\mu)\)
representing the gradient of \(f_i\) and satisfying \((\e_0)_*\ppi_i=\nu\). 
Theorem \ref{thm:deriv_time_0_tp} grants that \({\rm D}_{\sppi_i}\) exists
as in \eqref{eq:def_D_pi}. Also, it holds
\[\begin{split}
{\rm Im}_{\e_0,\sppi_i}({\rm D}_{\sppi_i})&=
{\rm Im}_{\e_0,\sppi_i}\big(\iota_\mu(\nabla_\mu f_i)\circ\e_0\big)=
\frac{\d(\e_0)_*\big(\iota_\mu(\nabla_\mu f_i)\circ\e_0\,\ppi_i\big)}
{\d\nu}=\frac{\d\big(\iota_\mu(\nabla_\mu f_i)\,\nu\big)}{\d\nu}\\
&=\iota_\mu(\nabla_\mu f_i).
\end{split}\]
By taking \eqref{eq:fiber_cl_dot_pi_aux} into account, we eventually
obtain \eqref{eq:fiber_cl_dot_pi_claim}, as desired.
\end{proof}
\subsection{Tensorisation of the Cheeger energy on weighted
Euclidean spaces}\label{ss:tens_Ch}
In the framework of Sobolev calculus on metric measure spaces,
a surprisingly difficult problem is the following: given two
metric measure spaces \((\X,\sfd_\X,\mu)\) and \((\Y,\sfd_\Y,\nu)\),
is the Sobolev space on the product space
\((\X\times\Y,\sfd_{\X\times\Y},\mu\otimes\nu)\) the
\emph{tensorisation} of \(W^{1,2}(\X,\mu)\) and \(W^{1,2}(\Y,\nu)\)?
\medskip

The precise statement would read as follows: given any function
\(f\in W^{1,2}(\X\times\Y,\mu\otimes\nu)\), it holds for
\((\mu\otimes\nu)\)-a.e.\ \((x,y)\in\X\times\Y\) that
\(f^{(y)}\in W^{1,2}(\X,\mu)\), \(f_{(x)}\in W^{1,2}(\Y,\nu)\), and
\[
|D_{\mu\otimes\nu}f|^2(x,y)=|D_\mu f^{(y)}|^2(x)
+|D_\nu f_{(x)}|^2(y),
\]
where we set \(f^{(y)}(x)=f_{(x)}(y)\coloneqq f(x,y)\).
(Here, Fubini theorem plays a role.)
\medskip

A positive answer to the above question is known only in some
particular circumstances. About the spaces having such
tensorisation property, this is the current state of the art:
\begin{itemize}
\item[\(\rm a)\)] L.\ Ambrosio, N.\ Gigli, and G.\ Savar\'{e}
proved in \cite{AmbrosioGigliSavare11-2} that
\({\sf RCD}(K,\infty)\) spaces, for any given \(K\in\R\),
have the tensorisation property.
\item[\(\rm b)\)] L.\ Ambrosio, A.\ Pinamonti, and G.\ Speight
proved in \cite{APS14} the tensorisation property on doubling metric
measure spaces supporting a weak \((1,2)\)-Poincar\'{e} inequality.
\item[\(\rm c)\)] N.\ Gigli and B.-X.\ Han showed in \cite{GH15} that
the Sobolev space tensorises as soon as one of the two factors
is a closed real interval \(I\subseteq\R\).
\end{itemize}
To the best of our knowledge, these are all the cases that
have been studied so far. The aim of this section is to prove
that weighted Euclidean spaces have the tensorisation property
(cf.\ Theorem \ref{thm:tens_Sob}), and we do so by first
showing that the fibers of the tangent distribution
`tensorise' as well (cf.\ Proposition \ref{prop:tens_t}).
Notice that the family of all weighted Euclidean spaces
is not contained in any of the classes of spaces described
in items a), b), and c) above.
\subsubsection{Test plans on product spaces}
Let \((\X,\sfd_\X,\mu)\), \((\Y,\sfd_\Y,\nu)\) be two given metric
measure spaces. The cartesian product \(\X\times\Y\) will be
implicitly endowed with the product distance
\[
\sfd_{\X\times\Y}\big((x,y),(x',y')\big)\coloneqq
\sqrt{\sfd_\X(x,x')^2+\sfd_\Y(y,y')^2},\quad
\text{ for every }(x,y),(x',y')\in\X\times\Y,
\]
and the product measure \(\mu\otimes\nu\). We denote by
\(p^\X\colon\X\times\Y\to\X\) and \(p^\Y\colon\X\times\Y\to\Y\)
the canonical projection maps \(p^\X(x,y)\coloneqq x\) and
\(p^\Y(x,y)\coloneqq y\). They induce the \(1\)-Lipschitz maps
\[\begin{split}
\boldsymbol p^\X\colon C([0,1],\X\times\Y)\longrightarrow C([0,1],\X),&
\quad\boldsymbol p^\X(\gamma)\coloneqq p^\X\circ\gamma,\\
\boldsymbol p^\Y\colon C([0,1],\X\times\Y)\longrightarrow C([0,1],\Y),&
\quad\boldsymbol p^\Y(\gamma)\coloneqq p^\Y\circ\gamma.
\end{split}\]
It can be readily checked that
\[\begin{split}
&\boldsymbol p^\X\big(AC^2([0,1],\X\times\Y)\big)\subseteq AC^2([0,1],\X),\\
&\boldsymbol p^\Y\big(AC^2([0,1],\X\times\Y)\big)\subseteq AC^2([0,1],\Y).
\end{split}\]
Moreover, let us consider the joint mapping
\begin{equation}\label{eq:def_j}\begin{split}
(\boldsymbol p^\X,\boldsymbol p^\Y)\colon C([0,1],\X\times\Y)&\longrightarrow
C([0,1],\X)\times C([0,1],\Y),\\
\gamma&\longmapsto\big(\boldsymbol p^\X(\gamma),\boldsymbol p^\Y(\gamma)\big).
\end{split}\end{equation}
It turns out that \((\boldsymbol p^\X,\boldsymbol p^\Y)\) is a
\(\sqrt 2\)-Lipschitz bijection whose inverse is \(1\)-Lipschitz. Also,
\begin{equation}\label{eq:beh_AC_under_j}
(\boldsymbol p^\X,\boldsymbol p^\Y)\big(AC^2([0,1],\X\times\Y)\big)=
AC^2([0,1],\X)\times AC^2([0,1],\Y).
\end{equation}
More precisely, given any curve
\(\gamma=(\gamma^\X,\gamma^\Y)\in AC^2([0,1],\X\times\Y)\), it holds that
\begin{equation}\label{eq:tens_ms}
|\dot\gamma_t|^2=|\dot\gamma^\X_t|^2+|\dot\gamma^\Y_t|^2,
\quad\text{ for }\mathcal L_1\text{-a.e.\ }t\in[0,1].
\end{equation}
For completeness, we report below the elementary proofs
of the following two technical results:
\begin{lemma}\label{lem:proj_test_plan}
Let \((\X,\sfd_\X,\mu)\), \((\Y,\sfd_\Y,\nu)\) be metric measure
spaces such that \(\mu\), \(\nu\) are finite Borel measures.
Let \(\ppi\) be a given test plan on
\((\X\times\Y,\sfd_{\X\times\Y},\mu\otimes\nu)\).
Then \(\ppi_\X\coloneqq\boldsymbol p^\X_*\ppi\) is a test plan on
\((\X,\sfd_\X,\mu)\) and \(\ppi_\Y\coloneqq\boldsymbol p^\Y_*\ppi\)
is a test plan on \((\Y,\sfd_\Y,\nu)\). Moreover, it holds that
\[
{\rm Comp}(\ppi_\X)\leq{\rm Comp}(\ppi)\,\nu(\Y),
\quad{\rm Comp}(\ppi_\Y)\leq{\rm Comp}(\ppi)\,\mu(\X).
\]
\end{lemma}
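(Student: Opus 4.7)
The plan is to verify the two defining properties of a test plan separately for $\ppi_\X$, since the case of $\ppi_\Y$ is entirely symmetric: namely the compression bound $(\e_t^\X)_*\ppi_\X\leq C\mu$ for some $C>0$, and the concentration on $AC^2([0,1],\X)$ together with finite kinetic energy. Three ingredients that are already at our disposal drive the proof: (a) the commutation identity $\e_t^\X\circ\boldsymbol p^\X=p^\X\circ\e_t^{\X\times\Y}$, which is immediate from the definitions; (b) the inclusion $\boldsymbol p^\X\bigl(AC^2([0,1],\X\times\Y)\bigr)\subseteq AC^2([0,1],\X)$ recorded just before \eqref{eq:beh_AC_under_j}; and (c) the pointwise bound $|\dot\gamma^\X_t|^2\leq|\dot\gamma_t|^2$ obtained from \eqref{eq:tens_ms}.

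For the compression estimate, I would pick any Borel set $E\subseteq\X$ and any $t\in[0,1]$, and use (a) together with $\ppi_\X=\boldsymbol p^\X_*\ppi$ to rewrite
\[
(\e_t^\X)_*\ppi_\X(E)=\ppi\bigl((p^\X\circ\e_t^{\X\times\Y})^{-1}(E)\bigr)=(\e_t^{\X\times\Y})_*\ppi\bigl((p^\X)^{-1}(E)\bigr).
\]
Since $\ppi$ is a test plan, the right-hand side is bounded above by $\Comp(\ppi)\,(\mu\otimes\nu)\bigl((p^\X)^{-1}(E)\bigr)=\Comp(\ppi)\,\nu(\Y)\,\mu(E)$, which yields the desired inequality $\Comp(\ppi_\X)\leq\Comp(\ppi)\,\nu(\Y)$ and makes transparent why the finiteness of $\nu$ is essential.

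For the remaining condition, ingredient (b) immediately ensures that $\ppi_\X$ is concentrated on $AC^2([0,1],\X)$, as $\ppi$ is concentrated on $AC^2([0,1],\X\times\Y)$ by assumption. The finiteness of the kinetic energy then follows from Fubini theorem, the change-of-variables formula, and (c):
\[
\int\!\!\!\int_0^1|\dot\sigma_t|^2\,\d t\,\d\ppi_\X(\sigma)=\int\!\!\!\int_0^1|\dot\gamma^\X_t|^2\,\d t\,\d\ppi(\gamma)\leq\int\!\!\!\int_0^1|\dot\gamma_t|^2\,\d t\,\d\ppi(\gamma)<+\infty.
\]
I do not foresee any genuine obstacle here: every ingredient is either purely notational or already established above the statement, and the role of the finiteness of $\mu,\nu$ surfaces naturally only in the compression step.
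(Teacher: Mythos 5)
Your proposal is correct and follows essentially the same route as the paper's proof: concentration on \(AC^2([0,1],\X)\) via the projection inclusion, finite kinetic energy via the change-of-variables formula and the bound \(|\dot\gamma^\X_t|\leq|\dot\gamma_t|\), and the compression estimate by computing \((\e^\X_t)_*\ppi_\X\) on Borel sets through the identity \(\e^\X_t\circ\boldsymbol p^\X=p^\X\circ\e^{\X\times\Y}_t\) and using \((\mu\otimes\nu)(E\times\Y)=\nu(\Y)\,\mu(E)\). No gaps.
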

\begin{proof}
By symmetry, it suffices to prove the statement just for \(\ppi_\X\).
Since \(\ppi\) is concentrated on
\(AC^2([0,1],\X\times\Y)\), we have that \(\ppi_\X\)
is concentrated on \(AC^2([0,1],\X)\).
Moreover, for any curve \(\gamma=(\gamma^\X,\gamma^\Y)\in
AC^2([0,1],\X\times\Y)\) it holds
\(|\dot\gamma^\X_t|\leq|\dot\gamma_t|\) for
\(\mathcal L_1\)-a.e.\ \(t\in[0,1]\), whence
\[
\int\!\!\!\int_0^1|\dot\gamma^\X_t|^2\,\d t\,\d\ppi_\X(\gamma^\X)
=\int\!\!\!\int_0^1|\dot\gamma^\X_t|^2\,\d t\,\d\ppi(\gamma^\X,\gamma^\Y)
\leq\int\!\!\!\int_0^1|\dot\gamma_t|^2\,\d t\,\d\ppi(\gamma)<+\infty.
\]
Finally, for any Borel set \(A\subseteq\X\) we have that
\[\begin{split}
(\e^\X_t)_*\ppi_\X(A)=\ppi_\X\big((\e^\X_t)^{-1}(A)\big)&=
\ppi\big((\e^{\X\times\Y}_t)^{-1}(A\times\Y)\big)=
(\e^{\X\times\Y}_t)_*\ppi(A\times\Y)\\
&\leq{\rm Comp}(\ppi)\,(\mu\otimes\nu)(A\times\Y)
={\rm Comp}(\ppi)\,\nu(\Y)\,\mu(A),
\end{split}\]
for all \(t\in[0,1]\). Hence, \(\ppi_\X\) is a test plan on
\((\X,\sfd_\X,\mu)\) and \({\rm Comp}(\ppi_\X)\leq{\rm Comp}(\ppi)\,\nu(\Y)\).
\end{proof}
\begin{lemma}\label{lem:prod_test_plan}
Let \((\X,\sfd_\X,\mu)\), \((\Y,\sfd_\Y,\nu)\) be metric measure
spaces. Let \(\ppi_\X\) and \(\ppi_\Y\) be test plans on
\((\X,\sfd_\X,\mu)\) and \((\Y,\sfd_\Y,\nu)\), respectively.
Then \(\ppi\coloneqq(\boldsymbol p^\X,\boldsymbol p^\Y)^{-1}_*
(\ppi_\X\otimes\ppi_\Y)\) is a test plan on
\((\X\times\Y,\sfd_{\X\times\Y},\mu\otimes\nu)\). Moreover, it holds
that \({\rm Comp}(\ppi)\leq{\rm Comp}(\ppi_\X)\,{\rm Comp}(\ppi_\Y)\).
\end{lemma}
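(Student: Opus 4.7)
The plan is to verify directly the three defining properties of a test plan, leveraging the fact that $(\boldsymbol p^\X,\boldsymbol p^\Y)$ is a bi-Lipschitz bijection (hence a homeomorphism, so its inverse is Borel measurable). Thus $\ppi$ is a well-defined Borel probability measure on $C([0,1],\X\times\Y)$, and the verification reduces to bookkeeping that unpacks the pushforward through the product structure.

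For the compression estimate, I would compute $(\e_t^{\X\times\Y})_*\ppi$ on a product Borel rectangle $A\times B\subseteq\X\times\Y$. Because $(\boldsymbol p^\X,\boldsymbol p^\Y)$ is a bijection intertwining $\e_t^{\X\times\Y}$ with $(\e_t^\X,\e_t^\Y)$, one has
\[
(\e_t^{\X\times\Y})_*\ppi(A\times B)=(\ppi_\X\otimes\ppi_\Y)\bigl(\{\gamma^\X:\gamma^\X_t\in A\}\times\{\gamma^\Y:\gamma^\Y_t\in B\}\bigr)=(\e_t^\X)_*\ppi_\X(A)\,(\e_t^\Y)_*\ppi_\Y(B),
\]
and applying the compression bounds for $\ppi_\X,\ppi_\Y$ on each factor yields the desired bound on product rectangles; a standard monotone class argument extends the inequality to all Borel subsets of $\X\times\Y$, giving ${\rm Comp}(\ppi)\leq{\rm Comp}(\ppi_\X)\,{\rm Comp}(\ppi_\Y)$.

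To check concentration on $AC^2([0,1],\X\times\Y)$ and the finite kinetic energy, I would invoke the identity \eqref{eq:beh_AC_under_j}: since $\ppi_\X\otimes\ppi_\Y$ is concentrated on $AC^2([0,1],\X)\times AC^2([0,1],\Y)$, the pushforward $\ppi$ is concentrated on the preimage, which is exactly $AC^2([0,1],\X\times\Y)$. For the kinetic energy, the tensorisation \eqref{eq:tens_ms} of metric speeds together with Fubini's theorem gives
\[
\int\!\!\!\int_0^1|\dot\gamma_t|^2\,\d t\,\d\ppi(\gamma)=\int\!\!\!\int_0^1|\dot\gamma^\X_t|^2\,\d t\,\d\ppi_\X+\int\!\!\!\int_0^1|\dot\gamma^\Y_t|^2\,\d t\,\d\ppi_\Y<+\infty,
\]
where both summands are finite by the analogous property of $\ppi_\X$ and $\ppi_\Y$.

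There is no real obstacle here; the lemma is essentially a compatibility statement made transparent by the explicit formulas \eqref{eq:def_j}, \eqref{eq:beh_AC_under_j}, and \eqref{eq:tens_ms}. The only mildly delicate point is the extension of the compression inequality from rectangles to arbitrary Borel sets in $\X\times\Y$, which is handled by the uniqueness of the product measure on the Borel $\sigma$-algebra.
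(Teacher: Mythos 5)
Your proposal is correct and follows essentially the same route as the paper: concentration on \(AC^2([0,1],\X\times\Y)\) via \eqref{eq:beh_AC_under_j}, finiteness of the kinetic energy via \eqref{eq:tens_ms} and Fubini, and the compression bound \({\rm Comp}(\ppi)\leq{\rm Comp}(\ppi_\X)\,{\rm Comp}(\ppi_\Y)\). The only cosmetic difference is in the compression step, where the paper integrates an arbitrary non-negative Borel function on \(\X\times\Y\) against \((\e^{\X\times\Y}_t)_*\ppi\) and applies Fubini directly, whereas you verify the bound on product rectangles and extend by a monotone class/product-measure uniqueness argument (legitimate here since the measures involved are \(\sigma\)-finite and the spaces separable).
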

\begin{proof}
We know from \eqref{eq:beh_AC_under_j} that \(\ppi\) is concentrated
on \(AC^2([0,1],\X\times\Y)\), while \eqref{eq:tens_ms} yields
\[
\int\!\!\!\int_0^1|\dot\gamma_t|^2\,\d t\,\d\ppi(\gamma)
=\int\!\!\!\int_0^1|\dot\gamma^\X_t|^2\,\d t\,\d\ppi_\X(\gamma^\X)
+\int\!\!\!\int_0^1|\dot\gamma^\Y_t|^2\,\d t\,\d\ppi_\Y(\gamma^\Y)<+\infty.
\]
Moreover, given any non-negative Borel function \(f\) on \(\X\times\Y\),
for every \(t\in[0,1]\) it holds that
\[\begin{split}
\int f\,\d(\e^{\X\times\Y}_t)_*\ppi&=
\int f(\gamma_t)\,\d\ppi(\gamma)=
\int\!\!\!\int f(\gamma^\X_t,\gamma^\Y_t)\,\d\ppi_\X(\gamma^\X)
\,\d\ppi_\Y(\gamma^\Y)\\
&=\int\!\!\!\int f(x,y)\,\d(\e^\X_t)_*\ppi_\X(x)\,\d(\e^\Y_t)_*\ppi_\Y(y)\\
&\leq{\rm Comp}(\ppi_\X)\,{\rm Comp}(\ppi_\Y)\int\!\!\!\int f(x,y)\,\d\mu(x)\,\d\nu(y)\\
&={\rm Comp}(\ppi_\X)\,{\rm Comp}(\ppi_\Y)\int f\,\d(\mu\otimes\nu),
\end{split}\]
whence \((\e^{\X\times\Y}_t)_*\ppi\leq
{\rm Comp}(\ppi_\X)\,{\rm Comp}(\ppi_\Y)\,\mu\otimes\nu\).
This proves the statement.
\end{proof}
\subsubsection{Tensorisation of the tangent distribution}
Let us denote by \(p^n\) and \(p^m\) the canonical projections of
the product \(\R^{n+m}\cong\R^n\times\R^m\) onto \(\R^n\) and
\(\R^m\), respectively, instead of \(p^{\R^n}\) and \(p^{\R^m}\).
Also, we define the embedding maps
\(\iota^n\colon\R^n\to\R^{n+m}\) and \(\iota^m\colon\R^m\to\R^{n+m}\) as
\[\begin{split}
\iota^n(v)&\coloneqq(v,0)\in\R^n\times\R^m,\quad\text{ for every }v\in\R^n,\\
\iota^m(w)&\coloneqq(0,w)\in\R^n\times\R^m,\quad\text{ for every }w\in\R^m.
\end{split}\]
\begin{proposition}[Tangent distribution on the product space]
\label{prop:tens_t}
Let \(\mu\) and \(\nu\) be finite Borel measures on \(\R^n\)
and \(\R^m\), respectively. Then it holds that
\[
T_{\mu\otimes\nu}(x,y)=\iota^n\big(T_\mu(x)\big)\oplus
\iota^m\big(T_\nu(y)\big),\quad\text{ for }(\mu\otimes\nu)\text{-a.e.\ }
(x,y)\in\R^{n+m}.
\]
\end{proposition}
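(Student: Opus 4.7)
The plan is to prove both inclusions separately, exploiting two different characterisations of $T_\mu$ from Theorem \ref{thm:alt_char_T_mu}, together with the projection/product lemmas for test plans proved just above.

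For the inclusion $\iota^n\big(T_\mu(x)\big)\oplus\iota^m\big(T_\nu(y)\big)\subseteq T_{\mu\otimes\nu}(x,y)$, I would use the divergence characterisation (item ii) of Theorem \ref{thm:alt_char_T_mu}. Given $\underline v\in D(\underline\div_\mu)$, define the `lifted' vector field $\tilde{\underline v}(x,y)\coloneqq\iota^n\big(\underline v(x)\big)$ on $\R^{n+m}$. Since $\nu$ is finite, $\tilde{\underline v}\in L^2(\R^{n+m},\R^{n+m};\mu\otimes\nu)$ and, by Fubini applied separately in the $x$-variable to each slice $f(\cdot,y)\in C^\infty_c(\R^n)$ for $f\in C^\infty_c(\R^{n+m})$, one checks that $\tilde{\underline v}\in D(\underline\div_{\mu\otimes\nu})$ with $\underline\div_{\mu\otimes\nu}(\tilde{\underline v})(x,y)=\underline\div_\mu(\underline v)(x)$. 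Hence $\tilde{\underline v}(x,y)\in T_{\mu\otimes\nu}(x,y)$ for $(\mu\otimes\nu)$-a.e.\ $(x,y)$. Now pick a countable dense subset $(\underline v_i)_i$ of $D(\underline\div_\mu)$, which by the Leibniz rule is closed under multiplication by $C^\infty_c(\R^n)$-functions, so by Lemma \ref{lem:stable_spaces} one has $T_\mu(x)={\rm cl}\,\{\underline v_i(x):i\in\N\}$ for $\mu$-a.e.\ $x$. Applying the construction above to each $\underline v_i$ and using Fubini yields, for $(\mu\otimes\nu)$-a.e.\ $(x,y)$, the inclusion $\{\iota^n(\underline v_i(x)):i\in\N\}\subseteq T_{\mu\otimes\nu}(x,y)$; taking closures gives $\iota^n(T_\mu(x))\subseteq T_{\mu\otimes\nu}(x,y)$. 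The symmetric argument handles $\iota^m(T_\nu(y))$.

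For the reverse inclusion, I would use the test-plan characterisation (item i) of Theorem \ref{thm:alt_char_T_mu}: it suffices to check that the distribution $V(x,y)\coloneqq\iota^n(T_\mu(x))\oplus\iota^m(T_\nu(y))$ absorbs the velocities of every test plan on $(\R^{n+m},\sfd_{\rm Eucl},\mu\otimes\nu)$. Given such a plan $\ppi$, Lemma \ref{lem:proj_test_plan} yields that $\ppi_n\coloneqq\boldsymbol p^n_*\ppi$ and $\ppi_m\coloneqq\boldsymbol p^m_*\ppi$ are test plans on $(\R^n,\mu)$ and $(\R^m,\nu)$ respectively. Writing $\gamma=(\gamma^n,\gamma^m)$, one has $\dot\gamma_t=\iota^n(\dot\gamma^n_t)+\iota^m(\dot\gamma^m_t)$ for $\mathcal L_1$-a.e.\ $t$ (this is exactly \eqref{eq:tens_ms} read componentwise). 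Applying Lemma \ref{lem:speed_pi_tangent_MOD} to $\ppi_n$ and $\ppi_m$ gives $\dot\gamma^n_t\in T_\mu(\gamma^n_t)$ and $\dot\gamma^m_t\in T_\nu(\gamma^m_t)$ for $(\ppi\otimes\mathcal L_1)$-a.e.\ $(\gamma,t)$, whence $\dot\gamma_t\in V(\gamma_t)$. Therefore $T_{\mu\otimes\nu}\leq V$ by minimality.

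Combining the two inclusions yields the equality. The only step that requires any care is verifying the Fubini-type identity used to lift $\underline v\mapsto\tilde{\underline v}$ into $D(\underline\div_{\mu\otimes\nu})$, but this is a direct computation once the finiteness of $\mu$ and $\nu$ is exploited to keep everything in $L^2$; no genuine obstacle arises, since both ingredients -- the projection of test plans and the lifting of divergence-free vector fields -- are already packaged in the preceding lemmas.
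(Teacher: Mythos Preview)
Your proof is correct. The inclusion $T_{\mu\otimes\nu}\leq\iota^n(T_\mu)\oplus\iota^m(T_\nu)$ is handled exactly as in the paper, via Lemma \ref{lem:proj_test_plan} and item i) of Theorem \ref{thm:alt_char_T_mu}. For the converse inclusion you take a genuinely different route: you lift a field $\underline v\in D(\underline\div_\mu)$ to $\tilde{\underline v}(x,y)=(\underline v(x),0)$, check via Fubini that $\tilde{\underline v}\in D(\underline\div_{\mu\otimes\nu})$, and then invoke item ii) together with Lemma \ref{lem:stable_spaces} to pass from a countable dense family to the full fibre. The paper instead stays within the test-plan picture: given a test plan $\ppi_n$ on $(\R^n,\mu)$, it tensorises with the ``constant'' plan ${\rm Const}^m_*\nu$ on $\R^m$ (Lemma \ref{lem:prod_test_plan}), obtaining a product plan whose velocities are $(\dot\gamma^n_t,0)\in T_{\mu\otimes\nu}(\gamma^n_t,y)$; minimality of $T_\mu$ in item i) then yields $\iota^n(T_\mu(x))\subseteq T_{\mu\otimes\nu}(x,y)$. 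Your divergence argument is slightly more self-contained (it bypasses Lemma \ref{lem:prod_test_plan} entirely and needs only Fubini plus finiteness of $\nu$), whereas the paper's approach has the virtue of using a single characterisation of $T_\mu$ throughout. One small wording issue: it is $D(\underline\div_\mu)$, not the countable dense subset $(\underline v_i)_i$ itself, that is stable under multiplication by $C^\infty_c$-functions; but this is precisely what Lemma \ref{lem:stable_spaces} requires, so the argument goes through.
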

\begin{proof}
Let us define \(S(x,y)\coloneqq
\iota^n\big(T_\mu(x)\big)\oplus\iota^m\big(T_\nu(y)\big)\)
for \((\mu\otimes\nu)\)-a.e.\ \((x,y)\in\R^{n+m}\). It is
straightforward to check that \(S\in \mathscr D_{n+m}(\mu\otimes\nu)\).
To prove the statement amounts to showing that \(T_{\mu\otimes\nu}=S\).
First, let us prove that \(T_{\mu\otimes\nu}\leq S\). In light of
item i) of Theorem \ref{thm:alt_char_T_mu}, this is equivalent to saying that
for any test plan \(\ppi\) on \((\R^{n+m},\sfd_{\rm Eucl},\mu\otimes\nu)\)
it holds that
\begin{equation}\label{eq:tens_T_aux}
\dot\gamma_t\in S(\gamma_t),\quad\text{ for }(\ppi\otimes\mathcal L_1)
\text{-a.e.\ }(\gamma,t)\in AC^2([0,1],\R^{n+m})\times[0,1].
\end{equation}
Call \(\ppi_n\coloneqq\boldsymbol p^n_*\ppi\) and
\(\ppi_m\coloneqq\boldsymbol p^m_*\ppi\). We know from Lemma 
\ref{lem:proj_test_plan} that \(\ppi_n\) and \(\ppi_m\) are test
plans on \((\R^n,\sfd_{\rm Eucl},\mu)\) and
\((\R^m,\sfd_{\rm Eucl},\nu)\), respectively. Hence,
item i) of Theorem \ref{thm:alt_char_T_mu} gives
\[\begin{split}
\dot\gamma^n_t\in T_\mu(\gamma^n_t),&\quad\text{ for }
(\ppi_n\otimes\mathcal L_1)\text{-a.e.\ }(\gamma^n,t)\in
AC^2([0,1],\R^n)\times[0,1],\\
\dot\gamma^m_t\in T_\nu(\gamma^m_t),&\quad\text{ for }
(\ppi_m\otimes\mathcal L_1)\text{-a.e.\ }(\gamma^m,t)\in
AC^2([0,1],\R^m)\times[0,1],
\end{split}\]
which can be equivalently restated as follows: for
\(\ppi\)-a.e.\ \(\gamma=(\gamma^n,\gamma^m)\in AC^2([0,1],\R^{n+m})\)
it holds \((\dot\gamma^n_t,\dot\gamma^m_t)\in T_\mu(\gamma^n_t)
\times T_\nu(\gamma^m_t)\) for \(\mathcal L_1\)-a.e.\ \(t\in[0,1]\).
This proves \eqref{eq:tens_T_aux}, whence \(T_{\mu\otimes\nu}\leq S\).

In order to prove that \(S\leq T_{\mu\otimes\nu}\),
it is clearly sufficient to show that
\(T_\mu(x)\subseteq p^n\big(T_{\mu\otimes\nu}(x,y)\big)\)
and \(T_\nu(y)\subseteq p^m\big(T_{\mu\otimes\nu}(x,y)\big)\)
hold for \((\mu\otimes\nu)\)-a.e.\ \((x,y)\in\R^{n+m}\).
Let us just prove the former inclusion, since the latter one
can be obtained by an analogous argument. Trivially, we have that
\(p^n\big(T_{\mu\otimes\nu}(\cdot,y)\big)\in\mathscr D_n(\mu)\)
for \(\nu\)-a.e.\ \(y\in\R^m\).
Now fix a test plan \(\ppi_n\) on \((\R^n,\sfd_{\rm Eucl},\mu)\).
We then define the measure \(\ppi\) on \(C([0,1],\R^{n+m})\) as
\[
\ppi\coloneqq({\boldsymbol p}^n,{\boldsymbol p}^m)^{-1}
_*(\ppi_n\otimes{\rm Const}^m_*\nu),
\]
where the map \({\rm Const}^m\coloneqq{\rm Const}^{\R^m}\)
is defined as in Example \ref{ex:Const}.
Lemma \ref{lem:prod_test_plan} grants that
\(\ppi\) is a test plan on \((\R^{n+m},\sfd_{\rm Eucl},\mu\otimes\nu)\),
thus item i) of Theorem \ref{thm:alt_char_T_mu} ensures that
\(\dot\gamma_t\in T_{\mu\otimes\nu}(\gamma_t)\) is satisfied for
\((\ppi\otimes\mathcal L_1)\)-a.e.\ \((\gamma,t)\in
AC^2([0,1],\R^{n+m})\times[0,1]\). This can be rewritten as
\[
(\dot\gamma^n_t,0)\in T_{\mu\otimes\nu}(\gamma^n_t,y),
\quad\text{ for }(\ppi_n\otimes\nu\otimes\mathcal L_1)
\text{-a.e.\ }(\gamma^n,y,t)\in AC^2([0,1],\R^n)\times\R^m\times[0,1].
\]
Therefore, by arbitrariness of \(\ppi_n\) we can finally conclude
that \(T_\mu(x)\subseteq p^n\big(T_{\mu\otimes\nu}(x,y)\big)\)
holds for \((\mu\otimes\nu)\)-a.e.\ \((x,y)\in\R^{n+m}\), whence
the proof of the statement is complete.
\end{proof}
\begin{remark}{\rm
Proposition \ref{prop:tens_t} is claimed in
\cite[Remark 2.2(iv)]{BBS97}. Therein, the tangent distribution
is defined in terms of the distributional divergence,
an approach that is equivalent to ours in view of item ii) of
Theorem \ref{thm:alt_char_T_mu}.
\fr}\end{remark}
\subsubsection{Tensorisation of the Sobolev space}
We are in a position -- by exploiting Propositions
\ref{prop:tens_t} and \ref{prop:char_minimal_Gmu_grad} -- to prove
that weighted Euclidean spaces have the tensorisation property.
\medskip

Given a Borel function \(f\colon\R^{n+m}\to\R\), we define
\(f^{(y)}\colon\R^n\to\R\) and \(f_{(x)}\colon\R^m\to\R\) as
\[
f^{(y)}(x)=f_{(x)}(y)\coloneqq f(x,y),
\quad\text{ for every }(x,y)\in\R^{n+m}.
\]
Observe that \(f^{(y)}\) and \(f_{(x)}\) are Borel functions as well.
Also, thanks to Fubini theorem, for every \(f\in L^2(\mu\otimes\nu)\)
we have \(f^{(y)}\in L^2(\mu)\) for \(\nu\)-a.e.\ \(y\in\R^m\)
and \(f_{(x)}\in L^2(\nu)\) for \(\mu\)-a.e.\ \(x\in\R^n\).
\begin{theorem}[Tensorisation of the Sobolev space on weighted \(\R^n\)]
\label{thm:tens_Sob}
Let \(\mu\) and \(\nu\) be finite Borel measures on \(\R^n\)
and \(\R^m\), respectively. Let
\(f\in W^{1,2}(\R^{n+m},\mu\otimes\nu)\) be given. Then
\begin{equation}\label{eq:tens_Sob_claim}\begin{split}
f^{(y)}\in W^{1,2}(\R^n,\mu),&\quad\text{ for }\nu\text{-a.e.\ }y\in\R^m,\\
f_{(x)}\in W^{1,2}(\R^m,\nu),&\quad\text{ for }\mu\text{-a.e.\ }x\in\R^n.
\end{split}\end{equation}
Moreover, it holds that
\begin{equation}\label{eq:tens_Sob_claim1}
|D_{\mu\otimes\nu}f|^2(x,y)=|D_\mu f^{(y)}|^2(x)+|D_\nu f_{(x)}|^2(y),
\quad\text{ for }(\mu\otimes\nu)\text{-a.e.\ }(x,y)\in\R^{n+m}.
\end{equation}
\end{theorem}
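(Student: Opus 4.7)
The plan is to reduce to the smooth case via the explicit formula for Lipschitz/smooth minimal weak upper gradients obtained earlier, and then pass to the limit using infinitesimal Hilbertianity and Fubini. First, for $g\in C_c^\infty(\R^{n+m})$, Proposition \ref{prop:char_minimal_Gmu_grad} gives $|D_{\mu\otimes\nu}g|(x,y)=\big|{\rm pr}_{T_{\mu\otimes\nu}(x,y)}(\nabla g(x,y))\big|$, while Proposition \ref{prop:tens_t} identifies $T_{\mu\otimes\nu}(x,y)$ with the \emph{orthogonal} direct sum $\iota^n(T_\mu(x))\oplus\iota^m(T_\nu(y))$ in $\R^{n+m}$. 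Splitting $\nabla g(x,y)=\iota^n(\nabla g^{(y)}(x))+\iota^m(\nabla g_{(x)}(y))$ and applying Pythagoras, the projection splits and
\[
|D_{\mu\otimes\nu}g|^2(x,y)=\big|{\rm pr}_{T_\mu(x)}(\nabla g^{(y)}(x))\big|^2+\big|{\rm pr}_{T_\nu(y)}(\nabla g_{(x)}(y))\big|^2=|D_\mu g^{(y)}|^2(x)+|D_\nu g_{(x)}|^2(y),
\]
the last equality being Proposition \ref{prop:char_minimal_Gmu_grad} applied to the smooth slices $g^{(y)}$ and $g_{(x)}$.

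For general $f\in W^{1,2}(\R^{n+m},\mu\otimes\nu)$, I would invoke Corollary \ref{cor:strong_dens_smooth} to pick $(f_i)_i\subseteq C_c^\infty(\R^{n+m})$ with $f_i\to f$ in $L^2(\mu\otimes\nu)$ and $\nabla_{\mu\otimes\nu}f_i\to\nabla_{\mu\otimes\nu}f$ strongly in $L^2_{\mu\otimes\nu}(T\R^{n+m})$. By Theorem \ref{thm:Eucl_inf_Hilb} the space is infinitesimally Hilbertian, so $\nabla_{\mu\otimes\nu}$ is linear and $|\nabla_{\mu\otimes\nu}h|=|D_{\mu\otimes\nu}h|$ for every $h\in W^{1,2}$; hence $\big\||D_{\mu\otimes\nu}(f_i-f_j)|\big\|_{L^2}\to 0$, i.e.\ $(f_i)$ is Cauchy in the full Hilbertian norm of $W^{1,2}(\R^{n+m},\mu\otimes\nu)$. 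Applying the smooth-case identity to $f_i-f_j\in C_c^\infty(\R^{n+m})$ and integrating by Fubini yields
\[
\big\||D_{\mu\otimes\nu}(f_i-f_j)|\big\|_{L^2(\mu\otimes\nu)}^2=\int_{\R^m}\big\||D_\mu(f_i^{(y)}-f_j^{(y)})|\big\|_{L^2(\mu)}^2\,\d\nu(y)+\int_{\R^n}\big\||D_\nu((f_i)_{(x)}-(f_j)_{(x)})|\big\|_{L^2(\nu)}^2\,\d\mu(x),
\]
so both slice integrands tend to zero. After extraction, $(f_i^{(y)})_i$ is Cauchy in $W^{1,2}(\R^n,\mu)$ for $\nu$-a.e.\ $y$ and, symmetrically, $((f_i)_{(x)})_i$ is Cauchy in $W^{1,2}(\R^m,\nu)$ for $\mu$-a.e.\ $x$. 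Combining this with the fact that $f_i\to f$ in $L^2(\mu\otimes\nu)$ (whence, after a further subsequence via Fubini, $f_i^{(y)}\to f^{(y)}$ in $L^2(\mu)$ for $\nu$-a.e.\ $y$ and analogously for $x$-slices) and completeness of $W^{1,2}$, one obtains \eqref{eq:tens_Sob_claim} together with the slice convergences $|D_\mu f_i^{(y)}|\to|D_\mu f^{(y)}|$ in $L^2(\mu)$ for $\nu$-a.e.\ $y$, and $|D_\nu (f_i)_{(x)}|\to|D_\nu f_{(x)}|$ in $L^2(\nu)$ for $\mu$-a.e.\ $x$.

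The pointwise identity \eqref{eq:tens_Sob_claim1} then falls out by passing to the $L^1(\mu\otimes\nu)$-limit in the smooth identity applied to $f_i$: the left-hand side $|D_{\mu\otimes\nu}f_i|^2$ tends to $|D_{\mu\otimes\nu}f|^2$ in $L^1(\mu\otimes\nu)$, while Fubini combined with the slice $L^2$-convergences already obtained forces each summand on the right to converge in $L^1(\mu\otimes\nu)$ to $|D_\mu f^{(y)}|^2(x)$ and $|D_\nu f_{(x)}|^2(y)$ respectively. I expect the main technical hurdle to lie in the second step, namely upgrading the strong convergence of the abstract gradients on the product to a \emph{simultaneous} slice-wise Cauchyness; it is exactly infinitesimal Hilbertianity -- which linearises $\nabla_{\mu\otimes\nu}$ and allows the Fubini decomposition of the smooth case to be applied to differences -- that bypasses the usual two-sided integral inequality plus weak-upper-gradient argument and delivers the full a.e.\ identity in a single sweep.
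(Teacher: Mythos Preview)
Your proposal is correct and follows essentially the same strategy as the paper: establish the identity for smooth functions via Proposition~\ref{prop:tens_t} and Proposition~\ref{prop:char_minimal_Gmu_grad}, then approximate using Corollary~\ref{cor:strong_dens_smooth} and pass to slices via Fubini. The paper organises the limit step slightly differently---it works with the \emph{concrete} gradients \(\nabla f_i\to\iota_{\mu\otimes\nu}(\nabla_{\mu\otimes\nu}f)\) in \(L^2(\R^{n+m},\R^{n+m};\mu\otimes\nu)\), splits the limit vector field \(\underline v\) into its \(T_\mu\)- and \(T_\nu\)-components, and then applies Proposition~\ref{prop:closure_diff} on each slice---whereas you stay at the level of abstract gradients and exploit the smooth identity applied to differences \(f_i-f_j\) to obtain slice-wise Cauchy sequences. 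Both routes are equivalent in substance.

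One small point deserves tightening. In your final paragraph you assert that ``Fubini combined with the slice \(L^2\)-convergences forces each summand on the right to converge in \(L^1(\mu\otimes\nu)\)''. A.e.\ slice convergence alone does not give this; you need to observe that your own difference identity, together with the reverse triangle inequality \(\big||D_\mu f_i^{(y)}|-|D_\mu f_j^{(y)}|\big|\le|D_\mu(f_i^{(y)}-f_j^{(y)})|\) (valid by infinitesimal Hilbertianity on the slice), shows that \((x,y)\mapsto|D_\mu f_i^{(y)}|(x)\) is Cauchy in \(L^2(\mu\otimes\nu)\). Its limit is then identified with \(|D_\mu f^{(y)}|(x)\) via the slice-wise convergence you already extracted, and the \(L^1\) convergence of the squares follows. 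With that amendment the argument is complete.
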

\begin{proof}
First of all, we know from Proposition \ref{prop:char_minimal_Gmu_grad}
that \(G_{\mu\otimes\nu}(f)\neq\emptyset\) and
\(\iota_{\mu\otimes\nu}(\nabla_{\mu\otimes\nu}f)\) is the minimal
\(G_{\mu\otimes\nu}\)-gradient of \(f\). We can choose a sequence
\((f_i)_i\subseteq C^\infty_c(\R^{n+m})\) such that \(f_i\to f\)
in \(L^2(\mu\otimes\nu)\) and
\(\nabla f_i\to\iota_{\mu\otimes\nu}(\nabla_{\mu\otimes\nu}f)\)
in \(L^2(\R^{n+m},\R^{n+m};\mu\otimes\nu)\). Notice that
\((f_i)^{(y)}\in C^\infty_c(\R^n)\) and \((f_i)_{(x)}\in C^\infty_c(\R^m)\)
for every \(i\in\N\) and \((x,y)\in\R^{n+m}\). Proposition
\ref{prop:tens_t} grants that
\begin{equation}\label{eq:tens_Sob_aux}
{\rm pr}_{T_{\mu\otimes\nu}}(\nabla f_i)(x,y)
=\Big({\rm pr}_{T_\mu}\big(\nabla(f_i)^{(y)}\big)(x),
{\rm pr}_{T_\nu}\big(\nabla(f_i)_{(x)}\big)(y)\Big),
\end{equation}
for \((\mu\otimes\nu)\)-a.e.\ \((x,y)\in\R^{n+m}\).
Recalling Proposition \ref{prop:char_minimal_Gmu_grad},
we deduce from \eqref{eq:tens_Sob_aux} that
\begin{equation}\label{eq:tens_Sob_aux1}
|D_{\mu\otimes\nu}f_i|^2(x,y)=\big|D_\mu(f_i)^{(y)}\big|^2(x)
+\big|D_\nu(f_i)_{(x)}\big|^2(y),\quad\text{ for }
(\mu\otimes\nu)\text{-a.e.\ }(x,y)\in\R^{n+m}.
\end{equation}
Thanks to Fubini theorem, we have (up to a not relabelled subsequence) that
\begin{equation}\label{eq:tens_Sob_aux2}\begin{split}
(f_i)^{(y)}\longrightarrow f^{(y)},&\quad\text{ strongly in }
L^2(\mu)\text{ for }\nu\text{-a.e.\ }y\in\R^m,\\
(f_i)_{(x)}\longrightarrow f_{(x)},&\quad\text{ strongly in }
L^2(\nu)\text{ for }\mu\text{-a.e.\ }x\in\R^n.
\end{split}\end{equation}
Call \(\underline v\coloneqq\iota_{\mu\otimes\nu}
(\nabla_{\mu\otimes\nu}f)\in\Gamma(T_{\mu\otimes\nu})\).
We have that
\({\rm pr}_{T_{\mu\otimes\nu}}(\nabla f_i)\to\underline v\)
in \(L^2(\R^{n+m},\R^{n+m};\mu\otimes\nu)\), so that (up to
passing to a further subsequence) it holds that
\begin{equation}\label{eq:tens_Sob_aux2_bis}
{\rm pr}_{T_{\mu\otimes\nu}}(\nabla f_i)(x,y)\longrightarrow
\underline v(x,y),\quad\text{ for }(\mu\otimes\nu)
\text{-a.e.\ }(x,y)\in\R^{n+m},
\end{equation}
thus in particular
\begin{equation}\label{eq:tens_Sob_aux3}
|D_{\mu\otimes\nu}f_i|\overset{\eqref{eq:explicit_iota_nabla_mu}}=
\big|{\rm pr}_{T_{\mu\otimes\nu}}(\nabla f_i)
\big|\longrightarrow|\underline v|=|D_{\mu\otimes\nu}f|,
\quad\text{ in the }(\mu\otimes\nu)\text{-a.e.\ sense.}
\end{equation}
Set
\(\underline v^{(y)}(x)\coloneqq p^n\big(\underline v(x,y)\big)
\in T_\mu(x)\) and \(\underline v_{(x)}(y)\coloneqq p^m
\big(\underline v(x,y)\big)\in T_\nu(y)\) for
\((\mu\otimes\nu)\)-a.e.\ \((x,y)\). Therefore,
for \((\mu\otimes\nu)\)-a.e.\ \((x,y)\in\R^{n+m}\) it holds that
\[\begin{split}
\iota_\mu\big(\nabla_\mu(f_i)^{(y)}\big)
\overset{\eqref{eq:explicit_iota_nabla_mu}}=
{\rm pr}_{T_\mu}\big(\nabla(f_i)^{(y)}\big)\longrightarrow
\underline v^{(y)},&\quad\text{ strongly in }L^2(\R^n,\R^n;\mu),\\
\iota_\nu\big(\nabla_\nu(f_i)_{(x)}\big)
\overset{\eqref{eq:explicit_iota_nabla_mu}}=
{\rm pr}_{T_\nu}\big(\nabla(f_i)_{(x)}\big)\longrightarrow
\underline v_{(x)},&\quad\text{ strongly in }L^2(\R^m,\R^m;\nu).
\end{split}\]
This implies \(\underline v^{(y)}\in\iota_\mu\big(L^2_\mu(T\R^n)\big)\)
and \(\underline v_{(x)}\in\iota_\nu\big(L^2_\nu(T\R^m)\big)\) for
\((\mu\otimes\nu)\)-a.e.\ \((x,y)\in\R^{n+m}\), and
\begin{equation}\label{eq:tens_Sob_aux4}\begin{split}
\nabla_\mu(f_i)^{(y)}\longrightarrow
\iota_\mu^{-1}(\underline v^{(y)})\eqqcolon v^{(y)},&
\quad\text{ strongly in }L^2_\mu(T\R^n),\\
\nabla_\nu(f_i)_{(x)}\longrightarrow
\iota_\nu^{-1}(\underline v_{(x)})\eqqcolon v_{(x)},&
\quad\text{ strongly in }L^2_\nu(T\R^m).
\end{split}\end{equation}
Moreover, it follows from \eqref{eq:tens_Sob_aux}
and \eqref{eq:tens_Sob_aux2_bis} that for
\((\mu\otimes\nu)\)-a.e.\ \((x,y)\in\R^{n+m}\) it holds that
\begin{equation}\label{eq:tens_Sob_aux5}\begin{split}
\big|D_\mu(f_i)^{(y)}\big|
\overset{\eqref{eq:explicit_iota_nabla_mu}}=
\big|{\rm pr}_{T_\mu}\big(\nabla(f_i)^{(y)}\big)\big|
\longrightarrow|\underline v^{(y)}|=|v^{(y)}|,&
\quad\text{ in the }\mu\text{-a.e.\ sense,}\\
\big|D_\nu(f_i)_{(x)}\big|
\overset{\eqref{eq:explicit_iota_nabla_mu}}=
\big|{\rm pr}_{T_\nu}\big(\nabla(f_i)_{(x)}\big)\big|
\longrightarrow|\underline v_{(x)}|=|v_{(x)}|,&
\quad\text{ in the }\nu\text{-a.e.\ sense.}
\end{split}\end{equation}
By applying Proposition \ref{prop:closure_diff}, we deduce from
\eqref{eq:tens_Sob_aux2} and \eqref{eq:tens_Sob_aux4} that
\eqref{eq:tens_Sob_claim} is satisfied, that
\(\nabla_\mu f^{(y)}=v^{(y)}\) for \(\nu\)-a.e.\ \(y\in\R^m\),
and that \(\nabla_\nu f_{(x)}=v_{(x)}\) for \(\mu\)-a.e.\ \(x\in\R^n\).
Consequently, by letting \(i\to\infty\) in \eqref{eq:tens_Sob_aux1}
and using \eqref{eq:tens_Sob_aux3} and \eqref{eq:tens_Sob_aux5}, we
finally conclude that \eqref{eq:tens_Sob_claim1} holds.
\end{proof}
\begin{remark}{\rm
Proposition \ref{prop:tens_t} and Theorem \ref{thm:tens_Sob} are
verified even when \(\mu\) and \(\nu\) are (not necessarily finite)
Radon measures, by taking into account \cite[Proposition 2.6]{Gigli12},
which says that the Sobolev space can be `localised' in a suitable sense.
We omit the details.
\fr}\end{remark}
\def\cprime{$'$} \def\cprime{$'$}

\end{document}